\newtheorem{theorem}{Theorem}[section]
\newtheorem{lemma}{Lemma}[section]
\newtheorem{corollary}{Corollary}[section]
\newtheorem{proposition}{Proposition}[section]
\theoremstyle{definition}
\newtheorem{definition}{Definition}[section]
\newtheorem{example}{Example}[section]
\newtheorem{remark}{Remark}[section]
\title{New four-vertex type theorems for spherical polygons}
\author{Samuel Pacitti Gentil }
\affil{\begin{small} Pontifícia Universidade Católica, Rio de Janeiro

sampacitti@gmail.com \end{small}}
\date{}
\begin{document}

\maketitle

\begin{abstract}
    We prove discrete analogs of four-vertex type theorems of spherical curves, which imply corresponding results for space polygons. The smooth theory goes back to the work of Beniamino Segre and, more recently, by Mohammad Ghomi, and consists of theorems that state, for a given closed spherical curve, nontrivial lower bounds on the number of spherical inflections plus the number of self and/or antipodal intersections counted with multiplicity. We study these concepts and results adapted to the case of spherical polygons and prove, using only discrete tools, the corresponding theorems.
\end{abstract}

\section{Introduction}

The original four-vertex theorem, proved by Mukhopadhyaya in 1909 (see \cite{Mukhopadhyaya}), states that a regular closed plane curve which is also simple and convex must have at least four points at which its curvature function attains a local maximum or minimum (these are the vertices of the curve). This theorem has various generalizations, some of which being theorems about regular closed space curves.

One of these theorems, proved by Segre in 1968 (see \cite{Segre}), states that a regular closed space curve without any pair of parallel tangents with same orientation must have at least four points at which the torsion vanishes (these points are called the flattenings of the curve). This can be reformulated in the following way: the tangent indicatrix of a space curve is the spherical curve described by the unit tangent vectors of the space curve after translating their base point to the origin. Since flattenings of the space curve correspond to spherical inflections of its tangent indicatrix, Segre's theorem is a corollary of the following theorem, also proved by Segre: a regular simple and closed spherical curve which is not contained in any closed hemisphere must have at least four spherical inflections. An easy corollary of this result states that, if such a spherical curve is also symmetric with respect to the origin, then it must have at least 6 spherical inflections (this result is equivalent to a theorem on projective curves due to Möbius - see \cite{Mobius}).

More recently, Ghomi (see \cite{Ghomi}) proved that Segre's theorem on spherical curves also admits an improvement to 6 on the lower bound of spherical inflections if such curves do not pass through pairs of antipodal points. Moreover, he proves that all these previous theorems are particular cases of results that take into account the number of self-intersections, ``antipodal intersections" (i.e., pairs of antipodal points through which the curve passes) and singular points (i.e., cusps) of the spherical curve.

Although theorems on smooth curves are interesting in their own right, one can also study the discrete analog of them. Four-vertex type theorems for polygons (i.e., closed polygonal lines) have the advantage of being easier to state and having in their proofs relatively simple combinatorial arguments. Moreover, a theorem about polygons provides, in the limit, a theorem about smooth curves. We also notice that discretizing is nontrivial in the sense that there might be different discrete results (relative to a smooth one) which are not equivalent to each other. Therefore, it makes sense to discretize Segre's theorem and its extensions.

Although a proof of Segre's original result for spherical polygons was already found by Panina (see \cite{Panina}), her proof uses the original theorem by Segre (i.e., the smooth case). As far as we know, a proof using only discrete tools was first found in \cite{Gentil}, where other interesting results related to the geometry of spherical polygons were also obtained. The main theorem of \cite{Gentil} states that a simple spherical polygon not contained in a closed hemisphere must have at least 4 (discrete) spherical inflections. If, moreover, such polygon is symmetric, then this lower bound on the number of inflections can be improved to 6. The proof in \cite{Gentil} consists of an induction argument on the number of vertices, and its most difficult step is the proof the existence of a vertex whose deletion does not cause the resulting polygon to be contained in a closed hemisphere nor to have self-intersections. Crucial to the proof of this result are a triangulation technique and some elementary algebraic relations that encode the hypotheses of the theorem.

In this paper we state and prove discrete analogs of Ghomi's results. In order to do this, we adapt the tools that we developed in \cite{Gentil} and deduce an elementary algebraic relation to express an ``antipodal intersection" of a spherical polygon. We first prove that a spherical polygon, not contained in any closed hemisphere and not having self nor antipodal intersections, must have at least 6 (discrete) spherical inflections.

Later we deal with spherical polygons which, although not contained in any closed hemisphere, might have self or antipodal intersections. Denote by $I$, $D^+$ and $D^-$ the numbers of (discrete) inflections, self-intersections and antipodal intersections of a spherical polygon, respectively. Put $D:= D^+ + D^-$. Using a ``cutting-and-pasting" technique, we prove that, for any polygon not contained in any closed hemisphere, the following inequalities hold:
$$ 2D + I \geq 6$$
and
$$ 2D^+ + I \geq 4.$$
Moreover, if the spherical polygon is symmetric with respect to the origin, then we have
$$ 2D^+ + I \geq 6.$$

\section{Previous definitions and results}

We begin this section with the original theorems proved by Segre (see \cite{Segre}):
\begin{definition}
\label{segre_curve}
A closed curve $\gamma: \mathbb{S}^1 \rightarrow \mathbb{R}^3$ is called a \textit{Segre curve} if it has non-vanishing curvature and if, for any $t_1 \neq t_2 \in \mathbb{S}^1$, the tangent vectors $\gamma'(t_1)$ and $\gamma'(t_2)$ do not point to the same direction.
\end{definition}

\begin{definition}
A \textit{flattening} or a \textit{inflection point} is a point $p=\gamma(t_0)$ of a curve, for some $t_0 \in \mathbb{S}^1$, at which $\tau(t_0)=0$.
\end{definition}

\begin{theorem}
\label{segre_theorem}
\textit{(Segre)} Any Segre curve has at least 4 flattenings.
\end{theorem}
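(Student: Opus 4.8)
The plan is to reduce Theorem~\ref{segre_theorem} to the spherical-curve statement recalled in the introduction — that a simple closed spherical curve not contained in any closed hemisphere has at least four spherical inflections — by passing to the tangent indicatrix $T := \gamma'/\lvert\gamma'\rvert \colon \mathbb{S}^1 \to \mathbb{S}^2$. First I would observe that the hypothesis of non-vanishing curvature makes $T$ an immersion (a regular spherical curve), and that the defining property of a Segre curve — no two tangent vectors point in the same direction — is precisely the statement that $T$ is injective, hence a \emph{simple} closed spherical curve.

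Next I would check that $T$ is not contained in any closed hemisphere. If $T(\mathbb{S}^1) \subseteq \{x \in \mathbb{S}^2 : \langle x, v\rangle \ge 0\}$ for some unit vector $v$, then $\langle \gamma'(t), v\rangle = \lvert\gamma'(t)\rvert\,\langle T(t), v\rangle \ge 0$ for all $t$; but $\oint_{\mathbb{S}^1}\langle\gamma'(t),v\rangle\,dt = \langle \oint \gamma'(t)\,dt,\, v\rangle = 0$ because $\gamma$ is closed, so $\langle T(t), v\rangle \equiv 0$ and $T$ lies on the great circle $v^\perp$. In that degenerate case $\gamma'(t) \perp v$ for all $t$, so $\gamma$ is contained in an affine plane, its torsion vanishes identically, every point is a flattening, and the conclusion holds trivially; otherwise $T$ satisfies the hypotheses of Segre's spherical theorem and therefore has at least four spherical inflections.

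Finally I would translate spherical inflections of $T$ back into flattenings of $\gamma$. Parametrizing $\gamma$ by arc length and using the Frenet equations $\gamma'' = \kappa N$, $N' = -\kappa \gamma' + \tau B$, one computes $\det(T, T', T'') = \det(\gamma', \gamma'', \gamma''') = \kappa^2 \tau$. Since a point of a regular spherical curve is an inflection exactly when the position vector together with the first two derivatives is linearly dependent (equivalently the geodesic curvature vanishes, and indeed $\kappa_g = \tau/\kappa$ along the tangent indicatrix), and since $\kappa$ never vanishes, the spherical inflections of $T$ are exactly the points where $\tau = 0$, i.e.\ the flattenings of $\gamma$. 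Hence $\gamma$ has at least four flattenings.

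The main obstacle I expect is not any single computation but rather handling conventions and degeneracies carefully: pinning down the precise notion of ``spherical inflection'' so that the identity $\det(T,T',T'') = \kappa^2\tau$ genuinely yields a bijection between inflections of $T$ and flattenings of $\gamma$; verifying that $T$ is an embedding and not merely an immersion, which is exactly where ``same direction'' — as opposed to ``same line'' — in the Segre condition is used; and disposing cleanly of the planar case, where the indicatrix collapses onto a great circle and the auxiliary spherical theorem does not apply.
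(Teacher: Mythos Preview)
Your proposal is correct and follows essentially the same route the paper outlines: reduce Theorem~\ref{segre_theorem} to Segre's spherical result (Theorem~\ref{segre_theorem_spherical}) via the tangent indicatrix, using that flattenings of $\gamma$ correspond to spherical inflections of $T$. You supply more detail than the paper does---the integral argument that $T$ is not contained in a closed hemisphere, the explicit identity $\det(T,T',T'')=\kappa^2\tau$, and the planar degenerate case---but the underlying strategy is identical.
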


We can reformulate the previous definitions and result in the following way:

\begin{definition}
Given a smooth curve $\gamma$ in $\mathbb{R}^3$ with non-vanishing curvature, translate the unit tangent vector at each point of the curve to a fixed point \textbf{0}. The endpoints of the translated vectors describe then a curve on the unit sphere $\mathbb{S}^2$. We call this curve the \textit{tangent indicatrix} of $\gamma$.
\end{definition}

Therefore, the notion of a Segre curve can be reformulated as a closed curve such that its tangent indicatrix is embedded in $\mathbb{S}^2$ (i.e., smooth and without self-intersections). Moreover, Theorem \ref{segre_theorem} now reads:

\begin{theorem}
\label{segre_theorem_reformulated}
Let $\gamma$ be a closed curve in $\mathbb{R}^3$. If its tangent indicatrix is embedded in $\mathbb{S}^2$, then $\gamma$ has at least 4 flattenings.
\end{theorem}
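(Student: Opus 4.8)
The plan is to set up a precise dictionary between the space curve $\gamma$ and its tangent indicatrix $\Gamma := \gamma'/|\gamma'|\colon \mathbb{S}^1 \to \mathbb{S}^2$, and thereby recognize the statement as Theorem \ref{segre_theorem} written in different language. Reparametrizing $\gamma$ by arc length $s$ changes neither its torsion nor the image of the indicatrix, so I may assume $\Gamma = T$, the unit tangent field, and then the Frenet equations give $\Gamma' = \kappa N$ and $\Gamma'' = -\kappa^2 T + \kappa' N + \kappa\tau B$; these are the only formulas I will need.

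First I would verify that ``$\Gamma$ is embedded in $\mathbb{S}^2$'' is equivalent to ``$\gamma$ is a Segre curve'' in the sense of Definition \ref{segre_curve}. Since $\Gamma' = \kappa N$, regularity (being an immersion) of $\Gamma$ is equivalent to $\kappa$ never vanishing; and $\Gamma(t_1) = \Gamma(t_2)$ holds precisely when $\gamma'(t_1)$ and $\gamma'(t_2)$ point in the same direction, so injectivity of $\Gamma$ is exactly the remaining condition in the definition of a Segre curve. Hence the hypothesis of Theorem \ref{segre_theorem_reformulated} is equivalent to the hypothesis of Theorem \ref{segre_theorem}. Since a flattening is, by definition, a point where $\tau$ vanishes in either formulation, the two conclusions coincide verbatim, and so Theorem \ref{segre_theorem_reformulated} follows immediately from Theorem \ref{segre_theorem}. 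It is worth recording, as this is the viewpoint used throughout the rest of the paper, that the flattenings of $\gamma$ are exactly the spherical inflections of $\Gamma$: from the formulas above,
\[ \det(\Gamma, \Gamma', \Gamma'') = \det\bigl(T,\, \kappa N,\, -\kappa^2 T + \kappa' N + \kappa\tau B\bigr) = \kappa^2 \tau, \]
which vanishes exactly where $\tau = 0$, because $\kappa \neq 0$.

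There is no real obstacle here, as the theorem is a reformulation rather than a new statement; the only point deserving care is the equivalence ``embedded $\iff$ Segre'', where one must keep in mind that $\Gamma$ is allowed to pass through pairs of antipodal points (tangent vectors of $\gamma$ in exactly opposite directions are permitted) while being forbidden to pass through any single point twice. The genuinely substantial input is Theorem \ref{segre_theorem} itself, which we take as given, and whose discrete analogue is the subject of the present work.
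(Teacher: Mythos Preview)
Your proposal is correct and takes essentially the same approach as the paper: both treat Theorem~\ref{segre_theorem_reformulated} as a direct restatement of Theorem~\ref{segre_theorem}, obtained by observing that ``tangent indicatrix embedded'' is equivalent to the definition of a Segre curve. The paper does not give a formal proof at all---it simply asserts the reformulation in one sentence---so your version, with the explicit Frenet computation and the verification that embeddedness is equivalent to non-vanishing curvature plus injectivity of the unit tangent, is in fact more detailed than what the paper provides.
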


Since flattenings of the original space curve correspond to spherical inflection points (i.e., points at which the tangent spherical line has a greater order of contact with the curve) of the corresponding tangent indicatrix, we have that theorem \ref{segre_theorem_reformulated} is a corollary of the following result:

\begin{theorem}
\label{segre_theorem_spherical}
    \textit{(Segre)} If a spherical closed curve is smooth and simple, and moreover is not contained in any closed hemisphere, then it has at least 4 spherical inflection points.
\end{theorem}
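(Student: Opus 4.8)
The plan is to argue by contradiction: assuming $\gamma$ has at most three spherical inflections, I will show it must lie in a closed hemisphere. After an arbitrarily small perturbation (which preserves both simplicity and the property of not lying in any closed hemisphere, these being open conditions, and whose effect on the count I would control by a routine limiting argument recorded separately) we may assume the geodesic curvature $\kappa_g$ of $\gamma$, taken with respect to a fixed orientation of $\mathbb{S}^2$, has only simple zeros; these zeros are then exactly the spherical inflections, and at each one $\kappa_g$ changes sign. Since $\gamma$ is closed, the number of sign changes is even, so ``at most three inflections'' forces $0$ or $2$ inflections, and the argument splits into these two cases. (The case $\kappa_g\equiv 0$, in which $\gamma$ is a great circle, is not a counterexample, a great circle already lying in a closed hemisphere.)

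\emph{Zero inflections.} Then $\kappa_g$ has constant sign; reversing orientation if necessary, $\kappa_g\geq 0$ relative to one of the two regions $\Omega$ bounded by the simple closed curve $\gamma$. A simple closed spherical curve with nonnegative geodesic curvature relative to the region on one side bounds a geodesically convex region: this is the local-to-global convexity principle on $\mathbb{S}^2$, with Gauss--Bonnet in the form $\int_\gamma \kappa_g\, ds = 2\pi-\operatorname{Area}(\Omega)$ ruling out the curve winding too far. Finally, a geodesically convex region of $\mathbb{S}^2$ lies in a closed hemisphere: pick any boundary point and a supporting great circle there; convexity then puts $\Omega$, hence $\gamma$, on one closed side of it. This contradicts the hypothesis.

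\emph{Two inflections.} Let $p,q$ be the inflections; they split $\gamma$ into two arcs on which $\kappa_g$ is of opposite constant sign, so each arc is locally convex but ``curves the other way.'' The route I would follow is to pass to the polar dual curve $\gamma^{*}(t):=\gamma(t)\times T(t)$, the unit pole of the great circle tangent to $\gamma$ at $\gamma(t)$. A short computation (using $\gamma'=T$ and $T'=-\gamma+\kappa_g(\gamma\times T)$) gives $(\gamma^{*})'=-\kappa_g\,T$, so $\gamma^{*}$ is an immersion except at $p$ and $q$, where it has ordinary cusps, and on each of the two arcs between the cusps $\gamma^{*}$ is locally convex (its own polar dual is $\pm\gamma$, which has nonvanishing geodesic curvature there). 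Thus $\gamma^{*}$ is a ``doubly cusped locally convex'' closed spherical curve, a far more rigid object than $\gamma$: I would bound the total geodesic curvature of each of its two convex subarcs by $\pi$ via Gauss--Bonnet on the (immersed) bigon they form, deduce that each subarc lies in a lune, and then use the parity fact that a great circle meets a closed curve on $\mathbb{S}^2$ an even number of times mod $2$ (applied to the tangent great circles at the cusps, whose poles are $\pm\gamma(p)$ and $\pm\gamma(q)$) to pin down a great circle having all of $\gamma$ on one closed side. This contradicts the hypothesis and finishes the proof.

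The step I expect to be the main obstacle is precisely this two-inflection case: turning the soft statement ``$\gamma^{*}$ has only two cusps and is otherwise locally convex'' into the conclusion that $\gamma$ (equivalently $\gamma^{*}$) lies in a closed hemisphere, including the degenerate subcases where the two cusp tangent great circles coincide or where $p$ and $q$ are antipodal. By contrast, the genericity reduction, the zero-inflection convexity argument, and the Frenet-type computation for $\gamma^{*}$ should all be routine.
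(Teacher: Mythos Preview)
The paper does not prove this theorem: it is stated as a classical result due to Segre and used only as background for the discrete analogs that are the paper's actual contribution. So there is no proof in the paper to compare your proposal against, and I can only comment on the proposal itself.

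Your reduction to the cases of $0$ or $2$ inflections and the treatment of the $0$-inflection case are standard and essentially correct: a simple closed spherical curve with $\kappa_g$ of constant sign bounds a geodesically convex disc, and any tangent great circle is then a supporting great circle for that disc, placing $\gamma$ in a closed hemisphere.

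The two-inflection case, however, is not just ``the main obstacle'' --- as written it is not yet a proof, and the sketch has concrete gaps. First, $\gamma^{*}$ need not be simple even though $\gamma$ is, so speaking of ``the (immersed) bigon'' the two arcs form, and applying Gauss--Bonnet to it to bound each arc's total geodesic curvature by $\pi$, requires justification you have not supplied; an immersed locally convex arc on $\mathbb{S}^2$ can have arbitrarily large total turning if it is allowed to self-overlap. Second, even granting that each arc of $\gamma^{*}$ lies in a lune, you have not explained how to pass from this to a single great circle having all of $\gamma$ on one side; the parity remark about great circles meeting a closed curve evenly is true but does not by itself select a hemisphere containing $\gamma$. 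Third, your genericity reduction sweeps under the rug exactly the delicate point: a small perturbation can \emph{create} inflections, so the ``routine limiting argument'' that a curve with $\leq 3$ inflections can be approximated by generic curves with $\leq 3$ inflections is not automatic and is in fact where much of the work in rigorous proofs of Segre's theorem is spent. In short, the architecture (contrapositive, parity, dual curve) is reasonable, but the two-inflection step needs a genuine argument rather than a plan.
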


An easy consequence of theorem \ref{segre_theorem_spherical} is the following result, which is equivalent to a theorem by Möbius (see \cite{Mobius}):

\begin{theorem}
\label{mobius_theorem}
    If a spherical closed curve is smooth and simple and, moreover, is symmetric with respect to the origin, then it has at least six spherical inflection points.
\end{theorem}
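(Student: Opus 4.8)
The plan is to deduce Theorem~\ref{mobius_theorem} from Theorem~\ref{segre_theorem_spherical} by exploiting the centrally symmetric structure of the curve. Let $\gamma$ be a smooth, simple, closed spherical curve that is symmetric with respect to the origin, i.e. $-\gamma(\mathbb{S}^1) = \gamma(\mathbb{S}^1)$ as a set. The first observation is that such a curve cannot be contained in any closed hemisphere: if $\gamma$ lay in the closed hemisphere with pole $v$, then $-\gamma$ would lie in the closed hemisphere with pole $-v$; but $-\gamma$ traces the same curve, so $\gamma$ would lie in the intersection of these two hemispheres, which is the great circle $\{x : \langle x, v\rangle = 0\}$. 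A simple closed curve contained in a great circle is the great circle itself, which is not simple as a parametrized curve in the usual sense (or, alternatively, a great circle has every point an inflection, so the conclusion holds trivially in that degenerate case). Hence we may assume $\gamma$ is not contained in a closed hemisphere, and Theorem~\ref{segre_theorem_spherical} applies and already yields $\geq 4$ inflections.

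The heart of the argument is to upgrade $4$ to $6$ using symmetry. Because the property of being a spherical inflection point is intrinsic to the curve as a subset of $\mathbb{S}^2$ (it is characterized by higher-order contact with a great circle, a notion invariant under the antipodal map, which is an isometry of $\mathbb{S}^2$), the set of inflection points of $\gamma$ is symmetric under $x \mapsto -x$. In particular the inflections come in antipodal pairs, so their number is even; thus Theorem~\ref{segre_theorem_spherical} already forces the count to be $4$ or $\geq 6$, and I must rule out exactly $4$. Suppose for contradiction there are exactly four inflection points; by symmetry they are $\{p, q, -p, -q\}$. The idea is that the inflections divide $\gamma$ into arcs on which the curve is ``locally convex'' (lies strictly to one side of its osculating great circle), and along such an arc one can track on which side of a fixed great circle the curve sits, getting a contradiction with the symmetry: four inflections give a ``convexity pattern'' around the curve that is incompatible with the antipodal identification. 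Concretely, I would pick a great circle $C$ through $p$ and $-p$ and analyze how $\gamma$ crosses $C$; the symmetry $\gamma(t) \mapsto -\gamma(t)$ together with exactly four inflections should produce a curve that must self-intersect, contradicting simplicity.

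An alternative and cleaner route, which I expect is what the author intends, is to pass to the quotient: the antipodal map is a free involution on $\mathbb{S}^2$ with quotient $\mathbb{RP}^2$, and a centrally symmetric simple closed curve on $\mathbb{S}^2$ descends to a curve in $\mathbb{RP}^2$. One then invokes (or reproves) the classical Möbius theorem that a simple closed curve in $\mathbb{RP}^2$ that is not null-homotopic has at least three inflections; pulling back to $\mathbb{S}^2$, each inflection in $\mathbb{RP}^2$ lifts to an antipodal pair, giving $\geq 6$. The non-null-homotopy of the projected curve follows from the fact that $\gamma$ is not contained in a hemisphere — a null-homotopic simple closed curve in $\mathbb{RP}^2$ bounds a disk and lifts to a curve contained in an open hemisphere. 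One must check that a simple curve upstairs projects to a simple curve downstairs: this uses precisely that $\gamma$ has no antipodal self-intersections, which is automatic here because $\gamma$ being simple and centrally symmetric with $\gamma(t) = -\gamma(s)$ would force $\gamma$ to meet its own antipode, and simplicity plus the symmetry of the image handles this.

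The main obstacle is the step ruling out exactly four inflections (equivalently, establishing the projective Möbius inequality with the right lower bound). The parity argument reduces the problem from ``$4 \to 6$'' to ``exclude $4$'', but excluding $4$ still requires a genuine global argument about how inflections organize the curve into convex arcs and how central symmetry constrains their cyclic arrangement; a naive local analysis is not enough, since four inflections are consistent locally. I would expect to need a careful case analysis of the ``side pattern'' of $\gamma$ relative to a well-chosen great circle, using the intermediate value theorem to force a self-intersection. If instead one goes through $\mathbb{RP}^2$, the obstacle shifts to either citing Möbius's theorem as a black box (acceptable, since the excerpt mentions it) or reproving it, and to verifying the topological facts about the projection — which are routine but must be stated correctly.
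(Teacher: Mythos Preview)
The paper does not prove this theorem. It appears in the background section and is simply asserted to be ``an easy consequence of theorem~\ref{segre_theorem_spherical}'' and ``equivalent to a theorem by M\"obius,'' with a citation but no argument; there is nothing in the text to compare your proposal against.

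Your outline is sound and correctly isolates the one genuine difficulty, excluding exactly four inflections. That step can in fact be dispatched by a short sign computation, which is presumably the ``easy'' argument the paper has in mind. Parametrize so that $\gamma(t+\pi)=-\gamma(t)$; then the geodesic curvature $\kappa_g(t)=\det(\gamma,\gamma',\gamma'')/|\gamma'|^3$ satisfies $\kappa_g(t+\pi)=-\kappa_g(t)$, since all three columns pick up a minus sign. If $\kappa_g$ has $2k$ sign changes, the half-turn $t\mapsto t+\pi$ permutes them by a cyclic shift of $k$, so in cyclic order they sit at $t_1,\dots,t_k,t_1+\pi,\dots,t_k+\pi$; the sign of $\kappa_g$ on the $(k{+}1)$-st arc is then $(-1)^k$ times its sign on the first arc by alternation, yet must be $-1$ times it by the oddness relation. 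Hence $k$ is odd and the inflection count is $\equiv 2\pmod 4$; combined with Segre's bound $\geq 4$ this gives $\geq 6$. Two minor corrections to your side remarks: a great circle traversed once \emph{is} a simple closed curve, so only your parenthetical (every point an inflection) rescues that degenerate case; and the non-null-homotopy of the projection to $\mathbb{RP}^2$ is seen most cleanly from the fact that its lift $\gamma$ is a single connected curve, whereas a null-homotopic loop in $\mathbb{RP}^2$ lifts to two disjoint copies --- the hemisphere criterion you propose is not quite the right test.
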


Now we discretize the previous definitions and theorems. All the discrete results in the remaining of this section were already proved in \cite{Gentil}.

\begin{definition}
Given a polygon $P = [v_1,v_2,...,v_n]$ in $\mathbb{R}^3$ (i.e., a closed polygonal line in $\mathbb{R}^3$, where we consider the indices $i$ modulo $n$), denote by $u_i$ the unit tangent vector with the same direction of the edge $e_i$, i.e.,
$$ u_i = \frac{e_i}{|e_i|} = \frac{\overrightarrow{v_i v_{i+1}}}{|\overrightarrow{v_i v_{i+1}}|} = \frac{v_{i+1}-v_i}{|v_{i+1}-v_i|}.$$
We define the \textit{(discrete) tangent indicatrix} of $P$ as the \textit{closed spherical polygonal line}, i.e., the \textit{spherical polygon}
$$ Q = [u_1,u_2,...,u_n],$$
whose edges are the spherical segments (with minimal length) joining $u_i$ and $u_{i+1}$. This definition goes back to the work of Banchoff (see \cite{Banchoff}).
\end{definition}

We can finally define the discrete counterpart of Definition \ref{segre_curve}:

\begin{definition}
A polygon $P$ is a \textit{Segre polygon} if its tangent indicatrix $Q$ does not have self-intersections.
\end{definition}

\begin{definition}
A \textit{flattening} of a polygon is a triple $\{v_i,v_{i+1},v_{i+2}\}$ such that $v_{i-1}$ and $v_{i+3}$ are on the same side of the plane generated by vertices $v_i$, $v_{i+1}$ and $v_{i+1}$ (see figure \ref{figure_flattening_non_flattening}).
\end{definition}

\begin{figure}
    \centering
    \includegraphics[scale=0.3]{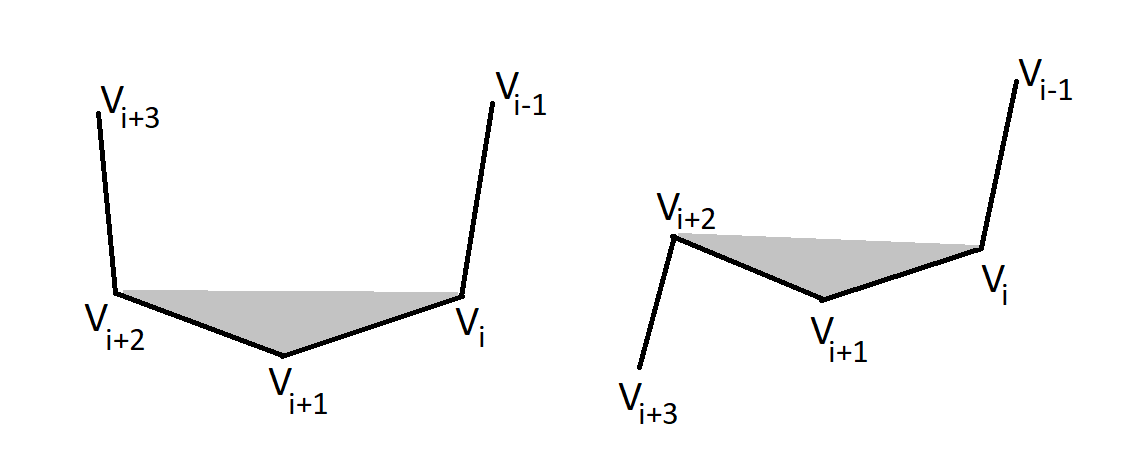}
    \caption{A flattening on the left, a non-flattening on the right}
    \label{figure_flattening_non_flattening}
\end{figure}

\begin{theorem}
\label{segre_theorem_discrete}
A Segre polygon with at least 4 vertices has at least 4 flattenings.
\end{theorem}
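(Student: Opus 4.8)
The plan is to reduce Theorem~\ref{segre_theorem_discrete} to the discrete spherical version of Segre's theorem established in \cite{Gentil} (a simple spherical polygon not contained in any closed hemisphere has at least four spherical inflections), exactly mirroring the way the smooth Theorem~\ref{segre_theorem_reformulated} is deduced from Theorem~\ref{segre_theorem_spherical}. Two things have to be checked: that the flattenings of $P$ are in bijection with the spherical inflections of its tangent indicatrix $Q=[u_1,\dots,u_n]$, and that $Q$ is never contained in a closed hemisphere.

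For the first point I would rewrite the flattening condition in terms of determinants of the vectors $u_i$. The plane through $v_i,v_{i+1},v_{i+2}$ is spanned by the edge vectors $e_i$ and $e_{i+1}$, so a normal to it is $u_i\times u_{i+1}$. Since $v_{i-1}-v_i=-|e_{i-1}|\,u_{i-1}$, the vertex $v_{i-1}$ lies on the side of this plane determined by the sign of $-\det[u_{i-1},u_i,u_{i+1}]$; and since $v_{i+2}-v_i$ already lies in the plane, $v_{i+3}$ lies on the side determined by the sign of $(v_{i+3}-v_{i+2})\cdot(u_i\times u_{i+1})=|e_{i+2}|\,\det[u_i,u_{i+1},u_{i+2}]$. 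Hence $\{v_i,v_{i+1},v_{i+2}\}$ is a flattening precisely when $\det[u_{i-1},u_i,u_{i+1}]$ and $\det[u_i,u_{i+1},u_{i+2}]$ have opposite signs. These two determinants encode the direction in which the spherical polygon $Q$ turns at $u_i$ and at $u_{i+1}$, so this is exactly the condition for $Q$ to have a spherical inflection at the edge $[u_i,u_{i+1}]$, the coplanar and other degenerate cases being absorbed by the sign conventions used in the definitions. Thus $P$ and $Q$ have the same number of flattenings, a number which is automatically even because it counts the sign changes of a cyclic sequence.

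For the second point, summing the edges of the closed polygon gives $\sum_{i=1}^{n}|e_i|\,u_i=\sum_{i=1}^{n}(v_{i+1}-v_i)=0$. If $Q$ were contained in a closed hemisphere $\{x:\langle x,\nu\rangle\ge 0\}$, then $0=\sum_i |e_i|\,\langle u_i,\nu\rangle$ would be a sum of nonnegative terms, forcing $\langle u_i,\nu\rangle=0$ for every $i$, i.e.\ $P$ planar; but in that case every plane through three consecutive vertices is the plane of $P$ and the conclusion is immediate from the convention, so we may assume $P$ is not planar. Then $Q$ is a simple spherical polygon (because $P$ is a Segre polygon) that is not contained in any closed hemisphere, the discrete spherical Segre theorem of \cite{Gentil} produces at least four spherical inflections of $Q$, and the bijection above yields at least four flattenings of $P$.

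The genuine difficulty is not in this reduction, which is routine, but inside the spherical-polygon theorem being invoked: its proof is an induction on the number of vertices whose crux is exhibiting a vertex of $Q$ whose deletion keeps the polygon simple and outside every closed hemisphere. If one insisted on a proof of Theorem~\ref{segre_theorem_discrete} that did not cite that result, one would essentially have to reproduce this induction directly on $P$, and the same step — deleting a vertex of $P$ without creating a self-intersection of the tangent indicatrix or collapsing it into a hemisphere — would be the main obstacle. One should also be mildly careful with the parity bookkeeping: ruling out zero flattenings is the easy hemisphere argument above, whereas ruling out exactly two is precisely where the substantive theorem is needed.
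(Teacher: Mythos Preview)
Your reduction is exactly the one the paper uses: by Remark~\ref{remark_flattening_inflection} the flattenings of $P$ are the spherical inflections of its tangent indicatrix $Q$, and Theorem~\ref{segre_theorem_discrete} is then deduced from Theorem~\ref{segre_theorem_discrete_spherical} (the discrete spherical Segre theorem of \cite{Gentil}). Your argument is in fact slightly more explicit than the paper's, since you spell out the identity $\sum_i |e_i|\,u_i=0$ to verify that $Q$ is not contained in a closed hemisphere, a point the paper leaves implicit in the passage from Theorem~\ref{segre_theorem_discrete} to Theorem~\ref{segre_theorem_discrete_spherical}.
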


\begin{definition}
Given a spherical polygon $Q \subset \mathbb{S}^2$, a \textit{(spherical) inflection} of $Q$ is a pair $\{u_i,u_{i+1}\}$ such that $u_{i-1}$ and $u_{i+2}$ are in different sides of the plane spanned by $\{u_i,u_{i+1}\}$. Equivalently, $u_{i-1}$ and $u_{i+2}$ are in different hemispheres determined by the spherical line spanned by $\{u_i,u_{i+1}\}$ (see figure \ref{figure_spherical_flattening} on the right).

In terms of determinants, this means that $\epsilon_{i-1} = [u_{i-1},u_i,u_{i+1}]$ and $\epsilon_i = [u_i,u_{i+1},u_{i+2}] = [u_{i+2},u_i,u_{i+1}]$ have opposite signs.
\end{definition}

\begin{remark}
\label{remark_flattening_inflection}
The previous definition implies that, if the triple $\{v_i,v_{i+1},v_{i+2}\}$ is a flattening, then the vectors $e_{i-1}$ and $e_{i+2}$ point to different sides of the plane generated by $\{v_i, e_i, e_{i+1}\}$. This in turn happens if and only if $u_{i-1}$ and $u_{i+2}$ are on different sides of $span\{u_i,u_{i+1}\}$, i.e., $\{u_i,u_{i+1}\}$ is an inflection (see figure \ref{figure_spherical_flattening}).
\end{remark}

\begin{figure}
    \centering
    \includegraphics[scale=0.3]{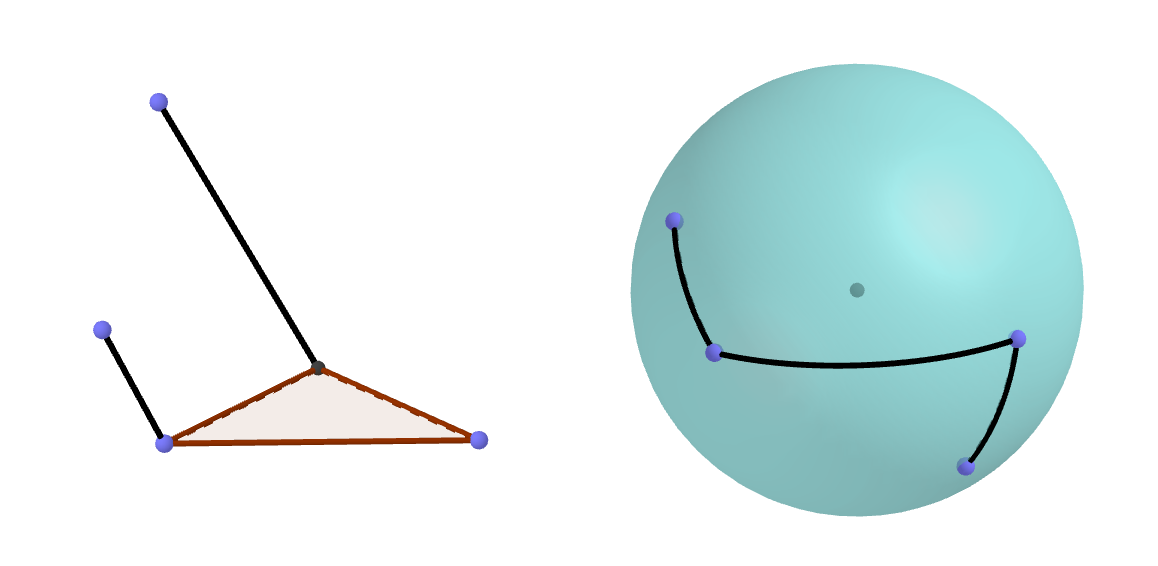}
    \caption{Flattening of a polygon $P$ and the corresponding inflection of the tangent indicatrix $Q$.}
    \label{figure_spherical_flattening}
\end{figure}

By remark \ref{remark_flattening_inflection}, we have that theorem \ref{segre_theorem_discrete} follows from the following theorem:

\begin{theorem}
\label{segre_theorem_discrete_spherical}
Let $Q=[u_1,...,u_n] \in \mathbb{S}^2$ ($n\geq 4$) be a spherical polygon not contained in any closed hemisphere and without self-intersections. Then $Q$ has at least four spherical inflections.
\end{theorem}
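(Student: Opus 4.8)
The plan is to prove Theorem~\ref{segre_theorem_discrete_spherical} by induction on the number $n$ of vertices of $Q$. The base case $n=4$ should be handled directly: a simple spherical quadrilateral not contained in a closed hemisphere can be analyzed by hand, checking that the sign sequence $(\epsilon_1,\epsilon_2,\epsilon_3,\epsilon_4)$ of the determinants $\epsilon_i = [u_i,u_{i+1},u_{i+2}]$ cannot be constant (a constant sign would force local convexity all the way around and hence containment in a hemisphere), and that once the signs are non-constant there must be at least four sign changes around the cyclic word — here one uses that the number of sign changes in a cyclic sequence is even, so we must rule out exactly two. The key geometric input for the base case, and for the whole argument, is the standard fact that if $\{u_i, u_{i+1}\}$ is not an inflection for any $i$ in a cyclic block, the corresponding sub-arc of $Q$ is ``locally convex'' and stays on one side of each of its edge-great-circles; piecing such blocks together, if there are no inflections at all, $Q$ bounds a convex spherical region and lies in a hemisphere, contradiction. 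This shows $I \geq 1$, hence (by parity of cyclic sign changes) $I \geq 2$; the real content is improving $2$ to $4$.

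For the inductive step, assume the result for all admissible polygons with fewer than $n$ vertices, and let $Q = [u_1,\dots,u_n]$ with $n \geq 5$ satisfy the hypotheses. The strategy is to find a vertex $u_k$ whose removal yields the polygon $Q' = [u_1,\dots,u_{k-1},u_{k+1},\dots,u_n]$ (replacing the two edges at $u_k$ by the single spherical segment $u_{k-1}u_{k+1}$) such that: (i) $Q'$ is still simple, (ii) $Q'$ is still not contained in any closed hemisphere, and (iii) the deletion destroys at most as many inflections as it is ``allowed'' to — more precisely, one shows $I(Q) \geq I(Q')$ is false in general, so instead one tracks the change carefully: removing $u_k$ only affects the inflection status of the pairs involving indices $k-2, k-1, k, k+1$, and one chooses $k$ so that no inflection is lost, or accounts for any loss. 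By induction $I(Q') \geq 4$, and then one argues $I(Q) \geq I(Q') \geq 4$. The existence of such a ``good'' vertex is exactly the hard combinatorial-geometric lemma: one must rule out the possibility that every vertex is ``essential'' either because removing it creates a self-intersection or because removing it drops the polygon into a hemisphere. This is where the triangulation technique and the elementary algebraic relations (the sign identities among the determinants $[u_i,u_j,u_k]$ encoding non-containment in a hemisphere) from \cite{Gentil} enter: they let us show that if deleting $u_k$ causes containment in a hemisphere then a certain determinant inequality must fail, and these constraints cannot hold simultaneously for all $k$.

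The main obstacle, as in \cite{Gentil}, is precisely proving the existence of a vertex whose deletion preserves both simplicity and non-containment in a closed hemisphere while not decreasing the inflection count below the threshold. Containment in a hemisphere is the delicate condition: it is not a local property, so one cannot read it off from a bounded neighborhood of $u_k$, and a naive choice of $u_k$ (say, a vertex at which $Q$ is ``locally convex'', i.e.\ $\{u_{k-1},u_k\}$ and $\{u_k,u_{k+1}\}$ are both non-inflections) may well push $Q'$ into a hemisphere. The resolution should be to use the global structure: consider the (finitely many) inflection pairs, which partition the cyclic edge-sequence into maximal locally convex arcs; if $I \leq 2$ one gets at most two such arcs, and then a convexity/support-function argument — showing that two locally convex spherical arcs joined at two inflections must together lie in a hemisphere — yields the contradiction. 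Thus the cleanest route may actually be to bypass the full vertex-deletion induction for the $I\geq 4$ bound and instead argue directly: no inflections $\Rightarrow$ hemisphere; exactly two inflections $\Rightarrow$ the two locally convex arcs can be separately enclosed and then shown jointly enclosable in a hemisphere, again a contradiction; hence $I \geq 3$, and by the parity of the number of sign changes in the cyclic sequence $(\epsilon_i)$, $I \geq 4$. I expect the write-up to hinge on making the phrase ``two locally convex arcs meeting at two inflection pairs lie in a common hemisphere'' precise and correct, handling the simple-closedness hypothesis to prevent the arcs from winding, and this is the step I would budget the most care for.
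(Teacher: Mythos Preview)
Your plan starts along the same lines as the paper---induction on $n$, base case $n=4$ via the determinant sign relations, and an inductive step by deleting a vertex---but then veers off at exactly the point where the paper's argument is clean, and you end up with two real gaps.

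First, you assert that ``$I(Q)\ge I(Q')$ is false in general'' and therefore try to track inflection losses by hand or switch strategies. In fact the paper's Lemma~\ref{lemma_inflections_not_increase} says precisely that deleting a \emph{good} vertex (one whose removal keeps the polygon simple) never increases the number of inflections, so $I(Q)\ge I(Q-u_k)$ holds outright once $u_k$ is good. You do not need to choose $u_k$ with any extra care regarding inflections. Second, and more importantly, you never prove the existence of a vertex that is simultaneously good and nonessential; you only say ``these constraints cannot hold simultaneously for all $k$'' and gesture at \cite{Gentil}. The paper's mechanism here is a short pigeonhole: Lemma~\ref{lemma_pick_up}(b) gives at least $n-3$ nonessential vertices, Lemma~\ref{lemma_resulting_polygon} gives at least $4$ good vertices, and since $(n-3)+4>n$ the two sets overlap. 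That single sentence is the whole inductive step, and your proposal does not supply it.

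Your alternative ``direct'' route---show that two locally convex spherical arcs joined at two inflection edges must lie in a common hemisphere---is a genuinely different idea and, if it worked, would avoid induction entirely. But you yourself flag the key implication (two convex arcs $\Rightarrow$ common hemisphere) as the step needing the most care, and you give no argument for it; for spherical polygons this is not obvious, since a single locally convex arc can already fail to lie in a hemisphere once it is long enough, and simplicity of the closed polygon has to be used in a nontrivial way. As written this is a hope, not a proof.
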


Theorem \ref{segre_theorem_discrete_spherical} states that, if a spherical polygon $Q=[u_1,...,u_n]$ is not contained in any closed hemisphere and does not have self-intersections, then the cyclic sequence $(\epsilon_1,\epsilon_2,...,\epsilon_n$) has at least 4 sign changes (where each $\epsilon_i$ is defined as $[u_i,u_{i+1},u_{i+2}]$).

We introduce the following terminology:

\begin{definition}
A set of points $Q = \{u_1,...,u_n\} \subset \mathbb{S}^2 $ ($n \geq 4$), not in the same spherical line, is said to be \textit{balanced} or in \textit{balanced position} if its points are not in the same closed hemisphere.

A point $u_i$ of a balanced set is said to be \textit{essential} if the set $\{u_1,...,\hat{u}_i,...,u_n\}$ is not balanced. Otherwise $u_i$ is \textit{nonessential}. For a spherical polygon $Q = [u_1,..,u_n]$, the same definition applies to $Q$ considered as a set of vertices.
\end{definition}

The terminology used above is justified because of the following result. In order to state it, we denote by $[u,v,w]$ the determinant of vectors $u,v,w \in \mathbb{R}^3$ and by the equivalence sign $\simeq$ the fact that two determinants have the same sign.

\begin{proposition}
\label{proposition_characterization}
Let $u_1,u_2,u_3,u_4$ be four points of $\mathbb{S}^2$ such that any triple of them is linearly independent. Then $u_1$, $u_2$, $u_3$ and $u_4$ are not on the same closed hemisphere if and only if the following relation holds:
$$sign[u_1,u_2,u_3] = sign[u_1,u_3,u_4] = - sign[u_1,u_2,u_4] = - sign[u_2,u_3,u_4].$$
\end{proposition}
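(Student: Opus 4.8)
The plan is to translate the hemisphere hypothesis into a statement about where the origin sits relative to the convex hull of $\{u_1,u_2,u_3,u_4\}$ in $\mathbb{R}^3$, and then to read the sign relation off an explicit generator of the kernel of the $3\times 4$ matrix whose columns are the $u_i$.

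First I would record the reformulation: a finite set $S\subset\mathbb{S}^2$ lies in some closed hemisphere if and only if there is a nonzero $n\in\mathbb{R}^3$ with $\langle n,s\rangle\ge 0$ for every $s\in S$, and by the supporting/separating hyperplane theorem this is in turn equivalent to $\mathbf{0}\notin\operatorname{int}\operatorname{conv}(S)$ (if $\mathbf 0\in\partial\operatorname{conv}(S)$ take $n$ normal to a supporting plane at $\mathbf 0$; if $\mathbf 0\notin\operatorname{conv}(S)$ separate strictly). Applied to $S=\{u_1,u_2,u_3,u_4\}$, the hypothesis ``$u_1,u_2,u_3,u_4$ are not on a common closed hemisphere'' becomes ``$\mathbf 0\in\operatorname{int}\operatorname{conv}\{u_1,u_2,u_3,u_4\}$'', i.e. there exist $\lambda_1,\dots,\lambda_4>0$ with $\sum_i\lambda_i u_i=\mathbf 0$.

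Next, because every triple of the $u_i$ is linearly independent, the matrix $M=[\,u_1\;u_2\;u_3\;u_4\,]$ has rank $3$, so $\ker M$ is one-dimensional; it is spanned by the vector $c=(c_1,c_2,c_3,c_4)$ of signed $3\times 3$ minors, $c_i=(-1)^{i+1}\det(M_{\hat\imath})$ with $M_{\hat\imath}$ obtained from $M$ by deleting its $i$-th column, that is
$$c_1=[u_2,u_3,u_4],\qquad c_2=-[u_1,u_3,u_4],\qquad c_3=[u_1,u_2,u_4],\qquad c_4=-[u_1,u_2,u_3],$$
all of which are nonzero by hypothesis. A strictly positive vector lies in $\ker M$ exactly when $c$ or $-c$ has all coordinates positive, i.e. when $c_1,c_2,c_3,c_4$ all share the same sign. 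In that case $\sum_i c_i\neq 0$; but if the $u_i$ were coplanar — necessarily on a plane missing the origin, by independence of triples — then pairing $\sum_i c_i u_i=\mathbf 0$ with a normal to that plane would give $\sum_i c_i=0$, so sharing a sign forces the $u_i$ into genuine tetrahedral position, and then $\mathbf 0$ is interior to that tetrahedron. Chaining this with the previous step, ``not on a common closed hemisphere'' is equivalent to $c_1,c_2,c_3,c_4$ all having the same sign, which unwinds to
$$sign[u_1,u_2,u_3]=sign[u_1,u_3,u_4]=-sign[u_1,u_2,u_4]=-sign[u_2,u_3,u_4],$$
as claimed.

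I expect the only genuinely delicate point to be this convexity step: carefully justifying the equivalence between ``lies in a closed hemisphere'' and ``$\mathbf 0\notin\operatorname{int}\operatorname{conv}$'' (especially the boundary case) and checking that the coplanar configuration is dealt with consistently — both come out cleanly here precisely because every triple of the $u_i$ is assumed independent, so, for instance, the four points can never be coplanar through the origin. The rest is just the standard cofactor description of the null space of a wide matrix together with bookkeeping of signs. Alternatively one could sidestep convex geometry and run a direct case analysis over the sign patterns of the four $3\times 3$ determinants, exhibiting for every pattern other than the claimed one (and its negation) an explicit closed hemisphere containing all four points; but the convex-hull argument is shorter and more conceptual.
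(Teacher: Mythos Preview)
The paper itself does not supply a proof of this proposition; it is quoted from the author's earlier work as background material, so there is no in-paper argument to compare against directly.

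Your proof is correct. The reduction of ``not contained in a closed hemisphere'' to ``$\mathbf{0}\in\operatorname{int}\operatorname{conv}\{u_1,\dots,u_4\}$'' via the supporting-hyperplane theorem is standard and sound; the cofactor description of the one-dimensional kernel of the $3\times 4$ matrix $M=[\,u_1\;u_2\;u_3\;u_4\,]$ is exactly right; and your handling of the potential coplanar degeneration --- observing that if the $u_i$ lay on a common affine plane it could not pass through the origin (else three of them would be linearly dependent), whence pairing $\sum_i c_i u_i=\mathbf{0}$ with a normal to that plane forces $\sum_i c_i=0$, impossible when all $c_i$ share a sign --- closes the only subtle gap. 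The final unwinding of ``all $c_i$ have the same sign'' into the displayed chain of equalities is just bookkeeping.

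The alternative route you mention at the end (for each sign pattern other than the claimed one, exhibit an explicit closed hemisphere containing all four points) is closer in spirit to how such facts are often argued hands-on in the discrete-geometry literature, and is presumably what the cited reference does; your convex-hull/kernel argument is more conceptual, scales immediately to higher dimensions, and avoids any case analysis.
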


Unless explicitly stated, we assume that any triple of unit vectors are not on the same plane (and therefore are not on the same spherical line). Hence ``$\simeq -$" can always be read as ``$\not \simeq$". Moreover, to simplify notation, we can drop the letter ``$u$" and write only the subscripts. Therefore, the conclusion of proposition \ref{proposition_characterization} can be written as
$$ [1,2,3] \simeq [1,3,4] \not \simeq [1,2,4] \simeq [2,3,4].$$

\begin{lemma}
\label{lemma_all_S2}
If a set $X = \{u_1,...,u_n\} \subset \mathbb{S}^2$ ($n \geq 4$) is in balanced position, then any point of the sphere is a positive linear combination of points of $X$.
\end{lemma}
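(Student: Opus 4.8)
The plan is to show that the set of points expressible as a positive (i.e., nonnegative, with not all coefficients zero) linear combination of the $u_i$ is all of $\mathbb{R}^3$, which clearly implies the statement for points of $\mathbb{S}^2$. Let $C = \{\sum_i \lambda_i u_i : \lambda_i \geq 0\}$ be the convex cone generated by $X$. This cone is convex, and I would argue it is closed (a finitely generated cone is always closed). The claim is then equivalent to $C = \mathbb{R}^3$.

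First I would argue by contradiction: suppose $C \neq \mathbb{R}^3$. Since $C$ is a closed convex cone that is not all of $\mathbb{R}^3$, there is a supporting hyperplane through the origin, i.e., there exists a nonzero vector $w \in \mathbb{R}^3$ with $\langle w, x \rangle \leq 0$ for all $x \in C$. In particular $\langle w, u_i \rangle \leq 0$ for every $i$. But this says exactly that every $u_i$ lies in the closed hemisphere $\{p \in \mathbb{S}^2 : \langle w, p \rangle \leq 0\}$, contradicting the assumption that $X$ is in balanced position. Hence $C = \mathbb{R}^3$, and every point of $\mathbb{S}^2 \subset \mathbb{R}^3$ is a positive linear combination of points of $X$.

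The two technical points I would want to nail down are: (i) that a finitely generated convex cone is closed — this can be cited or proved via Carathéodory's theorem for cones (every element of $C$ is a nonnegative combination of at most $3$ of the $u_i$, and one takes a limit over the finitely many choices of $3$-element subcones, each of which is the continuous image of a closed set); and (ii) the separation statement for closed convex cones, which is a standard consequence of the hyperplane separation theorem applied to the closed convex set $C$ and a point $p \notin C$, after noting the separating functional can be taken to vanish at the origin since $C$ is a cone. Alternatively, one can bypass (i) entirely: if $p \notin C$ but $p \in \overline{C}$, one still gets a hyperplane separating $p$ from the interior of $C$; pushing the argument through requires knowing $\mathrm{int}(C) \neq \emptyset$, which follows because three of the $u_i$ are linearly independent (as $X$ is not contained in a spherical line, hence not in a plane through the origin).

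The main obstacle is really just choosing how much convex-geometry machinery to invoke versus prove from scratch; there is no combinatorial or geometric subtlety beyond the dictionary ``closed hemisphere containing all $u_i$'' $\Longleftrightarrow$ ``linear functional nonpositive on all $u_i$'' $\Longleftrightarrow$ ``the cone $C$ is not all of $\mathbb{R}^3$.'' I expect the cleanest writeup to state the separation/Carathéodory facts with a brief justification and spend most of the words on this dictionary.
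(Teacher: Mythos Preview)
Your argument is correct. The dictionary you identify---``$X$ is not contained in any closed hemisphere'' $\Leftrightarrow$ ``no nonzero linear functional is nonpositive on all $u_i$'' $\Leftrightarrow$ ``the convex cone generated by $X$ is all of $\mathbb{R}^3$''---is exactly the content of the lemma, and the separating-hyperplane argument you give is the standard way to establish it. The technical points you flag (closedness of a finitely generated cone, existence of a supporting hyperplane for a proper closed convex cone) are genuine but routine, and either of the two routes you sketch works.

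Note, however, that the paper itself does \emph{not} prove this lemma: it is listed in Section~2 among results imported from the author's earlier paper \cite{Gentil}, with the blanket remark ``All the discrete results in the remaining of this section were already proved in \cite{Gentil}.'' So there is no in-paper proof to compare against. Your writeup would serve perfectly well as a self-contained proof; the only suggestion is that since the ambient convention in the paper is quite elementary (determinant signs, no convex-analysis machinery), you might prefer the bare-hands version: observe that balanced position means the origin lies in the interior of the convex hull of $X$ in $\mathbb{R}^3$, hence every direction meets that convex hull, hence every unit vector is a nonnegative combination of the $u_i$. This avoids citing Carath\'eodory or separation theorems explicitly, at the cost of a sentence justifying ``origin in the interior.''
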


\begin{lemma}
\label{lemma_pick_up}
Let $Q = \{u_1, u_2,..., u_n \}$ be a finite set of points on the sphere $\mathbb{S}^2$, with $n \geq 5$, not all of them on the same hemisphere. Then:
\begin{itemize}
    \item [(a)] there are four points $u_{i_1}$, $u_{i_2}$, $u_{i_3}$ and $u_{i_4}$ of $Q$ in balanced position;
    \item [(b)] the set of nonessential vertices of $Q$ has at least $n-3$ elements;
\end{itemize}
\end{lemma}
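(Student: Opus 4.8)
The plan is to prove (a) first and then derive (b) from it together with a counting argument. For part (a), I would argue by contradiction: suppose no four points of $Q$ are in balanced position. Using Lemma \ref{lemma_all_S2} (or rather its failure), the hypothesis that not all of $Q$ lies in a closed hemisphere should force the existence of \emph{some} minimal subset that is balanced; the point is to show a minimal balanced subset has at most four elements. Concretely, since $Q$ is not contained in a closed hemisphere, the origin lies in the interior of the convex hull of $Q$, so by Carath\'eodory's theorem for positive (conical) combinations — the origin is a positive combination of at most four of the $u_i$ — there is a subset of size $\leq 4$ whose conical hull is all of $\mathbb{R}^3$, i.e.\ which is not contained in any closed hemisphere. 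If this subset has size $3$ its points would be coplanar with the origin, contradicting our standing general-position assumption that any triple is linearly independent; hence it has exactly four elements, which is the desired balanced quadruple. (One must also check these four points are in ``general position'' in the sense required by Proposition \ref{proposition_characterization}, i.e.\ each triple among them is independent — this follows from the global assumption.)

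For part (b), I would show that a vertex $u_i$ is \emph{essential} only if every balanced quadruple — indeed every balanced subset — contains $u_i$; equivalently, the essential vertices are exactly those lying in the intersection of all balanced subsets of $Q$. The key claim is that this intersection has at most three elements, so at least $n-3$ vertices are nonessential. To see the claim: suppose $u_{i_1},\dots,u_{i_4}$ is a balanced quadruple (which exists when $n\geq 4$ by part (a), and certainly when $n \geq 5$). Removing any one vertex $u_j$ with $j \notin \{i_1,i_2,i_3,i_4\}$ still leaves this quadruple intact, so the remaining set is still balanced; hence every such $u_j$ is nonessential. This already shows at least $n-4$ vertices are nonessential. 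To upgrade $n-4$ to $n-3$, I would use the freedom in choosing the balanced quadruple: if $u_{i_1}$ were essential, then in the set $Q \setminus \{u_{i_1}\}$ (which has $n-1 \geq 4$ points and, by essentiality of $u_{i_1}$, is \emph{not} balanced) — hmm, that is the wrong direction. Instead: among the four vertices of a fixed balanced quadruple, I claim at most three can be essential; indeed if all four of $u_{i_1},\dots,u_{i_4}$ were essential, then... one shows, using Proposition \ref{proposition_characterization} and Lemma \ref{lemma_all_S2}, that one of them can be exchanged for a fifth point of $Q$, producing a different balanced quadruple missing that vertex, so that vertex is in fact nonessential — a contradiction. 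Combining, all vertices outside a suitably chosen balanced quadruple plus at least one vertex of it are nonessential, giving $n-3$.

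The main obstacle is the upgrade from $n-4$ to $n-3$ in part (b): the naive argument only excludes the $n-4$ vertices outside one fixed quadruple, and one must genuinely use that, because $n \geq 5$, there is a ``spare'' fifth point available to re-route one essential-looking vertex of the quadruple. The cleanest route is probably to show directly that the set $E$ of essential vertices is itself \emph{not} in balanced position and any three points are independent, whence $|E| \leq 3$: if $E$ were balanced it would contain a balanced quadruple by part (a), but each vertex of \emph{that} quadruple is nonessential by the argument of the previous paragraph (the other three still span everything... ), a contradiction; so $|E|\le 3$ and the nonessential vertices number at least $n-3$. Making ``the other three still witness balancedness after exchange'' precise — i.e.\ that from a balanced $4$-set inside $E$ one can always free up one element using a point of $Q \setminus E$ — is where Proposition \ref{proposition_characterization} and the sign relations will do the real work, and I expect that to be the technical heart of the proof.
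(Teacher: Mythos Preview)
The paper does not actually prove this lemma here; it is quoted from \cite{Gentil} along with the other results of Section~2, so there is no in-paper argument to compare against. I will assess your plan on its own merits.

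Part (a) via Carath\'eodory is correct and clean. Since $Q$ is not contained in any closed hemisphere, the origin lies in the interior of $\mathrm{conv}(Q)$, so $0$ is a convex combination of at most four of the $u_i$. If only three carried positive weight they would be linearly dependent, contradicting the standing general-position assumption; hence exactly four carry positive weight, and those four are balanced.

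For part (b), your first approach---the exchange argument---is the right one and can be completed without difficulty. Given a balanced quadruple $\{u_{i_1},u_{i_2},u_{i_3},u_{i_4}\}$ and any fifth vertex $u$ (available since $n\ge 5$), Lemma~\ref{lemma_all_S2} writes $-u$ as a positive combination of the four; Carath\'eodory then lets you drop one of them, say $u_{i_1}$, yielding $0=\alpha u+\beta u_{i_2}+\gamma u_{i_3}+\delta u_{i_4}$ with all coefficients positive. Thus $\{u,u_{i_2},u_{i_3},u_{i_4}\}$ is balanced and $u_{i_1}$ is nonessential. Together with the $n-4$ vertices outside the original quadruple, this gives at least $n-3$ nonessential vertices. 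No sign-chasing with Proposition~\ref{proposition_characterization} is needed.

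Your proposed ``cleanest route'', however, has a genuine gap. You assert that if the set $E$ of essential vertices is not in balanced position (and every triple is independent) then $|E|\le 3$. That implication is false: arbitrarily many points in general position inside an open hemisphere fail to be balanced. ``Not balanced'' simply does not bound cardinality. So that route does not close as written; stay with the exchange argument above, which already finishes the job.
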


We can also express algebraically the fact that two edges of a spherical polygon intersect (see figure \ref{fig:intersections_labeled} on the left):

\begin{proposition}
\label{self_intersection_determinants}
A spherical polygon $Q \subset \mathbb{S}^2$ has a self-intersection at edges $\overrightarrow{u_i u_{i+1}}$ and $\overrightarrow{u_j u_{j+1}}$ (where $j \neq i+1$ and $i \neq j+1$) if and only if the relation
$$ [i,i+1,j] \simeq [i+1,j,j+1] \not \simeq [i,i+1,j+1] \simeq [i,j,j+1]$$
holds.
\end{proposition}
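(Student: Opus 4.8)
The plan is to translate the geometric statement ``the spherical segments $\overrightarrow{u_i u_{i+1}}$ and $\overrightarrow{u_j u_{j+1}}$ cross'' into a system of sign conditions on $2\times 2$ minors, and then recognize that system as exactly the relation in Proposition~\ref{proposition_characterization} applied to the quadruple $\{u_i,u_{i+1},u_j,u_{j+1}\}$. First I would fix notation: let $L_1$ be the great circle through $u_i$ and $u_{i+1}$ (spanned by the plane $\Pi_1 = \mathrm{span}\{u_i,u_{i+1}\}$) and $L_2$ the great circle through $u_j$ and $u_{j+1}$. A spherical segment of length less than $\pi$ lies on a great circle and is determined by its two endpoints; two such segments on distinct great circles meet in at most one point, since $L_1 \cap L_2$ consists of a single antipodal pair. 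So the two edges intersect if and only if (i) $u_j$ and $u_{j+1}$ lie on opposite open sides of $\Pi_1$, AND (ii) $u_i$ and $u_{i+1}$ lie on opposite open sides of $\Pi_2$, AND (iii) the unique point where, say, $L_2$ crosses $\Pi_1$ that lies between $u_j$ and $u_{j+1}$ is the \emph{same} crossing point of $L_1$ that lies between $u_i$ and $u_{i+1}$, rather than its antipode. The first task is to show that, under the standing genericity assumption (every triple of the four points independent), conditions (i)--(iii) are jointly equivalent to the four-point balanced-position relation.

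The key computation is the following. Condition (i) says $[u_i,u_{i+1},u_j]$ and $[u_i,u_{i+1},u_{j+1}]$ have opposite signs, i.e.\ $[i,i+1,j] \not\simeq [i,i+1,j+1]$; condition (ii) says $[u_j,u_{j+1},u_i]$ and $[u_j,u_{j+1},u_{i+1}]$ have opposite signs, i.e.\ $[i,j,j+1] \not\simeq [i+1,j,j+1]$, equivalently $[j,j+1,i]\not\simeq[j,j+1,i+1]$. These two conditions alone still allow the ``antipodal'' crossing in which the segments miss each other; the content of (iii) is precisely to pin down which of the two possible sign patterns consistent with (i) and (ii) actually corresponds to a true intersection. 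I would make this precise by parametrizing a point of $L_2$ between $u_j$ and $u_{j+1}$ as a positive combination $p = \alpha u_j + \beta u_{j+1}$ with $\alpha,\beta > 0$ (every point of the open minor arc has such a representative, up to positive scaling), and likewise a point of $L_1$ between $u_i$ and $u_{i+1}$ as $q = \gamma u_i + \delta u_{i+1}$ with $\gamma,\delta>0$; the edges intersect iff $p$ and $q$ can be taken to represent the same direction, i.e.\ $\gamma u_i + \delta u_{i+1} = \lambda(\alpha u_j + \beta u_{j+1})$ for some $\lambda > 0$. Expanding the four determinants $[i,i+1,j]$, $[i+1,j,j+1]$, $[i,i+1,j+1]$, $[i,j,j+1]$ against this linear dependence (Cramer-style: take the determinant of the relation with pairs of the four vectors) yields each of them as $\pm$ a product of some of $\alpha,\beta,\gamma,\delta,\lambda$ times a common nonzero determinant, and reading off signs gives exactly
$$[i,i+1,j] \simeq [i+1,j,j+1] \not\simeq [i,i+1,j+1] \simeq [i,j,j+1].$$
Conversely, if that sign relation holds, Proposition~\ref{proposition_characterization} tells us the four points $u_i,u_{i+1},u_j,u_{j+1}$ are in balanced position, hence by Lemma~\ref{lemma_all_S2} (applied in the plane, or directly by the planar analog) the arcs must overlap as claimed; alternatively one runs the determinant computation backwards to solve for positive $\alpha,\beta,\gamma,\delta,\lambda$.

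I expect the main obstacle to be the bookkeeping in the ``converse/which-antipode'' direction: showing that conditions (i) and (ii) together with the \emph{correct} sign alignment in (iii) force a genuine crossing rather than the antipodal near-miss, and making sure the minor-arc (length $<\pi$) hypothesis is used correctly so that ``opposite sides of the plane'' really does translate into ``the arc crosses the great circle.'' The cleanest way around this is probably to avoid case analysis entirely and argue symmetrically: observe that ``edges $\overrightarrow{u_iu_{i+1}}$ and $\overrightarrow{u_ju_{j+1}}$ cross'' is the statement that the four points are in convex (balanced) position \emph{and} that the two diagonals of the resulting spherical quadrilateral are precisely the pairs $\{u_i,u_j\}$--wait, rather that the cyclic order around the boundary of their convex hull is $u_i, u_{i+1}$ adjacent and $u_j, u_{j+1}$ adjacent is the \emph{non}-crossing case, so crossing corresponds to the cyclic order $u_i, u_j, u_{i+1}, u_{j+1}$. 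That combinatorial reformulation, combined with Proposition~\ref{proposition_characterization}, gives the determinant relation directly: the cyclic order $u_i,u_j,u_{i+1},u_{j+1}$ on the boundary of a balanced quadruple is exactly the content of $[i,i+1,j]\simeq[i+1,j,j+1]\not\simeq[i,i+1,j+1]\simeq[i,j,j+1]$, by relabeling the indices in Proposition~\ref{proposition_characterization}. The remaining details — that the closed minor arcs, not the full great circles, are what intersect, handled by the genericity assumption ruling out endpoint coincidences — are routine.
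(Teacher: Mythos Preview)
Your parametrization-and-Cramer computation for the forward direction is correct and, carried out, does produce exactly the stated sign pattern. The error is in your converse and in your ``cleaner'' reformulation: you invoke Proposition~\ref{proposition_characterization} to conclude that the self-intersection relation means $\{u_i,u_{i+1},u_j,u_{j+1}\}$ is in \emph{balanced} position, and you write that crossing edges correspond to ``convex (balanced) position.'' This is backwards. In this paper \emph{balanced} means the four points are not contained in any closed hemisphere, and the paper proves (Proposition~\ref{antipodal_intersection_determinants}) that balanced position of $\{u_i,u_{i+1},u_j,u_{j+1}\}$ is precisely the condition for an \emph{antipodal} intersection, not a self-intersection. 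Concretely: four points in the open upper hemisphere forming a convex spherical quadrilateral have crossing diagonals---the self-intersection picture---yet are obviously not balanced. The two determinant patterns are genuinely different: self-intersection gives $[i,i+1,j]\simeq[i{+}1,j,j{+}1]$, while balanced (antipodal) gives $[i,i+1,j]\simeq[i,j,j{+}1]$. So Proposition~\ref{proposition_characterization} and Lemma~\ref{lemma_all_S2} simply do not apply here, and a balanced quadruple has no ``boundary of the convex hull'' on which to read a cyclic order (its spherical convex hull is the whole sphere).

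The fix is to drop those appeals and run your own computation in reverse. The four sign conditions say that in the unique (up to scale) linear dependence $a\,u_i+b\,u_{i+1}+c\,u_j+d\,u_{j+1}=0$ the coefficients have sign pattern $(+,+,-,-)$; equivalently some positive combination of $u_i,u_{i+1}$ equals a positive combination of $u_j,u_{j+1}$, which is exactly the statement that the two short arcs share a point. That gives the converse without any reference to balanced position. (The paper itself does not prove Proposition~\ref{self_intersection_determinants} here; it is quoted from the author's earlier work. The discussion preceding Proposition~\ref{antipodal_intersection_determinants}, which separates the self-intersection and antipodal cases by exactly the extra sign condition you call (iii), is the closest thing to an in-paper argument.)
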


\begin{figure}
    \centering
    \includegraphics[scale=0.3]{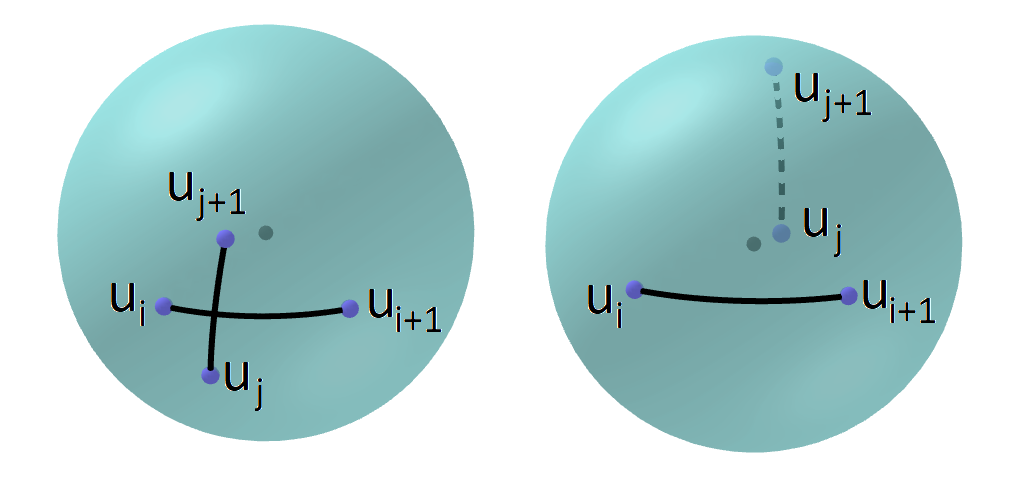}
    \caption{A self-intersection (left) and a antipodal intersection (right)}
    \label{fig:intersections_labeled}
\end{figure}

Given a spherical polygon $Q$ and any of its vertices $u_i$, denote by $Q-u_i$ the polygon $[u_1,...,\hat{u_i},...,u_n]$, obtained from $Q$ by deleting the vertex $u_i$ along with the edges $\overrightarrow{u_{i-1} u_i}$ and $\overrightarrow{u_i u_{i+1}}$, and adding the edge $\overrightarrow{u_{i-1} u_{i+1}}$ to connect vertices $u_{i-1}$ and $u_{i+1}$.

\begin{definition}
A spherical polygon $Q = [u_1,...,u_n]$ is \textit{simple} if it does not have self-intersections. A vertex $u_i$ is said to be \textit{good} if the spherical polygon $Q - u_i$ is simple. Otherwise $u_i$ is said to be \textit{bad}.
\end{definition}

\begin{lemma}
\label{lemma_resulting_polygon}
Let $Q = [u_1,u_2,...,u_n]$ be a balanced, simple spherical polygon, ($n\geq 4$). Then $Q$ has at least 4 good vertices.
\end{lemma}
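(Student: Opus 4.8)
The plan is to combine the combinatorial information about self-intersections (Proposition \ref{self_intersection_determinants}) with the counting of nonessential vertices provided by Lemma \ref{lemma_pick_up}(b). First I would clarify what goes wrong when a vertex $u_i$ is deleted: since $Q$ is simple, a self-intersection in $Q-u_i$ can only involve the new edge $\overrightarrow{u_{i-1}u_{i+1}}$ crossing some old edge $\overrightarrow{u_j u_{j+1}}$ (with $j \notin \{i-1, i, i+1\}$). So $u_i$ is \emph{bad} precisely when the new edge $\overrightarrow{u_{i-1}u_{i+1}}$ meets the rest of the polygon. Geometrically, the old edges $\overrightarrow{u_{i-1}u_i}$ and $\overrightarrow{u_i u_{i+1}}$ together with the new edge $\overrightarrow{u_{i-1}u_{i+1}}$ bound a spherical triangle $T_i$ (assuming the three points are not on a common spherical line, which holds by our standing genericity assumption), and $u_i$ is bad exactly when the polygonal line $[u_{i+1}, u_{i+2}, \ldots, u_{i-1}]$ (the part of $Q$ not incident to $u_i$) enters the interior of $T_i$ — equivalently, when some vertex $u_k$ lies inside $T_i$, since $Q$ is simple and its edges cannot cross $\partial T_i$ through the two old edges.

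Next I would relate badness to essentiality. The key observation is the following dichotomy for a vertex $u_i$ of a balanced simple polygon: either $u_i$ is nonessential, or the triangle $T_i$ contains an entire open hemisphere's worth of obstruction — more precisely, I claim that if $u_i$ is both essential \emph{and} bad, then deleting it must still leave the polygon balanced but self-intersecting, and I want to rule out too many such vertices. The cleaner route, which I expect to be the actual argument, is: a vertex $u_i$ that is both good and nonessential gives a strictly ``simpler'' polygon $Q - u_i$ which is again balanced and simple. By Lemma \ref{lemma_pick_up}(b) at least $n-3$ vertices are nonessential, so at most $3$ vertices are essential. It then suffices to show that at most ... no — I would instead show directly that among the $\geq n-3$ nonessential vertices, at least $4$ are good, by bounding the number of \emph{bad} nonessential vertices. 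For a bad nonessential vertex $u_i$, the triangle $T_i$ contains some vertex $u_k$; I would argue that the ``chords'' $\overrightarrow{u_{i-1}u_{i+1}}$ arising from distinct bad vertices, together with the nesting structure of the triangles $T_i$ they cut off, force these bad vertices to be few — concretely, I expect to show that the set of bad vertices, when nonempty, can be linearly ordered by inclusion of the regions they cut off, and the two ``extreme'' ones in this order are forced to be good or essential, yielding at most finitely many genuinely obstructing vertices and leaving $\geq 4$ good ones.

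The main obstacle will be making the triangle/nesting argument precise on the sphere: unlike in the plane, a spherical triangle and its complement are both disks, and ``which side is the inside'' must be pinned down using the balanced hypothesis (via Lemma \ref{lemma_all_S2}, every point of $\mathbb{S}^2$ is a positive combination of the $u_j$, so the polygon genuinely wraps around). I would handle this by using the determinant relations: $u_i$ is bad iff there is a $j$ with $[i-1,i+1,j] \simeq \cdots$ matching the pattern of Proposition \ref{self_intersection_determinants} for the edge $\overrightarrow{u_{i-1}u_{i+1}}$ against $\overrightarrow{u_j u_{j+1}}$, and I would translate the nesting of cut-off regions into a compatible chain of sign conditions on triples $[i-1, i+1, k]$. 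Once the bookkeeping is set up, counting sign changes / using the minimality of extreme elements in the nesting order should close the argument, and combining with Lemma \ref{lemma_pick_up}(b) gives the stated bound of $4$ good vertices.
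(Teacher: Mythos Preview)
The paper does not prove this lemma here; it is quoted from \cite{Gentil}, with the introduction identifying the method there as a ``triangulation technique''. From the analogous arguments that \emph{are} written out in this paper (Proposition~\ref{proposition_at_least_2_excellents}, Lemma~\ref{lemma_3_good_vertices}), the shape of the intended proof is: triangulate a region bounded by $Q$ using diagonals between its vertices; the dual graph of the triangulation is a tree whose leaves are ears $\triangle(u_{i-1},u_i,u_{i+1})$, and the intermediate vertex of each ear is good. The balanced hypothesis is what lifts the planar two-ears bound to four.

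Your plan is different and has a genuine gap. The characterisation ``$u_i$ is bad iff some $u_k$ lies in $T_i$'' is fine. The nesting claim is not: you assert that the triangles $T_i$ cut off by bad vertices are linearly ordered by inclusion, with the extremes then forced to be good or essential. This fails already in the plane --- in a comb-shaped simple polygon the triangles $T_i$ at distinct bad vertices are pairwise \emph{disjoint}, and a simple $n$-gon can have as many as $n-2$ bad vertices with no inclusion relations among the $T_i$ whatsoever. More to the point, your outline never uses balancedness except through Lemma~\ref{lemma_pick_up}(b), which controls \emph{essential} vertices; but the present lemma is purely about \emph{good} vertices and says nothing about essentiality. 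You seem to be aiming at the stronger fact needed for the induction step of Theorem~\ref{segre_theorem_discrete_spherical} (a good \emph{and} nonessential vertex), but in the paper that is obtained afterwards by intersecting the two independent bounds $|\{\text{good}\}|\geq 4$ and $|\{\text{nonessential}\}|\geq n-3$, not by a joint argument. Without a device that converts ``balanced'' into a structural constraint on the bad-vertex triangles, your counting cannot close; the triangulation argument is exactly such a device.
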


\begin{lemma}
\label{lemma_inflections_not_increase}
Given a simple spherical polygon $Q$, let $u_i$ be a good vertex of $Q$. Then the number of spherical inflections of $Q$ is greater or equal to the number of spherical inflections of the resulting spherical polygon $Q-u_i$.
\end{lemma}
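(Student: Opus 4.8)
My plan is to reduce the statement to a short combinatorial fact and then to feed in one piece of geometric information coming from simplicity. Write $\epsilon_j:=[u_j,u_{j+1},u_{j+2}]$, so that the number of spherical inflections of any spherical polygon is the number of sign changes of its cyclic sequence of $\epsilon$'s. Deleting the vertex $u_i$ destroys exactly the three consecutive triples $(u_{i-2},u_{i-1},u_i)$, $(u_{i-1},u_i,u_{i+1})$, $(u_i,u_{i+1},u_{i+2})$ and creates the two triples $(u_{i-2},u_{i-1},u_{i+1})$, $(u_{i-1},u_{i+1},u_{i+2})$, every other triple being untouched. So, writing $\alpha:=[u_{i-2},u_{i-1},u_{i+1}]$ and $\beta:=[u_{i-1},u_{i+1},u_{i+2}]$, passing from $Q$ to $Q-u_i$ replaces the window $(\epsilon_{i-3},\epsilon_{i-2},\epsilon_{i-1},\epsilon_i,\epsilon_{i+1})$ of the cyclic $\epsilon$-sequence by $(\epsilon_{i-3},\alpha,\beta,\epsilon_{i+1})$ and changes nothing else. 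Hence it suffices to prove
\[
\mathrm{sc}(\epsilon_{i-3},\alpha,\beta,\epsilon_{i+1})\ \le\ \mathrm{sc}(\epsilon_{i-3},\epsilon_{i-2},\epsilon_{i-1},\epsilon_i,\epsilon_{i+1}),
\]
where $\mathrm{sc}$ denotes the number of sign changes of a finite sequence.

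The combinatorial key I would aim at is that $\alpha\simeq\epsilon_{i-2}$ or $\alpha\simeq\epsilon_{i-1}$, and that $\beta\simeq\epsilon_{i-1}$ or $\beta\simeq\epsilon_i$. Granting this, I would conclude with the elementary inequality $\mathrm{sc}(x,z)\le\mathrm{sc}(x,y)+\mathrm{sc}(y,z)$ for signs: inserting $\epsilon_{i-1}$ as an intermediate value and splitting into the four cases for which alternative holds for $\alpha$ and for $\beta$, the left-hand count of the displayed inequality collapses into a subsum of the right-hand count in each case — a routine finite check. To get the two sign relations I would use Proposition \ref{self_intersection_determinants}. Let $\Delta$ be the spherical triangle cut off by the deletion, i.e.\ the closed region, an intersection of three closed hemispheres, bounded by the arcs $\overline{u_{i-1}u_i}$, $\overline{u_iu_{i+1}}$ (edges of $Q$) and $\overline{u_{i-1}u_{i+1}}$ (edge of $Q-u_i$). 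Since $Q$ and $Q-u_i$ are simple, $\overline{u_{i-2}u_{i-1}}$ crosses none of the three sides of $\Delta$, so $u_{i-2}$ lies outside $\Delta$ or in its interior; and since $\overline{u_{i-2}u_{i-1}}$ and $\overline{u_iu_{i+1}}$ are non-crossing edges of $Q$, Proposition \ref{self_intersection_determinants} forbids the pattern $\epsilon_{i-2}\simeq\epsilon_{i-1}\not\simeq\alpha\simeq[u_{i-2},u_i,u_{i+1}]$. If the sign relation for $\alpha$ failed, we would have $\epsilon_{i-2}\simeq\epsilon_{i-1}$ and $\alpha\not\simeq\epsilon_{i-1}$, whence the forbidden pattern would force $[u_{i-2},u_i,u_{i+1}]\not\simeq\alpha$, hence $[u_{i-2},u_i,u_{i+1}]\simeq\epsilon_{i-1}$; but reading these three relations as statements about which side of each side of $\Delta$ the point $u_{i-2}$ lies on, they say exactly that $u_{i-2}\in\mathrm{int}\,\Delta$. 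The symmetric argument (with $u_{i+2}$) handles $\beta$. So the sign relations hold — and we win — whenever neither $u_{i-2}$ nor $u_{i+2}$ lies in $\mathrm{int}\,\Delta$.

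The hard part, then, is the degenerate case where, say, $u_{i-2}\in\mathrm{int}\,\Delta$. Here I would first observe that $\overline{u_{i-3}u_{i-2}}$ also cannot meet $\partial\Delta$ (same non-crossing facts, now using simplicity of $Q-u_i$ as well), so $u_{i-3}\in\mathrm{int}\,\Delta$; iterating around the polygon, every vertex other than $u_{i-1},u_i,u_{i+1}$ lies in $\mathrm{int}\,\Delta$, so all of $Q$ lies in $\overline{\Delta}$, which, being an intersection of closed hemispheres, is contained in a closed hemisphere. In the balanced setting in which this lemma gets applied this already contradicts the hypothesis, so the degenerate case does not arise and the previous paragraph finishes the proof. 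Without a balance hypothesis the degenerate case must be faced directly: from $u_{i-2},u_{i+2}\in\mathrm{int}\,\Delta$ one still extracts $\epsilon_{i-2}\simeq\epsilon_{i-1}\simeq\epsilon_i$ and $\alpha\simeq\beta\not\simeq\epsilon_{i-1}$, so the two counts in the displayed inequality add up to $2$ and one is reduced to showing $\epsilon_{i-3}\not\simeq\epsilon_{i-1}$ or $\epsilon_{i+1}\not\simeq\epsilon_{i-1}$. I expect this to come from an orientation argument — $Q-u_i$ is a simple closed polygon lying in $\overline{\Delta}$ with exactly the side $\overline{u_{i-1}u_{i+1}}$ on $\partial\Delta$, which fixes the sense in which it winds around the region it bounds and prevents both $u_{i-2}$ and $u_{i+2}$, the neighbours in $Q-u_i$ of the boundary vertices $u_{i-1}$ and $u_{i+1}$, from being ``convex'' at the same time — but getting this bookkeeping precisely right is the delicate point of the argument, and the only place I do not see a short proof.
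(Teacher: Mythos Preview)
The paper does not give a proof of this lemma; it is quoted from the author's earlier work \cite{Gentil}, so there is no in-paper argument to compare against.

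Your approach is sound and your main case is correct. The localisation to the window $(\epsilon_{i-3},\epsilon_{i-2},\epsilon_{i-1},\epsilon_i,\epsilon_{i+1})\mapsto(\epsilon_{i-3},\alpha,\beta,\epsilon_{i+1})$ is right, and your key claim --- that simplicity of $Q$ forces either $\alpha\simeq\epsilon_{i-2}$ or $\alpha\simeq\epsilon_{i-1}$ (and symmetrically for $\beta$) unless $u_{i-2}$ falls into $\mathrm{int}\,\triangle(u_{i-1},u_i,u_{i+1})$ --- follows cleanly from Proposition~\ref{self_intersection_determinants} exactly as you wrote. The four-case check that these sign relations give $\mathrm{sc}(\epsilon_{i-3},\alpha,\beta,\epsilon_{i+1})\le \mathrm{sc}(\epsilon_{i-3},\epsilon_{i-2},\epsilon_{i-1},\epsilon_i,\epsilon_{i+1})$ via the triangle inequality for sign changes is routine and correct. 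You also correctly observe that in the balanced situation --- the only setting in which the paper ever invokes the lemma (in the proofs of Theorems~\ref{segre_theorem_discrete_spherical} and~\ref{special_segre_discrete}) --- the degenerate case forces $Q\subset\overline\Delta$, contradicting balance; so your argument is complete for every application in the paper.

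Your unease about the remaining unbalanced degenerate case is more than justified: the lemma as literally stated is \emph{false} there. Take (in planar coordinates, then lift to $\mathbb{S}^2$)
\[
u_1=(3,1),\quad u_2=(0,0),\quad u_3=(10,0),\quad u_4=(5,10),\quad u_5=(5,3),\quad u_6=(6,\tfrac32).
\]
Then $Q=[u_1,\dots,u_6]$ is simple, $u_3$ is good, and the cyclic $\epsilon$-sequence of $Q$ is $(+,+,+,+,-,+)$ with two sign changes, while that of $Q-u_3=[u_1,u_2,u_4,u_5,u_6]$ is $(-,-,+,-,+)$ with four. Here $u_1,u_5,u_6$ all lie in $\mathrm{int}\,\triangle(u_2,u_3,u_4)$, which is precisely your degenerate configuration, and the inflection count genuinely goes up. So the orientation argument you were hoping for does not exist; the statement needs the extra hypothesis that $Q$ is balanced (or at least not contained in a closed hemisphere). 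That is exactly the version you did prove, and the only version the paper needs.
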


\begin{proof}
    (of theorem \ref{segre_theorem_discrete_spherical}) The case $n=4$ is an immediate application of propositions \ref{proposition_characterization} and \ref{self_intersection_determinants}, in which case both hypotheses and the conclusion of the theorem is verified. Now, given a balanced and simple spherical polygon $Q$ with $n\geq 5$ vertices, there is (by lemmas \ref{lemma_pick_up}.b and \ref{lemma_resulting_polygon} combined) at least one good, nonessential vertex $u_i$ of $Q$. By lemma \ref{lemma_inflections_not_increase}, the number of inflections of $Q$ is greater or equal to the number of inflections of $Q-u_i$. By the induction hypothesis, however, the number of inflections of $Q-u_i$ is greater or equal to 4.
\end{proof}

An easy corollary of theorem \ref{segre_theorem_discrete} is the following result, which is the discrete analog of theorem \ref{mobius_theorem}:

\begin{theorem}
    \label{discrete_mobius_theorem}
    Let $Q=[u_1,...,u_{2n}] \in \mathbb{S}^2$ ($2n \geq 6$) be a spherical symmetric polygon not contained in any closed hemisphere and without self-intersections. Then $Q$ has at least six spherical inflections.
\end{theorem}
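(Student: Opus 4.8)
The plan is to derive this from Theorem~\ref{segre_theorem_discrete_spherical} by means of a parity argument on the cyclic sequence $(\epsilon_1,\dots,\epsilon_{2n})$, where $\epsilon_i=[u_i,u_{i+1},u_{i+2}]$ as before. Recall that the number $I$ of spherical inflections of $Q$ is exactly the number of sign changes of this cyclic sequence, i.e.\ the number of indices $i$ (mod $2n$) with $\epsilon_{i-1}\epsilon_i<0$ (all the $\epsilon_i$ being nonzero under our standing general-position assumption).

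First I would use the symmetry of $Q$. Indexing the vertices so that $u_{i+n}=-u_i$ for every $i$, multilinearity of the determinant gives
$$\epsilon_{i+n}=[-u_i,-u_{i+1},-u_{i+2}]=-\epsilon_i.$$
Thus the second half of the cyclic sequence is the negative of the first half, so a consecutive pair $(\epsilon_{n+j},\epsilon_{n+j+1})=(-\epsilon_j,-\epsilon_{j+1})$ is a sign change if and only if $(\epsilon_j,\epsilon_{j+1})$ is. Letting $k$ denote the number of sign changes among the $n-1$ pairs $(\epsilon_1,\epsilon_2),\dots,(\epsilon_{n-1},\epsilon_n)$, the $2n$ consecutive (cyclic) pairs of the full sequence split as: $k$ from the first half, $k$ from the second half, and the two ``junction'' pairs $(\epsilon_n,\epsilon_{n+1})=(\epsilon_n,-\epsilon_1)$ and $(\epsilon_{2n},\epsilon_1)=(-\epsilon_n,\epsilon_1)$. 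Both junction pairs are sign changes precisely when $\epsilon_1$ and $\epsilon_n$ have the same sign, and neither of them is otherwise; writing $\delta\in\{0,1\}$ for this common value, we obtain $I=2(k+\delta)$.

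Next I would pin down the parity of $k+\delta$. For a finite non-cyclic sequence of nonzero reals, the number of sign changes is odd if and only if the first and last terms have opposite signs (an immediate telescoping observation on the product of consecutive sign ratios). Applied to $\epsilon_1,\dots,\epsilon_n$: if $\epsilon_1$ and $\epsilon_n$ have the same sign then $\delta=1$ and $k$ is even; if they have opposite signs then $\delta=0$ and $k$ is odd. In either case $k+\delta$ is odd, whence $I=2(k+\delta)\equiv 2\pmod 4$.

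Finally, $Q$ is balanced and simple with $2n\ge 4$ vertices, so Theorem~\ref{segre_theorem_discrete_spherical} gives $I\ge 4$; combined with $I\equiv 2\pmod 4$ this forces $I\ge 6$, as claimed. The argument is elementary throughout; the only point requiring care is the bookkeeping at the two junction pairs, together with the observation that whether $\epsilon_1$ and $\epsilon_n$ agree in sign simultaneously determines $\delta$ and the parity of $k$ — which is exactly what produces the congruence $I\equiv 2\pmod 4$.
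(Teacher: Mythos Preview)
Your argument is correct. The paper does not spell out a proof of this theorem, merely calling it an ``easy corollary'' of Theorem~\ref{segre_theorem_discrete_spherical}; your parity computation showing $I\equiv 2\pmod 4$ via the antisymmetry $\epsilon_{i+n}=-\epsilon_i$ is precisely the natural way to make that corollary explicit, and the bookkeeping at the two junction pairs is handled cleanly.
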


\section{Polygons without self nor antipodal intersections}

An improvement on the lower bound on spherical inflections of Segre's theorem \ref{segre_theorem} was obtained by Ghomi (see \cite{Ghomi}). He proved (in the smooth setting) that, under one additional condition (namely, that such curves do not have antipodal intersections), the lower bound on the number of inflections can be improved to 6:

\begin{theorem}
\label{special_segre_smooth}
    Let $\gamma$ be a $\mathcal{C}^2$ closed spherical curve, not entirely contained in any closed hemisphere. If, for any pair of points $t \neq s \in \mathbb{S}^1$, $\gamma(t) \neq \pm \gamma(s)$ (i.e., $\gamma$ does not have self nor antipodal intersections), then $\gamma$ must have at least 6 (spherical) inflections.
\end{theorem}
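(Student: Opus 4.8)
The plan is to obtain Theorem~\ref{special_segre_smooth} from a discrete counterpart together with a routine approximation argument, exactly in the spirit of the passage from Theorem~\ref{segre_theorem_discrete_spherical} to Theorem~\ref{segre_theorem_spherical}. The discrete statement to be proved is: \emph{if $Q=[u_1,\dots,u_n]$ is a balanced spherical polygon with no self-intersections and no antipodal intersections} --- that is, no edge of $Q$ meets an edge of the antipodal polygon $-Q:=[-u_1,\dots,-u_n]$ --- \emph{then $Q$ has at least six spherical inflections}. Granting this, one takes polygons $P_k$ inscribed in $\gamma$ with one vertex on each arc of a partition whose mesh tends to $0$. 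Since $\gamma$ is $\mathcal{C}^2$, simple, balanced and antipodal-free, and since the family of closed hemispheres containing a fixed compact set is closed while $\mathrm{dist}(\gamma,-\gamma)>0$ (both $\gamma$ and $-\gamma$ being compact and disjoint), for all large $k$ the inscribed polygon $P_k$ is simple, balanced and antipodal-free as well; the discrete theorem then yields at least six discrete inflections for each such $P_k$.

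To conclude I would invoke the standard comparison between discrete and smooth inflections. On any closed arc of $\gamma$ containing no inflection the geodesic curvature keeps a fixed sign and hence has a definite minimum modulus there, so for a fine enough inscribed polygon the signs $\epsilon_i=[u_i,u_{i+1},u_{i+2}]$ are constant along that arc and produce no sign change; therefore every sign change of the cyclic sequence $(\epsilon_i)$ of $P_k$ lies near an inflection of $\gamma$, and for $k$ large there is at most one change per inflection. Consequently, for $k$ large the number of inflections of $\gamma$ is at least the number of discrete inflections of $P_k$, hence at least six.

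For the discrete analogue itself I would argue by induction on $n$, paralleling the proof of Theorem~\ref{segre_theorem_discrete_spherical} but now also carrying the antipodal condition. First one needs an algebraic encoding, in the style of Propositions~\ref{proposition_characterization} and \ref{self_intersection_determinants}, of the statement ``the edge $\overrightarrow{u_i u_{i+1}}$ meets the edge $\overrightarrow{(-u_j)(-u_{j+1})}$'' as a sign pattern among the determinants $[\pm u_i,\pm u_{i+1},\pm u_j,\pm u_{j+1}]$; since $[-a,b,c]=-[a,b,c]$, this is just the self-intersection relation of Proposition~\ref{self_intersection_determinants} with certain subscripts negated. The inductive step then requires strengthening Lemmas~\ref{lemma_pick_up}, \ref{lemma_resulting_polygon} and \ref{lemma_inflections_not_increase} to produce, whenever $n$ is large enough, a vertex $u_i$ such that $Q-u_i$ is still balanced, simple, \emph{and} antipodal-free; as deleting a good vertex does not increase the inflection count (Lemma~\ref{lemma_inflections_not_increase}), the induction hypothesis applied to $Q-u_i$ gives the bound $6$ for $Q$. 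The base case is small $n$: here one first checks that the three hypotheses force $n\geq 6$ --- a polygon with $m\leq 5$ vertices has only $m\leq 5$ inflection slots, so it cannot satisfy the conclusion unless no such polygon exists --- and then verifies the hexagonal case directly from the determinant relations above.

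The main obstacle, as in \cite{Gentil}, will be the combinatorial existence result for the vertex to delete: one must now simultaneously rule out that deleting $u_i$ destroys balancedness, creates a self-intersection, \emph{or} creates an antipodal intersection, and this last possibility is new and couples $Q$ to $-Q$. I expect that controlling it will mean re-running the triangulation technique of \cite{Gentil} together with the new determinant identities, and showing that the good, nonessential vertices provided by Lemmas~\ref{lemma_resulting_polygon} and \ref{lemma_pick_up} can be chosen so as to also avoid the antipodal obstruction. A secondary obstacle is the base case: proving that a balanced, simple, antipodal-free hexagon has \emph{all six} of its inflection slots active, which is precisely the point at which the hypothesis ``no antipodal intersection'' must be exploited to upgrade the bound $4$ of Theorem~\ref{segre_theorem_discrete_spherical} to $6$.
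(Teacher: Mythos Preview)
The paper does not prove Theorem~\ref{special_segre_smooth} at all: it is quoted as a result of Ghomi and serves only as motivation. What the paper actually proves is the discrete analogue, Theorem~\ref{special_segre_discrete}. So there is no ``paper's own proof'' of this smooth statement to compare your attempt against.

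That said, your proposal is a coherent strategy, and the discrete part of it matches the paper's approach to Theorem~\ref{special_segre_discrete} quite closely: induction on $n$ with base case $n=6$ handled by a direct determinant computation (the paper's Proposition~3.2 gives exactly the sign pattern you anticipate for antipodal intersections, and it turns out to coincide with the balanced-position relation of Proposition~\ref{proposition_characterization}); and an induction step that hinges on finding a vertex whose deletion preserves balancedness, simplicity, \emph{and} the antipodal-free condition. The paper calls such a vertex ``nonessential and excellent'' and spends two full sections (the cases $Ess(Q)=3$ and $Ess(Q)=2$) on its existence --- so your expectation that this is the main obstacle is correct, though the argument is considerably more involved than ``re-running the triangulation technique'': a straight counting argument fails here, and separate geometric analyses are needed depending on the number of essential vertices.

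Your passage from the discrete theorem to the smooth one is a genuinely different route from Ghomi's (which works directly in the smooth category). It is plausible but not airtight as written. The delicate point is the claim that for large $k$ each smooth inflection accounts for \emph{at most one} discrete sign change: this is fine at a transversal zero of the geodesic curvature, but for a $\mathcal{C}^2$ curve a zero of the geodesic curvature need not be a sign change, nor isolated, nor transversal. You would need either to perturb $\gamma$ to make inflections nondegenerate while preserving the hypotheses, or to argue more carefully that the \emph{number of sign changes} of the geodesic curvature (rather than the number of zeros) is lower-semicontinuous under polygonal approximation. Also, your justification that the hypotheses force $n\ge 6$ is circular as stated; the correct reason (which the paper gives) is that a balanced polygon contains four vertices in balanced position, and for $n\le 5$ these necessarily form two adjacent pairs, forcing an antipodal intersection by Proposition~\ref{antipodal_intersection_determinants}.
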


Theorem \ref{special_segre_smooth} is actually a particular case of a theorem also proved by Ghomi, which will be stated in section \ref{section_with_intersections}.

The main goal of the present section until the end of section \ref{case_ess_q_2} is to state a discrete analog of theorem \ref{special_segre_smooth} and prove it. In order to do it, we use determinants again to express algebraically when the edge of a spherical polygon intersects the antipode of another edge. The proof of our theorem then proceeds on induction on the number of vertices of the polygon. The most difficult step of the proof is to prove the existence of a vertex such that, after its deletion, the resulting polygon will not be contained in a closed hemisphere nor has self or antipodal intersections.

In order to simplify the steps of our proof, we will assume from now on until the end of section \ref{case_ess_q_2} that each one of the spherical polygons being considered is such that any three of its vertices are not in the same (spherical) line, i.e., great circle.

We will also need the notion of intersection of an edge with the antipode of another edge (see figure \ref{fig:intersections_labeled} on the right). We will call such an intersection as an \textit{antipodal intersection}. Notice that, given edges $\overrightarrow{u_i u_{i+1}}$ and $\overrightarrow{u_j u_{j+1}}$ (assume that $i=1$ and $j=5$ for the sake of simplicity of notation) such that one intersects the antipode of the other, the spherical line spanned by each one intersects the other one. This implies that

$$ [1,2,5] \not \simeq [1,2,6] \text{ and } [1,5,6] \not \simeq [2,5,6]. $$

However, this condition also applies to the case of ordinary intersection of edges. The difference now is that, in the case of antipodal intersections, the spherical line spanned by $u_1$ and $u_6$ separates $u_2$ and $u_5$, while the spherical line spanned by $u_2$ and $u_5$ separates $u_1$ and $u_6$. Hence

$$ [1,6,2] \not \simeq [1,6,5] \text{ and } [2,5,1] \not \simeq [2,5,6], $$
i.e.,
$$ [1,2,6] \not \simeq [1,5,6] \text{ and } [1,2,5] \not \simeq [2,5,6]. $$

Therefore, if edge $\overrightarrow{u_1 u_2}$ intersects the antipode of $\overrightarrow{u_5 u_6}$, we have that
$$ [1,2,5] \simeq [1,5,6] \not \simeq [1,2,6] \simeq [2,5,6],$$

which is equivalent to the condition of the vertices of edges $\overrightarrow{u_1 u_2}$ and $\overrightarrow{u_5 u_6}$ being in balanced position, by proposition \ref{proposition_characterization}. We have therefore proved

\begin{proposition}
\label{antipodal_intersection_determinants}
A spherical polygon $Q \subset \mathbb{S}^2$ has a antipodal intersection at edges $\overrightarrow{u_i u_{i+1}}$ and $\overrightarrow{u_j u_{j+1}}$ (where $j \neq i+1$ and $i \neq j+1$) if and only if the following relation holds:
$$ [i,i+1,j] \simeq [i,j,j+1] \not \simeq [i,i+1,j+1] \simeq [i+1,j,j+1].$$
This, on its turn, happens if and only if the set of vectors $\{u_i,u_{i+1},u_j,u_{j+1} \}$ is in balanced position.
\end{proposition}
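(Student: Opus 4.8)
The plan is to deduce the statement directly from Proposition~\ref{proposition_characterization} by a convex‑position argument. Throughout I use the standing general‑position hypothesis (no three vertices on a common great circle), which in particular forces $u_i,u_{i+1},u_j,u_{j+1}$ to be four distinct, pairwise non‑antipodal unit vectors, no three of them coplanar, and makes each edge $\overrightarrow{u_au_{a+1}}$ precisely the set of unit vectors of the planar cone $\{\alpha u_a+\beta u_{a+1}:\alpha,\beta\geq 0\}$. The key observation is that an antipodal intersection of the two edges is exactly a sign pattern in a linear dependence among these four vectors, namely the all‑positive one.

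First I would reformulate an antipodal intersection: $Q$ has one at the edges $\overrightarrow{u_iu_{i+1}}$ and $\overrightarrow{u_ju_{j+1}}$ iff there is a point $p\in\mathbb{S}^2$ with $p$ on $\overrightarrow{u_iu_{i+1}}$ and $-p$ on $\overrightarrow{u_ju_{j+1}}$. Expanding $p$ and $-p$ in the respective cones, this is equivalent to the existence of $\alpha,\beta,\gamma,\delta\geq 0$, not all zero, with
\[
\alpha u_i+\beta u_{i+1}+\gamma u_j+\delta u_{j+1}=0 .
\]
A short case check using that no three of the four vectors are coplanar shows that none of the coefficients can vanish, so all four are strictly positive; equivalently, $0$ lies in the interior of the (tetrahedral) convex hull of $\{u_i,u_{i+1},u_j,u_{j+1}\}$. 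Since the linear dependence among four such vectors is unique up to scaling, the existence of an all‑positive dependence is the same as $0$ being interior to the hull, which in turn is the same as the four points lying in no closed hemisphere (otherwise a supporting or separating hyperplane through $0$ would exhibit such a hemisphere). Hence the antipodal intersection exists iff $\{u_i,u_{i+1},u_j,u_{j+1}\}$ is in balanced position. Finally I would apply Proposition~\ref{proposition_characterization} to these four points---its hypothesis that every triple is linearly independent is exactly the general‑position assumption---with the substitution $(u_1,u_2,u_3,u_4)=(u_i,u_{i+1},u_j,u_{j+1})$, turning ``balanced'' into
\[
[i,i+1,j]\simeq[i,j,j+1]\not\simeq[i,i+1,j+1]\simeq[i+1,j,j+1],
\]
which is the asserted relation. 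The converse direction runs the chain backwards: the determinant relation yields an all‑positive dependence, and then $\widehat p:=(\alpha u_i+\beta u_{i+1})/\lvert\alpha u_i+\beta u_{i+1}\rvert$ lies on $\overrightarrow{u_iu_{i+1}}$ while $-\widehat p$ lies on $\overrightarrow{u_ju_{j+1}}$, producing the intersection.

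I do not expect a deep obstacle; the only real care is in the point where one separates the antipodal case from the ordinary one, and in checking that general position forces the coefficients to be \emph{strictly} positive (so that ``$0$ in the hull'' genuinely means ``$0$ in the interior''). The contrast is instructive: an ordinary intersection means some $p$ lies on \emph{both} edges, giving $\alpha u_i+\beta u_{i+1}-\gamma u_j-\delta u_{j+1}=0$ with all coefficients positive, a $(+,+,-,-)$ dependence that is not rescalable to an all‑positive one and hence does \emph{not} put the four points in balanced position (consistently with Proposition~\ref{self_intersection_determinants}). An alternative, more hands‑on route---essentially the one sketched in the discussion preceding the statement---is to argue the four relevant separations of the sphere directly: the great circles through $\{u_i,u_{i+1}\}$ and through $\{u_j,u_{j+1}\}$ each separate the endpoints of the other edge, while the ``crossed'' great circles through $\{u_i,u_{j+1}\}$ and through $\{u_{i+1},u_j\}$ separate the complementary pairs; one then reads off four ``$\not\simeq$'' relations and combines them algebraically. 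There the delicate part is justifying the crossed‑circle separations from the combinatorial position of the four points around the antipodal intersection.
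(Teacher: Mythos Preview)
Your argument is correct, and it takes a genuinely different route from the paper. The paper proceeds geometrically: it first observes that in an antipodal intersection the great circle through $\{u_i,u_{i+1}\}$ separates $u_j$ from $u_{j+1}$ and vice versa (two $\not\simeq$ relations that also hold for ordinary intersections), and then argues the distinguishing feature directly---that the ``crossed'' great circles through $\{u_i,u_{j+1}\}$ and through $\{u_{i+1},u_j\}$ separate the complementary pairs---yielding two further $\not\simeq$ relations; combining the four gives the stated sign pattern, which is then matched to balanced position via Proposition~\ref{proposition_characterization}. You instead pass through convex geometry: an antipodal intersection is reformulated as an all-positive linear dependence among the four vectors, hence $0$ lies in the interior of their convex hull, hence they are balanced, and only then do you invoke Proposition~\ref{proposition_characterization} to read off the determinant relation. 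Your route makes the equivalence with balanced position the conceptual core and handles both directions symmetrically; the paper's route is more hands-on but, as written, really only argues the forward implication explicitly and leaves the converse implicit. You even describe the paper's approach accurately in your final paragraph as the ``alternative, more hands-on route.''
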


We can finally state the discrete analog of theorem \ref{special_segre_smooth}:

\begin{theorem}
\label{special_segre_discrete}
    Let $Q=[u_1,...,u_n] \in \mathbb{S}^2$ ($n \geq 6$) be a spherical polygon in balanced position. If $Q$ does not have self-intersections nor antipodal intersections, then it has at least six inflections.
\end{theorem}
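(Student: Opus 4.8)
The plan is to induct on the number $n$ of vertices, the base case being $n=6$, and at the inductive step to use the vertex-deletion operation $Q\mapsto Q-u_i$ together with Lemma \ref{lemma_inflections_not_increase}. Call a polygon \emph{admissible} if it is balanced, simple and has no antipodal intersection, so that the theorem says every admissible polygon with at least $6$ vertices has at least $6$ inflections. The point is that $Q-u_i$ is again admissible exactly when $u_i$ is nonessential (so $Q-u_i$ is balanced), good (so $Q-u_i$ is simple), and, in addition, what I will call \emph{safe}: the polygon $Q-u_i$ still has no antipodal intersection. Granting that, for $n\ge 7$, one can always find a vertex that is simultaneously nonessential, good and safe, then $Q-u_i$ is admissible with $n-1\ge 6$ vertices, hence has at least $6$ inflections by the inductive hypothesis; and Lemma \ref{lemma_inflections_not_increase}, applicable since $Q$ is simple and $u_i$ is good, gives that $Q$ has at least as many inflections as $Q-u_i$. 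So the whole proof reduces to the base case plus this vertex-existence statement.

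For the base case $n=6$ I would show that the cyclic sequence $(\epsilon_1,\dots,\epsilon_6)$, $\epsilon_i=[u_i,u_{i+1},u_{i+2}]$, is strictly alternating, so that all six pairs $\{u_i,u_{i+1}\}$ are inflections. This is a finite sign-pattern check: being balanced pins down, via Proposition \ref{proposition_characterization}, the determinants attached to the relevant quadruples of vertices, while the absence of self-intersections and of antipodal intersections says, via Propositions \ref{self_intersection_determinants} and \ref{antipodal_intersection_determinants}, that for each of the nine non-adjacent pairs of edges the four relevant vertices realize neither the self-intersection pattern nor balanced position; combining these constraints forces $\epsilon_i\not\simeq\epsilon_{i+1}$ for every $i$.

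For the inductive step, note first that every edge of $Q-u_i$ other than the new edge $\overrightarrow{u_{i-1}u_{i+1}}$ is an edge of $Q$; since $Q$ has no antipodal intersection, $Q-u_i$ has one if and only if $\overrightarrow{u_{i-1}u_{i+1}}$ does, which by Proposition \ref{antipodal_intersection_determinants} means that $\{u_{i-1},u_{i+1},u_j,u_{j+1}\}$ is in balanced position for some edge $\overrightarrow{u_ju_{j+1}}$ of $Q$ not adjacent to it. Thus ``$u_i$ is safe'' is a purely combinatorial condition on these quadruples. Now I would combine the facts that at least $n-3$ vertices are nonessential (Lemma \ref{lemma_pick_up}(b)) and at least four are good (Lemma \ref{lemma_resulting_polygon}) with a bound on the non-safe vertices: the idea is that if too many vertices fail to be safe, then Lemma \ref{lemma_all_S2} forces a genuine antipodal intersection of $Q$, contradicting the hypothesis — so in particular one hopes to strengthen Lemma \ref{lemma_resulting_polygon} to ``at least four vertices are good and safe'', after which, since at most three vertices are essential, one of them is also nonessential. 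It is natural to organize this by cases on the number $\mathrm{ess}(Q)\le 3$ of essential vertices: for $\mathrm{ess}(Q)\le 1$ the counting closes immediately, while $\mathrm{ess}(Q)=2$ and $\mathrm{ess}(Q)=3$ require examining, via Proposition \ref{proposition_characterization}, how the essential vertices are placed relative to the others, and then either producing a good safe nonessential vertex explicitly or contradicting simplicity or balancedness.

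As the paper itself indicates, the hard part is precisely this last step — showing that a deletable vertex exists keeping the polygon balanced, simple and free of antipodal intersections — and the obstruction is in the cases $\mathrm{ess}(Q)=2,3$: the naive cardinality count (at most $n-4$ bad, at most $3$ essential, plus the non-safe ones) does not on its own leave a vertex with all three properties, so one must rule out the offending configurations using the geometry of balanced four-point sets rather than counting alone.
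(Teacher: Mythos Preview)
Your overall architecture --- induction on $n$, the base case $n=6$ via determinant sign patterns, and the induction step organized by cases on $\mathrm{ess}(Q)$ --- is exactly the paper's. The paper bundles your ``good and safe'' into a single notion, \emph{excellent}, and its first supporting result (Proposition~\ref{proposition_at_least_2_excellents}) is that an admissible polygon always has at least \emph{two} excellent vertices; this is proved by triangulating the interior of $Q$ (well-defined because $\overline{Q}$ lies in one complementary region) and taking the ears. That already closes $\mathrm{ess}(Q)\le 1$.

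The genuine gap is your hoped-for strengthening ``at least four vertices are good and safe'': this is false. The paper explicitly notes that there are admissible polygons with exactly three essential vertices and no more than three excellent ones, so no cardinality argument of the kind you sketch can finish the cases $\mathrm{ess}(Q)=2,3$. What the paper actually supplies there is substantial geometry that your proposal does not contain. For $\mathrm{ess}(Q)=3$: one first shows the essential vertices are pairwise nonconsecutive, then confines $Q$ to a union $U(Q)$ of three lunes (via Lemma~\ref{lemma_lune}), and triangulates a region bounded by a subarc of $Q$ together with the chord joining two essential vertices; a leaf of this triangulation yields a nonessential excellent vertex because the new edge stays inside $U(Q)$ while $\overline{Q}\subset U(\overline{Q})$ is disjoint from its interior. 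For $\mathrm{ess}(Q)=2$: the argument is considerably more delicate, passing through the auxiliary polygon $\tilde Q=Q-u_i-u_j$, its spherical convex hull, separate triangulations of $\mathrm{Int}(\tilde Q)$ and of the components of $\mathrm{Ext}(\tilde Q)\cap\mathcal{H}_S(\tilde Q)$, and finally a star-shapedness/``flip'' analysis on pairs of adjacent triangles to manufacture an extra good vertex away from $u_{i\pm 1},u_{j\pm 1}$. Your appeal to Lemma~\ref{lemma_all_S2} and to ``the geometry of balanced four-point sets'' is not a substitute for any of this; it is precisely where the work lies.
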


Notice that the hypotheses of theorem \ref{special_segre_discrete} do not even hold for polygons with $4$ or $5$ vertices: since a balanced spherical polygon has $4$ of its vertices in balanced position, these vertices will necessarily be consecutive in any spherical polygon with $4$ or $5$ vertices. For spherical polygons with $6$ vertices, however, the hypotheses of theorem \ref{special_segre_discrete} can be met (see figure \ref{fig:polygon_6_vertices} for an example).

\begin{figure}
    \centering
    \includegraphics[scale=0.3]{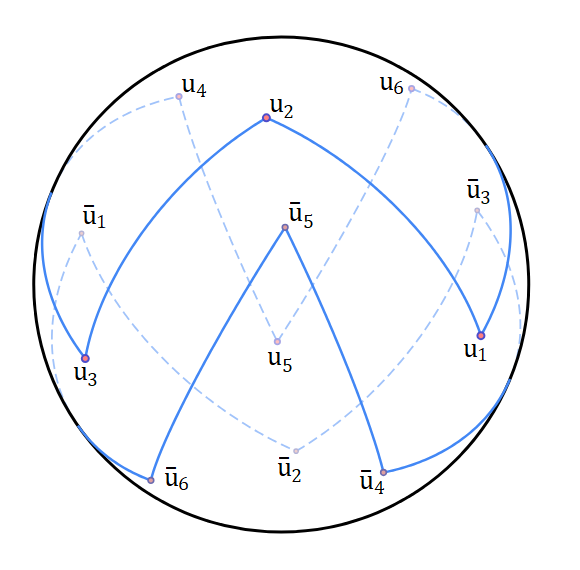}
    \caption{A balanced polygon $Q$ with $6$ vertices, without self nor antipodal intersections, together with its reflected polygon $\overline{Q}$. Notice that a polygon not having antipodal intersections is equivalent to this same polygon not intersecting its reflected version.}
    \label{fig:polygon_6_vertices}
\end{figure}

We can prove \ref{special_segre_discrete} by induction on the number $n \geq 6$ of vertices: for the case $n=6$ we can make explicit use of propositions \ref{self_intersection_determinants} and \ref{antipodal_intersection_determinants}. For the induction step we need to find a vertex $v$ with the property that the polygon $Q' = Q - \{u_i\}$ (obtained by deleting $u_i$ and its adjacent edges and connecting its adjacent vertices with a new edge) is such that:
\begin{itemize}
    \item $Q'$ is not contained in any closed hemisphere (recall that in this case $u_i$ is called \textit{nonessential});
    \item $Q'$ does not have self-intersections (recall that in this case $u_i$ is called \textit{good});
    \item $Q'$ does not have antipodal intersections.
\end{itemize}

By lemma \ref{lemma_inflections_not_increase}, the number of inflections of $Q$ is greater or equal to the number of inflections of $Q'$. By the induction hypothesis, $Q'$ has at least six inflections. It remains, therefore, to show that:
\begin{itemize}
    \item the result holds for polygons with $n=6$ vertices satisfying the hypotheses of theorem \ref{special_segre_discrete};
    \item there is a vertex that can be eliminated so that the resulting polygon still satisfies the hypotheses of theorem \ref{special_segre_discrete}.
\end{itemize}

The base case of our induction argument is given in the following proposition:

\begin{proposition}
    Let $Q=[u_1,u_2,u_3,u_4,u_5,u_6] \in \mathbb{S}^2$ be a balanced spherical polygon without self nor antipodal intersections. Then $Q$ has 6 inflections.
\end{proposition}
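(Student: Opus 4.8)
The plan is to prove the base case by a direct combinatorial analysis of the sign sequence $(\epsilon_1,\dots,\epsilon_6)$, where $\epsilon_i = [u_i,u_{i+1},u_{i+2}]$, using the algebraic characterizations of balanced position (Proposition \ref{proposition_characterization}), self-intersections (Proposition \ref{self_intersection_determinants}), and antipodal intersections (Proposition \ref{antipodal_intersection_determinants}). Recall that an inflection at $\{u_i,u_{i+1}\}$ is exactly a sign change $\epsilon_{i-1} \neq \epsilon_i$ in the cyclic sequence; since the sequence is cyclic of length $6$, the number of sign changes is even, so ``at least $6$'' means ``all six consecutive pairs differ'', i.e. the sequence is alternating: $(\epsilon_1,\dots,\epsilon_6) = (+,-,+,-,+,-)$ up to a global sign. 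So the goal reduces to: \emph{show that no sign pattern with fewer than $6$ sign changes is compatible with $Q$ being balanced, simple, and antipodal-intersection-free.}

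First I would set up the bookkeeping. By Theorem \ref{segre_theorem_discrete_spherical} the polygon already has at least $4$ inflections, so the only pattern to rule out (up to cyclic rotation and global sign) is the one with exactly $4$ sign changes, namely $(\epsilon_1,\dots,\epsilon_6) = (+,+,-,+,-,-)$ and its rotations; equivalently two ``blocks'' of equal sign, one of length $2$ and one of length $2$ separated appropriately, or a block of length $2$ and a block of length $2$ --- I would enumerate the finitely many necklaces of length $6$ over $\{+,-\}$ with exactly $4$ sign changes and reduce by symmetry (cyclic shift, reversal, global sign flip) to one or two canonical patterns. For each such candidate pattern, the six quantities $\epsilon_i$ are $4\times 4$ data; I then need to extract the balanced-position relations among the four-element subsets and the non-intersection (for all non-adjacent edge pairs) and non-antipodal-intersection relations, and derive a contradiction. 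The key tool is that balanced position of any four of the $u_j$ is encoded by a relation of the form $[a,b,c] \simeq [a,c,d] \not\simeq [a,b,d] \simeq [b,c,d]$, while \emph{absence} of a self-intersection or antipodal intersection for a given pair of non-adjacent edges rules out the two ``bad'' sign patterns of Propositions \ref{self_intersection_determinants} and \ref{antipodal_intersection_determinants} for the corresponding quadruple. With $6$ vertices there are three pairs of ``opposite'' non-adjacent edges ($\{e_1,e_4\}$, $\{e_2,e_5\}$, $\{e_3,e_6\}$) plus the pairs of edges at distance two; carefully listing which quadruples are forced balanced and which intersection patterns are forbidden should over-determine the signs.

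Concretely, the engine of the argument will be the (already used, in \cite{Gentil}) technique of picking a balanced quadruple and running the three-term/four-term determinant identities (Plücker-type relations, or the elementary fact that for five vectors in $\mathbb{R}^3$ the $\binom{5}{3}$ determinants satisfy linear syzygies) to propagate sign constraints. I would write down, for the candidate bad pattern, the signs of all $\binom{6}{3}=20$ determinants $[i,j,k]$ (those not of the form $\epsilon_i$ are still constrained because every triple of vertices is assumed non-degenerate and because balanced-position relations among the forced-balanced quadruples pin them down), and then exhibit either a quadruple that the sign data forces \emph{not} to be balanced (contradicting nonessentiality forced by having $6$ vertices all needed, or directly contradicting Lemma \ref{lemma_pick_up}), or a non-adjacent edge pair whose sign data matches exactly the self-intersection pattern of Proposition \ref{self_intersection_determinants} or the antipodal pattern of Proposition \ref{antipodal_intersection_determinants}. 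The main obstacle I anticipate is the case analysis bloat: even after quotienting by the dihedral symmetry of the hexagon and the global sign, there may be two or three genuinely distinct $4$-sign-change necklaces, and for each one the propagation of determinant signs through the syzygies is a delicate hand computation where it is easy to miss a case or to use a determinant identity with the wrong sign. I would therefore organize the proof around a single lemma --- ``if $(\epsilon_i)$ has a block of two consecutive equal signs, then some non-adjacent edge pair self-intersects or antipodally intersects, or some four vertices fail to be balanced'' --- and prove that lemma once, applying it to kill all the non-alternating patterns simultaneously.
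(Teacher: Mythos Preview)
Your high-level strategy --- reduce to showing the cyclic sign sequence $(\epsilon_i)$ must alternate, by ruling out the patterns with exactly four sign changes --- is correct in spirit, but the machinery you propose has a real gap and misses the organizing idea that makes the paper's proof short.

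The gap is in the ``propagation'' step. The Pl\"ucker/syzygy relations among the $3\times 3$ determinants are linear or quadratic identities in the determinant \emph{values}; they do not transfer sign information without magnitude data. Knowing the signs of the six $\epsilon_i=[i,i+1,i+2]$ therefore does not let you ``write down the signs of all $\binom{6}{3}=20$ determinants'' as you claim. Likewise, your proposed unifying lemma ends with the alternative ``or some four vertices fail to be balanced'', but that is never a contradiction: in any balanced hexagon most quadruples are not balanced; only the non-existence of \emph{any} balanced quadruple would contradict Lemma~\ref{lemma_pick_up}. To make your plan go through you would have to case not only on the $\epsilon$-necklace but also on \emph{which} quadruple is balanced, and then chase signs through the non-intersection constraints for each combination --- feasible, but a much larger case analysis than you indicate.

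The paper reverses the direction of attack and thereby avoids both problems. By Proposition~\ref{antipodal_intersection_determinants}, a quadruple $\{u_i,u_{i+1},u_j,u_{j+1}\}$ consisting of the endpoints of two non-adjacent edges is balanced exactly when those edges antipodally intersect; hence the balanced quadruple guaranteed by Lemma~\ref{lemma_pick_up} must have index pattern ``three consecutive plus one isolated'', WLOG $\{1,2,3,5\}$. The same antipodal-freeness then forces each of $u_4$ and $u_6$ to be balanced with the triple $\{u_1,u_3,u_5\}$ (any other triple would produce an antipodal intersection). These three balanced-quadruple relations already pin down the signs of ten determinants, including $\epsilon_1,\epsilon_3,\epsilon_5$, all with the same sign. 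It remains to show $\epsilon_2,\epsilon_4,\epsilon_6$ carry the opposite sign; the paper handles one of them (namely $[6,1,2]$) by a short contradiction using only the no-self-intersection hypothesis on three specific edge pairs, and then observes that the index shift $i\mapsto i+2 \pmod 6$ preserves all the sign data already established, so the other two follow by symmetry. That $i\mapsto i+2$ symmetry is the real labor-saver, and it is essentially invisible from the $\epsilon$-necklace viewpoint you start from.
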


\begin{proof}
    Since $Q$ is balanced, four of its vertices are in balanced position. Since $Q$ does not have antipodal intersections, the indices of these vertices cannot be two pairs of consecutive indices (which includes in particular the case of four consecutive indices). Therefore the indices must be such that three of them are consecutive and the remaining one is isolated in the cyclic sequence $(1,2,3,4,5,6)$ (for example, $\{1,2,3,5\}$ or $\{2,4,5,6\}$ are in principle valid, but $\{1,2,4,5\}$ and $\{2,3,4,5\}$ are not). After a cyclic rearrangement of the indices of $Q$, we may assume that the indices of these vertices are $\{1,2,3,5\}$. Therefore
    
    $$ [1,2,3] \simeq [1,3,5] \not \simeq [1,2,5] \simeq [2,3,5].$$
    
    In this case, the vertices $u_1$, $u_2$, $u_3$ and $u_5$ subdivide the sphere in four spherical regions (triangles). The vertices $u_4$ is such that its antipode must be in one of these four regions, i.e., $u_4$ must be in balanced position with three of the four other vertices. The only triple that works is $\{u_1,u_3,u_5\}$, since any other triple, being in balanced position with $u_4$, would then form an antipodal intersection. The fact of $\{u_1,u_3,u_4,u_5\}$ being in balanced position then implies that
    
    $$ [3,4,5] \simeq [1,3,5] \not \simeq [1,4,5] \simeq [1,3,4].$$

    Again, by the same argument as with the vertex $u_4$, $\{u_1,u_3,u_5,u_6\}$ is in balanced position. Therefore

    $$ [1,3,5] \simeq [1,5,6] \not \simeq [3,5,6] \simeq [1,3,6].$$

    Notice that the determinant $[1,3,5]$ appears in all cases. Suppose without loss of generality that $[1,3,5] > 0$. Hence
    \begin{itemize}
        \item $[1,3,5]$, $[1,2,3]$, $[3,4,5]$ and $[1,5,6]$ are positive;
        \item $[1,2,5]$, $[2,3,5]$, $[1,4,5]$, $[1,3,4]$, $[3,5,6]$ and $[1,3,6]$ are negative.
    \end{itemize}

    Since $[1,2,3]$, $[3,4,5]$ and $[1,5,6]$ are positive, we just need to show that $[2,3,4]$, $[4,5,6]$ and $[1,2,6]$ are negative.
    
    Suppose by contradiction that one of these determinants, say $[1,2,6]$, is positive. From this the following two facts follow:
    \begin{itemize}
        \item $[2,5,6]$ is positive. In fact, if $[2,5,6]$ were negative, then we would have
        $$ [1,2,5] \simeq [2,5,6] \not \simeq [1,2,6] \simeq [1,5,6],$$
        i.e., edges $\overrightarrow{u_1 u_2}$ and $\overrightarrow{u_5 u_6}$ would have a (usual) self-intersection, contrary to hypothesis.
        \item $[2,3,6]$ is positive. In fact, if $[2,3,6]$ were negative, then we would have
        $$ [1,2,3] \simeq [1,2,6] \not \simeq [2,3,6] \simeq [1,3,6],$$
        i.e.,
        $$ [2,3,6] \simeq [3,6,1] \not \simeq [2,3,1] \simeq [2,6,1],$$
        i.e., the edges $\overrightarrow{u_2 u_3}$ and $\overrightarrow{u_6 u_1}$ would have a (usual) self-intersection, contrary to hypothesis.
    \end{itemize}
    Now, since $[2,5,6]$ and $[2,3,6]$ are positive, we have that
    $$ [2,3,5] \simeq [3,5,6] \not \simeq [2,3,6] \simeq [2,5,6],$$
    i.e., the edges $\overrightarrow{u_2 u_3}$ and $\overrightarrow{u_5 u_6}$ have a (usual) self-intersection. But this contradicts our hypothesis on $Q$.
    
    Our conclusion then is that the determinant $[1,2,6]$ must be negative.
    
    Now, the fact that the determinant $[1,2,6]$ is negative actually implies that $[2,3,4]$ and $[4,5,6]$ are also negative when one considers a certain symmetry between the determinants of the six-vector configuration. Notice that the bijection of the sets of vertices of $Q$ into itself, defined by $u_i \mapsto u_{i+2} \mod 6$, preserves the sign of all determinants considered (for example, $[1,3,6] < 0$, and $[3,5,2] = [2,3,5] <0$). Geometrically, we are now looking to polygon $Q$ but with a cyclically different order: $Q'=[u_5,u_6,u_1,u_2,u_3,u_4]$. It follows that our previous argument (for the determinant $[1,2,6]$) works exactly the same but to the polygon $Q'$, implying that the determinant $[4,5,6]$ is negative. Applying this bijection one more time we can use the argument again but to polygon $Q''=[u_3,u_4,u_5,u_6,u_1,u_2]$, implying that the determinant $[2,3,4]$ is negative.
\end{proof}

Now, given a spherical polygon $Q$ in balanced position without self nor antipodal intersections, we must prove the existence of a vertex $v_i$ such that the resulting polygon $Q' = Q - v_i$ is also in balanced position and does not have self nor antipodal intersections. An idea to prove the existence of such vertex would be similar to the idea that we used to prove the existence of a nonessential, good vertex in a simple polygon in balanced position: recall that there are always $n-3$ nonessential vertices and $4$ good vertices, therefore there is at least one vertex in both subsets.

In our present case, however, there are three different features that the resulting polygon $Q'$ must have. Therefore a counting argument such as the one before would not necessarily work as easily now. A more hopeful strategy is to consider again two subsets of the vertices of $Q$:
\begin{itemize}
    \item vertices $v_i$ such that $Q'= Q - v_i$ is balanced, i.e., nonessential vertices;
    \item vertices $v_i$ such that $Q'= Q - v_i$ does not have self nor antipodal intersections. From now on, such vertices will be called \textit{excellent}.
\end{itemize}

We know already that the number of nonessential vertices is always greater or equal to $n-3$. And what about the minimal number of excellent vertices?

\begin{proposition}
\label{proposition_at_least_2_excellents}
    Let $Q=[u_1,...,u_n]$ ($n \geq 6$) be a balanced spherical polygon, without self nor antipodal intersections. Then $Q$ has at least 2 excellent vertices.
\end{proposition}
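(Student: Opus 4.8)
The plan is to recast the property ``excellent'' as the condition that an ear triangle be disjoint from both $Q$ and its reflected polygon $\overline Q$, to observe that a good vertex whose ear lies on the side of $Q$ avoiding $\overline Q$ is then automatically excellent, and finally to produce two such vertices by applying the triangulation technique behind Lemma \ref{lemma_resulting_polygon} to a single complementary disk of $Q$.

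For a vertex $u_i$, let $f_i=\overrightarrow{u_{i-1}u_{i+1}}$ be the new edge of $Q-u_i$ and let $\Delta_i$ be the spherical triangle bounded by $e_{i-1}$, $e_i$ and $f_i$. Since $Q$ is simple and has no antipodal intersection, $Q$ and $\overline Q$ are disjoint; moreover the only edge of $Q-u_i$ that is not already an edge of $Q$ is $f_i$, the only edge of $\overline{Q-u_i}=\overline Q-\overline{u_i}$ that is not already an edge of $\overline Q$ is $\overline{f_i}$, and the arcs $f_i$ and $\overline{f_i}$ are disjoint (being minimal arcs with non-antipodal endpoints, by our genericity assumption). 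A short case check, using that no edge of $Q$ and no edge of $\overline Q$ crosses $e_{i-1}$ or $e_i$, then shows that $Q-u_i$ has a self-intersection if and only if some edge of $Q$ meets $\operatorname{int}\Delta_i$, and that $Q-u_i$ has an antipodal intersection if and only if some edge of $\overline Q$ meets $\operatorname{int}\Delta_i$. Hence $u_i$ is excellent if and only if $\operatorname{int}\Delta_i\cap(Q\cup\overline Q)=\emptyset$; in particular every excellent vertex is good, and a good vertex $u_i$ is excellent exactly when $\Delta_i$ does not meet $\overline Q$.

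Since $Q\cap\overline Q=\emptyset$ and $\overline Q$ is connected, $\overline Q$ lies entirely in one of the two open disks $D_1,D_2$ of $\mathbb{S}^2\setminus Q$; say $\overline Q\subset D_1$. If $u_i$ is good with $\Delta_i\subset\overline{D_2}$, then $\operatorname{int}\Delta_i\subset D_2$ (it avoids $Q$ because $u_i$ is good), so $\operatorname{int}\Delta_i\cap\overline Q\subset D_2\cap D_1=\emptyset$, and thus $u_i$ is excellent. It therefore suffices to find two good vertices whose ear triangles lie in $\overline{D_2}$. For this I would rerun the argument of \cite{Gentil} that proves Lemma \ref{lemma_resulting_polygon}, but applied only to $\overline{D_2}$: this set is a closed topological disk with boundary the $n$-gon $Q$, and it is not contained in any closed hemisphere because $Q$ is balanced, so the triangulation technique of \cite{Gentil} can be used to subdivide $\overline{D_2}$ into ear triangles $\Delta_j$ of vertices $u_j$ of $Q$. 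The dual of such a triangulation of a disk is a tree with $n-2\ge 4$ nodes, hence with at least two leaves, and each leaf is an ear triangle $\Delta_i\subset\overline{D_2}$ whose interior meets no edge of $Q$; so $u_i$ is good with $\Delta_i\subset\overline{D_2}$. Two distinct such vertices are excellent, which proves the proposition.

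The step I expect to be the real obstacle is the triangulation of $\overline{D_2}$: on a disk that is not contained in a hemisphere one cannot simply take all minimal-arc diagonals, since a minimal arc between two vertices of $Q$ may leave the disk, so one genuinely needs the subdivision machinery of \cite{Gentil}; concretely, one should verify that the proof of Lemma \ref{lemma_resulting_polygon} in fact produces at least two good ears inside \emph{each} of the two disks bounded by $Q$, rather than only four good vertices overall. The remaining steps are the elementary bookkeeping carried out above.
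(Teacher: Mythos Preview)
Your proposal is correct and follows essentially the same route as the paper: define the interior of $Q$ as the complementary disk not containing $\overline Q$, triangulate it by vertices of $Q$, and take the two leaves of the dual tree as the excellent vertices. The paper simply asserts that this interior can be triangulated (deferring to \cite{Gentil}), whereas you flag this step explicitly; but the argument and the conclusion are the same.
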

\begin{proof}
    Since $Q$ does not have self nor antipodal intersections, both $Q$ and its reflection $\overline{Q}$ through the origin subdivide the sphere into three different regions. Here it makes sense to talk about the \textit{interior} of $Q$: it is one of the two regions determined by $Q$ which does not contain the reflected polygon $\overline{Q}$.

    Since $Q$ is balanced, the interior of $Q$ can be triangulated with vertices of $Q$. Since the dual graph of this triangulation (i.e., the graph whose vertices are dual to the triangles of the triangulation and whose edges are dual to the edges used in the triangulation but not contained in the polygon $Q$) is a nontrivial tree, it has at least two leaves, i.e., two triangles whose set of vertices are of the form $\{u_{i-1},u_i,u_{i+1}\}$ and $\{u_{j-1},u_j,u_{j+1}\}$. Since both triangles $\triangle(u_{i-1},u_i,u_{i+1})$ and $\triangle(u_{j-1},u_j,u_{j+1})$ are entirely contained in the interior of $Q$, both edges $\overrightarrow{u_{i-1} u_{i+1}}$ and $\overrightarrow{u_{j-1} u_{j+1}}$ do not intersect any other edge of $Q$ nor $\overline{Q}$. Therefore vertices $u_i$ and $u_j$ are excellent, by definition.
\end{proof}

At first sight we could hope to improve the lower bound on the number of nonessential or excellent vertices. A reasonable guess would be, for instance, that:
\begin{itemize}
    \item the lower bound for nonessential vertices could be improved from $n-3$ to $n-2$;
    \item the lower bound for excellent vertices could be improved from $2$ to $3$.
\end{itemize}

In this case, by the same counting argument as before, we would find our nonessential, excellent vertex. There are, however, balanced polygons without self or antipodal intersections but with exactly $n-3$ nonessential vertices. And, among these latter polygons, some of them do not have more than $3$ excellent vertices. Therefore, we must proceed differently.

Denote by $Ess(Q)$ and $Exc(Q)$ the the number of vertices of $P$ that are essential and excellent, respectively. Let us first rephrase what we know:  $Ess(Q) \leq 3$ and $Exc(Q) \geq 2$. Our strategy will be as follows: besides the cases where $Ess(Q)$ equals $0$ or $1$ (where the existence of a nonessential and excellent vertex follows immediately from proposition \ref{proposition_at_least_2_excellents}), we must study separately the cases where $Ess(Q)=2$ and $Ess(Q)=3$ and prove, in each case, the existence of an excellent vertex among the nonessential ones.

For what follows, we will need the following definitions. A \textit{(closed) lune} of the sphere is the intersection of two closed hemispheres $H_1$ and $H_2$, each of them determined by distinct spherical lines (i.e., great circles) $l_1$ and $l_2$. The intersection $l_1 \cap l_2$ is a pair of antipodal points, which are called the \textit{cusps} of the lune. The intersection of each of the spherical lines $l_1$ and $l_2$ with the boundary of the lune are called the \textit{sides} of the lune.

Now, let $u,v,w \in \mathbb{S}^2$ be three noncollinear points (in the spherical sense). Consider the lune which has as cusps the vertices $u$ and $\overline{u}$ and whose sides pass through $v$ and $w$. We will denote such lune by $L(u;v,w)$.

\begin{figure}
    \centering
    \includegraphics[scale=0.3]{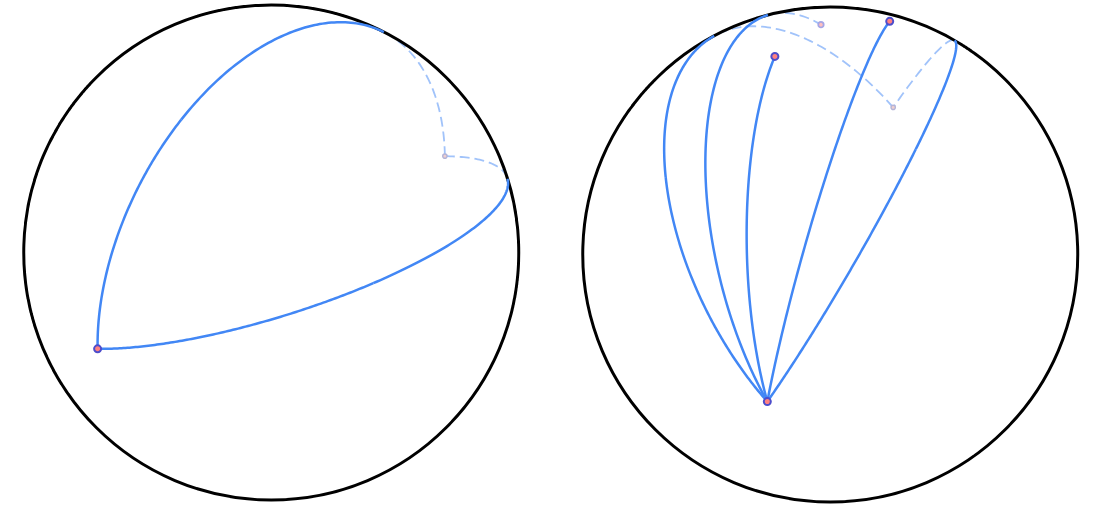}
    \caption{Examples of lunes. The spherical segment that connects any interior point of the lune to any of the two cusps is entirely contained in the closed lune.}
    \label{fig:lunes}
\end{figure}

\begin{lemma}
\label{lemma_lune}
    Let $L$ be a lune, $p$ any of its cusps and $q$ any point of the interior of the lune. Then the open segment $\overrightarrow{p q}$ is entirely contained in the interior of the lune.
\end{lemma}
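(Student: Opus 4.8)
The plan is to reduce the statement to a one-line sign computation using the linear description of a lune. Write $L = H_1 \cap H_2$, where $H_i = \{x \in \mathbb{S}^2 : \langle x, n_i \rangle \geq 0\}$ and $n_i \in \mathbb{S}^2$ is a unit normal to the great circle $l_i$ (here $\langle\,\cdot\,,\,\cdot\,\rangle$ is the Euclidean inner product). Since $l_1$ and $l_2$ are distinct, $n_1$ and $n_2$ are linearly independent, so $l_1 \cap l_2 = \{\pm p\}$ with $p$ the unit vector proportional to $n_1 \times n_2$; these are the cusps, and they are characterized by $\langle p, n_1 \rangle = \langle p, n_2 \rangle = 0$. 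The interior of $L$ is then precisely $\operatorname{int}(L) = \{x \in \mathbb{S}^2 : \langle x, n_1 \rangle > 0 \text{ and } \langle x, n_2 \rangle > 0\}$; in particular this set is nonempty (it contains $\tfrac{n_1+n_2}{|n_1+n_2|}$), and the cusps lie on the boundary of $L$, not in its interior.

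First I would check that a cusp $p$ and an interior point $q$ cannot be antipodal: if $q = -p$ then $\langle q, n_1\rangle = -\langle p, n_1\rangle = 0$, contradicting $q \in \operatorname{int}(L)$; also $q \neq p$ since $p \notin \operatorname{int}(L)$. Hence the minimizing spherical segment $\overrightarrow{p q}$ is unique and admits the parametrization $\gamma(t) = \dfrac{(1-t)p + tq}{|(1-t)p + tq|}$, with the open segment corresponding to $t \in (0,1)$. Next, for $t \in (0,1)$ and $i = 1,2$ I would compute
$$\langle (1-t)p + tq,\, n_i \rangle = (1-t)\langle p, n_i\rangle + t\langle q, n_i\rangle = t\,\langle q, n_i\rangle > 0,$$
using that $p$ is a cusp and $q \in \operatorname{int}(L)$. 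In particular $(1-t)p + tq \neq 0$, so $\gamma(t)$ is well defined, and since dividing by the positive scalar $|(1-t)p + tq|$ preserves the signs of these inner products, $\langle \gamma(t), n_i \rangle > 0$ for $i = 1,2$, i.e. $\gamma(t) \in \operatorname{int}(L)$. Letting $t$ range over $(0,1)$ gives exactly the claim.

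There is no genuinely hard step here. The only points requiring care are identifying the interior of the lune with the set of strict inequalities (so that, correctly, the cusps come out as boundary points), and making sure the chordal parametrization really traces the minimizing geodesic arc — which is why the small argument that $q \neq \pm p$ is placed before writing down $\gamma$. Everything else is the displayed sign computation.
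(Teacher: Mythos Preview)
Your proof is correct but takes a different route from the paper's. The paper argues by contradiction using incidence geometry: if the open segment $\overrightarrow{pq}$ left the interior of the lune it would meet one of the bounding great circles, say $l_1$, at a point other than the cusps $p,\overline{p}$; since $p$ already lies on $l_1$, the great circle through $p$ and $q$ would then share three distinct points with $l_1$, forcing the two great circles to coincide and hence $q\in l_1$, contradicting that $q$ is interior. Your argument is instead a direct linear computation: you describe the lune by the inequalities $\langle x,n_i\rangle\ge 0$, parametrize the geodesic as the normalized chord, and check that both inner products stay strictly positive along the open segment because $\langle p,n_i\rangle=0$ and $\langle q,n_i\rangle>0$. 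The paper's proof is shorter and coordinate-free, using only the fact that two distinct great circles meet in exactly two antipodal points; yours is more explicit and self-contained, and as a side benefit it cleanly handles the well-definedness of the minimizing arc by first excluding $q=\pm p$.
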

\begin{proof}
    Let $l_1$ and $l_2$ be the two spherical lines that determine the lune. Denote by $\overline{p}$ the antipodal point to $p$. Consider the line $l$ spanned by the points $p$ and $q$. If the open segment $\overrightarrow{p q}$ were not contained entirely in the interior of the lune, then it would have to intersect one of the sides of the lune, and consequently intersect one of the spherical lines (say $l_1$), at a point distinct from $p$ and $\overline{p}$. But then the spherical lines $l$ and $l_1$ would intersect at three different points. This would imply that the lines $l$ and $l_1$ are the same, contrary to the hypothesis that $q$ is an interior point of the lune.
\end{proof}

\section{The case where $Ess(Q)=3$}

We will first treat the case where $Ess(Q)=3$:

\begin{proposition}
\label{proposition_essentials_3_vertex}
    Let $Q=[u_1,...,u_n]$ ($n \geq 7$) be a spherical polygon in balanced position and without self nor antipodal intersections, with $Ess(P)=3$. Then there is at least one nonessential, excellent vertex.
\end{proposition}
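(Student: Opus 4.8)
The plan is to argue by contradiction. Suppose that no vertex of $Q$ is simultaneously nonessential and excellent. By Proposition \ref{proposition_at_least_2_excellents} we have $Exc(Q)\geq 2$, and by hypothesis $Ess(Q)=3$, say $Ess(Q)=\{u_a,u_b,u_c\}$; the contradiction hypothesis then forces $Exc(Q)\subseteq\{u_a,u_b,u_c\}$, so every nonessential vertex fails to be excellent. Since $n\geq 7$ there are at least four nonessential vertices, distributed along the three cyclic arcs of $(1,\dots,n)$ cut out by $a$, $b$, $c$. The goal is to derive a contradiction by exhibiting a nonessential excellent vertex.

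First I would describe the geometry forced by $Ess(Q)=\{u_a,u_b,u_c\}$. Any balanced quadruple must contain all essential vertices (if it omitted $u_a$ it would witness that $Q-u_a$ is still balanced), hence every balanced quadruple has the form $\{u_a,u_b,u_c,u_d\}$ with $u_d$ nonessential; by Proposition \ref{proposition_characterization} this happens exactly when $u_d$ lies in the open spherical triangle $\triangle(\overline{u_a},\overline{u_b},\overline{u_c})$. Call such $u_d$ the \emph{central} vertices; at least one exists, by Lemma \ref{lemma_pick_up}(a). For each essential vertex $u_x$ the set $Q-u_x$ lies in a closed hemisphere $H_x$, and refining $H_x$ to a suitable lune $L(u_x;\cdot,\cdot)$ (with cusps $u_x,\overline{u_x}$) confines every non-central nonessential vertex; the purpose of this step is to control, for each nonessential vertex $u_i$, where the open segment $\overrightarrow{u_{i-1}u_{i+1}}$ sits with respect to the interior of $Q$ and to $\overline Q$.

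Next I would re-run the triangulation argument from the proof of Proposition \ref{proposition_at_least_2_excellents}, now exploiting the freedom to choose the triangulation of the interior of $Q$. In any triangulation, each ear $\triangle(u_{i-1},u_i,u_{i+1})\subseteq \mathrm{int}(Q)$ has excellent tip $u_i$, so under the contradiction hypothesis every ear of every triangulation has its tip in $\{u_a,u_b,u_c\}$; hence the dual tree of every triangulation has at most three leaves and is, up to one node of degree three, a path. It then suffices to produce one triangulation with an ear at a nonessential vertex. I expect the decisive sub-claim to be that for a central vertex $u_d$ the open segment $\overrightarrow{u_{d-1}u_{d+1}}$ lies in $\mathrm{int}(Q)$, so that $u_d$ may be taken as an ear tip and is therefore excellent — contradicting $Exc(Q)\subseteq Ess(Q)$. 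In the degenerate situation where the central vertices are adjacent in $Q$ to essential vertices in a way that blocks this chord, I would instead use the lune confinement of the non-central nonessential vertices to relocate the ear onto one of them.

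The main obstacle is precisely this geometric sub-claim: showing that, for a suitable nonessential vertex, the chord joining its two polygon-neighbours stays inside $\mathrm{int}(Q)$ and misses $\overline Q$. This is where Lemma \ref{lemma_lune} and the determinant criteria of Propositions \ref{self_intersection_determinants} and \ref{antipodal_intersection_determinants} enter: one must exclude that the chord crosses an edge of $Q$ or of $\overline Q$, and this divides into a few cases according to how $u_{d-1}$ and $u_{d+1}$ fall among the three arcs determined by $u_a$, $u_b$, $u_c$. Organizing this bookkeeping around the three lunes $L(u_a;\cdot,\cdot)$, $L(u_b;\cdot,\cdot)$, $L(u_c;\cdot,\cdot)$ is the technical core of the argument; the remaining ingredients are the combinatorics of the dual tree and the counting $2\leq Exc(Q)\leq 3$ set up at the start.
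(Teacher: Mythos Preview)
Your proposal is an outline rather than a proof: the step you yourself flag as the ``main obstacle'' is never carried out. The ``decisive sub-claim'' --- that for a central vertex $u_d$ the chord $\overrightarrow{u_{d-1}u_{d+1}}$ lies in $\mathrm{int}(Q)$ --- is not true for an arbitrary central vertex (a nonessential vertex can perfectly well be reflex with respect to $\mathrm{int}(Q)$), and you give no mechanism for selecting a specific $u_d$ for which it does hold. Your fallback to ``non-central nonessential'' vertices is also problematic: in the situation $Ess(Q)=\{u_a,u_b,u_c\}$, the paper uses (and its subsequent lemmas rely on) the fact that \emph{every} nonessential vertex lies in $\triangle(\overline{u_a},\overline{u_b},\overline{u_c})$, so the non-central class is empty and cannot rescue the argument.

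The paper avoids triangulating $\mathrm{int}(Q)$ altogether. It first proves (Lemma~\ref{lemma_essentials_3_isolated}) that no two of $u_a,u_b,u_c$ are consecutive; since $n\ge 7$, some arc between two consecutive essentials, say from $u_i$ to $u_j$, contains at least two nonessential vertices. One then closes the path $u_i,u_{i+1},\dots,u_{j-1},u_j$ by the chord $\overrightarrow{u_j u_i}$ to form a simple polygon $\tilde Q$, and triangulates the region $R$ bounded by $\tilde Q$ that contains $\overline{u_k}$. The only leaf of this triangulation that can contain $\overline{u_k}$ is the one with base $\overrightarrow{u_j u_i}$, so some other leaf $\triangle_1$ has two sides that are edges of $Q$; its tip is nonessential. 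Excellence of this tip is then obtained not via $\mathrm{int}(Q)$ but via the lune decomposition: Lemma~\ref{lemma_polygon_in_union_of_lunes} gives $Q\subset U(Q)$ and $\overline Q\subset U(\overline Q)$, with the interiors of $U(Q)$ and $U(\overline Q)$ disjoint, so the third side of $\triangle_1$ (which lies in $\mathrm{int}\,U(Q)$ by Lemma~\ref{lemma_lune}) cannot meet $\overline Q$. This targeted triangulation of $R$, together with the $U(Q)$/$U(\overline Q)$ separation, is precisely the missing ingredient in your plan.
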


For the proof of \ref{proposition_essentials_3_vertex} we will need some lemmas. Since $Q$ is balanced, there are four vertices of $Q$ which are in balanced position. By hypothesis, exactly three of them are essential. Let $u_i$, $u_j$ and $u_k$ be these vertices (with $i<j<k$). Since all the remaining vertices are nonessential, they must all be contained in the triangular region spanned by $\overline{u}_i$, $\overline{u}_j$ and $\overline{u}_k$, i.e., the antipodes of the essential vertices. See figure \ref{fig:essentials_3_triangle}. Notice that this triangular region is the intersection of any two of the three following lunes (see figure \ref{fig:triangle_and_lunes} for an example):
\begin{itemize}
    \item the one whose cusps are $u_i$ and $\overline{u}_i$ and whose sides pass through $\overline{u}_j$ and $\overline{u}_k$, i.e., $L(u_i;\overline{u}_j,\overline{u}_k)$;
    \item the one whose cusps are $u_j$ and $\overline{u}_j$ and whose sides pass through $\overline{u}_k$ and $\overline{u}_i$, i.e., $L(u_j;\overline{u}_k,\overline{u}_i)$;
    \item and the one whose cusps are $u_k$ and $\overline{u}_k$ and whose sides pass through $\overline{u}_i$ and $\overline{u}_j$, i.e., $L(u_k;\overline{u}_i,\overline{u}_j)$.
\end{itemize}

\begin{figure}
    \centering
    \includegraphics[scale=0.3]{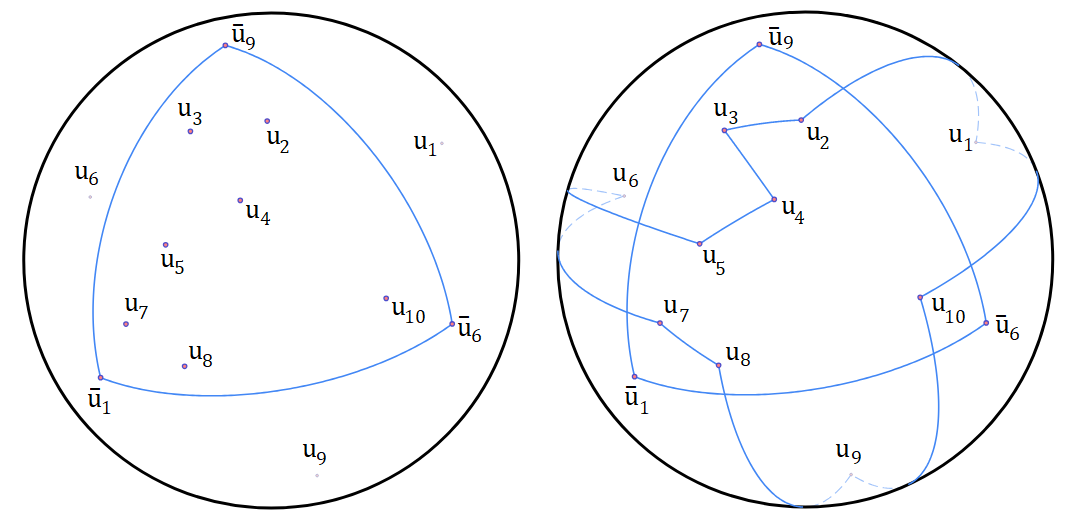}
    \caption{When $Ess(Q)=3$, all nonessential vertices are in the  triangular region spanned by the antipodes of the essential vertices.}
    \label{fig:essentials_3_triangle}
\end{figure}

\begin{figure}
    \centering
    \includegraphics[scale=0.2]{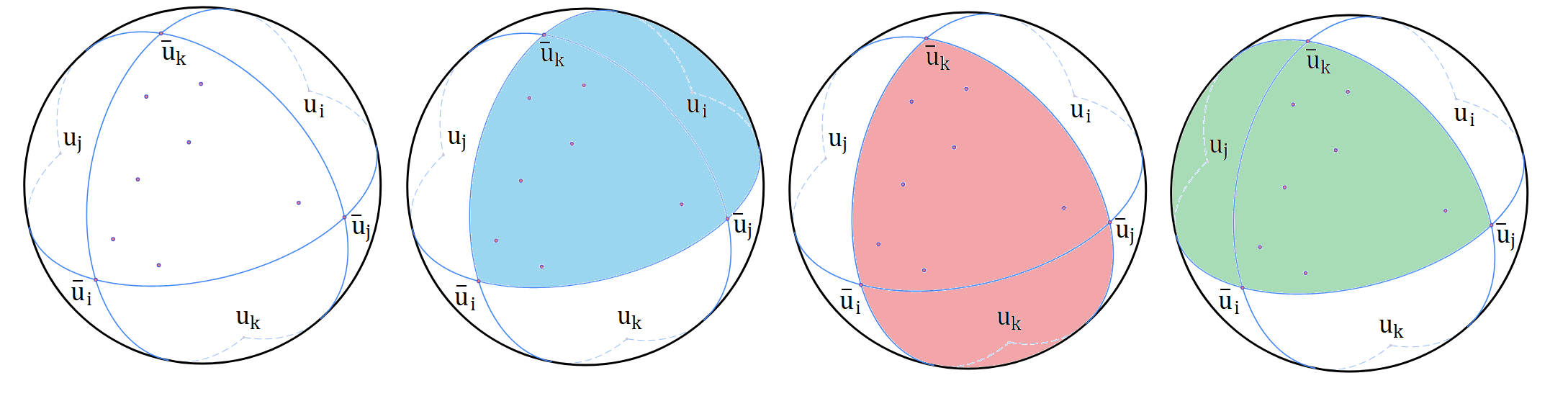}
    \caption{The triangle $\triangle(\overline{u}_i,\overline{u}_j,\overline{u}_k)$ as the intersection of the lunes $L(u_i;\overline{u}_j,\overline{u}_k)$ (in blue), $L(u_j;\overline{u}_k,\overline{u}_i)$ (in green) and $L(u_k;\overline{u}_j,\overline{u}_i)$ (in red).}
    \label{fig:triangle_and_lunes}
\end{figure}

\begin{lemma}
\label{lemma_essentials_3_isolated}
    Let $Q=[u_1,...,u_n]$ ($n \geq 6$) be a spherical polygon in balanced position and without self nor antipodal intersections, with $Ess(Q)=3$. Then the triple of essential vertices of $Q$ does not have a pair of consecutive vertices.
\end{lemma}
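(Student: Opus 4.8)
The plan is to argue by contradiction. Suppose the triple of essential vertices of $Q$ does contain a pair of consecutive vertices; I will produce a forbidden antipodal intersection of $Q$. After a cyclic relabeling of the vertices I may assume that the three essential vertices are $u_1$, $u_2$ and $u_m$ with $u_1,u_2$ consecutive and $3\le m\le n$. Recall the facts established just before this lemma: every nonessential vertex of $Q$ lies in the interior of the triangle $T=\triangle(\overline{u}_1,\overline{u}_2,\overline{u}_m)$, and $T$ is contained in each of the three lunes $L(u_1;\overline{u}_2,\overline{u}_m)$, $L(u_2;\overline{u}_m,\overline{u}_1)$ and $L(u_m;\overline{u}_1,\overline{u}_2)$.

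The argument hinges on the spherical line $\gamma$ spanned by the two consecutive essential vertices $u_1$ and $u_2$. Since $\gamma$ also contains $\overline{u}_1$ and $\overline{u}_2$ but not $\overline{u}_m$ (no three vertices are collinear), the triangle $T$ lies in the closed hemisphere bounded by $\gamma$ on the side of $\overline{u}_m$; consequently every nonessential vertex lies in the corresponding open hemisphere, while $u_m=-\overline{u}_m$ lies in the opposite open hemisphere. Among the two neighbours $u_{m-1},u_{m+1}$ of $u_m$ in $Q$, at least one — call it $u_{m'}$ — is nonessential: the only way a neighbour of $u_m$ can be essential is $u_{m-1}=u_2$ (forcing $m=3$) or $u_{m+1}=u_1$ (forcing $m=n$), and these cannot both occur since $n\ge 6$. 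Let $e^*$ be the edge of $Q$ joining $u_{m'}$ and $u_m$. Because $u_{m'}$ lies in the interior of the lune $L(u_m;\overline{u}_1,\overline{u}_2)$ (it is in $\mathrm{int}(T)$ and, by general position, not on the sides of that lune) and $u_m$ is a cusp of it, Lemma \ref{lemma_lune} gives that $e^*$ is contained in the closed lune $L(u_m;\overline{u}_1,\overline{u}_2)$. On the other hand, the endpoints $u_{m'}$ and $u_m$ of $e^*$ lie on opposite open sides of $\gamma$, so $e^*$ meets $\gamma$ in exactly one point $p$, and $p$ lies in the relative interior of $e^*$.

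It remains to locate $p$. The two sides of the lune $L(u_m;\overline{u}_1,\overline{u}_2)$ lie on the spherical lines spanned by $\{u_m,u_1\}$ and by $\{u_m,u_2\}$, which meet $\gamma$ in the antipodal pairs $\{u_1,\overline{u}_1\}$ and $\{u_2,\overline{u}_2\}$ respectively; keeping track of which arc of each line is a side of the lune, and of the cyclic order $u_1,u_2,\overline{u}_1,\overline{u}_2$ along $\gamma$, one finds that the intersection of $\gamma$ with the closed lune is precisely the minimal spherical segment from $\overline{u}_1$ to $\overline{u}_2$, i.e. the antipode of the edge $\overrightarrow{u_1u_2}$ of $Q$. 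Hence $p$ lies on that antipode, and (again using that no three vertices are collinear) $p$ is interior to both $e^*$ and the antipodal segment; thus $e^*$ intersects the antipode of the edge $\overrightarrow{u_1u_2}$. Since $u_{m'},u_m\notin\{u_1,u_2\}$, the edges $e^*$ and $\overrightarrow{u_1u_2}$ share no vertex, so they are non-consecutive and this is a genuine antipodal intersection of $Q$, contradicting the hypothesis.

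I expect the main obstacle to be the identification of $\gamma\cap L(u_m;\overline{u}_1,\overline{u}_2)$ in the third paragraph: one must keep careful track of which of the two arcs of each spherical line is a side of the lune, and which of the two arcs of $\gamma$ between $\overline{u}_1$ and $\overline{u}_2$ ends up inside it, and then verify that $p$ is genuinely interior to both edges rather than sitting at a shared endpoint or cusp. By comparison, the bookkeeping needed to select the nonessential neighbour $u_{m'}$ of $u_m$ when $m=3$ or $m=n$ (the cases where the essential triple is three consecutive vertices) is routine.
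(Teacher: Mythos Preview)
Your proof is correct and follows essentially the same approach as the paper's: both use Lemma \ref{lemma_lune} to trap an edge from the third essential vertex $u_m$ to a nonessential neighbour inside the lune $L(u_m;\overline{u}_1,\overline{u}_2)$, forcing it to cross the antipode $\overrightarrow{\overline{u}_1\,\overline{u}_2}$ of the edge $\overrightarrow{u_1u_2}$. The only difference is organisational: the paper splits into the two subcases ``two consecutive, one isolated'' and ``three consecutive'' and exhibits two antipodal intersections in each, whereas you handle both at once by selecting a single nonessential neighbour $u_{m'}$ of $u_m$ and producing one antipodal intersection, which already suffices for the contradiction.
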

\begin{proof}
    If there were at least one pair of consecutive vertices among the essential ones, then we would have one of the two situations:
    \begin{itemize}
        \item two of them are consecutive and the other one is isolated. After a cyclic rearrangement, we can relabel these vertices as $u_1$, $u_2$ and $u_i$, where $i \neq 3,n$. Since $u_i$ must be connected to vertices $u_{i-1}$ and $u_{i+1}$, which are in the lune $L(u_i;\overline{u}_1,\overline{u}_2)$, the respective edges must then be contained in this same lune, by lemma \ref{lemma_lune}. This implies that these edges intersect the open segment $\overrightarrow{\overline{u}_1 \overline{u}_2}$, which means that $Q$ has two antipodal intersections, contrary to hypothesis (see figure \ref{fig:essential_3_impossible_cases} on the left for an example);
        \item the three vertices are consecutive. After a cyclic rearrangement, we can relabel them as $u_1$, $u_2$ and $u_3$. Since $u_3$ must be connected to vertex $u_4$, the corresponding edge must be (by lemma \ref{lemma_lune} again) entirely contained in the lune $L(u_3;\overline{u}_1,\overline{u}_2)$, and therefore intersects the open segment $\overrightarrow{\overline{u}_1 \overline{u}_2}$. An analogous argument also implies that the edge connecting vertices $u_1$ and $u_n$ intersects the open segment $\overrightarrow{\overline{u}_2 \overline{u}_3}$. In other words, $Q$ has two antipodal intersections, contrary to hypothesis (see figure \ref{fig:essential_3_impossible_cases} on the right for an example).
    \end{itemize}
\end{proof}

\begin{figure}
    \centering
    \includegraphics[scale=0.3]{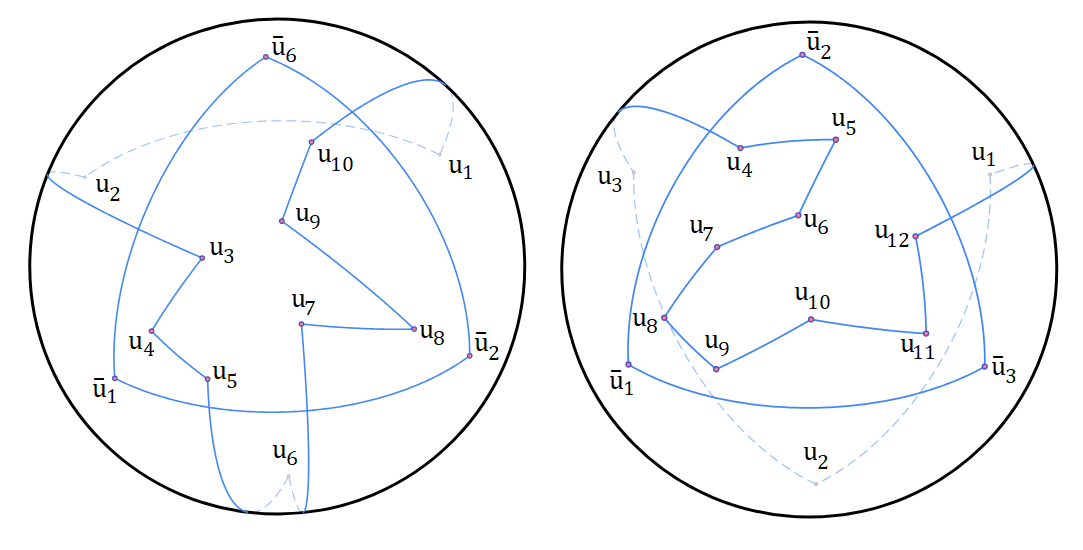}
    \caption{Spherical polygons where the triple of essential vertices always has at least a pair of consecutive vertices (on the left, the triple is $\{u_1,u_2,u_6\}$, while on the right it is $\{u_1,u_2,u_3\}$. In both examples, this implies at least two antipodal intersections.}
    \label{fig:essential_3_impossible_cases}
\end{figure}

Recall that the triangle $\triangle(\overline{u}_i,\overline{u}_j,\overline{u}_k)$ is the intersection of lunes $L(u_i;\overline{u}_j,\overline{u}_k)$, $L(u_j;\overline{u}_k,\overline{u}_i)$ and $L(u_k;\overline{u}_i,\overline{u}_j)$. Denote by $U(Q)$ the union of these three lunes. Notice that the boundary of $U(Q)$ is the polygon $[u_i,\overline{u}_k,u_j,\overline{u}_i,u_k,\overline{u}_j]$, which is symmetric. Therefore the reflection of $U(Q)$ is the union of other three lunes, namely, $L(\overline{u}_i;u_j,u_k)$, $L(\overline{u}_j;u_k,u_i)$ and $L(\overline{u}_k;u_i,u_j)$ (see figure \ref{fig:essentials_3_union_of_lunes} for an example).

\begin{figure}
    \centering
    \includegraphics[scale=0.2]{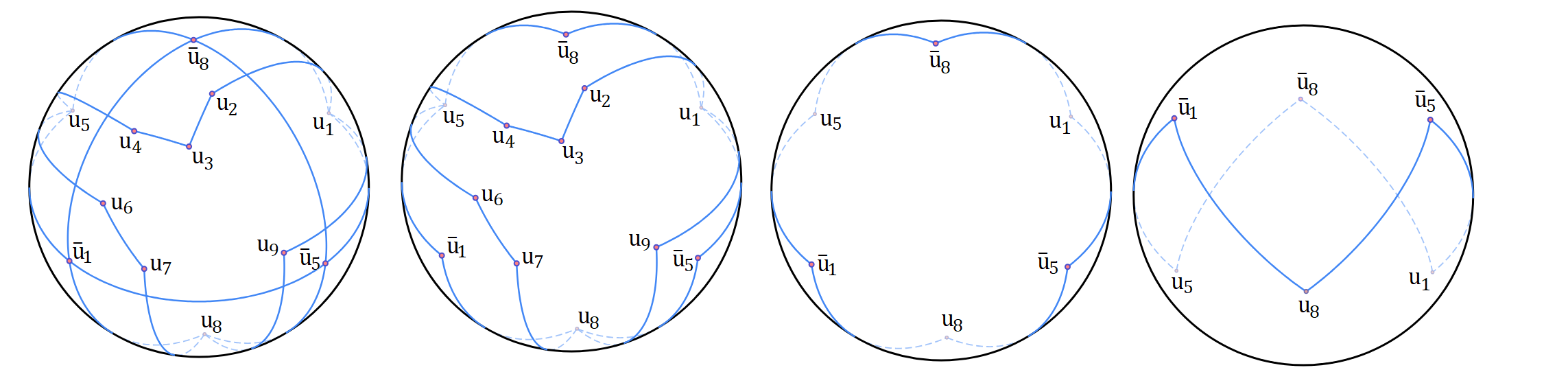}
    \caption{Given a polygon $Q$ with $Ess(Q)=3$, the region $U(Q)$ is the union of the lunes $L(u_i;\overline{u}_j,\overline{u}_k)$, $L(u_j;\overline{u}_k,\overline{u}_i)$ and $L(u_k;\overline{u}_i,\overline{u}_j)$.}
    \label{fig:essentials_3_union_of_lunes}
\end{figure}

\begin{lemma}
\label{lemma_polygon_in_union_of_lunes}
    Let $Q=[u_1,...,u_n]$ ($n \geq 7$) be a spherical polygon in balanced position and without self nor antipodal intersections, with $Ess(Q)=3$. Then
    \begin{itemize}
        \item[(a)] $Q \subset U(Q)$;
        \item[(b)] the vertices $u_i$,$u_j$ and $u_k$ are in the topological boundary of $U(Q)$, while all other points of $Q$ (including interior points of its edges) are in the topological interior of $U(Q)$.
    \end{itemize}
\end{lemma}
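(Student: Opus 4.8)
The plan is to prove both parts simultaneously by a case analysis over the edges of $Q$, using three ingredients. First, Lemma~\ref{lemma_essentials_3_isolated}, which tells us that no edge $\overrightarrow{u_m u_{m+1}}$ of $Q$ has both endpoints essential, i.e.\ in $\{u_i,u_j,u_k\}$. Second, Lemma~\ref{lemma_lune}, which controls open segments running from a cusp of a lune to an interior point of it. Third, the elementary observation that a closed lune, being the intersection of two closed hemispheres (each geodesically convex), is itself geodesically convex; moreover its interior is geodesically convex as well, and — by the standing general-position hypothesis — a geodesic segment through two vertices of $Q$ that both lie in a lune $L$ cannot meet $\partial L$, since a point of $\partial L$ lies on one of the two great circles bounding $L$, each of which already passes through two of $u_i,u_j,u_k$, so a vertex of $Q$ on that circle would put three vertices on a common great circle.

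Set $L_i := L(u_i;\overline u_j,\overline u_k)$, $L_j := L(u_j;\overline u_k,\overline u_i)$ and $L_k := L(u_k;\overline u_i,\overline u_j)$, so that $U(Q)=L_i\cup L_j\cup L_k$ and $T:=\triangle(\overline u_i,\overline u_j,\overline u_k)=L_i\cap L_j\cap L_k$. As noted before the statement, every nonessential vertex lies in $T$; none of them lies on $\partial T$, because a side of $T$ lies on the great circle through two of $u_i,u_j,u_k$ (general position again), and a vertex of $T$, say $\overline u_i$, is antipodal to the vertex $u_i$ of $Q$, which is ruled out since it would produce an antipodal intersection of incident edges. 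Hence every nonessential vertex lies in $\operatorname{int}(T)=\operatorname{int}(L_i)\cap\operatorname{int}(L_j)\cap\operatorname{int}(L_k)$, and in particular in $\operatorname{int}(L_i)$.

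Now take an edge $\overrightarrow{u_m u_{m+1}}$. If neither endpoint is essential, both lie in $\operatorname{int}(T)\subseteq\operatorname{int}(L_i)$; by convexity the closed edge lies in $L_i$, and by the general-position remark above the open edge lies in $\operatorname{int}(L_i)$. If exactly one endpoint is essential — by Lemma~\ref{lemma_essentials_3_isolated} this is the only other possibility — say $u_m=u_i$, with the other endpoint $w$ a nonessential vertex and hence $w\in\operatorname{int}(L_i)$, then $u_i$ is a cusp of $L_i$, so by Lemma~\ref{lemma_lune} the open edge $\overrightarrow{u_i w}$ lies in $\operatorname{int}(L_i)$, while its endpoints lie in $L_i$. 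In every case the whole edge lies in the appropriate lune $L_i$ (or $L_j$, $L_k$), hence in $U(Q)$, which gives (a); and every point of the edge other than its (at most one) essential endpoint lies in $\operatorname{int}(L_i)\subseteq\operatorname{int}(U(Q))$. Combined with the preceding paragraph (each nonessential vertex lies in $\operatorname{int}(T)\subseteq\operatorname{int}(U(Q))$), this shows that every point of $Q$ except $u_i,u_j,u_k$ lies in the interior of $U(Q)$.

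It remains to see that $u_i,u_j,u_k$ lie on $\partial U(Q)$. Each of them is a cusp of one of the three lunes, hence lies in $U(Q)$; and, as already observed, $\partial U(Q)$ is the symmetric hexagon $[u_i,\overline u_k,u_j,\overline u_i,u_k,\overline u_j]$, of which $u_i,u_j,u_k$ are vertices, so none of them is an interior point. (Alternatively, one checks directly that the open triangle antipodal to $\operatorname{int}(T)$ is disjoint from $U(Q)$ — because each antipodal lune $L(\overline u_m;u_j,u_k)$ meets $L_m$ only in their common pair of cusps — and that $u_m$ lies in the closure of that open triangle, whence $u_m\notin\operatorname{int}(U(Q))$.) The only delicate points are the two interior claims: checking that an edge joining two far-apart nonessential vertices does not graze $\partial L_i$, and that $u_i,u_j,u_k$ are not absorbed into $\operatorname{int}(U(Q))$ at a seam where two of the lunes meet — both handled by the general-position hypothesis together with the explicit description of $\partial U(Q)$.
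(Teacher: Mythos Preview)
Your proof is correct and follows essentially the same approach as the paper: case-split on edges according to how many endpoints are essential, use Lemma~\ref{lemma_lune} for the essential--nonessential case, and use convexity for the nonessential--nonessential case, with (a) following from (b). You add welcome detail the paper omits (invoking Lemma~\ref{lemma_essentials_3_isolated} explicitly, and justifying that $u_i,u_j,u_k\in\partial U(Q)$ via the hexagon description), but note that in the nonessential--nonessential case you can shortcut your general-position detour: both endpoints lie in $\operatorname{int}(T)$, which is itself geodesically convex as an intersection of open hemispheres, so the whole segment lies in $\operatorname{int}(T)\subseteq\operatorname{int}(U(Q))$ directly.
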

\begin{proof}
    (b) The first claim is immediate. Any other vertex of $Q$ is in the interior of $\triangle(\overline{u}_i,\overline{u}_j,\overline{u}_k)$ and hence in the interior of $U(Q)$. For an open edge $e$ of $Q$, it either:
    \begin{itemize}
        \item has as endpoints two nonessential vertices. Hence it is contained in the interior of $ \triangle(\overline{u}_i,\overline{u}_j,\overline{u}_k)$ and therefore in the interior of $U(Q)$;
        \item has as endpoints a nonessential vertex and a essential vertex. Hence it is, by lemma \ref{lemma_lune}, entirely contained in the interior of the corresponding lune and therefore in the interior of $U(Q)$.
    \end{itemize}
    (a) follows immediately from (b).
\end{proof}

Now we can finally prove proposition \ref{proposition_essentials_3_vertex}:
\begin{proof}
   (of proposition \ref{proposition_essentials_3_vertex}) By lemma \ref{lemma_essentials_3_isolated}, the three essential vertices are not consecutive. Denote these vertices by $u_i$, $u_j$ and $u_k$, where $i<j<k$ and $j \neq i+1$, $k \neq j+1$. In principle, the vertices which are connected to any of the essential vertices could be connected via a polygonal line to any of the others. In figure \ref{fig:essentials_3_pos_configurations} in the center we see a possible configuration, while in figure \ref{fig:essentials_3_imp_configurations} any of the connecting polygonal lines lead to the resulting polygon having either a self-intersection or being split into two different polygons. Therefore these vertices are connected in the following way: the vertex adjacent to $u_i$ which is nearest to $\overline{u}_k$ and the vertex adjacent to $u_j$ which is nearest to $\overline{u}_k$ will be connected to each other via a polygonal line; do the same but with indices $i$, $j$ and $k$ permuted (see figure \ref{fig:essentials_3_pos_configurations} right).

   \begin{figure}
    \centering
    \includegraphics[scale=0.2]{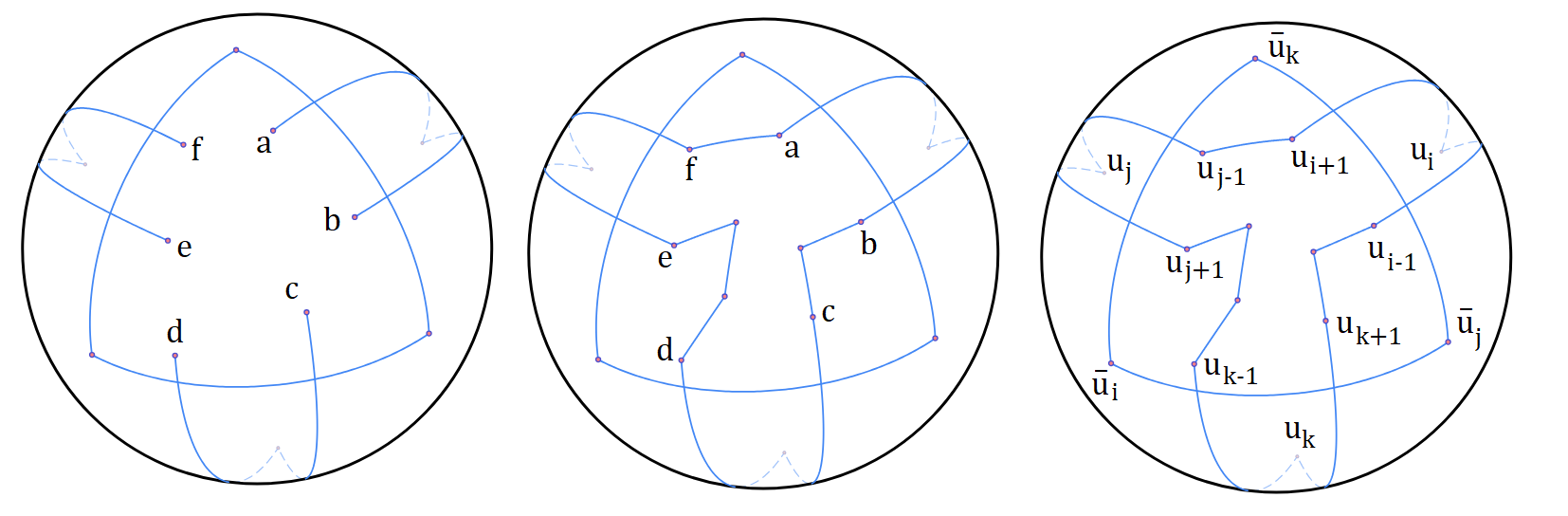}
    \caption{Given the vertices which are adjacent to the essential ones, the polygonal lines connecting them must be such that $a$ is connected to $f$, $e$ to $d$ and $c$ to $b$.}
    \label{fig:essentials_3_pos_configurations}
\end{figure}

\begin{figure}
    \centering
    \includegraphics[scale=0.2]{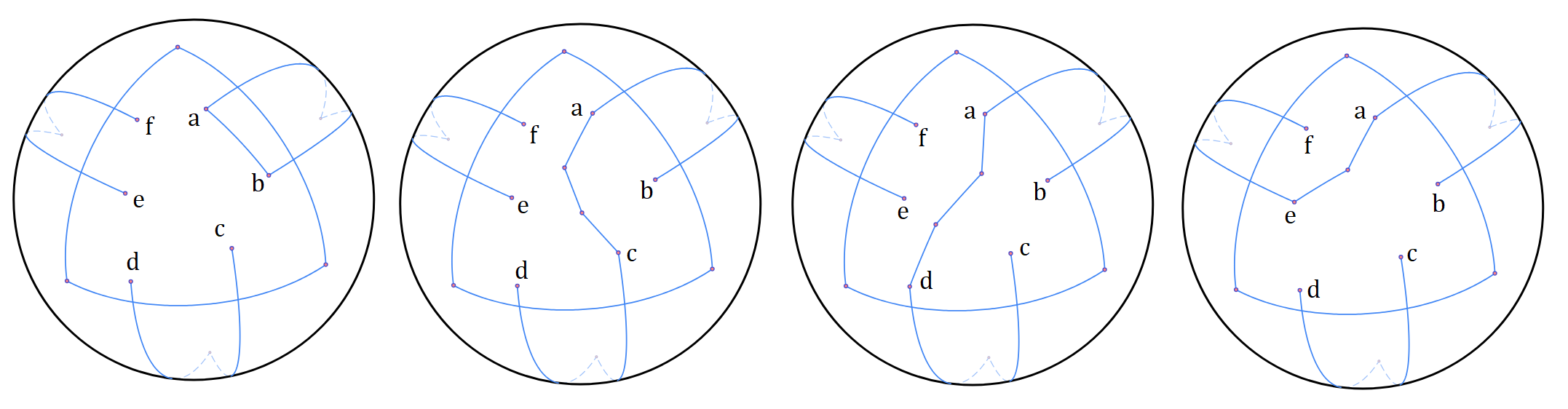}
    \caption{If vertex $a$ is connected via a polygonal line to vertex $b$, then $Q$ is split into two polygons. If $a$ is connected via a polygonal line to any of the vertices $c$, $d$ or $e$, then $Q$ will either have a self-intersection or becomes split into two polygons again.}
    \label{fig:essentials_3_imp_configurations}
\end{figure}

   It might happen that $u_{i+1}=u_{j-1}$, $u_{j+1}=u_{k-1}$ or $u_{k+1}=u_{i-1}$. However, since $Q$ has at least $7$ vertices, there is at least one pair of consecutive vertices inside the spherical triangle $\triangle(\overline{u}_i,\overline{u}_j,\overline{u}_k)$. We might assume without loss of generality that $u_{i+1}$ and $u_{j-1}$ are distinct and that therefore the polygonal line with vertices $u_i$, $u_{i+1}$, ..., $u_{j-1}$ and $u_j$ has at least four vertices.
   
   Consider then the (closed) polygon $\Tilde{Q}=[u_i,u_{i+1},...,u_{j-1},u_j]$ and the region $R$ enclosed by it which contains the vertex $\overline{u}_k$. Since $\Tilde{Q}$ has at least $4$ vertices, any triangulation of $R$ has at least two triangles. The dual graph of this triangulation is then a nontrivial tree and has therefore at least two leaves $\triangle_1$ and $\triangle_2$. Because all vertices $u_{i+1}$,...,$u_{j-1}$ are in the same triangular region, all the edges added to form the triangulation must be contained in one of the two lunes (if its endpoints are nonessential, it is inside the triangular region by convexity; if one of the endpoints is essential, the claim follows from lemma \ref{lemma_lune}) and therefore the only triangle that contains the vertex $\overline{u}_k$ is the one with side $\overrightarrow{u_j u_i}$, while the other triangles are inside of one of the two lunes. Therefore at least one leaf of this triangulation (say $\triangle_1$) has two of its sides as edges of the original polygon $Q$. Figures \ref{fig:essentials_3_example_1_triangulation} and \ref{fig:essentials_3_example_2_triangulation} show some examples.

   \begin{figure}
       \centering
       \includegraphics[scale=0.17]{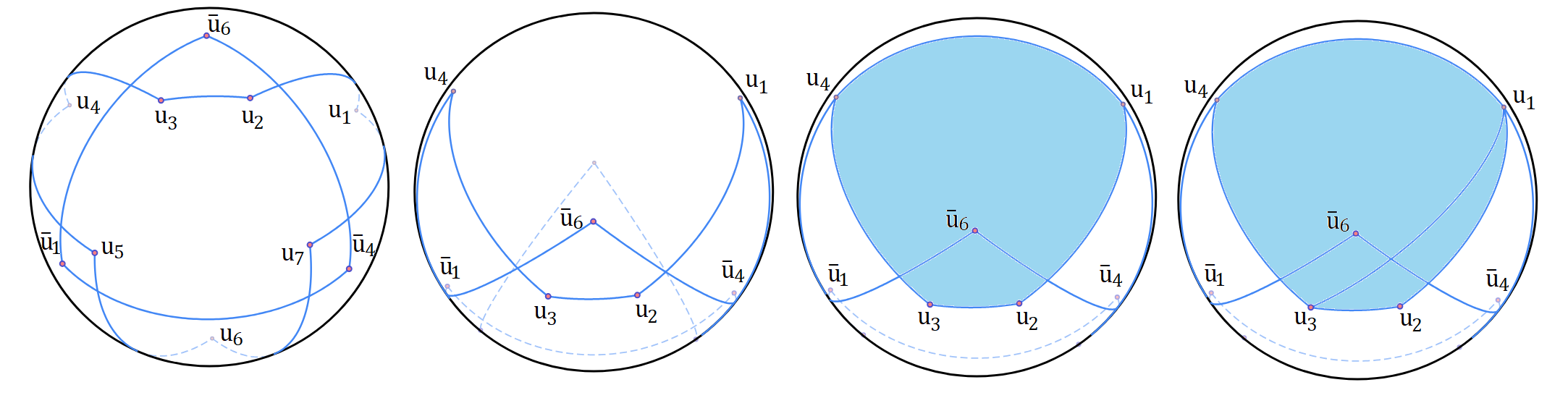}
       \caption{A polygon $Q$ with $7$ vertices and its corresponding polygon $\Tilde{Q}$ with $4$ vertices: in the given triangulation we have $u_2$ as an excellent vertex.}
       \label{fig:essentials_3_example_1_triangulation}
   \end{figure}

   \begin{figure}
       \centering
       \includegraphics[scale=0.17]{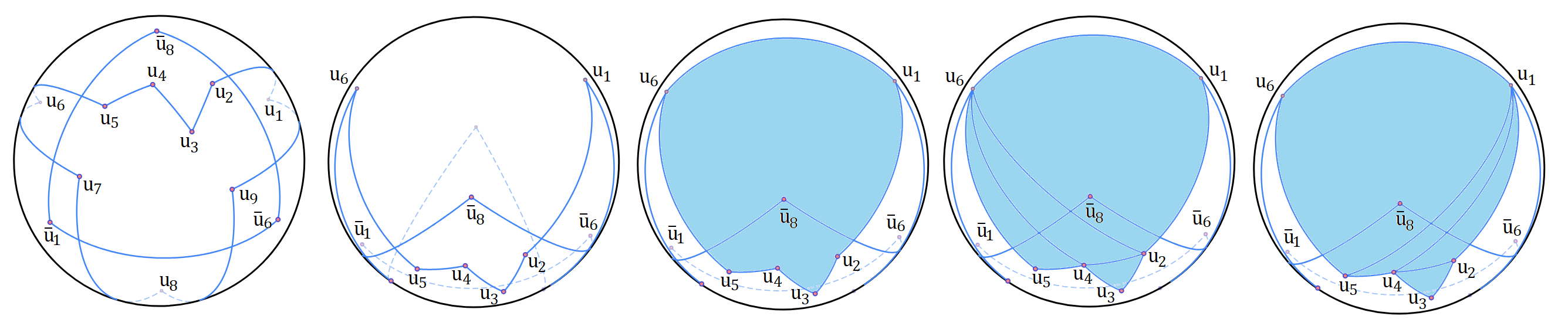}
       \caption{A polygon $Q$ with $9$ vertices and its corresponding polygon $\Tilde{Q}$ with $6$ vertices. Here we depict two different ways of triangulating the region $R$: in the first one we have $u_3$ and $u_5$ as excellent vertices, while in the second one we have only $u_3$ as an excellent vertex.}
       \label{fig:essentials_3_example_2_triangulation}
   \end{figure}

   We claim that the intermediate vertex of $\triangle_1$ (i.e., the vertex of $\triangle_1$ connected to the edges of $Q$) is an excellent vertex of $Q$. Consider the side of $\triangle_1$ that is not an edge of $Q$ and denote its open segment by $s$. It is clear that $s$ does not intersect any other edge of $Q$. Now, $s$ has as endpoints either:
   \begin{itemize}
       \item two nonessential vertices of $Q$ and therefore is entirely contained in the interior of $U(Q)$, by the same argument in the proof of lemma \ref{lemma_polygon_in_union_of_lunes}(b);
       \item a nonessential vertex and an essential vertex of $Q$, in which case the same edge must be in the interior of $U(Q)$, again by the same argument in the proof of lemma \ref{lemma_polygon_in_union_of_lunes}(b).
   \end{itemize}
   Since $\overline{Q}$ satisfies the same hypotheses of $Q$ (since it is its reflected image), lemma \ref{lemma_polygon_in_union_of_lunes}(a) applied to $\overline{Q}$ implies that $\overline{Q}$ is contained in $U(\overline{Q})$ (i.e., the reflection of $U(Q)$), which is disjoint from $int \text{ } U(Q)$. Therefore $s$ cannot intersect any edge of $\overline{Q}$.

   Since $s$ does not intersect no other edge of $Q$ or $\overline{Q}$, the intermediate vertex of $\triangle_1$ is excellent. Because this vertex is not one of the essential ones, the proposition is proved.
\end{proof}

\section{The case where $Ess(Q)=2$}
\label{case_ess_q_2}

Now we turn our attention to balanced spherical polygons without self nor antipodal intersections with $Ess(Q)=2$. The proof of the existence of a nonessential, excellent vertex turns out to be more involved in this case.

\begin{proposition}
\label{proposition_essentials_2_vertex}
    Let $Q=[u_1,...,u_n]$ ($n \geq 7$) be a spherical polygon in balanced position and without self nor antipodal intersections, with $Ess(P)=2$. Then there is at least one nonessential, excellent vertex.
\end{proposition}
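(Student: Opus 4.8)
The plan is to mimic the structure of the proof of Proposition \ref{proposition_essentials_3_vertex}, but now working with a \emph{lune} rather than a triangular region as the container for the nonessential vertices. Since $Ess(Q)=2$, let $u_i$ and $u_j$ be the two essential vertices among the four vertices of $Q$ that are in balanced position; the other two of those four, say $u_p$ and $u_q$, are nonessential, as are all the remaining vertices. The key structural fact I would establish first is an analog of Lemma \ref{lemma_polygon_in_union_of_lunes}: all nonessential vertices (hence all of $Q$ except $u_i$ and $u_j$) lie in the interior of the lune $L$ whose cusps are $\overline{u}_i$, $u_i$... more precisely, I would show that deleting $u_i$ and $u_j$ from a balanced position forces the remaining vertices into the interior of a specific lune determined by $\overline{u}_i$, $\overline{u}_j$ and the pair $\{u_p,u_q\}$. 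Concretely: since $\{u_i,u_j,u_p\}$ and $\{u_i,u_j,u_q\}$ must each fail to be balanced once we drop $u_i$ or $u_j$ — wait, more carefully, nonessentiality of a vertex $u_\ell$ ($\ell\neq i,j$) means $Q-u_\ell$ is still balanced, so $\{u_i,u_j, u_p, u_q\}\setminus\{u_\ell\}$ reasoning doesn't directly apply; instead I use that $u_\ell$ being nonessential and $\{u_i,u_j,u_p,u_q\}$ balanced with exactly $u_i,u_j$ essential forces $u_\ell$ into the lune bounded by the two great circles through $\{\overline{u}_i,\overline{u}_j\}$ and through... I would pin this down by the determinant characterization (Proposition \ref{proposition_characterization}) exactly as in the $Ess=3$ case: all nonessential vertices live in a common lune $L = L(\cdot;\cdot,\cdot)$, and I set $U(Q)$ to be the union of the two lunes $L(u_i;\overline{u}_j, *)$-type regions whose intersection is $L$, with the two essential vertices on $\partial U(Q)$ and everything else (including edge interiors, via Lemma \ref{lemma_lune}) in $\operatorname{int} U(Q)$. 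Note $U(Q)$ here is symmetric and disjoint from its reflection on its interior, just as before.

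Next I would run the analog of Lemma \ref{lemma_essentials_3_isolated}: the two essential vertices $u_i,u_j$ cannot be consecutive. If they were, say $u_i,u_{i+1}$ essential with all others nonessential in $\operatorname{int} L$, then the edge from $u_{i+1}$ to $u_{i+2}$ must (by Lemma \ref{lemma_lune}, since $u_{i+1}$ is a cusp of the relevant lune and $u_{i+2}$ is interior) lie in that lune and hence cross the side of $L$ opposite that cusp — producing an antipodal intersection with the edge emanating from $u_i$ on the other side, contradicting the hypothesis. With $u_i$ and $u_j$ non-consecutive, the cyclic sequence of vertices is split into two arcs $u_i, u_{i+1},\dots, u_{j}$ and $u_j, u_{j+1},\dots, u_i$, each with at least one interior (nonessential) vertex, and since $n\geq 7$ at least one of the two arcs has at least four vertices. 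Form the closed polygon $\tilde Q$ from the longer arc together with the chord $\overrightarrow{u_j u_i}$, and consider the region $R$ it bounds that sits inside the lune picture. Triangulate $R$; its dual graph is a nontrivial tree, so it has at least two leaves, and — arguing as in the $Ess=3$ case that every diagonal added in the triangulation stays inside $L$ (endpoints both nonessential: convexity inside the triangular-or-lune region; one endpoint essential: Lemma \ref{lemma_lune}) — at most one leaf triangle can contain the chord $\overrightarrow{u_j u_i}$, so at least one leaf triangle has two of its sides equal to edges of $Q$.

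The intermediate vertex of such a leaf triangle, call it $u_\ell$, is then the candidate excellent and nonessential vertex. That it is nonessential is clear since $\ell\neq i,j$. That it is excellent: the third side $s$ of the leaf triangle (the one that is not an edge of $Q$) clearly meets no other edge of $Q$ inside $R$, and no edge of $Q$ outside $R$ either since $s\subset R\subset L\subset \operatorname{int}U(Q)$ away from the cusps; and $s$ cannot meet any edge of $\overline Q$ because $\overline Q\subset U(\overline Q)=\overline{U(Q)}$, which is disjoint from $\operatorname{int}U(Q)$ — exactly the endgame of the proof of Proposition \ref{proposition_essentials_3_vertex}. Hence $Q - u_\ell$ has neither self nor antipodal intersections, and $u_\ell$ is nonessential, completing the proof.

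The main obstacle I anticipate is the very first step: correctly identifying the single lune $L$ (and the accompanying region $U(Q)$) that plays the role the triangle $\triangle(\overline u_i,\overline u_j,\overline u_k)$ played when $Ess(Q)=3$, and proving rigorously from Proposition \ref{proposition_characterization} that \emph{every} nonessential vertex lies in $\operatorname{int}L$. When three vertices were essential their antipodes bounded a genuine triangle; with only two essential vertices the "forbidden'' region for the rest of $Q$ is a priori just the complement of the two closed hemispheres associated with dropping $u_i$ resp. $u_j$, which is a lune, but one still has to check that the two nonessential vertices $u_p,u_q$ from the balanced quadruple cut this lune down consistently and that the symmetric region $U(Q)$ genuinely has $u_i,u_j$ on its boundary and is disjoint from its own reflection. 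Once that geometric picture is nailed down, the triangulation/leaf argument is essentially a transcription of the $Ess(Q)=3$ case.
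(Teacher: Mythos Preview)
Your plan has a genuine gap, and it is precisely the one you flag as ``the main obstacle'': the single lune $L$ and the region $U(Q)$ with $Q\subset U(Q)$, $\overline{Q}\subset U(\overline{Q})$, and $\operatorname{int}U(Q)\cap U(\overline{Q})=\emptyset$ do not exist in the $Ess(Q)=2$ case. With only two essential vertices $u_i,u_j$, the nonessential vertices do not sit in one lune but in the union of \emph{two} triangular regions $\triangle(\overline u_i,\overline u_j,\overline u_k)$ and $\triangle(\overline u_i,\overline u_j,\overline u_l)$, each containing at least two of them; the polygon genuinely passes through both triangles. There is no symmetric ``pinwheel'' region analogous to the three-lune $U(Q)$ of the $Ess=3$ case that separates $Q$ from $\overline Q$. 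The paper says this explicitly: it discusses exactly your ``triangulate the longer arc and use disjointness from $\overline Q$'' idea and exhibits a polygon (Figure~\ref{fig:failed_triangulation}) for which every diagonal of the triangulation meets $\overline Q$, because the leaf triangle ends up containing $\overline u_j$. So the endgame you borrow from Proposition~\ref{proposition_essentials_3_vertex} does not go through.

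The paper's proof is accordingly much more involved. It first proves several structural lemmas (the essentials are not consecutive; all four neighbours $u_{i\pm1},u_{j\pm1}$ lie in the \emph{same} one of the two triangles; the convex hulls of $\tilde Q:=Q-u_i-u_j$ and its reflection are disjoint). Then it splits into cases. If the arc from $u_i$ to $u_j$ has an interior vertex besides $u_{i+1}$, a triangulation argument works, but excellence is proved not via any $U(Q)$ but by showing that if the new edge $e$ met some $\overline f\subset\overline Q$ then the endpoints of $e$ and $f$ would be four vertices in balanced position with at most one essential, contradicting $Ess(Q)=2$. If instead $u_{i+1}=u_{j-1}$, the paper passes to $\tilde Q=Q-u_i-u_j$, uses a separate lemma giving at least three good vertices of a non-balanced simple polygon, and then handles the worst subcase (when those three good vertices are exactly $u_{i-1},u_{i+1},u_{j+1}$) by a star-shaped/``flip'' argument on the internal triangulation of $\tilde Q$. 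None of this is a transcription of the $Ess=3$ proof.
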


As it was the case for proposition \ref{proposition_essentials_3_vertex}, we will also need some lemmas for the proof of proposition \ref{proposition_essentials_2_vertex}. Since $Q$ is balanced, there are four vertices of $Q$ in balanced position. By hypothesis, exactly two of them are essential. Let $u_i$ and $u_j$ be the essential vertices (with $i<j$) and $u_k$ and $u_l$ be nonessential ones among these four vertices (the indices $k$ and $l$ do not hold any specific position with respect to $i$ and $j$). All the nonessential vertices must be contained in the triangular regions $\triangle(\overline{u_i},\overline{u_j},\overline{u_k})$ and $\triangle(\overline{u_i},\overline{u_j},\overline{u_l})$, and each of these regions must have at least two vertices in its interior (for if one of them had only one, then this sole vertex would be a third essential vertex, contradicting our assumption). Figure \ref{fig:essentials_2_examples} shows some examples. 

Moreover, by the next lemma, the spherical convex hull of the set of nonessential vertices must be contained in the union of these two triangular regions. Figure \ref{fig:essentials_2_valid_convex_hull} shows a polygon for which this latter property holds, while figure \ref{fig:essentials_2_invalid_convex_hull} shows a polygon for which this property does not hold.

\begin{lemma}
\label{lemma_nonessentials_are_in_union_of_triangles}
    Let $Q=[u_1,...,u_n]$ ($n \geq 7$) be a spherical polygon in balanced position and without self nor antipodal intersections, with $Ess(P)=2$. Then the spherical convex hull of the set of nonessential vertices must be contained in the union of $\triangle(\overline{u_i},\overline{u_j},\overline{u_k})$ and $\triangle(\overline{u_i},\overline{u_j},\overline{u_l})$.

    In particular, the spherical convex hull of the nonessential vertices does not contain the antipodal points of the essential vertices.
\end{lemma}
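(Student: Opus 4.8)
The plan is to locate the two essential vertices by their separating great circles, reduce the statement to a single claim about how the spherical convex hull of the nonessential vertices meets the great circle through $u_i$ and $u_j$, and only at the last step invoke the no-intersection hypotheses. Throughout write $N$ for the set of nonessential vertices of $Q$ and $\ell$ for the great circle through $u_i$ and $u_j$.

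\emph{Setup.} Since $u_i$ is essential, $Q\setminus\{u_i\}$ lies in a closed hemisphere; choose a tight one, $H_i$, so that its bounding great circle passes through two vertices of $Q\setminus\{u_i\}$ and, by general position, not through $u_i$. Then $u_i$ lies in the open complementary hemisphere, while $\overline{u_i}\in\mathrm{int}\,H_i$. Define $H_j$ analogously for $u_j$. Every nonessential vertex lies both in $Q\setminus\{u_i\}$ and in $Q\setminus\{u_j\}$, hence $N\subseteq H_i\cap H_j$, which is a convex lune, so the spherical convex hull $\mathrm{conv}(N)$ is contained in $H_i\cap H_j$. The ``in particular'' clause follows at once, since $\overline{u_i}\notin H_j$ and $\overline{u_j}\notin H_i$.

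\emph{Reduction.} By Propositions \ref{proposition_characterization} and \ref{antipodal_intersection_determinants}, each of $\triangle_k:=\triangle(\overline{u_i},\overline{u_j},\overline{u_k})$ and $\triangle_l:=\triangle(\overline{u_i},\overline{u_j},\overline{u_l})$ is an intersection of three closed half-spaces, hence convex; moreover the two triangles share a common edge lying on $\ell$ (the minor arc from $\overline{u_i}$ to $\overline{u_j}$) and lie on opposite sides of $\ell$ --- here one uses that $\{u_i,u_j,u_k,u_l\}$ is balanced. Split $N=N_k\sqcup N_l$ according to the two open sides of $\ell$; each side contains at least two points of $N$ (as observed before the lemma), and by convexity $\mathrm{conv}(N_k)\subseteq\triangle_k$ and $\mathrm{conv}(N_l)\subseteq\triangle_l$. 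Consequently the only points of $\mathrm{conv}(N)$ that could fail to lie in $\triangle_k\cup\triangle_l$ sit on geodesics joining $\triangle_k$ to $\triangle_l$, and such a geodesic leaves $\triangle_k\cup\triangle_l$ precisely when it crosses $\ell$ outside the common edge. Thus it suffices to show that $\mathrm{conv}(N)\cap\ell$ is contained in that common edge, equivalently that $\mathrm{conv}(N)$ never reaches the complementary arc of $\ell$, the one carrying $u_i$ and $u_j$.

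\emph{The crux.} Intersecting with $H_i$ and $H_j$ already forces $\mathrm{conv}(N)\cap\ell$ to avoid $u_i$ and $u_j$ themselves and to be a subarc lying in one of two complementary arcs of $\ell$ --- the ``good'' one, contained in the common edge, or a ``bad'' one abutting the $u_iu_j$-side. Excluding the bad case is the heart of the matter, and is the only point where the hypotheses that $Q$ has no self-intersections and no antipodal intersections are needed (the hemisphere arguments above used neither). I would argue by contradiction: a bad crossing would be realized by a geodesic from some $a\in N_k$ to some $b\in N_l$; tracing the two polygonal arcs of $Q$ between $u_i$ and $u_j$, confining the edges incident to the essential vertices to the appropriate lunes via Lemma \ref{lemma_lune}, one should be able to force an edge of $Q$ to meet the reflected polygon $\overline Q$, i.e.\ produce an antipodal intersection (or, for a crossing too near an essential vertex, an ordinary self-intersection), contradicting the hypotheses. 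I expect the main obstacle to be exactly this: identifying which polygonal configuration a bad crossing produces and extracting the forbidden intersection from it; the rest is routine bookkeeping with hemispheres, determinant signs and convexity.
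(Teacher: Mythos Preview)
Your setup and the ``in particular'' clause are fine, but the crux is both unproved and aimed in the wrong direction. You assert that excluding the bad crossing ``is the only point where the hypotheses that $Q$ has no self-intersections and no antipodal intersections are needed,'' and then leave that step as a hope (``one should be able to force\ldots''). In fact neither of those hypotheses is needed at all for this lemma; it follows from $Ess(Q)=2$ alone, and the paper's argument is a one-liner that you bypassed.

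Here is what you missed. Rather than looking at where $\mathrm{conv}(N)$ meets the great circle $\ell$ through $u_i,u_j$, look at the four \emph{outer} boundary arcs of the quadrilateral $\triangle_k\cup\triangle_l$, namely $\overline{u}_i\overline{u}_k$, $\overline{u}_k\overline{u}_j$, $\overline{u}_j\overline{u}_l$, $\overline{u}_l\overline{u}_i$. If $\mathrm{conv}(N)$ is not contained in the union, then some geodesic between two nonessential vertices $a,b\in N$ crosses one of these arcs, say $\overline{u}_j\overline{u}_l$. But that arc is the antipode of the segment $u_ju_l$, so by Proposition~\ref{antipodal_intersection_determinants} the set $\{a,b,u_j,u_l\}$ is in balanced position. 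This quadruple does not contain $u_i$, and three of its members ($a$, $b$, $u_l$) are nonessential; hence $Q\setminus\{u_i\}$ is still balanced, contradicting that $u_i$ is essential. No polygon-tracing, no Lemma~\ref{lemma_lune}, no appeal to simplicity or absence of antipodal intersections. Your reduction to the behaviour on $\ell$ was a detour; the right boundary to watch is the outer one, and crossing it immediately contradicts $Ess(Q)=2$ via the determinant characterization of balanced quadruples.
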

\begin{proof}
    If it were not the case, then the spherical convex hull of the nonessential vertices would intersect one of the following open spherical segments: $\overrightarrow{\overline{u}_i \overline{u}_k}$, $\overrightarrow{\overline{u}_k \overline{u}_j}$, $\overrightarrow{\overline{u}_j \overline{u}_l}$ or $\overrightarrow{\overline{u}_l \overline{u}_i}$. But this would mean that a pair of nonessential vertices intersects one of these segments, say $\overrightarrow{\overline{u}_j \overline{u}_l}$, which implies that these two nonessential vertices, together with $u_j$ and $u_l$, are in balanced position. In other words, $u_i$ would be nonessential, contrary to hypothesis.
\end{proof}

\begin{figure}
    \centering
    \includegraphics[scale=0.25]{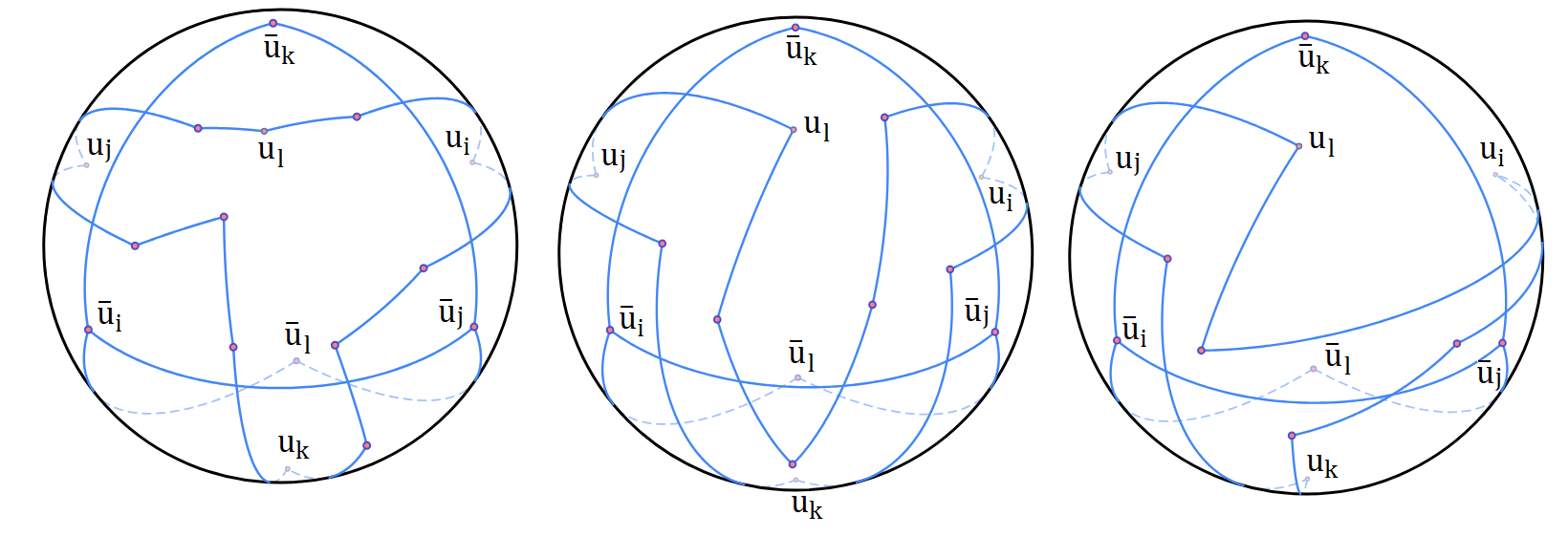}
    \caption{Examples of polygons for which the number of essential vertices is exactly $2$.}
    \label{fig:essentials_2_examples}
\end{figure}

\begin{figure}
    \centering
    \includegraphics[scale=0.25]{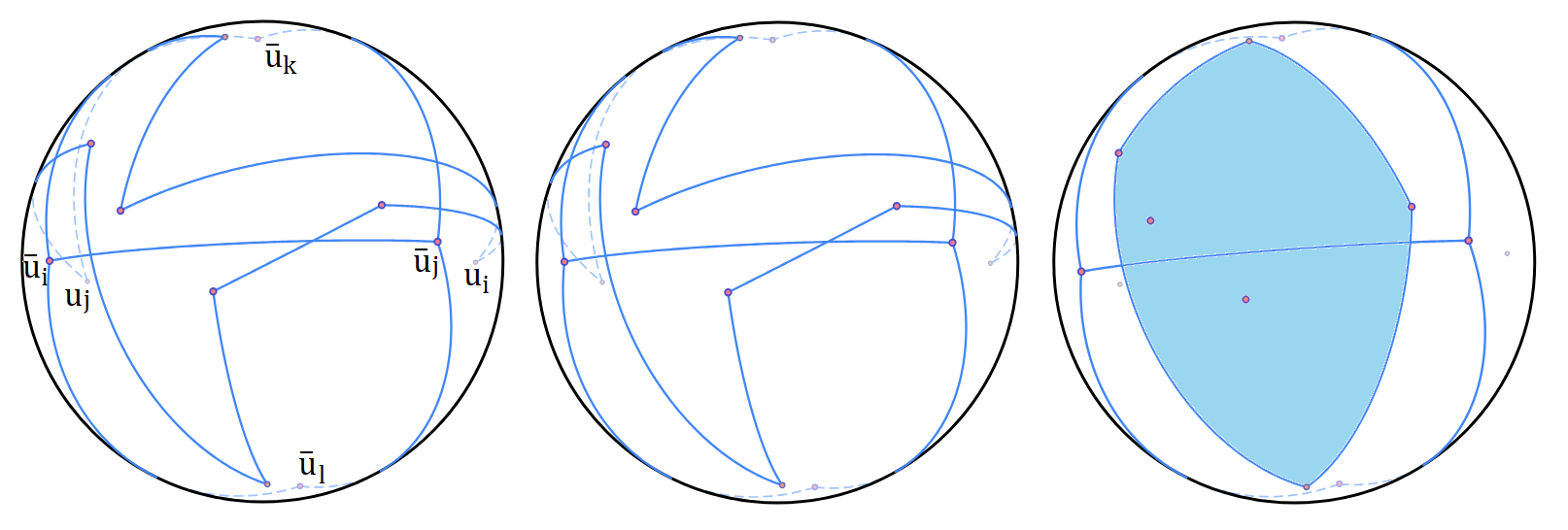}
    \caption{Valid spherical convex hull}
    \label{fig:essentials_2_valid_convex_hull}
\end{figure}

\begin{figure}
    \centering
    \includegraphics[scale=0.25]{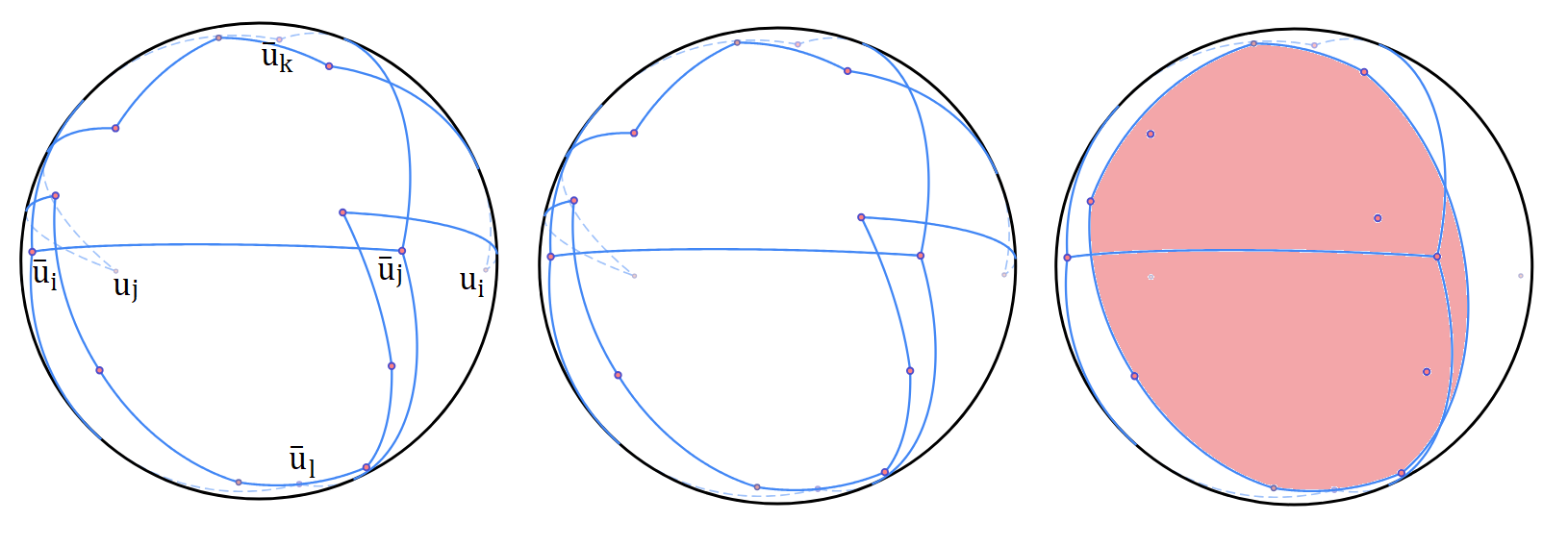}
    \caption{Invalid spherical convex hull}
    \label{fig:essentials_2_invalid_convex_hull}
\end{figure}

As in the case where $Ess(Q)=3$, the essential vertices cannot be consecutive:

\begin{lemma}
    Let $Q=[u_1,...,u_n]$ ($n \geq 6$) be a spherical polygon in balanced position and without self nor antipodal intersections, with $Ess(P)=2$. Then the essential vertices cannot be consecutive.
\end{lemma}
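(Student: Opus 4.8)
The plan is to argue by contradiction, exactly paralleling the structure of Lemma~\ref{lemma_essentials_3_isolated}. Suppose the two essential vertices $u_i$ and $u_j$ are consecutive; after a cyclic relabeling we may assume they are $u_1$ and $u_2$. The key geometric fact to exploit is the same one used in the $Ess(Q)=3$ case: since every vertex other than $u_1$ and $u_2$ is nonessential, it must lie in the region determined by the antipodes $\overline{u}_1$ and $\overline{u}_2$ together with the other balanced-position vertices. More precisely, by the discussion preceding Lemma~\ref{lemma_nonessentials_are_in_union_of_triangles}, every nonessential vertex lies in $\triangle(\overline{u}_1,\overline{u}_2,\overline{u}_k) \cup \triangle(\overline{u}_1,\overline{u}_2,\overline{u}_l)$, and in particular in the lune $L(u_1;\overline{u}_2,\ast)$ and the lune $L(u_2;\overline{u}_1,\ast)$ whose sides are the great circles through $\overline{u}_1\overline{u}_2$ and through the relevant nonessential vertex.

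The heart of the argument is then to look at the edges of $Q$ incident to $u_1$ and to $u_2$. Since $u_1$ and $u_2$ are consecutive, $u_1$ is joined to $u_n$ and to $u_2$, and $u_2$ is joined to $u_1$ and to $u_3$; here $u_n$ and $u_3$ are nonessential (they cannot equal $u_1$ or $u_2$ because $n \geq 6$). First I would apply Lemma~\ref{lemma_lune}: the edge $\overrightarrow{u_1 u_n}$ has $u_1$ as a cusp of the lune $L(u_1;\overline{u}_2,\overline{u}_k)$ (or the analogous lune with $\overline{u}_l$, whichever triangular region contains $u_n$), and $u_n$ lies in the interior of that lune, so the open edge $\overrightarrow{u_1 u_n}$ is contained in the interior of the lune. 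The side of that lune lying on the great circle through $u_1$ and $\overline{u}_1$ is precisely the spherical segment from $\overline{u}_1$ through $\overline{u}_2$ (up to $\overline{u}_1$); crossing from $u_1$ into the triangular region forces the edge $\overrightarrow{u_1 u_n}$ to meet the open segment $\overrightarrow{\overline{u}_1 \overline{u}_2}$, which is an antipodal intersection (the antipode of the segment $\overrightarrow{\overline{u}_1 \overline{u}_2}$ is $\overrightarrow{u_1 u_2}$, an edge of $Q$). Similarly, the edge $\overrightarrow{u_2 u_3}$, running from the cusp $u_2$ of the appropriate lune $L(u_2;\overline{u}_1,\ast)$ into the triangular region where $u_3$ lives, must cross the open segment $\overrightarrow{\overline{u}_1 \overline{u}_2}$ as well, giving a second antipodal intersection. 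Either one already contradicts the hypothesis that $Q$ has no antipodal intersections.

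One technical point I would be careful about: I must make sure that the edge $\overrightarrow{u_1 u_n}$ (resp.\ $\overrightarrow{u_2 u_3}$) genuinely \emph{leaves} the cusp and enters the triangular region, i.e.\ that $u_n$ (resp.\ $u_3$) is strictly interior to the relevant lune and not on its boundary. This is guaranteed because any triple of vertices of $Q$ is assumed noncollinear (the standing assumption of this section), so a nonessential vertex cannot lie on the great circle through $u_1$ and $\overline{u}_1$ unless it coincides with $u_1$ or $\overline{u}_1$ — and it is none of these. A second point is that I should phrase the conclusion symmetrically so it covers the sub-case where both $u_n$ and $u_3$ happen to lie in the same one of the two triangular regions, as well as the sub-case where they lie in different ones; in both sub-cases the argument above produces an antipodal intersection from the edge incident to $u_1$ alone, so no extra casework is really needed. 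I expect the main obstacle to be purely expository: stating cleanly which lune contains which incident edge and invoking Lemma~\ref{lemma_lune} in the right form, rather than any genuine mathematical difficulty — the geometry is essentially identical to the first bullet point in the proof of Lemma~\ref{lemma_essentials_3_isolated}, with ``two consecutive essential vertices'' here playing the role of ``two consecutive essential vertices with the third isolated'' there.
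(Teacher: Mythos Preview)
Your argument has a genuine gap: the edge $\overrightarrow{u_1 u_n}$ can \emph{never} cross the open segment $\overrightarrow{\overline{u}_1\,\overline{u}_2}$. The two great circles carrying these segments are the circle through $u_1,u_n$ and the circle through $u_1,u_2$; they meet only at $u_1$ and $\overline{u}_1$, and neither of those points is interior to both segments. Put differently, in the lune $L(u_1;\overline{u}_2,\overline{u}_k)$ the segment $\overrightarrow{\overline{u}_1\,\overline{u}_2}$ lies \emph{on one side of the lune} (the side through $\overline{u}_2$, running from cusp to cusp), whereas Lemma~\ref{lemma_lune} places the open edge $\overrightarrow{u_1 u_n}$ in the \emph{interior} of the lune. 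This is exactly where the analogy with Lemma~\ref{lemma_essentials_3_isolated} breaks down: there the crossing edge emanated from the \emph{isolated} essential vertex $u_i$, and $\overrightarrow{\overline{u}_1\,\overline{u}_2}$ was a chord of the lune $L(u_i;\overline{u}_1,\overline{u}_2)$ joining its two different sides, separating $u_i$ from the triangle. Here you are starting from $u_1$ itself, and the would-be barrier lies along the boundary rather than across the lune. (There is also a definitional issue: even if such a crossing existed, the edges $\overrightarrow{u_n u_1}$ and $\overrightarrow{u_1 u_2}$ are adjacent, so by the convention $j\neq i+1$, $i\neq j+1$ in Proposition~\ref{antipodal_intersection_determinants} it would not count as an antipodal intersection.)

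The paper's proof avoids this by looking not at the single edges incident to $u_1$ and $u_2$ but at the entire polygonal arc of nonessential vertices from $u_3$ to $u_n$. Using Lemma~\ref{lemma_nonessentials_are_in_union_of_triangles}, all edges of this arc lie in $\triangle(\overline{u}_1,\overline{u}_2,\overline{u}_k)\cup\triangle(\overline{u}_1,\overline{u}_2,\overline{u}_l)$, and these two triangles meet only along $\overrightarrow{\overline{u}_1\,\overline{u}_2}$. One then argues topologically: either $u_3$ and $u_n$ lie in different triangles, in which case the arc must cross $\overrightarrow{\overline{u}_1\,\overline{u}_2}$ to close up; or they lie in the same triangle, in which case the arc must visit the other triangle (which contains at least two vertices) and hence cross $\overrightarrow{\overline{u}_1\,\overline{u}_2}$ twice. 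The crossing edge has two nonessential endpoints, so it is not adjacent to $\overrightarrow{u_1 u_2}$ and the antipodal intersection is legitimate.
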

\begin{proof}
    Suppose that the essential vertices $u_i$ and $u_j$ are consecutive. After a cyclic rearrangement, we can relabel them as $u_1$ and $u_2$. Let $u_k$ and $u_l$ be nonessential vertices of $Q$ in balanced position with $u_1$ and $u_2$. There are two possibilities (see figure \ref{fig:essentials_2_impossible_cases}):
    \begin{itemize}
        \item $u_3$ and $u_n$ are in the same triangle. Without loss of generality we can consider this triangle to be $\triangle(\overline{u}_1,\overline{u}_2,\overline{u}_k)$. Since polygon $Q$ also has vertices in the triangular region $\triangle(\overline{u}_1,\overline{u}_2,\overline{u}_l)$ and the spherical convex hull of the nonessential vertices is contained in the union of the triangular regions, it must cross the edge segment $\overrightarrow{\overline{u}_1 \overline{u}_2}$ at least twice, i.e., it has at least two antipodal intersections, contrary to hypothesis (see figure \ref{fig:essentials_2_impossible_cases} on the left for an example);
        \item $u_3$ and $u_n$ are in different triangles. Since polygon $Q$ must close itself and the spherical convex hull of the nonessential vertices is contained in the union of the triangular regions, it must cross the edge segment $\overrightarrow{\overline{u}_1 \overline{u}_2}$ at least once, i.e., it has at least one antipodal intersection, contrary to hypothesis (see figure \ref{fig:essentials_2_impossible_cases} on the right for an example).
    \end{itemize}
\end{proof}

\begin{figure}
    \centering
    \includegraphics[scale=0.3]{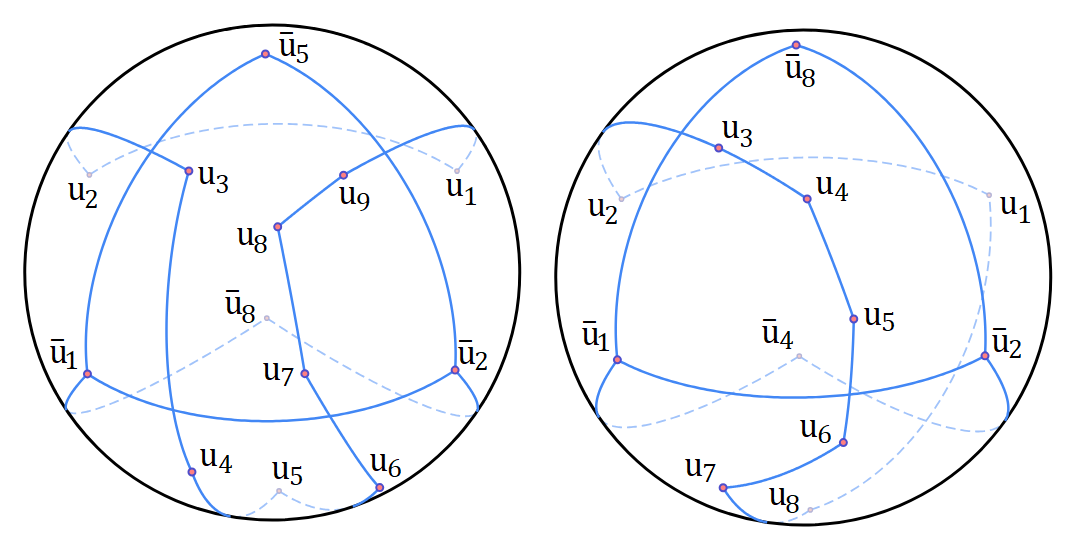}
    \caption{If the only $2$ essential vertices were consecutive, $Q$ would necessarily have at least one antipodal intersection.}
    \label{fig:essentials_2_impossible_cases}
\end{figure}

In the different examples of figure \ref{fig:essentials_2_examples} we see that the vertices of $Q$ connected to $u_i$ (i.e., $u_{i-1}$ and $u_{i+1}$) or $u_j$ (i.e., $u_{j-1}$ and $u_{j+1}$) are both in the same triangle. We might wonder if it is possible for a polygon satisfying our hypothesis to have these four (or three, if $u_{i+1}=u_{j-1}$) vertices in different triangles. The next lemmas show that this cannot happen:

\begin{lemma}
\label{lemma_each_pair_same_triangle}
    Let $Q=[u_1,...,u_n]$ ($n \geq 6$) be a spherical polygon in balanced position and without self nor antipodal intersections, with $Ess(P)=2$. Let $u_i$ and $u_j$ be the essential vertices of $Q$. Then:
    \begin{itemize}
        \item[(a)] $u_{i-1}$ and $u_{i+1}$ are in the same triangular region.
        \item[(b)] $u_{j-1}$ and $u_{j+1}$ are in the same triangular region.
    \end{itemize}
\end{lemma}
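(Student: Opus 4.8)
The plan is to prove part (a) by contradiction; part (b) then follows by the identical argument with the roles of $u_i$ and $u_j$ interchanged. The two triangular regions in question share the side $\overrightarrow{\overline{u}_i\,\overline{u}_j}$, which lies on the great circle $C:=\langle u_i,u_j\rangle$ through the two essential vertices. Since $[u_i,u_j,u_k]\not\simeq[u_i,u_j,u_l]$ (this is part of the balanced condition for $\{u_i,u_j,u_k,u_l\}$ supplied by Proposition~\ref{proposition_characterization}), the points $\overline{u}_k$ and $\overline{u}_l$ lie on opposite sides of $C$, so $C$ separates the two triangular regions; consequently, two nonessential vertices lie in the same triangular region exactly when they lie on the same side of $C$. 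We also use, from Lemma~\ref{lemma_nonessentials_are_in_union_of_triangles}, that the spherical convex hull of all nonessential vertices is contained in $T:=\triangle(\overline{u}_i,\overline{u}_j,\overline{u}_k)\cup\triangle(\overline{u}_i,\overline{u}_j,\overline{u}_l)$, that $T$ avoids $\overline{u}_i$ and $\overline{u}_j$, and that $T\cap C$ is exactly the closed arc $\overrightarrow{\overline{u}_i\,\overline{u}_j}$.

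Assume, for contradiction, that $u_{i-1}$ and $u_{i+1}$ lie in different triangular regions, say $u_{i-1}\in\triangle(\overline{u}_i,\overline{u}_j,\overline{u}_k)$ and $u_{i+1}\in\triangle(\overline{u}_i,\overline{u}_j,\overline{u}_l)$. Since $\triangle(\overline{u}_i,\overline{u}_j,\overline{u}_k)\subset L(u_i;\overline{u}_j,\overline{u}_k)$ and $u_i$ is a cusp of this lune, Lemma~\ref{lemma_lune} forces $\overrightarrow{u_{i-1}u_i}\subset L(u_i;\overline{u}_j,\overline{u}_k)$; likewise $\overrightarrow{u_i u_{i+1}}\subset L(u_i;\overline{u}_j,\overline{u}_l)$. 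Deleting the shared endpoint $u_i\in C$, these two edges lie strictly on opposite sides of $C$, and moreover — being confined to the respective lunes, whose common side through $\overline{u}_j$ contains $\overrightarrow{\overline{u}_i\,\overline{u}_j}$ — neither of them meets the open arc $\overrightarrow{\overline{u}_i\,\overline{u}_j}$. Now consider the complementary polygonal arc $\Gamma:=[u_{i+1},u_{i+2},\dots,u_{i-1}]$: every vertex of $\Gamma$ other than $u_j$ is nonessential (because $Ess(Q)=2$) and hence lies in $T$, while $u_j$ is the only vertex of $\Gamma$ on $C$. Comparing the signs of $[u_i,u_j,\cdot]$ along $\Gamma$, the arc must switch sides of $C$ an odd number of times; each switch occurs either at the vertex $u_j$ — forcing $u_{j-1}$ and $u_{j+1}$ onto opposite sides of $C$ — or along an edge between two nonessential vertices, which (being contained in $T$) then meets the open arc $\overrightarrow{\overline{u}_i\,\overline{u}_j}$.

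In either case one derives a contradiction with the hypothesis that $Q$ has no antipodal intersections. The mechanism is to combine the lune confinement of the edges at $u_i$ with the symmetric confinement at $u_j$ (so that $\overrightarrow{u_{j-1}u_j}$ and $\overrightarrow{u_j u_{j+1}}$ lie in $L(u_j;\overline{u}_i,\overline{u}_k)$ and $L(u_j;\overline{u}_i,\overline{u}_l)$, again on opposite sides of $C$), together with the fact that the spherical convex hull of the nonessential vertices avoids $\overline{u}_i$ and $\overline{u}_j$; this constrains the position of $Q$ relative to its reflection $\overline{Q}$ tightly enough that $Q$ and $\overline{Q}$ are forced to intersect, i.e.\ some edge of $Q$ meets the antipode of another edge, which by Proposition~\ref{antipodal_intersection_determinants} is a genuine antipodal intersection. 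I expect the bookkeeping in this last step to be the main obstacle: one must enumerate the ways in which $\Gamma$ can weave between the two triangular regions, identify in each case the offending pair of edges, and separately handle the degenerate configurations $u_{i+1}=u_{j-1}$ or $u_{i-1}=u_{j+1}$, in which $\Gamma$ is too short for the generic argument. Granting (a), part (b) follows by the same reasoning with $u_i$ and $u_j$ interchanged.
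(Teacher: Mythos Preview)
Your argument is incomplete: you set up the contradiction correctly (the two edges at $u_i$ lie in opposite lunes, the complementary arc $\Gamma$ must switch sides of $C$ an odd number of times, and each switch occurs either at $u_j$ or through the open arc $\overrightarrow{\overline{u}_i\,\overline{u}_j}$), but you then stop short of actually producing the forbidden antipodal intersection. Crossing the arc $\overrightarrow{\overline{u}_i\,\overline{u}_j}$ is \emph{not} by itself an antipodal intersection of $Q$, since $\overrightarrow{u_i u_j}$ is not an edge of $Q$; and you yourself flag the remaining ``bookkeeping'' as the main obstacle. As written, the proposal is a plan, not a proof.

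The paper closes this gap with a single clean idea that bypasses all the case analysis you anticipate. Rather than tracking sign changes along $C$, it uses a Jordan--curve separation argument: $Q$ is a simple closed spherical curve, so it divides $\mathbb{S}^2$ into two regions. Under the assumption that $u_{i-1}$ and $u_{i+1}$ lie in different triangular regions, and using Lemma~\ref{lemma_nonessentials_are_in_union_of_triangles} (the convex hull of the nonessential vertices does not cross $\overrightarrow{\overline{u}_j\,\overline{u}_k}$ or $\overrightarrow{\overline{u}_j\,\overline{u}_l}$), one sees that $Q$ separates the point $\overline{u}_j$ from both $\overline{u}_k$ and $\overline{u}_l$. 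But $\overline{u}_j$, $\overline{u}_k$, $\overline{u}_l$ are all vertices of the reflected polygon $\overline{Q}$, which is connected; hence $\overline{Q}$ must cross $Q$, giving an antipodal intersection and the desired contradiction. This argument needs no enumeration of how $\Gamma$ weaves between the triangles and no special handling of the degenerate cases $u_{i+1}=u_{j-1}$ or $u_{i-1}=u_{j+1}$.
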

\begin{proof}
    (a) First we prove that $u_{i-1}$ and $u_{i+1}$ cannot be in different triangular regions.
    
    Suppose by contradiction that one of them is in $\triangle(\overline{u}_i,\overline{u}_j,\overline{u}_l)$ while the other one is in $\triangle(\overline{u}_i,\overline{u}_j,\overline{u}_k)$. Since the spherical convex hull of the nonessential vertices of $Q$ does not cross edge segments $\overrightarrow{\overline{u}_j \overline{u}_l}$ and $\overrightarrow{\overline{u}_j \overline{u}_k}$ and moreover $Q$ must close itself, the vertex $\overline{u}_j$ would then be separated by $Q$ from vertices $\overline{u}_k$ and $\overline{u}_l$, i.e., they would be in different regions delimited by the polygon $Q$. But because $\overline{Q}$ is a continuous curve and must connect $\overline{u}_j$ to $\overline{u}_k$ and to $\overline{u}_l$ via polygonal lines, it would necessarily intersect polygon $Q$ at some edge, contrary to hypothesis (see figure \ref{fig:essentials_2_same_triangle_each_pair} for some examples).

    (b) The proof that vertices $u_{j-1}$ and $u_{j+1}$ are in the same triangular region is analogous to the proof of (a).
\end{proof}

\begin{figure}
    \centering
    \includegraphics[scale=0.3]{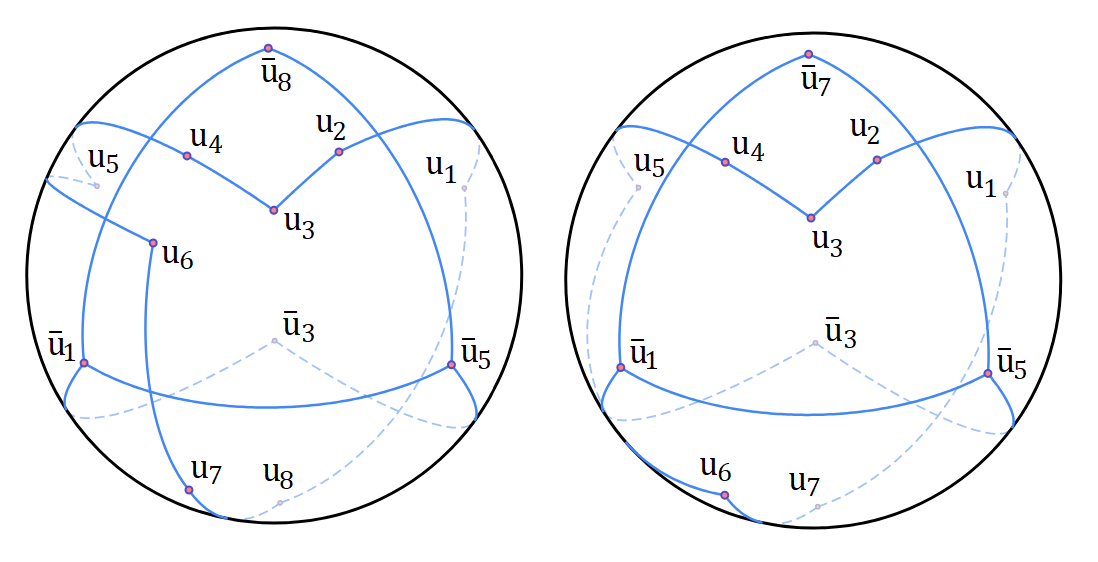}
    \caption{In both examples, $u_i=u_1$ and $u_j=u_5$. If $u_{i-1}$ and $u_{i+1}$ were in different triangles, vertex $\overline{u}_j$ would be isolated from vertices $\overline{u}_k$ and $\overline{u}_l$.}
    \label{fig:essentials_2_same_triangle_each_pair}
\end{figure}

\begin{lemma}
    Let $Q=[u_1,...,u_n]$ ($n \geq 6$) be a spherical polygon in balanced position and without self nor antipodal intersections, with $Ess(P)=2$. Let $u_i$ and $u_j$ be the essential vertices of $Q$. Then $u_{i-1}$, $u_{i+1}$, $u_{j-1}$ and $u_{j+1}$ are in the same triangular region.
\end{lemma}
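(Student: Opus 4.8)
The plan is to argue by contradiction, in the spirit of the earlier impossibility lemmas for $Ess(Q)=2$. By Lemma \ref{lemma_each_pair_same_triangle}, $u_{i-1}$ and $u_{i+1}$ lie in a single one of the two triangular regions, and $u_{j-1}$ and $u_{j+1}$ lie in a single one; if these two regions coincide we are done, and if $u_{i+1}=u_{j-1}$ (or $u_{i-1}=u_{j+1}$) the common vertex lies in both of them, so again they coincide. So suppose, after relabelling, that $u_{i-1},u_{i+1}\in T_k:=\triangle(\overline{u}_i,\overline{u}_j,\overline{u}_k)$ and $u_{j-1},u_{j+1}\in T_l:=\triangle(\overline{u}_i,\overline{u}_j,\overline{u}_l)$ with $T_k\neq T_l$; then $\overline{u}_k$ and $\overline{u}_l$ lie on opposite sides of the great circle $C$ through $\overline{u}_i$ and $\overline{u}_j$ (which also contains $u_i$ and $u_j$), and $T_k\cap T_l=\overrightarrow{\overline{u}_i\overline{u}_j}$.

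First I would count crossings. The two essential vertices split $Q$ into arcs $A_1=[u_{i+1},\dots,u_{j-1}]$ and $A_2=[u_{j+1},\dots,u_{i-1}]$, all of whose edges join nonessential vertices and hence, by Lemma \ref{lemma_nonessentials_are_in_union_of_triangles}, lie in $T_k\cup T_l$. Since $A_1$ runs from $u_{i+1}\in T_k$ to $u_{j-1}\in T_l$ and $T_k\cap T_l=\overrightarrow{\overline{u}_i\overline{u}_j}$, the arc $A_1$ crosses $\overrightarrow{\overline{u}_i\overline{u}_j}$, and so does $A_2$. Next I would localise $Q$: by Lemma \ref{lemma_lune} the two edges of $Q$ at $u_i$ lie in the lune $L(u_i;\overline{u}_j,\overline{u}_k)\supseteq T_k$ and the two edges at $u_j$ in $L(u_j;\overline{u}_i,\overline{u}_l)\supseteq T_l$, whence $Q\subseteq L(u_i;\overline{u}_j,\overline{u}_k)\cup L(u_j;\overline{u}_i,\overline{u}_l)$. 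A short computation with the cyclic order of $\overline{u}_i,\overline{u}_j,u_i,u_j$ on $C$ shows these two lunes lie in the two closed hemispheres bounded by $C$ and meet exactly along $\overrightarrow{\overline{u}_i\overline{u}_j}$. Hence $Q$ meets $C$ only inside $\overrightarrow{\overline{u}_i\overline{u}_j}$; it does meet $C$ (near $u_i$ it runs into one hemisphere, near $u_j$ into the other), so by parity it crosses $\overrightarrow{\overline{u}_i\overline{u}_j}$ an even number, hence at least two, of times: at least once along $A_1$ and once along $A_2$.

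The crux, and the step I expect to give the most trouble, is to turn this into a violation of the no-antipodal-intersection hypothesis. Write $Q=\alpha\cup\beta$, where $\alpha$ is the part of $Q$ in the closed hemisphere containing $L(u_i;\overline{u}_j,\overline{u}_k)$ (so $u_i$ lies on $\alpha$) and $\beta$ the part in the other hemisphere (containing $u_j$); assume for simplicity $Q$ meets $C$ exactly twice, so $\alpha,\beta$ are arcs meeting $C$ exactly at the two crossing points $p\in A_1$, $p'\in A_2$. Then $\overline{Q}=\overline{\alpha}\cup\overline{\beta}$, and $\overline{\beta}\subseteq L(\overline{u}_j;u_i,u_l)$ lies in the same hemisphere as $\alpha$ and contains $\overline{u}_j$. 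Working inside the closed disk bounded by $C$ on that side, $\alpha$ and $\overline{\beta}$ are two arcs touching the boundary $C$ at their endpoints and at the interior points $u_i$, resp.\ $\overline{u}_j$; the endpoints of $\alpha$ lie in $\overrightarrow{\overline{u}_i\overline{u}_j}$, those of $\overline{\beta}$ in the complementary arc $\overrightarrow{u_iu_j}$, and $u_i$ and $\overline{u}_j$ are the endpoints of the arc $\overrightarrow{\overline{u}_ju_i}\subseteq C$ separating these two sub-arcs. A Jordan-curve argument in the disk should then force $\alpha$ and $\overline{\beta}$ to intersect: after $\alpha$ bounces off $C$ at $u_i$ it separates $\overline{u}_j$ from the sub-arc of $C$ carrying the endpoints of $\overline{\beta}$, so $\overline{\beta}$ cannot join that sub-arc to $\overline{u}_j$ without meeting $\alpha$; such a point lies on $Q\cap\overline{Q}$, an antipodal intersection of $Q$ — contradiction. (By antipodal symmetry it is enough to produce the intersection on one side.) The delicate point is making the separation claim precise, since $\alpha$ and $\overline{\beta}$ are only confined to two overlapping lunes rather than to disjoint regions; I would handle it by analysing how $\alpha$ cuts the disk at the tangency $u_i$ and tracking on which side of $\alpha$ the arc $\overrightarrow{\overline{u}_ju_i}$, hence $\overline{u}_j$ and the end of $\overline{\beta}$ near it, must lie, and then dealing with the case of more than two crossings by replaying the argument with any sub-arc of $Q$ through $u_i$ bounded by two such crossings.
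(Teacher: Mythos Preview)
Your plan is essentially correct, and it is a genuinely different route from the paper's. The paper never introduces the great circle $C$ through $u_i,u_j$; instead it fixes the auxiliary nonessential vertices $u_k,u_l$ and works with the concrete polygonal lines $pl_{i,k}$ (the arc of $Q$ from $u_i$ to $u_k$ avoiding $u_j$) and $\overline{pl}_{j,l}$ (the antipode of the arc from $u_j$ to $u_l$ avoiding $u_i$). Closing $\overline{pl}_{j,l}$ with the segment $\overrightarrow{\overline{u}_l\,\overline{u}_j}$ yields a closed polygon that crosses $\overrightarrow{u_iu_k}$ twice, so $u_i$ and $u_k$ lie in the same complementary region; since $pl_{i,k}$ must cross $\overrightarrow{\overline{u}_l\,\overline{u}_j}$ once (transversally), it is forced to exit that region through $\overline{pl}_{j,l}$, producing the antipodal intersection. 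Your argument replaces this vertex-by-vertex bookkeeping with the single observation that $Q$ lies in two lunes on opposite sides of $C$, meeting only along $\overrightarrow{\overline{u}_i\,\overline{u}_j}$; this is cleaner and avoids any use of $u_k,u_l$ beyond setting up the triangular regions.

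Two small points to tighten. First, your sentence ``$Q$ meets $C$ only inside $\overrightarrow{\overline{u}_i\,\overline{u}_j}$'' is literally false, since $u_i,u_j\in C$; what you mean (and later use) is that the \emph{transversal crossings} lie there, while $u_i,u_j$ are tangencies. Second, the Jordan step is simpler than your sketch suggests: you do not need to analyse how $\alpha$ ``bounces'' at $u_i$. Just take the sub-arc $\alpha_1\subset\alpha$ from one crossing $p\in\overrightarrow{\overline{u}_i\,\overline{u}_j}$ to the tangency point $u_i$; this is a simple arc in the closed hemisphere $D$ with endpoints on $\partial D=C$, and the two complementary boundary arcs of $C\setminus\{p,u_i\}$ carry $\overline{u}_j$ on one side and $\overrightarrow{u_iu_j}\ni\overline{p},\overline{p}'$ on the other. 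Hence any arc in $D$ from $\overrightarrow{u_iu_j}$ to $\overline{u}_j$ meets $\alpha_1$; applying this to the sub-arc of $\overline{\beta}$ from $\overline{p}$ to $\overline{u}_j$ gives the desired point of $Q\cap\overline{Q}$. The same works verbatim if $Q$ crosses $C$ more than twice, taking $\alpha_1$ to be the portion of $Q$ between $u_i$ and the first crossing on one side.
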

\begin{proof}
    Given lemma \ref{lemma_each_pair_same_triangle}, it suffices to prove that the pair $\{u_{i-1},u_{i+1}\}$ is in the same triangular region as $\{u_{j-1},u_{j+1}\}$.

    Suppose by contradiction that $u_{i-1}$ and $u_{i+1}$ are in $\triangle(\overline{u}_i,\overline{u}_j,\overline{u}_l)$ while $u_{j-1}$ and $u_{j+1}$ are in $\triangle(\overline{u}_i,\overline{u}_j,\overline{u}_k)$. In particular, there are two polygonal lines from $u_i$ to $u_k$, each of them crossing the edge segment $\overrightarrow{ \overline{u}_l \overline{u}_j}$ only once (see figure \ref{fig:polygonal_line_lemma_1} for two different examples). Consider the polygonal line not passing through $u_j$ and denote it by $pl_{i,k}=pl(u_i,...,u_k)$. Similarly, there are two polygonal lines from $u_j$ to $u_l$, each of them crossing the edge segment $\overrightarrow{\overline{u}_i \overline{u}_k}$ only once. Consider the antipodal polygonal line of the one not passing through $u_i$ and denote it by $\overline{pl}_{j,l}= pl(\overline{u}_j,...,\overline{u}_l)$.

    We know that $pl_{i,k}$ intersects the segment $\overrightarrow{\overline{u}_l \overline{u}_j}$ once and that $\overline{pl}_{j,l}$ intersects the segment $\overrightarrow{u_i u_k}$ once. We shall prove that $pl_{i,k}$ intersects $\overline{pl}_{j,l}$, which contradicts the hypothesis of $Q$ not having antipodal intersections.

    Concatenating the polygonal line $\overline{pl}_{j,l}$ and the segment $\overrightarrow{\overline{u}_l \overline{u}_j}$ one forms a (closed) polygon $\overline{p}_{j,l}:=[\overline{u}_j,...,\overline{u}_l]$ which intersects the segment $\overrightarrow{u_i u_k}$ twice (see figure \ref{fig:polygonal_line_lemma_2}.a).
    
    The polygon $\overline{p}_{j,l}$ is a closed continuous curve which subdivides the sphere in at least two regions (more than two if it has self-intersections). Since $\overline{p}_{j,l}$ crosses segment $\overrightarrow{u_i u_k}$ twice and, except by $\overline{u}_j$ and the edges adjacent to $\overline{u}_j$, all the polygon $\overline{p}_{j,l}$ is contained in the two triangular regions, both vertices $u_i$ and $u_k$ must in the same region among the regions determined by $\overline{p}_{j,l}$.

    On the other hand, the polygonal line $pl_{i,k}$ must cross segment $\overrightarrow{\overline{u}_l,\overline{u}_j}$ once, say at a point $q$. If this point happens to be also in the polygonal line $\overline{p}_{j,l}$, we are done (see figure \ref{fig:polygonal_line_lemma_2}(b)). Suppose then that $q$ is not in $\overline{p}_{j,l}$. Since we are dealing with segments and there are no three collinear vertices, all intersections between segments are \textit{transversal}, which implies in our case that at the point of intersection $q$ the curve $pl_{i,k}$ is locally separated by segment $\overrightarrow{\overline{u}_l \overline{u}_j}$. In other words, the curve $pl_{i,k}$ passes in at least two different regions determined by $\overline{p}_{j,l}$: the one that contains vertices $u_i$ and $u_k$, and another one denoted by $R$ (see figure \ref{fig:polygonal_line_lemma_2}(c) and (d)). Therefore $pl_{i,k}$ must intersect another boundary point $q'$ of $R$. Since $q'$ cannot be in $\overrightarrow{\overline{u}_l \overline{u}_j}$ (by hypothesis), it must be a point of $\overline{pl}_{j,l}$.

    Similarly, assuming that $u_{i-1}$ and $u_{i+1}$ are in $\triangle(\overline{u}_i,\overline{u}_j,\overline{u}_k)$ while $u_{j-1}$ and $u_{j+1}$ are in $\triangle(\overline{u}_i,\overline{u}_j,\overline{u}_l)$, we derive another contradiction.
\end{proof}

\begin{figure}
    \centering
    \includegraphics[scale=0.3]{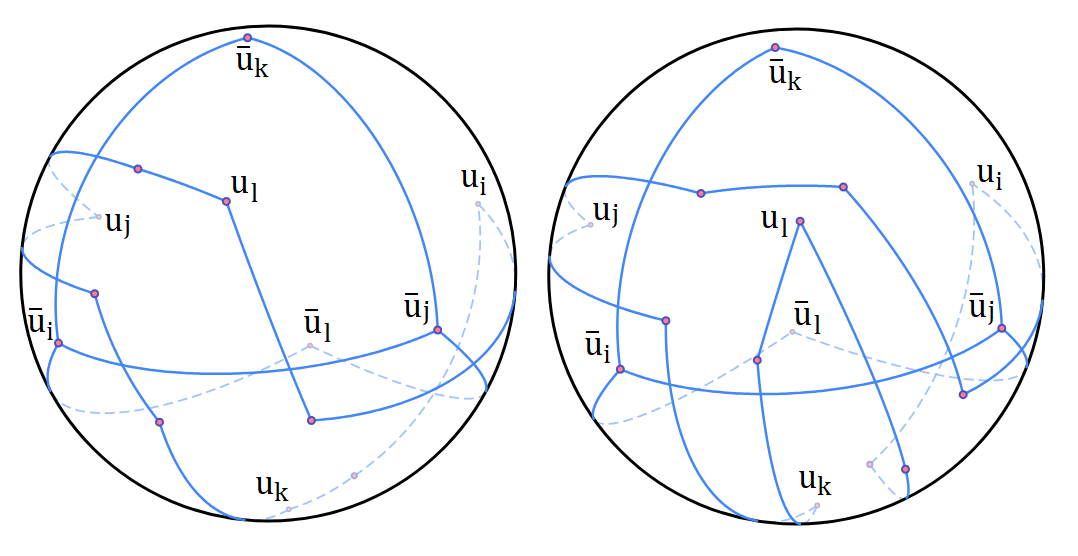}
    \caption{Two examples where $u_{i-1}$ and $u_{i+1}$ are in $\triangle(\overline{u}_i,\overline{u}_j,\overline{u}_l)$ while $u_{j-1}$ and $u_{j+1}$ are in $\triangle(\overline{u}_i,\overline{u}_j,\overline{u}_k)$.}
    \label{fig:polygonal_line_lemma_1}
\end{figure}

\begin{figure}
    \centering
    \includegraphics[scale=0.23]{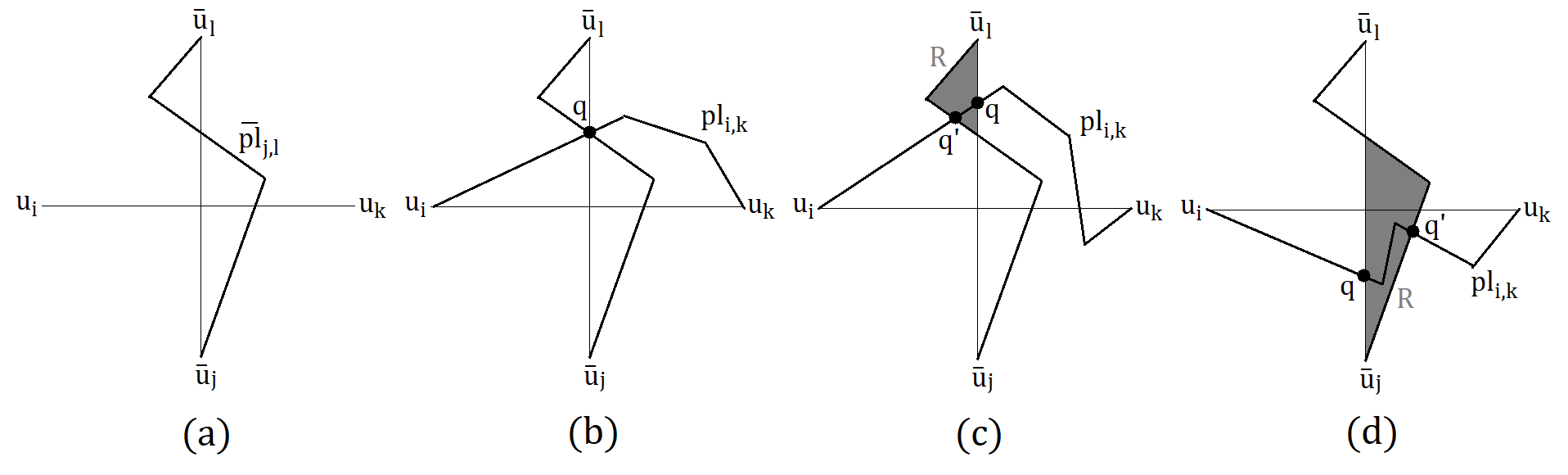}
    \caption{In (a), we see that the polygon $\overline{p}_{j,l}$ subdivides the sphere into different regions, one of them containing both vertices $u_i$ and $u_k$ in its interior. Figures (b), (c) and (d) show different possibilities regarding the polygonal line $pl_{i,k}$ and its intersections with polygon $\overline{p}_{j,l}$.}
    \label{fig:polygonal_line_lemma_2}
\end{figure}

Here is an idea to prove proposition \ref{proposition_essentials_2_vertex}: we have a geometrically similar situation as we had in the case where $Ess(Q)=3$: although two triangular regions contain more than one vertex, one of them, say $\triangle(\overline{u}_i,\overline{u}_j,\overline{u}_k)$ has ``more parts of $Q$" in the sense that the polygon $Q$ enters and exits this region more times (more precisely: the number of connected components of $Q \cap \triangle(\overline{u}_i,\overline{u}_j,\overline{u}_k)$ is equal the number of connected components of $Q \cap \triangle(\overline{u}_i,\overline{u}_j,\overline{u}_l)$ plus $2$). At first we could try to use the same argument as in proposition \ref{proposition_essentials_3_vertex}: we could triangulate a certain region enclosed by a polygon whose vertices are some essential and nonessential vertices of $Q$, and then obtain a leaf of the triangulation.

The problem with this approach in the case of $Ess(Q)=2$ is that it depends for the polygon $Q$ to have at least two consecutive vertices in $\triangle(\overline{u}_i,\overline{u}_j,\overline{u}_k)$, which might not happen here. If we try to triangulate the corresponding region as in figure \ref{fig:failed_triangulation}, all extra edges arising from the triangulation will intersect the reflected polygon $\overline{Q}$.

\begin{figure}
    \centering
    \includegraphics[scale=0.2]{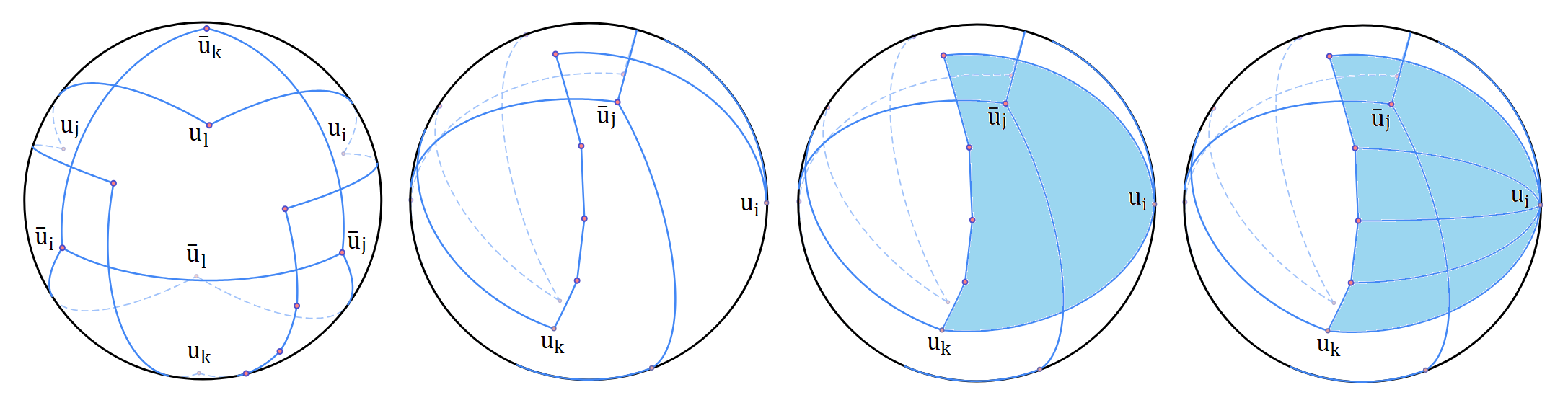}
    \caption{In this polygon $Q$, only one vertex between $u_k$ and $u_i$ is in $\triangle(\overline{u}_i,\overline{u}_j,\overline{u}_k)$. Consider the polygonal line from $u_k$ to $u_i$ and concatenate it with segment $\overrightarrow{u_i u_k}$, obtaining polygon $p_{k,i}$. Consider now the region determined by $p_{k,i}$ which contains $\overline{u}_j$. The unique triangulation on it has as leaves: the triangle whose side is the added segment $\overrightarrow{u_i u_k}$ (which does not have three consecutive vertices of the original polygon $Q$); and a triangle with $\overline{u}_j$ in its interior (which therefore will intersect the polygon $\overline{Q}$).}
    \label{fig:failed_triangulation}
\end{figure}

Therefore, in order to prove proposition \ref{proposition_essentials_2_vertex}, we must choose carefully the region that we want to triangulate. First we need the following terminology. Given a balanced spherical polygon $Q$ without self nor antipodal intersections, $Q$ separates the sphere into two regions: we say that the region which contains $\overline{Q}$ is the \textit{exterior of $Q$}, (which we denote it by $Ext(Q)$), while the other one is the \textit{interior of $Q$} (which we denote by $Int(Q)$). First notice that a striking difference between the triangulations used in the proofs of proposition \ref{proposition_at_least_2_excellents} and of proposition \ref{proposition_essentials_3_vertex} is that in the former the \textit{interior} of $Q$ is triangulated, while in the latter a subset of the \textit{exterior} of $Q$ is triangulated. Of course, in the second proof we also had to show that the one of the leaves of the triangulation did not intersect polygon $\overline{Q}$, which was true due to the fact that the corresponding triangle was contained in $U(Q)$ and that $\overline{Q}$ was contained in $U(\overline{Q})$ (recall that it only makes sense to speak of the subset $U(Q)$ when $Ess(Q)=3$).

In the case where $Ess(Q)=2$, the excellent (and nonessential) vertex to be found might come from either an internal or an external triangulation. By proposition \ref{proposition_at_least_2_excellents} and its proof, $Q$ has at least $2$ excellent vertices, the two of them coming from a triangulation of the interior of $Q$. If one of them is nonessential, we are done. Therefore we might assume that both of them are the essential vertices.

Denote by $\mathcal{H}_{S}(X)$ the spherical convex hull of a subset $X$ of the sphere, provided $X$ is contained in a closed hemisphere. Let $u_i$ and $u_j$ be the essential vertices of $Q$. In order to find an excellent vertex among the nonessential ones, we want to look at the new (simple) polygon $\Tilde{Q}:=Q - u_i - u_j$ (i.e., the polygon obtained deleting vertices $u_i$ and $u_j$ and the edges adjacent to these two vertices, and then connecting $u_{i-1}$ to $u_{i+1}$ and $u_{j-1}$ to $u_{j+1}$ by spherical segments) and its spherical convex hull $\mathcal{H}_S(\Tilde{Q})$. It is instructive to see some examples. First, a lemma:

\begin{lemma}
\label{lemma_spherical_convex_hulls_do_not_intersect}
    Let $Q=[u_1,...,u_n]$ ($n \geq 7$) be a spherical polygon in balanced position and without self nor antipodal intersections, with $Ess(P)=2$. Let $u_i$ an $u_j$ be essential vertices and $\Tilde{Q}:= Q - u_i - u_j$. Then
    $$ \mathcal{H}_S(\Tilde{Q}) \cap \mathcal{H}_S(\Tilde{\overline{Q}}) =\emptyset.$$
\end{lemma}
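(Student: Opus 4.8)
The plan is to show that the two spherical convex hulls $\mathcal{H}_S(\Tilde{Q})$ and $\mathcal{H}_S(\Tilde{\overline{Q}})$ are separated because they lie on opposite sides of a suitable great circle, or more precisely because each is confined to a region that is disjoint from the reflection of that region. The natural candidate for the confining region comes from the four balanced vertices $u_i, u_j$ (essential) and $u_k, u_l$ (nonessential): recall from Lemma \ref{lemma_nonessentials_are_in_union_of_triangles} that the spherical convex hull of \emph{all} nonessential vertices of $Q$ is contained in the union $T := \triangle(\overline{u}_i,\overline{u}_j,\overline{u}_k) \cup \triangle(\overline{u}_i,\overline{u}_j,\overline{u}_l)$. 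Since $\Tilde{Q} = Q - u_i - u_j$ has exactly the nonessential vertices of $Q$ as its vertices, its spherical convex hull is $\mathcal{H}_S$ of the nonessential vertices, hence $\mathcal{H}_S(\Tilde{Q}) \subset T$. Applying the same reasoning to $\overline{Q}$ (which satisfies the same hypotheses, being the reflection of $Q$), we get $\mathcal{H}_S(\Tilde{\overline{Q}}) \subset \overline{T}$, the reflection of $T$.

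The second step is to verify that $T$ and $\overline{T}$ are disjoint. Here I would use the lune description: $T$ is the intersection of two of the three lunes $L(u_i;\overline{u}_j,\overline{u}_k)$, etc.\ — but more directly, $T$ is contained in the closed lune $L(\overline{u}_i ; \overline{u}_j, \overline{u}_k)\cup L(\overline{u}_i;\overline{u}_j,\overline{u}_l)$, whose two cusps are $u_i$ and $\overline{u}_i$, wait — better: $T$ lies inside the lune with cusps $\overline{u}_i$ and $u_i$ whose two sides are the great circles through $\{\overline{u}_i,\overline{u}_j\}$ and through $\{\overline{u}_i,\overline{u}_k\}$ (or $\overline{u}_l$). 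Since $u_i, u_j, u_k$ are in balanced position, $\overline{u}_i, \overline{u}_j, \overline{u}_k$ are too, so the lune containing $T$ has angular width strictly less than $\pi$ at its cusp; a closed lune of width $<\pi$ is disjoint from its own antipodal reflection except possibly at the cusps, and the cusps $\overline{u}_i, u_i$ are not in $\mathcal{H}_S(\Tilde Q)$ — indeed the ``in particular'' clause of Lemma \ref{lemma_nonessentials_are_in_union_of_triangles} already tells us $\mathcal{H}_S(\Tilde Q)$ avoids $\overline{u}_i$ (and by symmetry $\overline{u}_j$), and avoiding $u_i$ is immediate since $u_i$ is a cusp of $T$ while the interior nonessential vertices lie strictly inside the triangles. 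So $\mathcal{H}_S(\Tilde Q) \subset \mathrm{int}\, T$ away from the cusps, forcing $\mathcal{H}_S(\Tilde{Q}) \cap \mathcal{H}_S(\Tilde{\overline{Q}}) \subset \mathrm{int}\,T \cap \mathrm{int}\,\overline{T} = \emptyset$.

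I expect the main obstacle to be the careful bookkeeping in the second step: making precise the claim ``$T$ is contained in a closed lune of angular width $<\pi$ and such a lune is disjoint from its reflection.'' One has to be slightly careful because $T$ is a \emph{union} of two triangles sharing the edge $\overrightarrow{\overline{u}_i\overline{u}_j}$, so the relevant containing lune has cusp $\overline{u}_i$ (the common vertex closest to… ) — actually the cleanest formulation is: both triangles $\triangle(\overline{u}_i,\overline{u}_j,\overline{u}_k)$ and $\triangle(\overline{u}_i,\overline{u}_j,\overline{u}_l)$ are contained in the closed hemisphere $H$ bounded by the great circle through $u_i$ and $u_j$ that does \emph{not} contain $u_i$ on its... no. The honest route: since $\{u_i,u_j,u_k\}$ is balanced, by Lemma \ref{lemma_all_S2}-type reasoning the open hemisphere centered at each of $u_i,u_j,u_k$ is relevant, and $\triangle(\overline{u}_i,\overline{u}_j,\overline{u}_k)$ is exactly the antipode of $\triangle(u_i,u_j,u_k)$, which lies in the open hemisphere opposite to any interior point; one then checks both triangles lie in a common open hemisphere $H$ (e.g.\ the one centered at an interior point of the common edge's ``far'' side), so $T \subset H$, hence $\overline{T}\subset \overline{H} = \mathbb{S}^2 \setminus H$, and the two hulls, being subsets of $H$ and $\mathbb{S}^2\setminus H$ respectively, are disjoint. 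Establishing that single separating hemisphere $H$ — using that $u_i, u_j, u_k, u_l$ are in balanced position and that $\overline{u}_k, \overline{u}_l$ lie on the same side — is the crux, and Proposition \ref{proposition_characterization} together with Lemma \ref{lemma_lune} should supply exactly the sign relations needed.
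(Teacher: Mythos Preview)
Your plan has a genuine gap: the region $T := \triangle(\overline{u}_i,\overline{u}_j,\overline{u}_k) \cup \triangle(\overline{u}_i,\overline{u}_j,\overline{u}_l)$ is \emph{not} contained in any hemisphere, and $T \cap \overline{T} \neq \emptyset$. Indeed, the four vertices $\overline{u}_i,\overline{u}_j,\overline{u}_k,\overline{u}_l$ of the quadrilateral $T$ are themselves in balanced position (being the antipodes of a balanced quadruple), so no closed hemisphere contains them all. Moreover $u_k$, being the antipode of $\overline{u}_k$, lies in the opposite face $\triangle(\overline{u}_i,\overline{u}_j,\overline{u}_l)\subset T$, while at the same time $u_k$ is a vertex of $\overline{T}$; hence $u_k\in T\cap\overline{T}$. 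So neither ``$T$ sits in a lune of width $<\pi$'' nor ``$T$ and $\overline{T}$ are disjoint'' can be made to work, and your second step collapses.

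You are circling around the right idea (a separating hemisphere) but looking for it in the wrong place. The separating hemisphere is supplied not by the geometry of $T$ but directly by the \emph{definition of essential vertex}: since $u_i$ is essential, the vertex set $Q\setminus\{u_i\}$ already lies in a closed hemisphere $H$, and a fortiori so does $\Tilde{Q}=Q\setminus\{u_i,u_j\}$; under the standing genericity assumptions (no three collinear vertices, no antipodal pair) one may take $H$ open. Then $\mathcal{H}_S(\Tilde{Q})\subset H$ while $\mathcal{H}_S(\Tilde{\overline{Q}})=\overline{\mathcal{H}_S(\Tilde{Q})}\subset \overline{H}$, and $H\cap\overline{H}=\emptyset$. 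This is exactly the paper's proof, and it is a two-line argument; Lemma~\ref{lemma_nonessentials_are_in_union_of_triangles}, lunes, and Proposition~\ref{proposition_characterization} are all unnecessary here.
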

\begin{proof}
    Since $u_i$ and $u_j$ are essential, $\Tilde{Q}$ is contained in a (open) hemisphere $H$. By the same reason, $\overline{\Tilde{Q}}$ is also contained in a (open) hemisphere $H'$ which is disjoint from $H$. Since $\Tilde{\overline{Q}}=\overline{\Tilde{Q}}$, the result follows.
\end{proof}

\begin{example}
    Let $Q$ be the polygon of figure \ref{fig:example_triangulation_1}. In this case we have two excellent and nonessential vertices. The triangulation of $Int(\Tilde{Q})$ has two leaves, which gives $u_5$ and $u_7$ as excellent vertices of \textit{the polygon $\Tilde{Q}$}. Since $u_5$ is connected through edges of the original polygon $Q$ while $u_7$ is not, we can deduce only that $u_5$ is an excellent vertex of \textit{the original polygon $Q$}. On the other hand, $Ext(\Tilde{Q}) \cap \mathcal{H}_S(\Tilde{Q})$ is already a triangle, which gives $u_6$ as the intermediate vertex of the unique leaf of the triangulation. Notice that the edges adjacent to $u_6$ are contained in the original polygon $Q$.
    
    Is $u_6$ an excellent vertex of $Q$? If it were not excellent, then the segment $\overrightarrow{u_5 u_7}$ would intersect $\overline{Q}$ at some edge, which would imply that a vertex $\overline{u}$ of $\overline{Q}$ is in $\mathcal{H}_S(\Tilde{Q})$. But this contradicts lemmas \ref{lemma_nonessentials_are_in_union_of_triangles} and \ref{lemma_spherical_convex_hulls_do_not_intersect}. Therefore, $u_6$ is indeed excellent.
\end{example}

\begin{figure}
    \centering
    \includegraphics[scale=0.25]{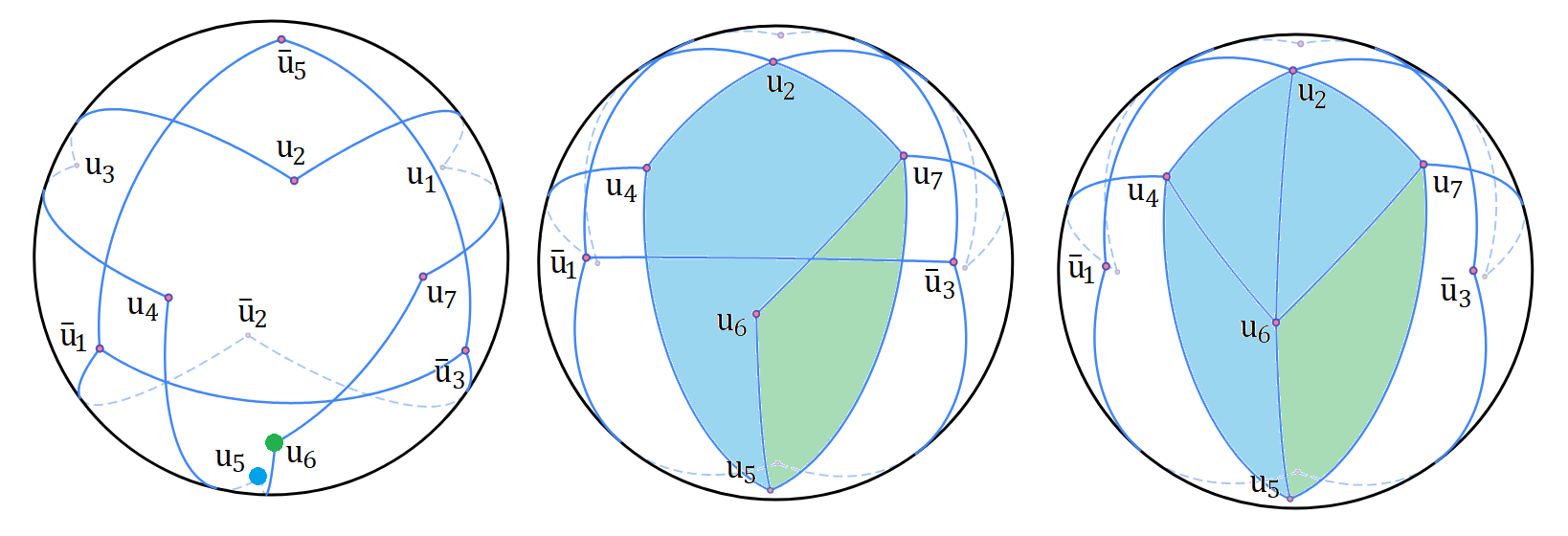}
    \caption{This polygon has $u_5$ and $u_6$ as excellent and nonessential vertices: $u_5$ (blue) can be found through an triangulation of the interior of polygon $\Tilde{Q}$ (blue), while $u_6$ (green) comes from the triangle obtained as the intersection of the exterior of $\Tilde{Q}$ with $\mathcal{H}_S(\Tilde{Q})$ (green).}
    \label{fig:example_triangulation_1}
\end{figure}

\begin{example}
    Let $Q$ be the polygon of figure \ref{fig:example_triangulation_2}. It has two excellent and nonessential vertices: $u_5$ and $u_6$. Both come from internal triangulations but, since they are consecutive, the triangulations used in each case must be different. Notice that these two triangulations also point to $u_4$ and $u_7$ as the intermediate vertices of the leaves of each triangulation, but since at least one edge adjacent to each these vertices is not of the original polygon $Q$, we cannot conclude whether or not they excellent vertices of $Q$.

    On the other hand, $u_2$ is the intermediate vertex of the unique triangle of $Ext(\Tilde{Q}) \cap \mathcal{H}_S(\Tilde{Q})$, but the two edges adjacent to it are not of the original polygon $Q$. Therefore we cannot conclude that $u_2$ is an excellent vertex of $Q$.
\end{example}

\begin{figure}
    \centering
    \includegraphics[scale=0.2]{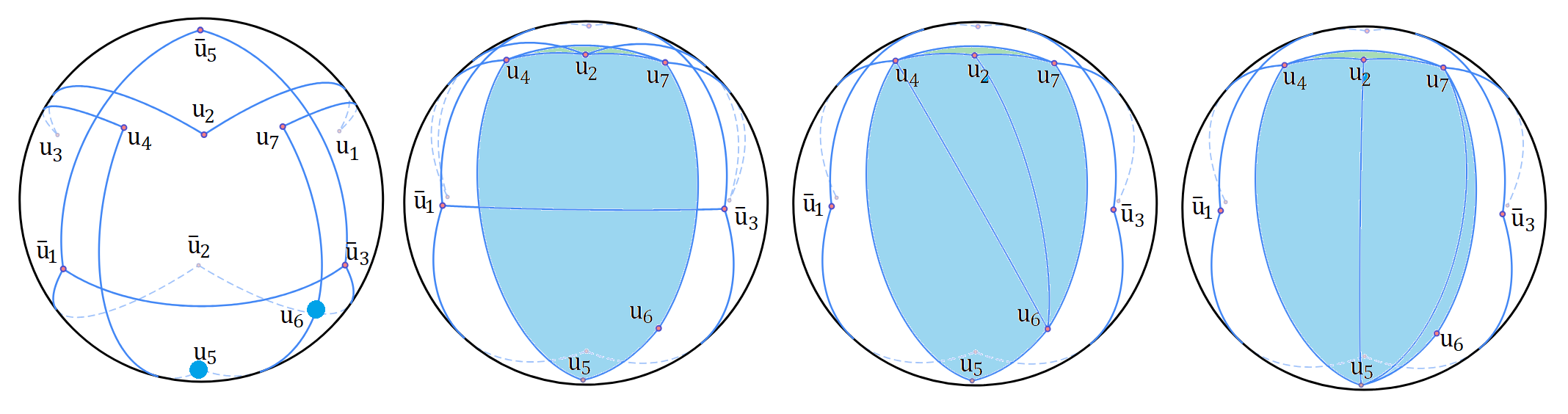}
    \caption{This polygon $Q$ has $u_5$ and $u_6$ as excellent and nonessential vertices (in blue). Different triangulations are needed in order to conclude that both of them are excellent because both vertices are consecutive. On the other hand, although $u_2$ is the intermediate vertex of the unique triangle of the triangulation of $Ext(\Tilde{Q}) \cap \mathcal{H}_S(\Tilde{Q})$, it is not an excellent vertex of $Q$.}
    \label{fig:example_triangulation_2}
\end{figure}

\begin{example}
    Let $Q$ be the polygon of figure \ref{fig:example_triangulation_3}. Both vertices $u_2$ and $u_4$ are the intermediate vertices of the leaves of the internal triangulation, but since they are adjacent to edges which are not contained in the original polygon $Q$, we cannot conclude that they are excellent vertices of $Q$.

    On the other hand, vertex $u_3$ is the intermediate vertex of the unique triangle of $Ext(\Tilde{Q}) \cap \mathcal{H}_S(\Tilde{Q})$ and, moreover, is adjacent to edges of the original polygon $Q$. To conclude that $u_3$ is indeed excellent, it suffices to see that the segment $\overrightarrow{u_2 u_4}$ does not intersect $\overline{Q}$. For if it intersected an edge of $\overline{Q}$, it would follow that that $\mathcal{H}_S(\Tilde{Q})$ contains a vertex $u$ of $\overline{Q}$ in its interior, which contradicts lemmas \ref{lemma_nonessentials_are_in_union_of_triangles} and \ref{lemma_spherical_convex_hulls_do_not_intersect}. Therefore, $u_3$ must be an excellent vertex of $Q$.
\end{example}

\begin{figure}
    \centering
    \includegraphics[scale=0.2]{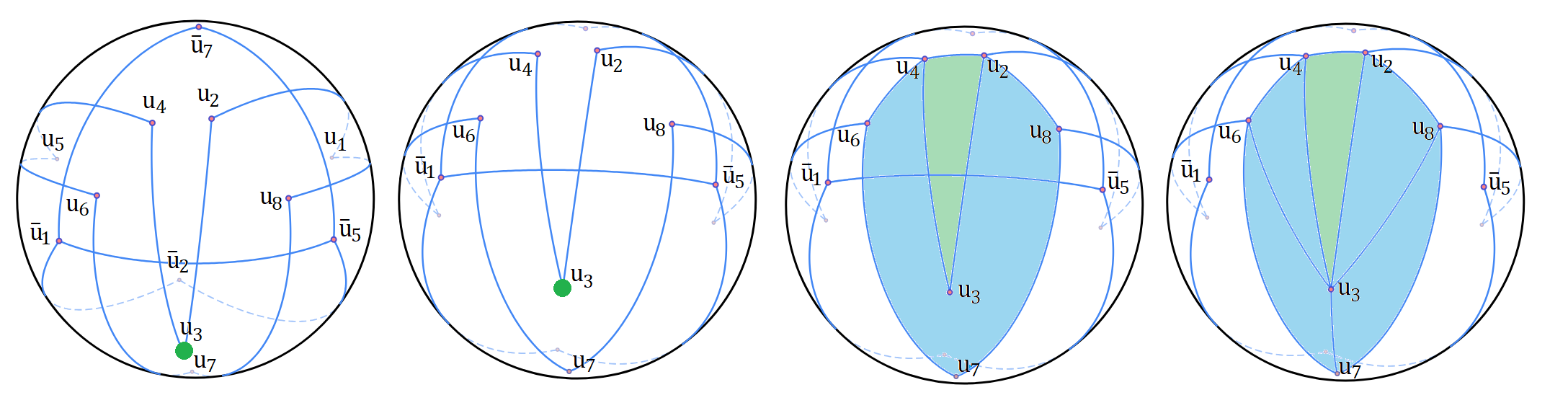}
    \caption{This polygon has only $u_3$ (in green) as an excellent vertex: it comes from the unique triangle of $Ext(\Tilde{Q}) \cap \mathcal{H}_S(\Tilde{Q})$.}
    \label{fig:example_triangulation_3}
\end{figure}

The previous examples show some general principles when looking for excellent and nonessential vertices of $Q$. When we consider the intermediate vertex of any leaf of the triangulation, it might be a vertex among $u_{i-1}$, $u_{i+1}$, $u_{j-1}$ or $u_{j+1}$. In this case it is an excellent vertex of $\Tilde{Q}$, although we cannot conclude in principle if it is an excellent vertex of $Q$. On the other hand, if the vertex is not one of these four vertices, the next lemma shows that it must be an excellent vertex of $Q$:

\begin{lemma}
\label{lemma_conditions_excellent}
    Let $Q=[u_1,...,u_n]$ ($n \geq 7$) be a spherical polygon in balanced position and without self nor antipodal intersections, with $Ess(P)=2$. Let $u_i$ an $u_j$ be the essential vertices and let $\Tilde{Q}:= Q - u_i - u_j$. Let $u_t$ be a good vertex of $\Tilde{Q}$ coming from a triangulation of either $Int(\Tilde{Q})$ or of a connected component of $Ext(\Tilde{Q}) \cap \mathcal{H}_S(\Tilde{Q})$. If $u_t$ is not one of the vertices $u_{i-1}$, $u_{i+1}$, $u_{j-1}$ and $u_{j+1}$, then $u_t$ is an excellent vertex of $Q$.
\end{lemma}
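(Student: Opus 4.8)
The plan is to reduce the assertion to the non-crossing of a single new edge, to place that edge inside the spherical convex hull $\mathcal{H}_S(\Tilde{Q})$, and then to let the hypotheses on $\Tilde{Q}$ together with Lemmas~\ref{lemma_nonessentials_are_in_union_of_triangles} and~\ref{lemma_spherical_convex_hulls_do_not_intersect} (and a lune-confinement argument in the spirit of the proof of Proposition~\ref{proposition_essentials_3_vertex}) finish the job. First, a reduction: the only edges of $\Tilde{Q}$ that are not already edges of $Q$ are the two ``shortcut'' edges $\overrightarrow{u_{i-1}u_{i+1}}$ and $\overrightarrow{u_{j-1}u_{j+1}}$, and these are incident only to $u_{i-1},u_{i+1},u_{j-1},u_{j+1}$. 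Since $u_t$ is none of these four, the vertices $u_{t-1},u_t,u_{t+1}$ are three consecutive (nonessential) vertices of $Q$ and the two edges of $Q$ at $u_t$ are also edges of $\Tilde{Q}$; hence $Q-u_t$ is obtained from $Q$ by deleting those two edges and adding the single new edge $s:=\overrightarrow{u_{t-1}u_{t+1}}$. As $Q$ has neither self- nor antipodal intersections, to prove $u_t$ excellent it suffices to show that (i) $s$ crosses no edge of $Q$ except its two neighbours in $Q-u_t$, and (ii) $s$ crosses no edge of $\overline{Q}$.

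Next I locate $s$. Since $u_i,u_j$ are essential, $\Tilde{Q}$ lies in an open hemisphere, so $\mathcal{H}_S(\Tilde{Q})$ is defined and spherically convex; it is exactly the spherical convex hull of the nonessential vertices of $Q$, so by Lemma~\ref{lemma_nonessentials_are_in_union_of_triangles} it is contained in $T:=\triangle(\overline{u}_i,\overline{u}_j,\overline{u}_k)\cup\triangle(\overline{u}_i,\overline{u}_j,\overline{u}_l)$ and contains neither $\overline{u}_i$ nor $\overline{u}_j$; moreover each of the two triangles making up $T$ lies in an open hemisphere and has both $\overline{u}_i$ and $\overline{u}_j$ as vertices, so it cannot contain $u_i=-\overline{u}_i$ or $u_j=-\overline{u}_j$, whence $u_i,u_j\notin\mathcal{H}_S(\Tilde{Q})$ as well. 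Because $u_t$ is the intermediate vertex of a leaf of a triangulation of $Int(\Tilde{Q})$ or of a connected component of $Ext(\Tilde{Q})\cap\mathcal{H}_S(\Tilde{Q})$, the edge $s$ is the third side of that leaf triangle; hence $s$ crosses no edge of $\Tilde{Q}$, and, its endpoints being vertices of $\Tilde{Q}$, the whole segment $s$ lies in $\mathcal{H}_S(\Tilde{Q})$ by convexity, with its open part lying in $Int(\Tilde{Q})$ in the first case and in $Ext(\Tilde{Q})\cap\mathcal{H}_S(\Tilde{Q})$ in the second.

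Now I dispatch the edges. Every edge of $Q$ other than the four incident to $u_i$ or $u_j$ is an edge of $\Tilde{Q}$, hence is missed by $s$. Every edge of $\overline{Q}$ other than the four incident to $\overline{u}_i$ or $\overline{u}_j$ is an edge of $\overline{\Tilde{Q}}$, hence (by convexity) lies in $\mathcal{H}_S(\overline{\Tilde{Q}})$, which by Lemma~\ref{lemma_spherical_convex_hulls_do_not_intersect} is disjoint from $\mathcal{H}_S(\Tilde{Q})\ni s$. It therefore remains only to show that $s$ avoids the four edges of $Q$ incident to $u_i$ or $u_j$ and the four edges of $\overline{Q}$ incident to $\overline{u}_i$ or $\overline{u}_j$. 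For these I would use that $u_i,u_j,\overline{u}_i,\overline{u}_j$ all lie outside $\mathcal{H}_S(\Tilde{Q})$, that each such edge runs from a point on $\Tilde{Q}$ (resp.\ on $\overline{\Tilde{Q}}$) out toward one of those four points, that $s$ does not cross $\Tilde{Q}$, and — for the edges at an essential vertex — their confinement to the appropriate lune as in the proof of Proposition~\ref{proposition_essentials_3_vertex}; together with the absence of antipodal intersections of $Q$ (which keeps $Q$ and $\overline{Q}$ disjoint), this should force each of these eight edges onto one side of the chord $s$.

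I expect this last step to be the crux. A geodesic segment can pass through a spherically convex region even when both of its endpoints lie outside it, so knowing that the endpoints of those eight edges avoid $\mathcal{H}_S(\Tilde{Q})$ is not by itself enough; one must use the finer position of the edges at $u_i,u_j$ and $\overline{u}_i,\overline{u}_j$ relative to $\partial\mathcal{H}_S(\Tilde{Q})$ to conclude that each crosses that boundary at most once. I anticipate splitting into the two cases according to the origin of $u_t$: if it comes from a triangulation of $Int(\Tilde{Q})$, then the open segment $s$ lies in $Int(\Tilde{Q})$, so any edge meeting $s$ would have to cross $\Tilde{Q}$, which — since a crossing with an edge of $\Tilde{Q}$ that is an edge of $Q$ (resp.\ $\overline{Q}$) would be a self- (resp.\ antipodal) intersection of $Q$, and a crossing with a shortcut edge is excluded by the shared endpoint or by the lune confinement — disposes of all remaining edges; if $u_t$ comes from a triangulation of a component of $Ext(\Tilde{Q})\cap\mathcal{H}_S(\Tilde{Q})$, then $s$ lies in the part of the hull outside $\Tilde{Q}$ and one argues instead via the lune containing the nonessential vertices adjacent to the relevant essential vertex, with a short geometric argument in each subcase.
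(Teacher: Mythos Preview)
Your reduction in the first paragraph is correct and matches the paper's implicit reduction, and your placement of $s$ inside $\mathcal{H}_S(\Tilde{Q})$ is right. But the step you yourself flag as the crux is left as a sketch: you propose a lune-confinement case analysis for the eight edges at $u_i,u_j,\overline{u}_i,\overline{u}_j$ without carrying it out, and the very obstruction you name --- a geodesic can traverse a spherically convex region with both endpoints outside --- is exactly what blocks the naive version of your argument.

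The paper short-circuits all of this with a trapping argument that handles every edge of $\overline{Q}$ uniformly, the four at $\overline{u}_i,\overline{u}_j$ included. The point is to use the whole ear triangle $\triangle=\triangle(u_{t-1},u_t,u_{t+1})$, not just the chord $s$. The two sides of $\triangle$ other than $s$ are edges of $Q$ --- this is precisely what the hypothesis $u_t\notin\{u_{i-1},u_{i+1},u_{j-1},u_{j+1}\}$ guarantees. If an edge $\overline{f}$ of $\overline{Q}$ crossed $s$, it would enter $\triangle$; it cannot exit through the other two sides, since that would be an antipodal intersection of $Q$; and two geodesic arcs meet in at most one point, so it cannot recross $s$. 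Hence one endpoint of $\overline{f}$, a vertex of $\overline{Q}$, is trapped inside $\triangle\subset\mathcal{H}_S(\Tilde{Q})$. That contradicts Lemmas~\ref{lemma_nonessentials_are_in_union_of_triangles} and~\ref{lemma_spherical_convex_hulls_do_not_intersect}, which together exclude every vertex of $\overline{Q}$ from $\mathcal{H}_S(\Tilde{Q})$. The same argument with ``antipodal'' replaced by ``self'' disposes of the four $Q$-edges at $u_i,u_j$: the trapped vertex would then be $u_i$, $u_j$, or a nonessential vertex, and none of these can lie in the interior of the ear $\triangle$ (the first two because each essential vertex lies outside the closed hemisphere containing $\mathcal{H}_S(\Tilde{Q})$, the last because $\triangle$ is a leaf of the triangulation). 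No separate treatment of the eight special edges, and no lune confinement, is needed.
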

\begin{proof}
    If $u_t$ comes from a triangulation of $Int(\Tilde{Q})$, it is excellent since in this case edge $\overrightarrow{u_{t-1} u_{t+1}}$ is contained in $Int(\Tilde{Q})$ and therefore cannot intersect $\overline{Q}$.
    
    Now, suppose that $u_t$ comes from a triangulation of a connected component of $Ext(\Tilde{Q}) \cap \mathcal{H}_S(\Tilde{Q})$. To prove that $u_t$ is excellent, it suffices to show that the segment $\overrightarrow{u_{t-1} u_{t+1}}$ (i.e., the side of $\triangle_1$ not adjacent to $u_t$) does not intersect $\overline{Q}$. For if it intersected an edge of $\overline{Q}$, it would follow that that $\mathcal{H}_S(\Tilde{Q})$ contains a vertex $\overline{u}$ of $\overline{Q}$ in its interior, which contradicts lemmas \ref{lemma_nonessentials_are_in_union_of_triangles} and \ref{lemma_spherical_convex_hulls_do_not_intersect}.
\end{proof}

\begin{figure}
    \centering
    \includegraphics[scale=0.25]{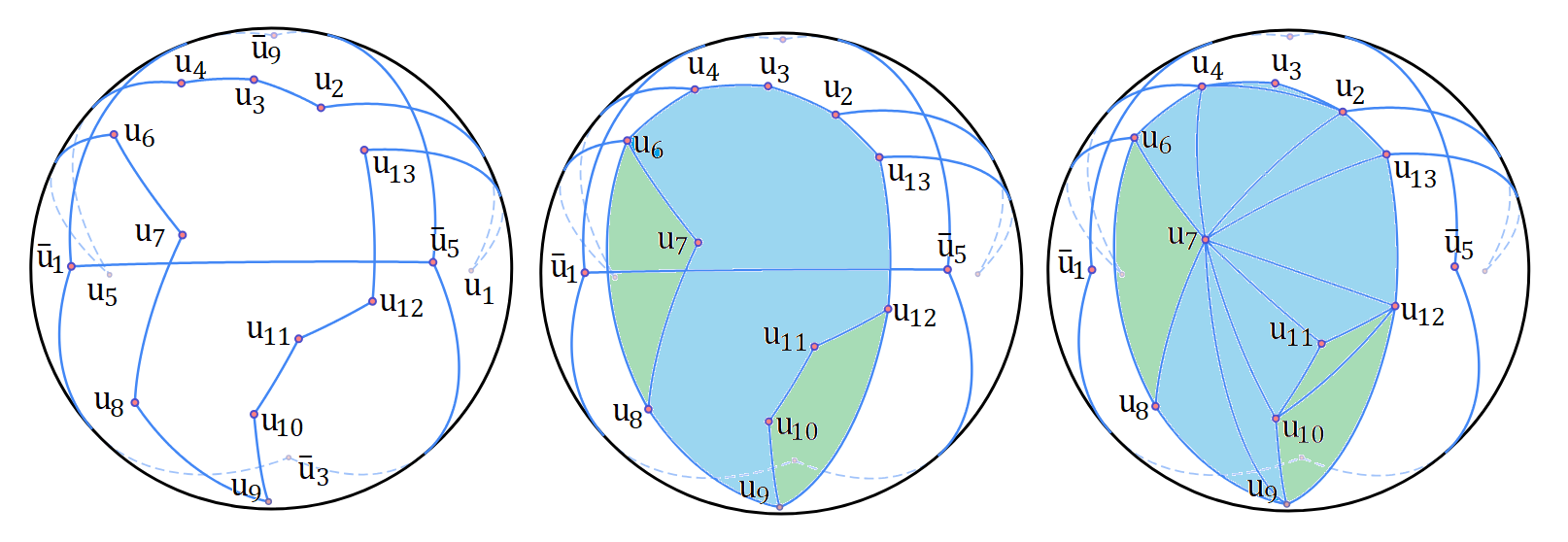}
    \caption{Vertices $u_3$ and $u_8$ are excellent and come from a triangulation of $Int(\Tilde{Q})$. Vertices $u_7$ and $u_{11}$ are also excellent, but come from a triangulation of $Ext(\Tilde{Q}) \cap \mathcal{H}_S(\Tilde{Q})$. Notice that since $u_6$, $u_8$ and $u_9$ are also at the boundary of components of $Ext(\Tilde{Q}) \cap \mathcal{H}_S(\Tilde{Q})$, they can be seen as the ``intermediate" vertices of leaves of these regions. However, each of them is adjacent to a segment that is not of the original polygon $Q$, so lemma \ref{lemma_conditions_excellent} does not apply here.}
    \label{fig:example_triangulation_4}
\end{figure}

Before proving proposition \ref{proposition_essentials_2_vertex}, we prove a lemma which states a lower bound on the number of good vertices of a \textit{polygon which is not in balanced position}. From now on, "convex" will always mean "spherically convex".

\begin{lemma}
\label{lemma_3_good_vertices}
    Let $Q=[u_1,...,u_n]$ ($n \geq 4$) be a simple spherical (or even planar) polygon which is contained in a closed hemisphere. Then $Q$ has at least 3 good vertices. More precisely:
    \begin{itemize}
        \item[(a)] If $Q$ is convex, then all of its vertices are good;
        \item[(b)] If $Q$ is not convex, then it has at least 3 good vertices.
    \end{itemize}
\end{lemma}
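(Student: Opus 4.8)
The plan is to reduce everything to planar simple polygons and then count good vertices by triangulating suitable regions. The reduction runs as follows: if $Q$ is already planar there is nothing to do, so assume $Q$ is spherical and contained in a closed hemisphere. Our standing general-position hypothesis (no three vertices on a common great circle) forces the vertices of $Q$ to lie in an \emph{open} hemisphere, i.e.\ the origin is not a convex combination of them: if it were, say $0=\sum t_iu_i$ with the $u_i$ in a closed half-space $\langle\,\cdot\,,N\rangle\geq 0$, then $0=\sum t_i\langle u_i,N\rangle$ with all terms nonnegative would force every $u_i$ appearing with $t_i>0$ to lie on the great circle $\langle\,\cdot\,,N\rangle=0$; there must be at least two such vertices, and if there are exactly two they are antipodal, so any third vertex is coplanar with them through $0$ — either way three vertices lie on a great circle, a contradiction. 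An open hemisphere being geodesically convex, all of $Q$ (edges included) lies in it, and gnomonic projection from the centre of the sphere carries $Q$ to a planar simple polygon, preserving simplicity, spherical convexity, and the property of a vertex being good. So it suffices to treat planar $Q$, and then \textbf{(a)} is immediate: removing a vertex from a convex polygon leaves its vertices in convex position joined in cyclic order, hence a convex (in particular simple) polygon, so every vertex is good.

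For \textbf{(b)}, let $Q$ be planar, simple, non-convex, $n\geq 4$. I will first obtain two good vertices as ears and then a third as a mouth. Recall $u_i$ is an \emph{ear} if $u_{i-1}u_{i+1}$ is an internal diagonal of $Q$; then $Q-u_i$ is $Q$ with the triangle $u_{i-1}u_iu_{i+1}$ clipped off, hence simple, so every ear is good, and every ear is a convex vertex. A triangulation of $Q$ has $n-2\geq 2$ triangles, its dual tree has at least two leaves, and the apex of a leaf triangle (the vertex not on the unique diagonal of that triangle) is an ear; so $Q$ has at least two (distinct) good convex vertices. Call $u_m$ a \emph{mouth} if it is a reflex vertex and the open segment $]u_{m-1},u_{m+1}[$ lies in the exterior of $Q$ and meets $\partial Q$ only at its endpoints; then $Q-u_m$ is $Q$ with the exterior triangle $u_{m-1}u_mu_{m+1}$ adjoined, hence simple, so a mouth is good, and being reflex it is none of the two ears. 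Thus exhibiting one mouth finishes the proof. (The case $n=4$ is anyway trivial, since $Q-u_i$ is always a non-degenerate triangle.)

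Exhibiting a mouth is the main obstacle. Since $Q$ is non-convex, its convex hull $C$ properly contains the closed region $\overline I$ bounded by $Q$, so there is an edge $[a,b]$ of $\partial C$ that is not an edge of $Q$; it cuts off a \emph{pocket} $P$, a connected component of $\operatorname{int}(C)\setminus\overline I$. I would check that $P$ is a simple polygon whose boundary is $[a,b]$ together with the sub-chain of $\partial Q$ from $a$ to $b$ — here the points to verify are that $]a,b[$ is disjoint from $Q$ (because $\partial C$ meets $Q$ only at hull vertices and along hull edges that happen to be edges of $Q$) and that $\operatorname{int}(P)$ is disjoint from $Q$ and contained in the exterior of $Q$. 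Now triangulate $P$: the lid $[a,b]$ lies in exactly one triangle, so the dual tree — which has a leaf, and at least two if $P$ has more than three vertices — has a leaf $\tau$ not containing the lid. The two boundary edges of $\tau$ are then consecutive edges $u_{m-1}u_m,\ u_mu_{m+1}$ of $Q$; its apex $u_m$ is a convex vertex of $P$, hence a reflex vertex of $Q$ since $\operatorname{int}(P)$ and $\operatorname{int}(Q)$ sit on opposite sides of the chain at $u_m$; and its third side $u_{m-1}u_{m+1}$ is an internal diagonal of $P$, so $]u_{m-1},u_{m+1}[\subset\operatorname{int}(P)$, which is disjoint from $Q$ and lies in the exterior of $Q$. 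Hence $u_m$ is a mouth, giving the third good vertex and completing the proof.
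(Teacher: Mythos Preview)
Your argument is correct and matches the paper's approach: two good vertices arise as ears from leaves of a triangulation of the interior of $Q$, and a third as a mouth from a leaf of a triangulation of a pocket (a connected component of $\mathcal{H}_S(Q)\cap\mathrm{Ext}(Q)$), with your convex/reflex distinction cleanly guaranteeing the mouth is distinct from the ears. The only wrinkle is when the pocket $P$ is itself a triangle---then the sole leaf of the dual tree \emph{does} contain the lid $[a,b]$, so your phrase ``a leaf $\tau$ not containing the lid'' and ``internal diagonal of $P$'' need adjusting, though the apex is still the desired mouth and the conclusion stands; the paper handles this degenerate case by a separate one-line remark.
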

\begin{proof}
    (a) is clear: if a vertex $u_i$ were not good, then the segment $\overrightarrow{u_{i-1} u_{i+1}}$ would intersect the polygon $Q$ at some other edge and therefore some of its points would not be in the interior of $Q$, contradicting the convexity of $Q$.
    
    (b) If $Q$ is not convex, then $\mathcal{H}_S(Q) \cap Ext(Q)$ is nonempty. Consider any triangulation of $\mathcal{H}_S(Q)$: since $Q$ has at least 4 vertices, the dual graph of $\mathcal{H}_S(Q) \cap Int(Q)$ is nontrivial and, by a basic theorem of graph theory, has at least two leaves. The intermediate vertex of each of these triangles is a good vertex.

    Moreover, $\mathcal{H}_S(Q) \cap Ext(Q)$ is nonempty (it might have more than one connected component). Consider a connected component $\mathcal{C}$ of $\mathcal{H}_S(Q) \cap Ext(Q)$: it is either a triangle with vertices $u_{i-1}$, $u_i$ and $u_{i+1}$ (in which case $u_{i}$ is the good vertex), or another type of poligonal region. In the latter case the dual graph of any triangulation of $\mathcal{C}$ is nontrivial and, again by a basic theorem of graph theory, has at least two leaves. Since at most one of these triangles might have as side the segment of $\partial \mathcal{H}_S(Q)$ which is not a side of $Q$, at least one of the leaves has two sides contained in $Q$. The vertex of $Q$ which is adjacent to these sides is therefore good.
\end{proof}

Now we can finally prove proposition \ref{proposition_essentials_2_vertex}:

\begin{proof}
    (of proposition \ref{proposition_essentials_2_vertex})
    Let $p$ be the polygonal line from $u_{i+1}$ to $u_{j-1}$.
    
    \textbf{Step 1}: First we prove the proposition in the case where $i+2 < j$, i.e., where $p$ is not a sole point (see figure \ref{fig:proof_similar_to_ess_3}.a). Here we proceed analogously to the proof of the proposition for $Ess(Q)=3$: we consider the (closed) polygon $[u_i,u_{i+1},...,u_{j-1},u_j]$ and denote by $R$ the region enclosed by it which contains the vertex $\overline{u}_k$. Triangulating the region $R$, we obtain again two leaves $\triangle_1$ and $\triangle_2$, one of them (say $\triangle_1$) having as intermediate vertex a nonessential vertex (see figure \ref{fig:proof_similar_to_ess_3}.b). It must be excellent: for if it were not the case, then the edge of $\triangle_1$ which is not contained in $Q$ (which we denote by $e$) would intersect an edge of $\overline{Q}$ (which we denote by $\overline{f}$), in which case there would be two possibilities:
    \begin{itemize}
        \item The endpoints of $e$ are two nonessential vertices (see figure \ref{fig:proof_similar_to_ess_3}.c). Since $e$ intersects $\overline{f}$, one of the endpoints of $\overline{f}$ is inside of $\triangle_1$ and therefore cannot be $\overline{u}_i$ nor $\overline{u}_j$, i.e., it must be a nonessential vertex of $\overline{Q}$. But then we would have four points of $Q$ in balanced position (namely, the endpoints of $e$ and $f$), at most one of them being essential. But this contradicts our hypothesis of $Ess(Q)=2$;
        \item One endpoint of $e$ is nonessential and the other is essential (see figure \ref{fig:proof_similar_to_ess_3}.d). We can assume without loss of generality that the latter point is $u_i$. Again, since $e$ intersects $\overline{f}$, one of the endpoints of $\overline{f}$ is inside of $\triangle_1$ and therefore cannot be $\overline{u}_i$ nor $\overline{u}_j$, i.e., it must be a nonessential vertex of $\overline{Q}$. The other endpoint cannot be $\overline{u}_i$ nor $\overline{u}_j$, since in this case $\overline{f}$ would not intersect $e$. Therefore, we would have four points of $Q$ in balanced position (namely, the endpoints of $e$ and $f$), with only one of them being essential. But this contradicts our hypothesis of $Ess(Q)=2$.
    \end{itemize}

    \begin{figure}
        \centering
        \includegraphics[scale=0.2]{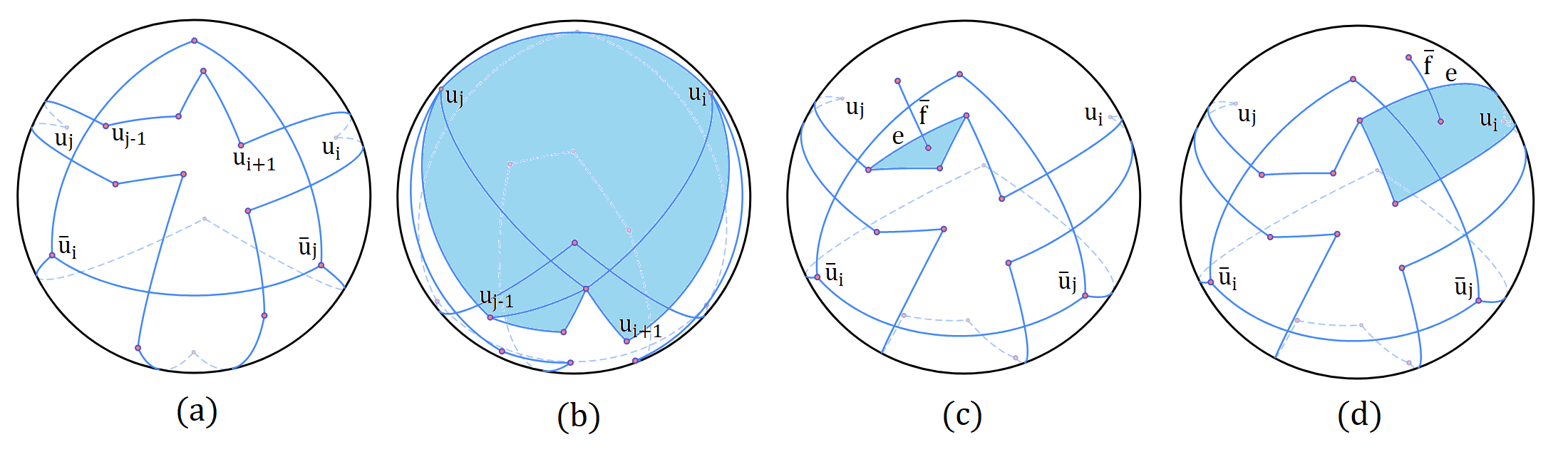}
        \caption{If the polygonal line $p$ is not a sole point, the polygon $[u_i,u_{i+1},...,u_{j-1},u_j]$ defines a region $R$ whose triangulation has at least two leaves. In this example, the intermediate vertices of each leaf of the triangulation are excellent vertices of $Q$.}
        \label{fig:proof_similar_to_ess_3}
    \end{figure}
    
    \textbf{Step 2}: Now we assume (for the rest of the proof) that $i+2=j$, i.e., the polygonal line $p$ is the sole point $u_{i+1}=u_{j-1}$ (see figure \ref{fig:q_and_q_tilde}). Denote this point by $u_h$. Denote by $p'$ the polygonal line from $u_{j+1}$ to $u_{i-1}$.
    
    Let $\Tilde{Q}:=Q - u_i - u_j$, where $u_i$ and $u_j$ are the essential vertices of $Q$ (see figure \ref{fig:q_and_q_tilde}). This new polygon is equal to the concatenation of the polygonal line $p$, the segment $\overrightarrow{u_{j-1} u_{j+1}}$, the polygonal line $p'$, and the segment $\overrightarrow{u_{i-1} u_{i+1}}$. Moreover, $\Tilde{Q}$ is a simple polygon with at least 5 vertices.

    \begin{figure}
        \centering
        \includegraphics[scale=0.2]{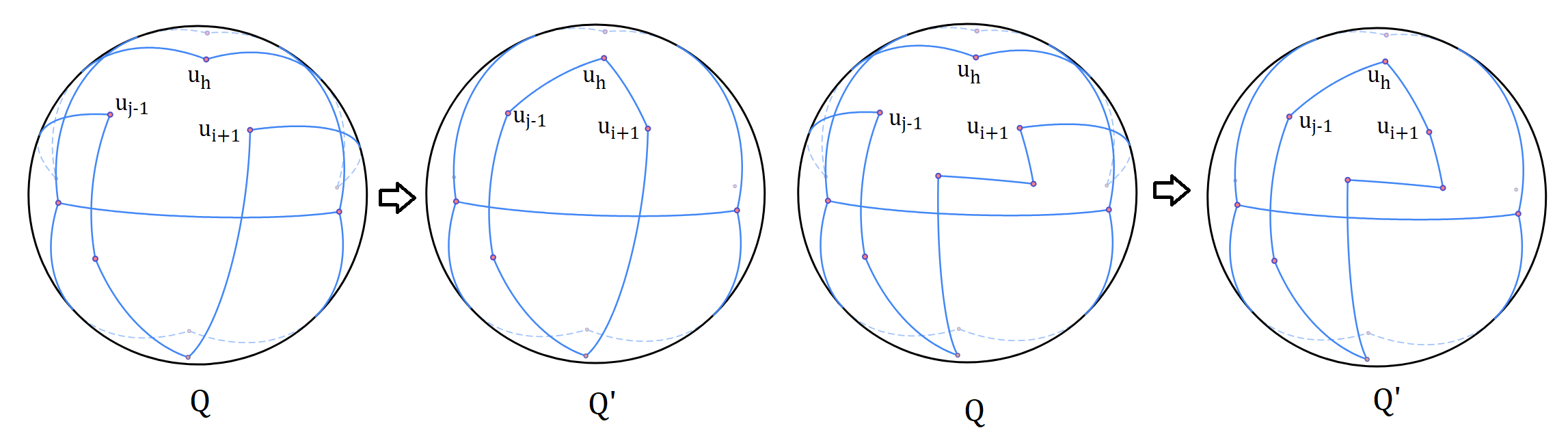}
        \caption{Two examples of a polygon $Q$ together with the corresponding $\Tilde{Q}:=Q - u_i - u_j$.}
        \label{fig:q_and_q_tilde}
    \end{figure}

    If $\Tilde{Q}$ is convex, then all of its vertices are good (by lemma \ref{lemma_3_good_vertices}.a) and therefore at least two of its vertices (namely, the ones different from $u_{i-1}$, $u_h$ and $u_{j+1}$) are also excellent vertices of the original polygon $Q$, by lemma \ref{lemma_conditions_excellent}.

    Now, if $\Tilde{Q}$ is not convex, then it has at least 3 good vertices by lemma \ref{lemma_3_good_vertices}.b. If one of these vertices is different from $u_{i-1}$, $u_h$ and $u_{j+1}$, then it is an excellent vertex of $Q$, by lemma \ref{lemma_conditions_excellent}.

    \textbf{Step 3}: Suppose now (for the rest of the proof) that the set of good vertices of $\Tilde{Q}$ (relative to the given triangulation) is precisely $\{u_{i-1},u_h,u_{j+1}\}$ (see figures \ref{fig:worst_situation_1} and \ref{fig:worst_situation_2}). By the proof of lemma \ref{lemma_3_good_vertices}.b, two of them are the intermediate vertices of leaves of a triangulation of the interior of $\Tilde{Q}$, while the other is the intermediate vertex of a leaf of a triangulation of $\mathcal{H}_S(\Tilde{Q}) \cap Ext(\Tilde{Q})$. Since $u_{i-1}$, $u_h$ and $u_{j+1}$ are consecutive in $\Tilde{Q}$, the only possibility is that $u_h$ is the intermediate vertex of the leaf of a triangulation of $\mathcal{H}_S(\Tilde{Q}) \cap Ext(\Tilde{Q})$.

    \begin{figure}
        \centering
        \includegraphics[scale=0.2]{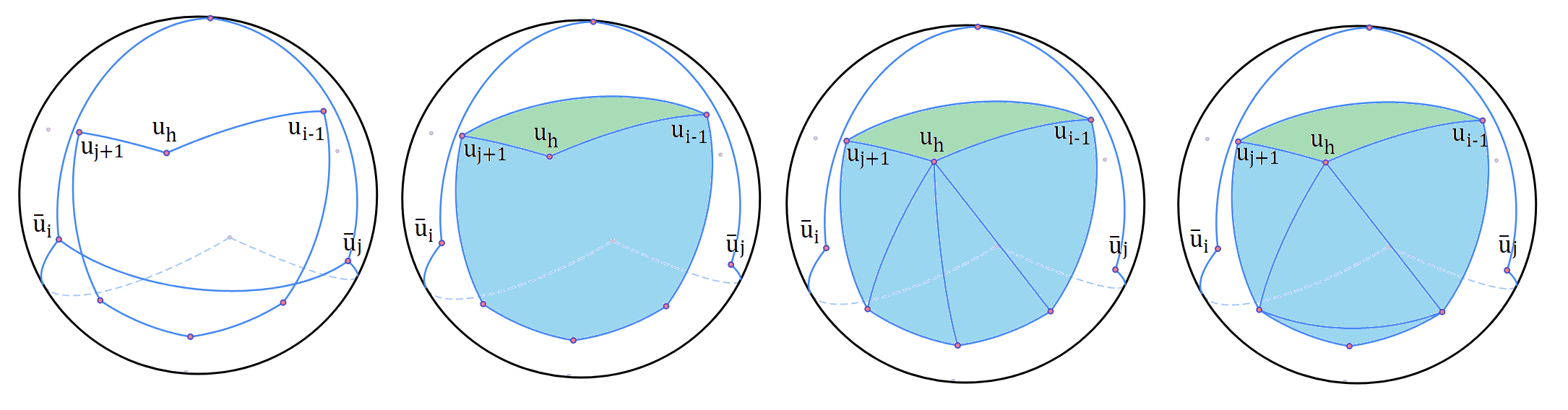}
        \caption{An example of a polygon $\Tilde{Q}$ such that the triangulation of $\mathcal{H}_S(\Tilde{Q})$ gives only $u_{i-1}$, $u_h$ and $u_{j+1}$ as good vertices of $\Tilde{Q}$. Notice that all edges which were added to form the internal triangulation must be adjacent to $u_h$. Moreover, we can ``flip" a pair of consecutive triangles in order to find another good vertex of $\Tilde{Q}$.}
        \label{fig:worst_situation_1}
    \end{figure}

    \begin{figure}
        \centering
        \includegraphics[scale=0.2]{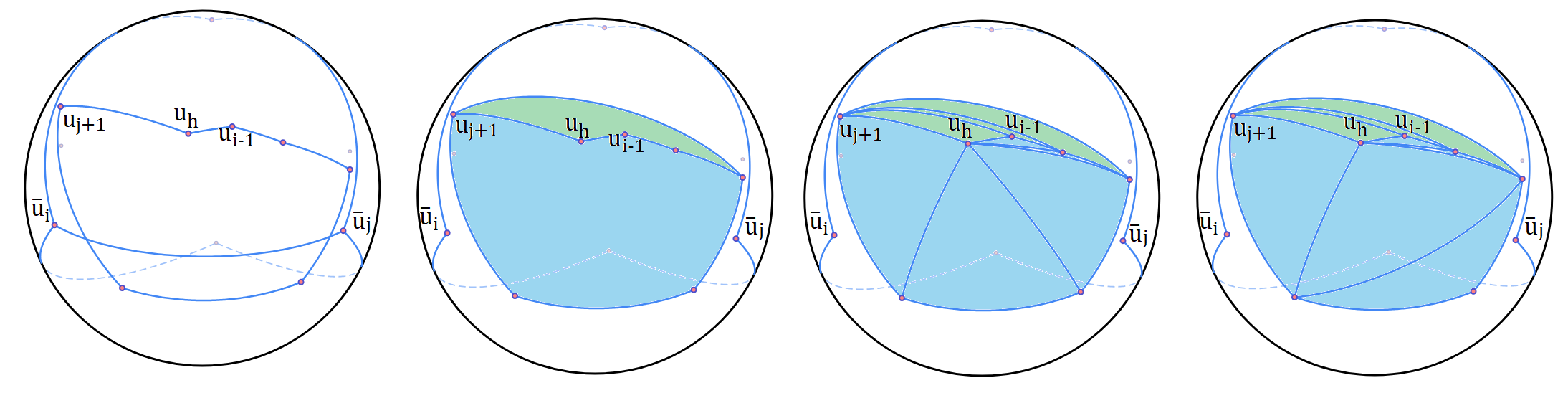}
        \caption{An example of a polygon $\Tilde{Q}$ such that the triangulation of $\mathcal{H}_S(\Tilde{Q})$ gives only $u_{i-1}$, $u_h$ and $u_{j+1}$ as good vertices of $\Tilde{Q}$. Notice that all edges which were added to form the internal triangulation must be adjacent to $u_h$. This example also shows that, besides the pairs of consecutive triangles of type (a) (which can be ``flipped"), type (b) pairs may also appear (which happens at vertices $u_h$, $u_{i-3}$, $u_{i-2}$ and $u_{i-1}$).}
        \label{fig:worst_situation_2}
    \end{figure}

    Now, since $u_{i-1}$, $u_h$ and $u_{j+1}$ are (in this order) consecutive in $\Tilde{Q}$, we have that all edges which were added to form the internal triangulation of $\Tilde{Q}$ must have $u_h$ as one of their endpoints: since $\triangle(u_{i-2},u_{i-1},u_h)$ and $\triangle(u_h,u_{j+1},u_{j+2})$ are the only leaves of the internal triangulation, all added edges must have as endpoints both a vertex in the polygonal line between $u_{i-1}$ and $u_{j+1}$ (in this order, relative to polygon $\Tilde{Q}$) and a vertex in the polygonal line between $u_{j+1}$ and $u_{i-1}$ (in this order, relative also to polygon $\Tilde{Q}$).

    Moreover, we have that the polygon $\Tilde{Q}$ is \textit{star-shaped} with respect to $u_h$: given any point of $\Tilde{Q}$ (including edge points), the segment connecting it to $u_h$ does not intersect the exterior of $\Tilde{Q}$.

    Therefore, the triangles
    $$ \triangle(u_h,u_{j+1},u_{j+2}), \triangle(u_h,u_{j+2},u_{j+3}),... \triangle(u_h,u_{i-3},u_{i-2}), \triangle(u_h,u_{i-2},u_{i-1}) $$
    are all the triangles of the internal triangulation of $\Tilde{Q}$. We want to examine each pair $\mathcal{P}$ of consecutive triangles of this sequence ($\triangle(u_h,u_{t-1},u_t)$ and $\triangle(u_h,u_t,u_{t+1})$, for each $j+1 < t < i-1$). The four vertices of $\Tilde{Q}$ which define them (i.e., $u_h$, $u_{t-1}$, $u_t$ and $u_{t+1}$) might be either in convex position or not. Because $\Tilde{Q}$ is star-shaped with respect to $u_h$, there are in principle only three possibilities (see figure \ref{fig:pair_of_triangles_3_cases}):
    \begin{itemize}
        \item[(a)] the four points are in convex position and such that $u_h$, $u_{t-1}$, $u_t$ and $u_{t+1}$ are in the counterclockwise order in $\partial \mathcal{H}_S(\mathcal{P})$;
        \item[(b)] $u_t$ is in the interior of $\mathcal{H}_S(\mathcal{P})$ and $u_h$, $u_{t-1}$ and $u_{t+1}$ are in the counterclockwise order in $\partial \mathcal{H}_S(\mathcal{P})$;
        \item[(c)] $u_h$ is in the interior of $\mathcal{H}_S(\mathcal{P})$ and $u_{t-1}$, $u_t$ and $u_{t+1}$ are in the counterclockwise order in $\partial \mathcal{H}_S(\mathcal{P})$;
    \end{itemize}

    \begin{figure}
        \centering
        \includegraphics[scale=0.2]{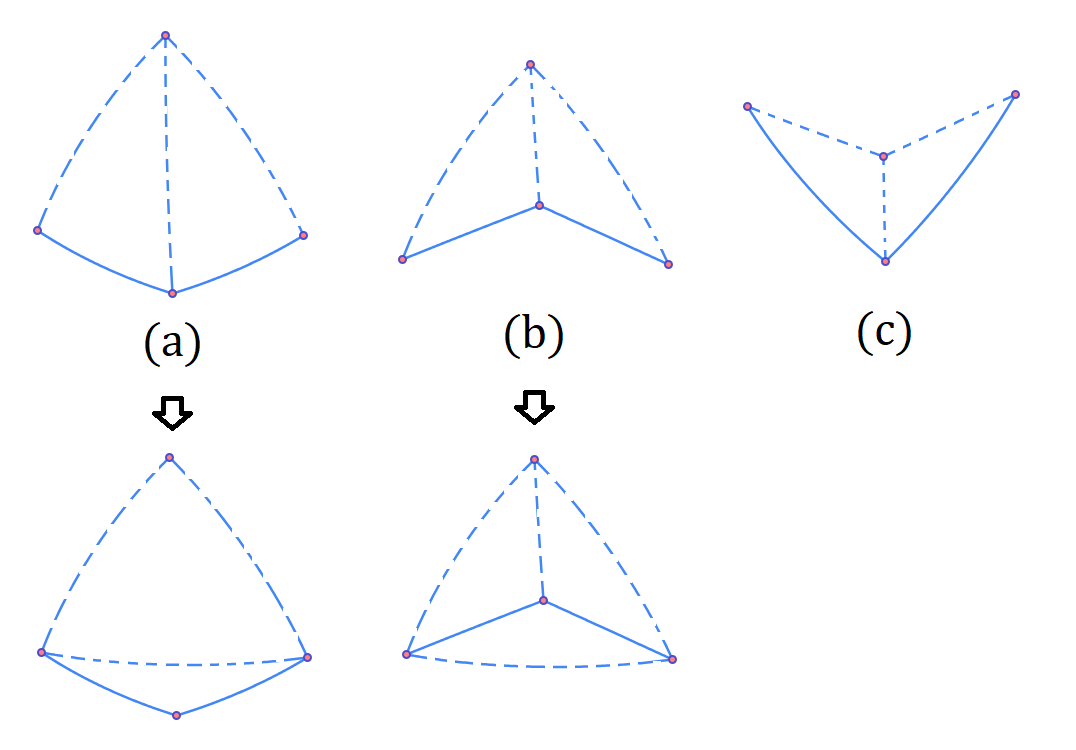}
        \caption{Three possibilities regarding a pair of consecutive triangles of the internal triangulation of $\Tilde{Q}$.}
        \label{fig:pair_of_triangles_3_cases}
    \end{figure}

    In case (a), we can just ``flip" the pair of triangles, i.e., switch the triangulation of $\mathcal{P}$ (and therefore change also the triangulation of the interior of $\Tilde{Q}$). In this case, we have that vertex $u_t$ is a good vertex of $\Tilde{Q}$.
    
    In case (b), we notice that, since $\Tilde{Q}$ is star-shaped with respect to $u_h$, we have that the triangle $\triangle(u_{t-1},u_t,u_{t+1})$ does not have vertices in its interior and, therefore, the vertex $u_t$ is a good vertex of $\Tilde{Q}$.

    It remains to show that case (c) cannot happen for every pair of triangles $\mathcal{P}$ of the internal triangulation of $\Tilde{Q}$. We have two cases to consider: the polygonal line $p'$ has either 4 vertices or more than 4 vertices.
    \begin{itemize}
        \item If the polygonal line $p'$ from $u_{j+1}$ to $u_{i-1}$ has exactly 4 vertices (see figure \ref{fig:pentagon}), then $\Tilde{Q}$ is a pentagon and, moreover, vertices $u_{j+2}$ and $u_{i-2}$ are consecutive and are both in the triangular region $\triangle(\overline{u}_i,\overline{u}_j,\overline{u}_l)$. 
        
        Here, it is impossible for vertex $u_h$ to be inside of $\triangle(u_{j+2},u_{i-2},u_{i-1})$: since $u_h$ is adjacent to $u_i$ in the original polygon $Q$, it must be ``on the left" of the oriented segment $\overrightarrow{u_{i-1} u_i}$ (i.e., the determinant $[u_{i-1},u_i,u_h]$ is positive). On the other hand, any point of the interior of $\triangle(u_{j+2},u_{i-2},u_{i-1})$ must be ``on the right" of $\overrightarrow{u_{i-1} u_i}$ (i.e., the corresponding determinant is negative), because points $u_{j+2}$ and $u_{i-2}$ are also on the right of $\overrightarrow{u_{i-1} u_i}$.

        Analogously, one proves that $\triangle(u_{j+1},u_{j+2},u_{i-2})$ cannot have $u_h$ in its interior.

        The conclusion is that in the case of $\Tilde{Q}$ being a pentagon, only cases (a) and (b) can happen, which implies the existence of at least two good vertices of $\Tilde{Q}$, both of them different from $u_h$, $u_{j+1}$ and $u_{i-1}$. Therefore, they are excellent vertices of $Q$, by lemma \ref{lemma_conditions_excellent}.

        \item If the polygonal line $p'$ from $u_{j+1}$ to $u_{i-1}$ has more than 4 vertices (see figure \ref{fig:hexagon}), then $\Tilde{Q}$ is a polygon with at least 6 vertices, whose interior is triangulated with at least 4 triangles. Therefore, there are at least 3 pairs of consecutive triangles $\mathcal{P}_1$,..., $\mathcal{P}_m$ ($m \geq 3$). Assume that there are no pairs of triangles of type (b). In this case (see figure \ref{fig:hexagon}), vertex $u_h$ cannot be in the interior of all triangles of the form $\triangle(u_{j+1},u_{j+2},u_{j+3})$, ..., $\triangle(u_{i-3},u_{i-2},u_{i-1})$, since some of them have disjoint interiors (for instance, triangles $\triangle(u_{j+1},u_{j+2},u_{j+3})$ and $\triangle(u_{i-3},u_{i-2},u_{i-1})$).

        Therefore, there must be at least one pair $\mathcal{P}$ of triangles such as in case (a) or (b). This implies the existence of at least one good vertex of $\Tilde{Q}$, which is different from $u_h$, $u_{j+1}$ and $u_{i-1}$. Hence it is also an excellent vertex of $Q$, by lemma \ref{lemma_conditions_excellent}.
    \end{itemize}

    \begin{figure}
        \centering
        \includegraphics[scale=0.2]{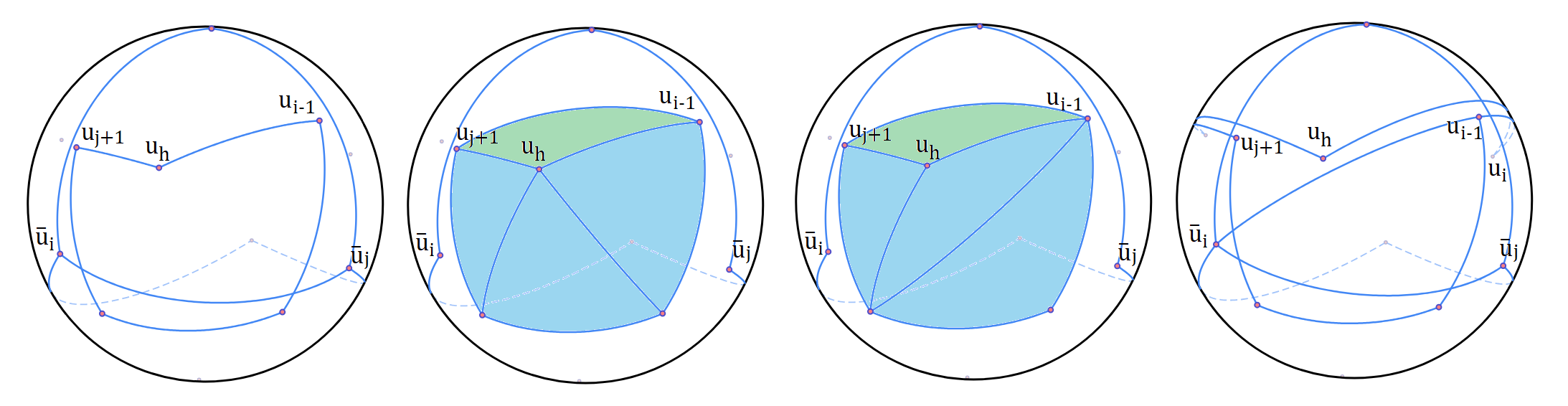}
        \caption{If $\Tilde{Q}$ is a pentagon and does not have pairs of type (b), we have that vertex $u_h$ cannot be inside of $\triangle(u_{j+2},u_{i-2},u_{i-1})$, since both sets are separated by the spherical line spanned by $u_{i-1}$ and $u_i$. Analogously, $u_h$ cannot be inside of $\triangle(u_{j+1},u_{j+1},u_{i-2})$.}
        \label{fig:pentagon}
    \end{figure}

    \begin{figure}
        \centering
        \includegraphics[scale=0.3]{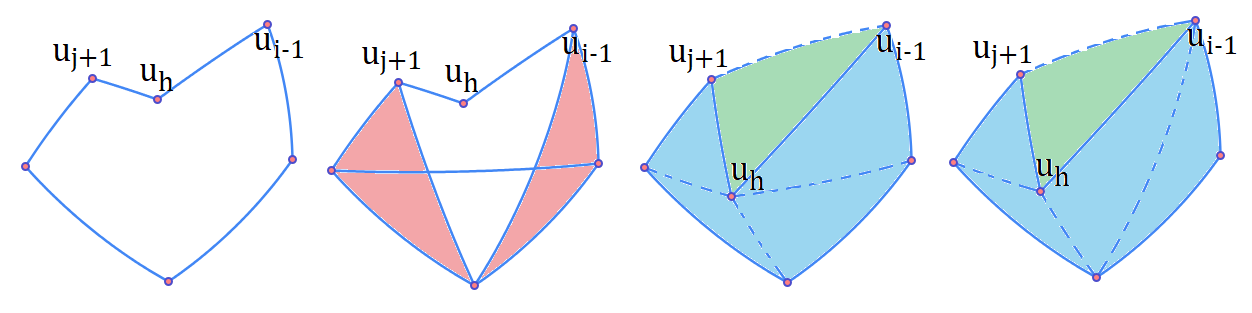}
        \caption{If $\Tilde{Q}$ has 6 or more vertices and does not have pairs of type (b), then the situation is depicted above: there will be triangles formed by consecutive vertices of $p'$ whose interiors are disjoint. Even if we move $u_h$, it cannot be inside all of these triangles.}
        \label{fig:hexagon}
    \end{figure}
\end{proof}

Since we proved propositions \ref{proposition_essentials_3_vertex} and \ref{proposition_essentials_2_vertex}, theorem \ref{special_segre_discrete} is also proved. An immediate corollary of theorem \ref{special_segre_discrete} is the following result:

\begin{corollary}
    Let $P=[v_1,...,v_n]$ ($n \geq 6$) be a space polygon whose tangent indicatrix does not self nor antipodal intersections. Then $P$ must have at least 6 flattenings.
\end{corollary}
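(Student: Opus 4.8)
The plan is to transfer Theorem \ref{special_segre_discrete} through the tangent indicatrix, exactly in the spirit in which Theorems \ref{segre_theorem_discrete} and \ref{discrete_mobius_theorem} were deduced from their spherical versions. Let $Q=[u_1,\dots,u_n]$ be the discrete tangent indicatrix of $P$, where $u_i = e_i/|e_i|$ and $e_i = v_{i+1}-v_i$. Then $Q$ is a spherical polygon with the same number $n\geq 6$ of vertices, and (under the standing general-position convention, which for $Q$ we read as: no three of the $u_i$ lie on a common great circle) it is eligible for Theorem \ref{special_segre_discrete} provided we check two things: that $Q$ has no self-intersections and no antipodal intersections, and that $Q$ is in balanced position. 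The first is precisely the hypothesis on $P$, so the only genuine point to address is balancedness.

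The key observation is that the indicatrix of a \emph{closed} polygon is automatically balanced. Suppose $Q$ were contained in a closed hemisphere $H_a=\{x\in\mathbb{S}^2:\langle a,x\rangle\geq 0\}$. Since $e_i=|e_i|\,u_i$, we would have $\langle a,e_i\rangle = |e_i|\,\langle a,u_i\rangle\geq 0$ for every $i$, while $\sum_{i=1}^n e_i = \sum_{i=1}^n (v_{i+1}-v_i) = 0$ because the polygonal line closes up. A sum of nonnegative reals that vanishes has all terms equal to zero, so $\langle a,u_i\rangle = 0$ for all $i$, i.e.\ every vertex of $Q$ lies on the great circle $a^\perp\cap\mathbb{S}^2$; this contradicts the general-position convention (and it forces $P$ to be planar, which the convention already excludes since all $u_i$ would then be collinear). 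Hence $Q$ is balanced, and Theorem \ref{special_segre_discrete} gives that $Q$ has at least six spherical inflections.

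It remains to convert inflections of $Q$ into flattenings of $P$, and here I would invoke Remark \ref{remark_flattening_inflection}, which records that $\{v_i,v_{i+1},v_{i+2}\}$ is a flattening of $P$ if and only if $\{u_i,u_{i+1}\}$ is an inflection of $Q$. For completeness I would write out the underlying sign computation: a point $p$ lies on the positive side of the plane through $v_i,v_{i+1},v_{i+2}$ iff $[p-v_i,e_i,e_{i+1}]>0$; taking $p=v_{i-1}$ gives $-[e_{i-1},e_i,e_{i+1}]$, and taking $p=v_{i+3}$ and using $v_{i+3}-v_i=e_i+e_{i+1}+e_{i+2}$ gives $[e_i,e_{i+1},e_{i+2}]$, so $v_{i-1}$ and $v_{i+3}$ lie on the same side exactly when $[e_{i-1},e_i,e_{i+1}]$ and $[e_i,e_{i+1},e_{i+2}]$ have opposite signs; since $[e_{i-1},e_i,e_{i+1}]\simeq[u_{i-1},u_i,u_{i+1}]=\epsilon_{i-1}$ and $[e_i,e_{i+1},e_{i+2}]\simeq[u_i,u_{i+1},u_{i+2}]=\epsilon_i$, this is precisely the condition that $\{u_i,u_{i+1}\}$ be an inflection. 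Thus distinct inflections of $Q$ correspond to distinct flattenings of $P$, and $P$ has at least six flattenings.

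As for difficulty, there is essentially no obstacle beyond recording the closedness argument of the second paragraph; all the geometric content is already carried by Theorem \ref{special_segre_discrete} and by the dictionary of Remark \ref{remark_flattening_inflection}. The only step that rewards a line of care is verifying that the correspondence between inflections and flattenings is a genuine bijection rather than a single inclusion, which is why I would spell out the two determinant identities $v_{i-1}-v_i=-e_{i-1}$ and $v_{i+3}-v_i=e_i+e_{i+1}+e_{i+2}$ to pin down the signs on both sides.
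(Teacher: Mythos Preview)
Your proof is correct and follows exactly the route the paper intends: the paper states the corollary as an ``immediate corollary of theorem \ref{special_segre_discrete}'' without giving a proof, relying implicitly on the flattening--inflection dictionary of Remark \ref{remark_flattening_inflection} and on the (standard) fact that the tangent indicatrix of a closed space polygon is balanced. You have simply made both of these implicit steps explicit, including the closedness argument $\sum e_i = 0$ forcing balancedness, which the paper does not spell out anywhere.
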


\section{Spherical polygons with self and antipodal intersections}
\label{section_with_intersections}

From now on we prove some generalizations of the main results on spherical polygons that we already proved. Recall that theorems \ref{segre_theorem_spherical} (resp. \ref{segre_theorem_discrete_spherical}) stated that a closed smooth spherical curve (a spherical polygon, resp.), without self-intersections and not entirely contained in any hemisphere must have at least four inflections. If such curve / polygon is symmetric, then this lower bound can be improved to six (the smooth case is theorem \ref{mobius_theorem} and the discrete case is theorem \ref{discrete_mobius_theorem}).

In the previous sections, we saw that this lower bound on the number of inflections may be improved for nonsymmetric curves / polygons when the given curve / polygon does not have antipodal intersections (in the smooth case, this result was theorem \ref{special_segre_smooth} due to Ghomi, while the discrete case was theorem \ref{special_segre_discrete}).

Ghomi extended all these smooth results (see \cite{Ghomi}) by allowing the curves to have singularities (i.e., cusps) and/or double points (i.e., self and antipodal intersections). Then he obtains the following results:

\begin{theorem}
\label{generalized_special_segre_smooth}
Given a $\mathcal{C}^2$ closed spherical curve $\gamma$, not entirely contained in any closed hemisphere, let $S$ be the number of singular points of $\gamma$, $I$ be the number of its inflections, and $D$ the number of pairs of points $t \neq s \in \mathbb{S}^1$ where $\gamma(t) = \pm \gamma(s)$. Then
$$ 2(D + S) + I \geq 6.$$
\end{theorem}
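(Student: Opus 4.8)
The plan is to deduce Theorem~\ref{generalized_special_segre_smooth} from its discrete analog, namely the inequality $2D+I\geq6$ valid for every spherical polygon not contained in a closed hemisphere (here $D=D^{+}+D^{-}$, as in the Introduction), which is established separately by the cutting-and-pasting argument. Given such a polygonal inequality, the smooth statement follows by approximating $\gamma$ with inscribed spherical polygons. We may assume $D,S,I<\infty$, the inequality being trivial otherwise; in particular $\gamma$ does not retrace itself or its antipode along any arc, and it has finitely many inflections and cusps, which cut $\mathbb{S}^{1}$ into finitely many arcs of constant-sign geodesic curvature.

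Inscribe in $\gamma$ spherical polygons $Q_{\delta}$ whose vertices lie on $\gamma$, are in general position (no three on one great circle), and have mesh $\delta\to0$. I claim that for $\delta$ small enough: (i) $Q_{\delta}$ is not contained in any closed hemisphere; (ii) $D_{Q_{\delta}}\le D$; and (iii) $I_{Q_{\delta}}\le I+2S$. For (i): being contained in no closed hemisphere means $0$ lies in the interior of the convex hull of the curve, hence, by Carathéodory, in the interior of the convex hull of a finite subset $F\subset\gamma$; for $\delta$ small the vertices of $Q_{\delta}$ approximate those of $F$, so $0$ also lies in the interior of their hull. For (ii): two spherical segments meet at most once, so each unordered pair $\{t,s\}$ with $\gamma(t)=\pm\gamma(s)$ accounts for at most one crossing of $Q_{\delta}$ — of the matching self or antipodal type — between the edges through $\gamma(t)$ and $\gamma(s)$; and a compactness argument (away from the double points, the curve stays a definite distance from itself and from its antipode) shows no other crossing appears when $\delta$ is small. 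For (iii): on each arc of constant-sign geodesic curvature the inscribed sub-polygon is convex, so the cyclic sign sequence $\epsilon_{i}=[v_{i},v_{i+1},v_{i+2}]$ is constant there; across an inflection of $\gamma$ it changes sign exactly once; and across a cusp of $\gamma$ it changes sign exactly twice — writing $\gamma$ near the cusp as $t\mapsto(t^{2},t^{3})$ in the exponential chart, the determinants $\epsilon_{i}$ attached to the handful of edges around the cusp vertex read $\dots,+,+,-,+,+,\dots$.

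Applying the polygonal inequality to $Q_{\delta}$ and using (i)--(iii),
$$ 6\ \le\ 2D_{Q_{\delta}}+I_{Q_{\delta}}\ \le\ 2D+(I+2S)\ =\ 2(D+S)+I, $$
which is the assertion of the theorem.

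Two steps should carry the real difficulty. The first is the polygonal inequality $2D+I\geq6$ itself: its cutting-and-pasting proof has to track precisely how $I$, $D^{+}$ and $D^{-}$ change when a self- or antipodal intersection is resolved, and this is the combinatorial core of the paper. The second is the cusp case of (iii): one must verify that every $\mathcal{C}^{2}$ cusp — not merely the semicubical model — produces \emph{exactly} two sign changes of $(\epsilon_{i})$ in any sufficiently fine inscribed polygon, and in particular never more than two (it is this upper bound that makes the chain of inequalities close). The convexity, Carathéodory, and compactness points in (i)--(iii) are routine.
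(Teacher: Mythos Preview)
The paper does not prove this theorem. Theorem~\ref{generalized_special_segre_smooth} is stated in Section~6 as a result of Ghomi, with a citation to \cite{Ghomi}, and no proof is given; the paper's contribution is the \emph{discrete} analog (Theorem~\ref{generalized_special_segre_discrete}), proved in Section~9 via the cutting-and-pasting lemma. The only information the paper offers about how the smooth case is proved is the remark in Section~10 that Ghomi also uses a cutting-and-pasting argument, but carried out directly in the smooth category.

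Your approach---deduce the smooth inequality from the discrete one by inscribing polygons---is therefore genuinely different from what the paper records of Ghomi's proof, and it is consonant with the paper's philosophy (``a theorem about polygons provides, in the limit, a theorem about smooth curves''). The reduction you outline is sound in broad strokes: item~(i) is Carath\'eodory plus openness of the interior of the convex hull; item~(ii) follows from uniform separation away from the finitely many double points; and on arcs of nonvanishing geodesic curvature the inscribed chords inherit convexity, giving the ``no spurious inflections'' half of~(iii).

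The one place where your argument is not yet a proof is the cusp estimate in~(iii). You model a cusp by $t\mapsto(t^{2},t^{3})$ and read off two sign changes, but a $\mathcal{C}^{2}$ singular point need not be semicubical: the hypothesis is only that $\gamma'(t_{0})=0$, and a $\mathcal{C}^{2}$ parametrization does not force a third-order normal form. Without a further nondegeneracy assumption (e.g.\ that the unit tangent extends continuously across $t_{0}$ and actually reverses there, or that the geodesic curvature of each branch has a limit of definite sign), one cannot conclude that an inscribed polygon sees \emph{at most} two sign changes near the cusp---and it is precisely this upper bound that your chain of inequalities needs. You flag this yourself as the hard step; to close the argument you would have to either impose Ghomi's precise hypotheses on the singular points and redo the local analysis under them, or else argue that higher-order cusps already force $I=\infty$ (or extra double points) and hence fall under the ``trivial'' case.
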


\begin{theorem}
\label{generalized_segre_smooth}
Let $\gamma$ be a spherical curve under the same conditions and notations of the previous theorem. If $D^+ (\leq D)$ denotes the number of pairs of points $t \neq s \in \mathbb{S}^1$ where $\gamma(t) = \gamma(s)$, then
$$ 2(D^+ + S) + I \geq 4.$$ 
Furthermore, if $\gamma$ is symmetric, i.e., $-\gamma = \gamma$, then
$$ 2(D^+ + S) + I \geq 6.$$
\end{theorem}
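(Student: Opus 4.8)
The plan is not to reprove this smooth statement directly, but to deduce it --- together with its symmetric refinement --- from a discrete analog for spherical polygons, proved by an elementary ``cutting and pasting'' argument, and then to pass to the limit. The discrete statement I would establish is: \emph{if $Q$ is a spherical polygon in balanced position with $D^{+}$ self-intersections and $I$ spherical inflections, then $2D^{+}+I\ge 4$, and $2D^{+}+I\ge 6$ if moreover $Q=-Q$} (in the discrete picture there is no term ``$S$'', since the edges are smooth geodesic arcs). To recover Theorem~\ref{generalized_segre_smooth} from this I would inscribe in $\gamma$ a sequence of spherical polygons $Q_{m}$ with mesh tending to $0$, with vertices chosen generically and off the crossings, inflections and cusps of $\gamma$ --- chosen symmetrically when $\gamma=-\gamma$. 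For $m$ large one has: $Q_{m}$ is balanced, because ``not contained in a closed hemisphere'' is an open condition (by compactness of the sphere of possible normals, $\max_{n}\min_{t}\gamma(t)\cdot n<0$ is stable under small Hausdorff perturbations); $D^{+}(Q_{m})\le D^{+}(\gamma)$ and $D^{-}(Q_{m})\le D^{-}(\gamma)$; and $I(Q_{m})\le I(\gamma)+2S(\gamma)$, since a fine inscribed polygon has one inflection near each inflection of $\gamma$, at most two near each cusp, and none elsewhere (the smooth local convexity keeps the corresponding $\epsilon_{i}$ from changing sign). Applying the discrete inequality to $Q_{m}$ and letting $m\to\infty$ gives $2(D^{+}(\gamma)+S(\gamma))+I(\gamma)\ge 4$, and $\ge 6$ in the symmetric case.

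The engine for the discrete statement would be a single crossing-resolution move. Suppose $Q$ has a self-intersection, at the crossing point $p$ of edges $\overrightarrow{u_{a}u_{a+1}}$ and $\overrightarrow{u_{b}u_{b+1}}$ (Proposition~\ref{self_intersection_determinants}). Of the two smoothings of this transverse crossing, exactly one --- the ``disoriented'' one, which reverses one of the two arcs into which $p$ divides $Q$ --- yields a connected curve; performing it inside a small neighborhood of $p$, by introducing a bounded number of new vertices with a generic choice of short new edges, produces a spherical polygon $Q'$ with: vertex set containing that of $Q$, hence $Q'$ still balanced by Lemma~\ref{lemma_all_S2}; one fewer self-intersection, $D^{+}(Q')=D^{+}(Q)-1$; after a generic perturbation, no new antipodal intersection, $D^{-}(Q')\le D^{-}(Q)$ (Proposition~\ref{antipodal_intersection_determinants}); and --- the one substantive point --- at most two extra inflections, $I(Q')\le I(Q)+2$. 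Away from $p$ the cyclic sign sequences $(\epsilon_{1},\ldots,\epsilon_{n})$ of $Q$ and $Q'$ coincide; near $p$ only a bounded block of signs is altered and a few new ones inserted, and the disoriented smoothing is precisely the choice that keeps the change in the number of cyclic sign changes within $2$.

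With this move in hand the discrete bounds fall out by a short case analysis. For $2D^{+}+I\ge 4$: if $D^{+}=0$ then $Q$ is simple and balanced, so $I\ge 4$ by Theorem~\ref{segre_theorem_discrete_spherical}; if $D^{+}\ge 2$ it is trivial; if $D^{+}=1$, the move gives a simple balanced $Q'$ with $I(Q')\ge 4$, hence $I(Q)\ge 2$ and $2D^{+}+I\ge 4$. For the symmetric refinement, first observe that the self-intersections of a symmetric polygon occur in pairs exchanged by the antipodal map (a self-paired crossing would be an edge meeting its own antipode, impossible for a minimal geodesic arc), so $D^{+}$ is even; then if $D^{+}=0$ Theorem~\ref{discrete_mobius_theorem} gives $I\ge 6$, if $D^{+}\ge 4$ it is trivial, and if $D^{+}=2$ one applies the move at both (antipodal) crossings simultaneously and symmetrically, obtaining a simple, symmetric, balanced $Q'$ with $I(Q')\ge 6$ (Theorem~\ref{discrete_mobius_theorem}), whence $I(Q)\ge 2$ and $2D^{+}+I\ge 6$. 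The same move, run as an induction on $D^{+}$ with base case handled by Theorem~\ref{special_segre_discrete}, also yields the companion bound $2D+I\ge 6$.

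The hard part will be the inflection estimate $I(Q')\le I(Q)+2$ for the disoriented smoothing: it is the discrete counterpart of Ghomi's index bookkeeping at a double point, and it reduces to verifying, over the finitely many ways the four edge-ends at $p$ and their immediate neighbors can be arranged, that this particular smoothing changes the number of sign changes of $(\epsilon_{1},\ldots,\epsilon_{n})$ by at most two --- a finite but delicate check in which the choice of smoothing genuinely matters (the other smoothing can be much worse, and can also disconnect the polygon). A secondary technical point is the limiting step: a preliminary reduction to transverse double points, bounding by two the inflections an inscribed polygon acquires near a cusp of $\gamma$, and arranging the inscribed polygons to record $D^{+}$ and $D^{-}$ without excess; and, for the $2D+I\ge 6$ version, checking that the smoothing can always be carried out without creating a new antipodal intersection.
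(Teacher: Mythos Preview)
The paper does not actually prove Theorem~\ref{generalized_segre_smooth}: it is stated there as a result of Ghomi, cited from~\cite{Ghomi}, and the paper's own contribution is the discrete analogs (Theorems~\ref{generalized_segre_discrete} and~\ref{generalized_mobius_discrete}). So there is no ``paper's own proof'' of this smooth statement to compare against directly.

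That said, the discrete core of your proposal --- the crossing-resolution move and the case split on $D^{+}$ --- is exactly the paper's route to Theorems~\ref{generalized_segre_discrete} and~\ref{generalized_mobius_discrete}. Your ``one substantive point'' $I(Q')\le I(Q)+2$ is precisely Lemma~\ref{never_decreasing_lemma}, and the paper proves it just as you anticipate: by a finite check over the relative positions of the four external vertices, carried out either pictorially over $27$ symmetry classes (Figure~\ref{fig:27_cases}) or via the sign-algebra of Lemma~\ref{polynomial_lemma}. The paper also makes explicit two technicalities you gloss over: it first inserts auxiliary vertices near the crossing (the $u_{i+1/3}$, $u_{i+2/3}$ construction) so that the ``forbidden regions'' contain no other vertices before applying the lemma, and it handles the degenerate case where $u_{j+1}$ and $u_{i}$ are consecutive by inserting a harmless extra vertex. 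Your symmetric case argument (that $D^{+}$ is even, and that one smooths the two antipodal crossings symmetrically) is also the paper's.

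What is genuinely new in your proposal is the limiting step from the discrete inequality back to the smooth one. The paper does not attempt this; in Section~10 it only remarks that the cutting-and-pasting mirrors Ghomi's smooth argument. Your approximation scheme is plausible, but the inequalities $I(Q_{m})\le I(\gamma)+2S(\gamma)$ and $D^{+}(Q_{m})\le D^{+}(\gamma)$ are asserted rather than argued, and the former is delicate: one needs the inflections of $\gamma$ to be isolated and of finite order, and one must control the discrete sign sequence near a cusp carefully (a spherical cusp can, without genericity assumptions, produce more than two discrete inflections in a fine inscription). If you intend this route to stand on its own, those local counting statements are where the work lies; the discrete machinery you invoke is already exactly what the paper supplies.
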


Note that in the particular case where $D^+ = S = 0$, theorem \ref{generalized_segre_smooth} is theorem \ref{segre_theorem} (or theorem \ref{mobius_theorem} if such curve is symmetric). In the particular case where $D = S = 0$, theorem \ref{generalized_special_segre_smooth} is theorem \ref{special_segre_smooth}.

We have already proved the discrete version of Theorem \ref{generalized_segre_smooth} in the case where $D^+ = S = 0$ and Theorem \ref{generalized_special_segre_smooth} in the case where $D = S = 0$. It remains, therefore, to state Theorems \ref{generalized_special_segre_smooth} and \ref{generalized_segre_smooth} in the discrete setting and then prove them.

Our strategy to prove these theorems consists of a ``cutting-and-pasting" procedure: at the point of self-intersection we delete the two edges and reconnect its adjacent vertices in such a way that we obtain a new polygon, which is free from self-intersections. We then prove that the number of inflections of the original polygon plus $2$ (which refers to the self-intersection counted twice) is greater or equal than the number of inflections of the resulting polygon, for which the lower bound is known.

We still have to define the notion of a ``discrete singular point". In the smooth case, the singular point is a cusp, i.e., a point at which the curve fails to be differentiable. Therefore it makes sense to define a ``discrete cusp" as a pair of two consecutive inflections (see figure \ref{fig:discrete_cusp}).

\begin{figure}
    \centering
    \includegraphics[scale=0.4]{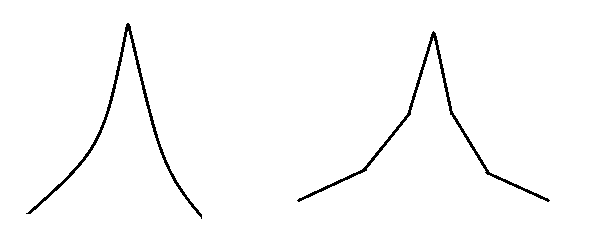}
    \caption{A cusp in the smooth/discrete setting.}
    \label{fig:discrete_cusp}
\end{figure}

Therefore, since a discrete cusp already counts as two inflections, it is enough to state theorems \ref{generalized_special_segre_smooth} and \ref{generalized_segre_smooth} without the explicit number $S$ of discrete cusps of the polygon.

Given a spherical polygon $Q$, let $D^+$ the number of double points (i.e., self-intersections) of the polygon, $D^-$ be the number of its antipodal-double points (i.e., antipodal intersections), $D := D^+ + D^-$ and $I$ be the number of its inflections. Now we can state the discrete analogs of theorems \ref{generalized_special_segre_smooth} and \ref{generalized_segre_smooth}.

\begin{theorem}
\label{generalized_special_segre_discrete}
Let $Q=[u_1,...,u_n] \in \mathbb{S}^2$ ($n\geq 6$) be a spherical polygon, not entirely contained in a closed hemisphere. Then
$$ 2D + I \geq 6.$$
\end{theorem}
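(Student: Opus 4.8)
The plan is to argue by induction on the total number $D=D^{+}+D^{-}$ of self- and antipodal intersections of $Q$. If $D\ge 3$ then $2D\ge 6$ and there is nothing to prove. If $D=0$, then $Q$ is a balanced spherical polygon with $n\ge 6$ vertices and with neither self- nor antipodal intersections, so Theorem \ref{special_segre_discrete} gives $I\ge 6=6-2D$. Thus the real content is the inductive step, $D\ge 1$: assuming the statement for all balanced polygons (with at least six vertices) having fewer than $D$ intersections, I would produce from $Q$ a balanced spherical polygon $Q'$, again with $n\ge 6$ vertices, such that $D(Q')=D(Q)-1$ and $I(Q')\le I(Q)+2$. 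Granting this, $2D(Q)+I(Q)=2\bigl(D(Q')+1\bigr)+I(Q)\ge 2D(Q')+I(Q')\ge 6$ by the induction hypothesis, and we are done. Note this uniform step covers the only genuinely nontrivial cases $D=1$ (where it yields $I\ge 4$) and $D=2$ (where it yields $I\ge 2$).

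The passage from $Q$ to $Q'$ is a \emph{cutting-and-pasting} at one of the intersections, say one occurring between the edges $e_i=\overrightarrow{u_i u_{i+1}}$ and $e_j=\overrightarrow{u_j u_{j+1}}$ (with $j\neq i+1$, $i\neq j+1$). Deleting $e_i$ and $e_j$ leaves two polygonal arcs, $A$ from $u_{i+1}$ to $u_j$ and $B$ from $u_{j+1}$ to $u_i$. If the intersection is an \emph{ordinary} self-intersection, I reconnect $u_i$ to $u_j$ and $u_{i+1}$ to $u_{j+1}$ by new spherical edges; of the two ways of reconnecting the four loose ends, this is the one producing a single closed polygon (the other splits $Q$ in two), and it amounts to traversing $A$ in the reversed direction. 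The resulting $Q'$ has the same vertex set as $Q$, hence the same number $n\ge 6$ of vertices, and since being balanced depends only on the vertex set (a closed hemisphere is spherically convex), $Q'$ is again balanced. If the intersection is an \emph{antipodal} intersection, the surgery must instead replace one of the two arcs, say $B$, by its antipodal image $\overline B$ and reconnect the loose ends accordingly; then $Q'$ has vertex set $A\cup\overline B$, which is the reflection of the balanced set $\overline A\cup B$ and so is again balanced, again with $n$ vertices.

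Next I would check $D(Q')=D(Q)-1$. The point is that $Q'$ differs from $Q$ only near the resolved intersection: in the self-intersection case the two new edges lie in the spherical convex hull of $\{u_i,u_{i+1},u_j,u_{j+1}\}$, which is contained in a hemisphere by Proposition \ref{self_intersection_determinants}, so every self- or antipodal intersection of $Q$ not meeting $e_i$ or $e_j$ survives, and conversely; and for the intersections that do meet $e_i$ or $e_j$ one argues with the determinant criteria of Propositions \ref{self_intersection_determinants} and \ref{antipodal_intersection_determinants} together with the characterization of balanced quadruples of Proposition \ref{proposition_characterization}. Concretely, the two new edges neither cross one another nor cross each other's antipode (the relevant sign pattern for a crossing is precisely the one excluded by the configuration, while the four endpoints lie in a hemisphere and so cannot be in balanced position), and an edge of $Q'$ that met a new edge would have had to enter the small triangle cut off by that edge and hence meet $e_i$ or $e_j$; so exactly the resolved crossing is lost. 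In the antipodal case the analogous bookkeeping, carried out with the reflected arc, again produces a net decrease of one. Finally, to control inflections, observe that the cyclic sequence of discriminants $\epsilon_k=[u_k,u_{k+1},u_{k+2}]$ of $Q'$ agrees with that of $Q$ except (i) along the reversed (resp.\ reflected) arc, where every $\epsilon_k$ is negated — which preserves every sign change strictly inside the arc and alters the sign-change count only at the two ends of that arc — and (ii) at the four vertices incident to the new edges. A careful count, in the spirit of Lemma \ref{lemma_inflections_not_increase}, then gives that the number of sign changes, i.e.\ of inflections, grows by at most $2$, which is the ``$+2$'' attached to the resolved double point.

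I expect the main obstacle to be the antipodal-intersection case of the cutting-and-pasting, and in particular the verification there that $D(Q')=D(Q)-1$. The surgery that works for ordinary self-intersections does not carry over verbatim: by Proposition \ref{antipodal_intersection_determinants} the four endpoints of an antipodal intersection are already in balanced position, so the two naively reconnected edges would themselves form an antipodal intersection, giving no net gain. One is therefore forced to reflect half of the polygon and then re-examine, edge by edge (again via Propositions \ref{proposition_characterization}, \ref{self_intersection_determinants} and \ref{antipodal_intersection_determinants}), that no compensating self- or antipodal intersection is created and that the new polygon is still simple near the new long edges. The second technical heart of the argument is the inflection estimate across the surgery, i.e.\ controlling simultaneously the block negation of the $\epsilon_k$ and the local changes at the four modified vertices so as to keep the increase at most $2$.
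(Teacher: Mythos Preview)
Your induction on $D$ forces you to perform surgery at an \emph{antipodal} intersection, and that is where the argument breaks. Your claim that the reflected polygon is balanced is unjustified and in fact false in general: you assert that $A\cup\overline{B}$ ``is the reflection of the balanced set $\overline{A}\cup B$'', but nothing tells you $\overline{A}\cup B$ is balanced---the hypothesis only says $A\cup B$ is. Concretely, take $Q$ with the arc $A$ lying near the north pole and $B$ near the south, connected by two long edges $e_i,e_j$ that have an antipodal crossing (this is easy to arrange, and Proposition~\ref{antipodal_intersection_determinants} only says the four endpoints are balanced, which they are). Then $\overline{B}$ is also near the north pole, so $A\cup\overline{B}$ sits in a closed hemisphere and $Q'$ is not balanced; the induction hypothesis does not apply. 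Beyond this, the bookkeeping you defer---that exactly one crossing disappears under the antipodal surgery and that the inflection count rises by at most $2$ when a whole arc is reflected---is substantial and not covered by anything in the paper; Lemma~\ref{never_decreasing_lemma} is proved only for the ordinary self-intersection surgery, via a $27$-case analysis and vertex-insertion trick tailored to that situation.

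The paper sidesteps the antipodal surgery entirely. It first proves Theorem~\ref{generalized_segre_discrete} ($2D^{+}+I\ge 4$) by cutting-and-pasting only at \emph{self}-intersections, using Lemma~\ref{never_decreasing_lemma} (with extra vertices inserted near the crossing, radius $\epsilon$ chosen small enough, to guarantee no new crossings are produced). Then Theorem~\ref{generalized_special_segre_discrete} follows by a short case split on $D^{-}$: if $D^{-}\ge 1$ then $2D+I=2D^{-}+(2D^{+}+I)\ge 2+4=6$ directly from Theorem~\ref{generalized_segre_discrete}; if $D^{-}=0$ one again cuts-and-pastes only at self-intersections (now taking $\epsilon$ small relative to $|w\pm u_i|$ so that no \emph{antipodal} crossings are created either) until $D^{+}=0$, and finishes with Theorem~\ref{special_segre_discrete}. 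So the route you were hoping to avoid---proving $2D^{+}+I\ge 4$ first---is exactly what makes the antipodal case trivial.
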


\begin{theorem}
\label{generalized_segre_discrete}
Let $Q=[u_1,...,u_n] \in \mathbb{S}^2$ ($n\geq 4$) be a spherical polygon, not entirely contained in a closed hemisphere. Then
$$ 2D^+ + I \geq 4.$$
\end{theorem}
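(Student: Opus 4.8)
The plan is to induct on the number $D^+$ of self-intersections of $Q$. When $D^+=0$, the polygon $Q$ is simple and, by hypothesis, not contained in any closed hemisphere, so Theorem~\ref{segre_theorem_discrete_spherical} applies and gives $I\ge 4 = 2D^+ + 4$ (the hypothesis $n\ge 4$ is exactly what that theorem requires). So assume $D^+\ge 1$ and fix a self-intersection of $Q$, say at edges $e_i=\overrightarrow{u_iu_{i+1}}$ and $e_j=\overrightarrow{u_ju_{j+1}}$ with $i<j$, $j\ne i+1$, $i\ne j+1$, meeting at a point $p$. Deleting $e_i$ and $e_j$ breaks $Q$ into two arcs $P_1=u_{i+1},\dots,u_j$ and $P_2=u_{j+1},\dots,u_i$. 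Reconnecting their endpoints by the pair of edges $\{\overrightarrow{u_iu_j},\overrightarrow{u_{i+1}u_{j+1}}\}$ produces a single closed polygon $Q'$ on the same $n$ vertices, with the arc $P_2$ now traversed backwards (the other reconnection would split $Q$ into two polygons, so we avoid it). Since $Q'$ has the same vertex set as $Q$, it is still not contained in any closed hemisphere.

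The core of the induction step is the claim that $Q'$ can be chosen so that $D^+(Q')\le D^+(Q)-1$ and $I(Q')\le I(Q)+2$. Granting this, the induction hypothesis gives $2D^+(Q')+I(Q')\ge 4$, whence
$$2D^+(Q)+I(Q)\ \ge\ 2\bigl(D^+(Q')+1\bigr)+\bigl(I(Q')-2\bigr)\ =\ 2D^+(Q')+I(Q')\ \ge\ 4,$$
which closes the induction. The inequality $D^+(Q')\le D^+(Q)-1$ I would get from a local picture: near $p$ the edges $e_i$ and $e_j$ cut out four small spherical geodesic triangles, and the two new edges $\overrightarrow{u_iu_j},\overrightarrow{u_{i+1}u_{j+1}}$ are the third sides of one opposite pair of them, the other two sides of each triangle being sub-arcs of $e_i$ and $e_j$. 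Hence any edge of the polygon meeting a new edge must already have crossed $e_i$ or $e_j$, so the surgery creates no self-intersection while it destroys the crossing $e_i\cap e_j$. A point needing care is that a vertex of $Q$ may lie inside one of those triangles; I expect to handle this by resolving an innermost self-intersection (one whose triangles contain no vertex) or by a short direct count — a routine but slightly delicate case distinction.

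The substantial part is the inflection bound $I(Q')\le I(Q)+2$. I would write $I$ as the number of sign changes in the cyclic sequence of $3\times 3$ determinants $\epsilon_m=[u_m,u_{m+1},u_{m+2}]$ (so $I(Q)$ counts the $m$ with $\mathrm{sign}\,\epsilon_{m-1}\ne\mathrm{sign}\,\epsilon_m$), and observe that the sequences attached to $Q$ and to $Q'$ share the two blocks coming from the interiors of $P_1$ and of $P_2$ — the $P_2$-block appearing reversed and globally sign-flipped in $Q'$, neither of which alters the number of internal sign changes. Consequently $I(Q')-I(Q)$ equals the difference of the at most six ``boundary'' sign changes, occurring around the four vertices $u_i,u_{i+1},u_j,u_{j+1}$: these are governed by $\epsilon_{i-1},\epsilon_i,\epsilon_{j-1},\epsilon_j$ for $Q$ and by $[u_{j-1},u_j,u_i],[u_j,u_i,u_{i-1}],[u_{j+2},u_{j+1},u_{i+1}],[u_{j+1},u_{i+1},u_{i+2}]$ for $Q'$. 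The relations $[i,i+1,j]\simeq[i+1,j,j+1]\not\simeq[i,i+1,j+1]\simeq[i,j,j+1]$ from Proposition~\ref{self_intersection_determinants}, supplemented by the handful of further sign comparisons forced by the positions of $u_{i-1},u_{i+2},u_{j-1},u_{j+2}$ relative to the relevant great circles, should constrain the possible boundary patterns enough to force the count to rise by no more than $2$.

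The hard part will be exactly this last bookkeeping. Unlike the vertex-deletion move of Lemma~\ref{lemma_inflections_not_increase}, which can only decrease $I$, the cut-and-paste can genuinely create new inflections, and one must verify — possibly only after a careful choice of which self-intersection to resolve first — that it never creates more than two. This is the discrete counterpart of the corresponding estimate in Ghomi's proof, and I anticipate the case analysis (of which of the six boundary sign changes can be ``on'' simultaneously, given the crossing relations) to be the most laborious step; the geometric reduction $D^+(Q')\le D^+(Q)-1$ and the bookkeeping that $Q'$ remains balanced should be comparatively straightforward.
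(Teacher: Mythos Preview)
Your plan matches the paper's: the connected cut-and-paste at a crossing, control of $2D^{+}+I$, and reduction to Theorem~\ref{segre_theorem_discrete_spherical}. Two corrections are needed. First, the full induction is unnecessary: $D^{+}\ge 2$ already gives $2D^{+}+I\ge 4$ for free, so only the case $D^{+}=1$ requires any work, and a single surgery suffices.

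Second, and more seriously, you have the two difficulties backwards. The inflection bound $I(Q')\le I(Q)+2$ is \emph{not} automatic: it can genuinely fail when one of the external vertices $u_{i-1},u_{i+2},u_{j-1},u_{j+2}$ lies inside one of the two triangles $\triangle(u_i,u_j,w)$ or $\triangle(u_{i+1},u_{j+1},w)$ (the paper exhibits a local configuration with $I'-I=4$; see case~(d) in the discussion preceding Lemma~\ref{never_decreasing_lemma}). Your ``innermost crossing'' idea cannot rescue this, since with $D^{+}=1$ there is only one crossing to choose. The paper's fix is different and concrete: before cutting, subdivide the two crossing edges by inserting up to four auxiliary vertices at spherical distance $\epsilon<\tfrac12\min_k|w-u_k|_{\mathbb{S}^2}$ from $w$, then perturb slightly off collinearity without changing $I$. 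After this preprocessing the two relevant triangles are tiny and contain no vertex of $Q$, so simultaneously (i) the surgery creates no new self-intersection, giving $D'^{+}=0$, and (ii) the external vertices are forced out of the forbidden regions, so the sign bookkeeping you sketch actually goes through. That bookkeeping is exactly Lemma~\ref{never_decreasing_lemma}, and the paper carries it out either via the $27$-configuration check you anticipate or, more elegantly, via the quadratic sign-polynomial identity of Lemma~\ref{polynomial_lemma}.
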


\begin{theorem}
\label{generalized_mobius_discrete}
Let $Q=[u_1,...,u_{2n}] \in \mathbb{S}^2$ ($2n\geq 6$) be a spherical polygon. If $Q$ is symmetric, then
$$ 2D^+ + I \geq 6.$$
\end{theorem}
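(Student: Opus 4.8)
The plan is to induct on the number $D^+$ of self-intersections, running the same ``cutting-and-pasting'' surgery used for Theorem \ref{generalized_segre_discrete}, but performed \emph{equivariantly} so that the polygon stays symmetric, and using the discrete Möbius theorem \ref{discrete_mobius_theorem} as the base case.

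First I would record two parity facts for a symmetric polygon $Q=[u_1,\dots,u_{2n}]$, in which $u_{m+n}=-u_m$. Since $\epsilon_{m+n}=[u_{m+n},u_{m+n+1},u_{m+n+2}]=[-u_m,-u_{m+1},-u_{m+2}]=-\epsilon_m$, the antipodal map carries an inflection $\{u_k,u_{k+1}\}$ to the inflection $\{u_{k+n},u_{k+n+1}\}$, and $k\not\equiv k+n\pmod{2n}$; similarly the crossing of edges $e_i,e_j$ is carried to the crossing of $e_{i+n},e_{j+n}$, which is a \emph{different} self-intersection since its location $-p\neq p$. Hence both $I$ and $D^+$ are even, so $2D^+ + I$ is even. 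I would also note that a symmetric polygon whose vertices are not all on one great circle is automatically balanced: if $Q\subset H$ then $Q=-Q\subset -H$, so $Q\subset H\cap(-H)$, a great circle (the degenerate case being excluded or handled by a small perturbation). Thus the hypotheses of the earlier theorems are automatically available.

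For the base case $D^+=0$, the polygon $Q$ is simple and symmetric, hence balanced, so Theorem \ref{discrete_mobius_theorem} gives $I\geq 6$, i.e. $2D^+ + I=I\geq 6$. For the inductive step, since $D^+$ is even we have $D^+\geq 2$; pick a self-intersection $p$, whose antipode $-p$ is also a self-intersection. I would apply the cutting-and-pasting surgery of Theorem \ref{generalized_segre_discrete} simultaneously at $p$ and at $-p$, taking the two local resolutions to be antipodal images of one another (and of the ``connected'' type), producing again a single symmetric spherical polygon $Q'$ with $D^+(Q')\leq D^+(Q)-2$, which (being symmetric and non-degenerate) is still not contained in any closed hemisphere, and which — by the surgery lemma, each single resolution raising the inflection count by at most $2$ — satisfies $I(Q')\leq I(Q)+4$. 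By the induction hypothesis $2D^+(Q')+I(Q')\geq 6$, and therefore $2D^+(Q)+I(Q)\geq 2\bigl(D^+(Q')+2\bigr)+\bigl(I(Q')-4\bigr)=2D^+(Q')+I(Q')\geq 6$.

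The main obstacle is making the equivariant double surgery behave well: one must verify that resolving at $p$ and $-p$ at once really yields a \emph{single}, still-symmetric polygon, and that the total inflection increase is at most $4$, i.e. that the surgery lemma underlying Theorem \ref{generalized_segre_discrete} applies to each of the two simultaneous resolutions without mutual interference (new crossings created by one resolution being absorbed correctly). An alternative that sidesteps the surgery is purely arithmetic: since $2D^+ + I$ is even and, by Theorem \ref{generalized_segre_discrete}, at least $4$, it suffices to rule out $2D^+ + I=4$; with $D^+$ even this means excluding $(D^+,I)=(0,4)$ — impossible by Theorem \ref{discrete_mobius_theorem} — and $(D^+,I)=(2,0)$, i.e. showing that an inflection-free symmetric polygon cannot have exactly two self-intersections, which is the delicate point in that variant.
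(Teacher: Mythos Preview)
Your approach is essentially the paper's: use the parity of $D^+$, handle $D^+=0$ by the discrete M\"obius theorem, and for $D^+\geq 2$ perform the cutting-and-pasting surgery equivariantly at an antipodal pair of crossings, invoking Lemma~\ref{never_decreasing_lemma} twice. The paper shortcuts your induction by noting that $D^+\geq 4$ already gives $2D^+ + I \geq 8$, so only $D^+\in\{0,2\}$ require any argument; as for your connectivity worry, the ``connected'' surgery (reverse the arc and reconnect) always produces a single polygon by construction, and symmetry of the result follows because the two local resolutions are antipodal images of one another.
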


We will first prove theorem \ref{generalized_segre_discrete}. Notice that for $D^+ \geq 2$ the result follows trivially, while for $D^+ = 0$ the result follows from Theorem \ref{segre_theorem_discrete_spherical}. Therefore, it suffices to consider the case where $D^+ = 1$. The strategy to prove the theorem in this case will be following:
we eliminate the intersecting edges and reconnect the adjacent vertices in such a way that the resulting polygon $Q'$ does not have self-intersections (we might need to add some more vertices to the polygon $P$ before this operation, in order to avoid new self-intersections). We then show that the number $2D^+ + I = 2 + I$ is greater or equal than the number $I'$ of inflections of $Q'$, which in its turn is greater or equal than $4$, by Theorem \ref{segre_theorem_discrete_spherical}.

\section{Eliminating self-intersections}

Suppose that we are given a spherical (or even planar) polygon $Q = [u_1,u_2,...,u_n]$ (with the usual orientation) with only one self-intersection, which happens at edges $\overrightarrow{u_i u_{i+1}}$ and $\overrightarrow{u_j u_{j+1}}$ (see figure \ref{fig:decomposition_polygons} - left). Suppose that no other vertex is in the spherical region spanned by $\{u_i,u_{i+1},u_j,u_{j+1}\}$, i.e., $\mathcal{C}(u_i,u_{i+1},u_j,u_{j+1}) \cap \mathbb{S}^2$ (in the case of a planar polygon, this region would be the convex hull of points $u_i$, $u_{i+1}$, $u_j$ and $u_{j+1}$).

\begin{figure}
    \centering
    \includegraphics[scale=0.4]{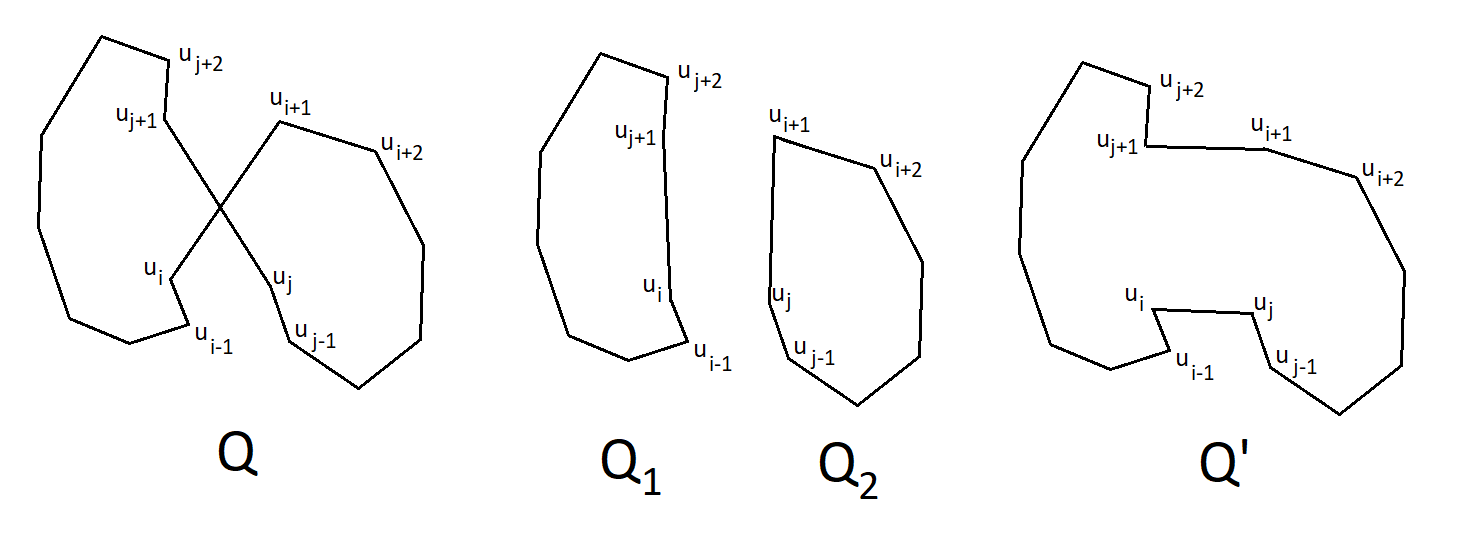}
    \caption{Two possible ways of eliminating a intersection: only one way results in a connected polygon.}
    \label{fig:decomposition_polygons}
\end{figure}

After deleting edges $\overrightarrow{u_i u_{i+1}}$ and $\overrightarrow{u_j u_{j+1}}$, there are two different ways of reconnecting these pairs of vertices (see figure \ref{fig:decomposition_polygons}):

\begin{itemize}
    \item form an edge from $u_i$ to $u_{j+1}$ and an edge of from $u_j$ to $u_{i+1}$. The result will be two new polygons $Q_1 = [u_i,u_{j+1},u_{j+2},...,u_{i-1}]$ and $Q_2 = [u_j,u_{i+1},u_{i+2},...,u_{j-1}]$, both of them with the orientation induced by that of $Q$;
    \item form an edge from $u_i$ to $u_j$ and an edge from $u_{i+1}$ to $u_{j+1}$, and reverse the orientation from vertex $u_{i+1}$ to $u_j$. This will result in only one polygon $Q' = [u_i,u_j,u_{j-1},...,u_{i+2},u_{i+1},u_{j+1},u_{j+2},...,u_{i-1}]$.
\end{itemize}

Both possibilities do not cause new self-intersections because we are assuming that no other vertex is in the region spanned by $\{u_i,u_{i+1},u_j,u_{j+1}\}$. More precisely, denoting by $w$ the self-intersection of edges $\overrightarrow{u_i u_{i+1}}$ and $\overrightarrow{u_j u_{j+1}}$, we see that:
\begin{itemize}
    \item $Q_1$ and $Q_2$ do not have self-intersections because no other vertex is in the regions spanned by $\{u_i,u_{j+1},w\}$ and $\{u_j,u_{i+1},w\}$;
    \item $Q'$ does not have self-intersections because no other vertex is in the regions spanned by $\{u_i,u_j,w\}$ and $\{u_{i+1},u_{j+1},w\}$.
\end{itemize}

Since we will need only the case where we obtain polygon $Q'$, our hypothesis might be narrowed down to the case in which no other vertex is in the regions spanned by $\{u_i,u_j,w\}$ and $\{u_{i+1},u_{j+1},w\}$.

Now we need the following lemma. Recall that $D^+$ and $I$ denote respectively the number of self-intersections and the number of inflections of $Q$ , while $D'^+$ and $I'$ denote respectively the number of self-intersections and the number of inflections of $Q'$.

\begin{lemma}
\label{never_decreasing_lemma}
Let $Q$ be a spherical (or planar) polygon with one self-intersection $w$ and such that no other vertex is in the regions spanned by $\{u_i,u_j,w\}$ and $\{u_{i+1},u_{j+1},w\}$. Let $Q'$ be the spherical (or planar) polygon obtained as above. Then
$$ 2D^+ + I \geq 2D'^+ + I'.$$
\end{lemma}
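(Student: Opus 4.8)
The plan is to reduce everything to a local count of sign changes. Since $Q$ has exactly one self-intersection, $D^+=1$, and since the construction produces a $Q'$ without self-intersections (as established above), $D'^+=0$; hence $2D^++I-(2D'^++I')=I+2-I'$, and it suffices to prove $I'\le I+2$.

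Recall that an inflection is exactly a sign change in the cyclic sequence of consecutive-triple determinants $\epsilon_k=[u_k,u_{k+1},u_{k+2}]$. First I would describe precisely how this cyclic sequence changes. Writing $Q'=[u_i,u_j,u_{j-1},\dots,u_{i+2},u_{i+1},u_{j+1},u_{j+2},\dots,u_{i-1}]$ and splitting $Q$ into the arc $A$ consisting of $u_{j+1},\dots,u_{i-1}$ (traversed with the same orientation in $Q'$) and the arc $B$ consisting of $u_{i+1},\dots,u_j$ (traversed in reverse in $Q'$), one checks: (i) every consecutive triple with all vertices in arc $A$ is unchanged; (ii) every consecutive triple with all vertices in arc $B$ gets its vertices reversed, hence its determinant multiplied by $-1$ — a uniform sign flip on that whole block, so the number of sign changes \emph{inside} it is unchanged; (iii) the determinants $\epsilon_{i-1},\epsilon_i,\epsilon_{j-1},\epsilon_j$ disappear, and the only new determinants are the two at each new edge: $\beta_2=[u_{i-1},u_i,u_j]$ and $\beta_3=[u_i,u_j,u_{j-1}]$ at $\overrightarrow{u_iu_j}$, and $\beta_4=[u_{i+2},u_{i+1},u_{j+1}]$ and $\beta_1=[u_{i+1},u_{j+1},u_{j+2}]$ at $\overrightarrow{u_{i+1}u_{j+1}}$. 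Consequently $I'-I$ equals the difference between two ``windows'' of six consecutive sign-change slots — one around the crossing in $Q$ (the slots adjacent to $\epsilon_{i-1},\epsilon_i,\epsilon_{j-1},\epsilon_j$), one around the new edges in $Q'$ (the slots adjacent to $\beta_1,\beta_2,\beta_3,\beta_4$) — and the whole problem becomes: bound this difference by $2$.

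Next I would record the signs entering these windows. Proposition \ref{self_intersection_determinants}, applied to the crossing of $\overrightarrow{u_iu_{i+1}}$ and $\overrightarrow{u_ju_{j+1}}$, fixes (up to an overall sign taken WLOG) the four determinants $[u_i,u_{i+1},u_j]\simeq[u_{i+1},u_j,u_{j+1}]\not\simeq[u_i,u_{i+1},u_{j+1}]\simeq[u_i,u_j,u_{j+1}]$. The hypothesis that no other vertex of $Q$ lies in the two bowtie-sectors $\{u_i,u_j,w\}$ and $\{u_{i+1},u_{j+1},w\}$ is then used to locate the neighbouring vertices $u_{i-1},u_{j-1},u_{i+2},u_{j+2}$ with respect to the new great circles: since the new edges bound these vertex-free sectors, the side of each new great circle on which the relevant neighbour lies is constrained, and this translates into sign relations tying $\beta_1,\beta_2,\beta_3,\beta_4$ to $\epsilon_{i-1},\epsilon_i,\epsilon_{j-1},\epsilon_j$ and to the crossing pattern.

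With these relations in hand, the remaining estimate is a finite case analysis over the data still free: the signs of $\epsilon_{i-1},\epsilon_i,\epsilon_{j-1},\epsilon_j$ and the signs of $\epsilon_{i-2},\epsilon_{j+1}$ and $\epsilon_{i+1},\epsilon_{j-2}$ at the ends of the two internal blocks. In each case one checks that the six-slot window of $Q'$ carries at most two more sign changes than that of $Q$ — morally, resolving the crossing can only release the single unit of turning hidden at $w$, registered once on each strand. Finally, the degenerate configurations (arc $A$ or arc $B$ a single vertex, or coincidences such as $u_{i-1}=u_{j+1}$) are handled by the same bookkeeping with a shortened window. The hard part is exactly this last stretch together with its input: extracting the forced signs of $\beta_1,\dots,\beta_4$ from the sector hypothesis and then verifying the $+2$ bound across all sign patterns; everything before that is routine once the local picture is set up.
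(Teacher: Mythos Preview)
Your outline is correct and is essentially the paper's own approach: reduce to $I'\le I+2$, note that only the six edge-slots adjacent to the crossing can change (the reversed arc contributes a uniform sign flip, hence no internal change), and then run a finite case analysis on the positions of the four ``external'' vertices $u_{i-1},u_{i+2},u_{j-1},u_{j+2}$, with the sector hypothesis excluding exactly one of the four quadrants at each internal vertex. The paper's only additions are a worst-case simplification for the outer signs $\epsilon_{i-2},\epsilon_{i+1},\epsilon_{j-2},\epsilon_{j+1}$ (so they need not be carried as free data), a symmetry reduction to $27$ configurations checked pictorially, and an equivalent algebraic reformulation in which $2+I-I'$ becomes a quadratic form in $\{\pm1\}$-variables that is shown to be nonnegative under the constraint $(x_k,y_k)\neq(-1,-1)$.
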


Lemma \ref{never_decreasing_lemma} does not apply  to a polygon $Q$ which does not satisfy the hypothesis concerning the regions spanned by $\{u_i,u_j,w\}$ and $\{u_{i+1},u_{j+1},w\}$ (see figure \ref{fig:counterexample_decreasing_lemma}). We will see later how to deal with such polygons.

\begin{figure}
    \centering
    \includegraphics[scale=0.4]{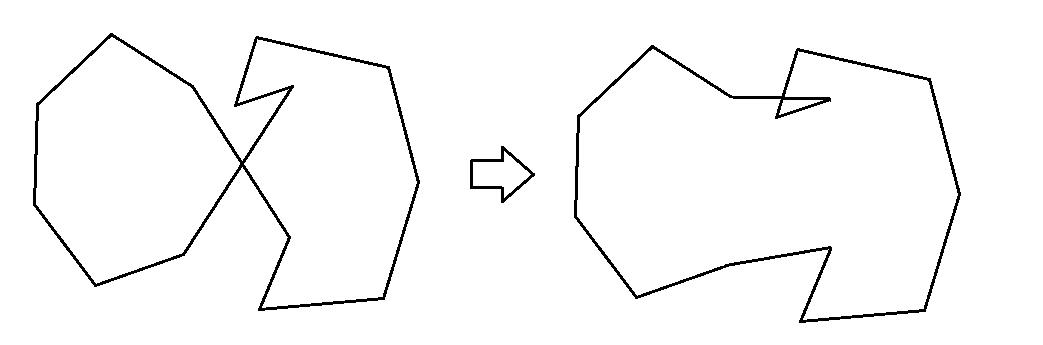}
    \caption{For the original polygon $Q$ we have $D^+=1$ and $I=2$, while for the resulting polygon $Q'$ we have $D'^+=1$ and $I'=4$.}
    \label{fig:counterexample_decreasing_lemma}
\end{figure}

Since $Q$ has only one self-intersection and $Q'$ has none, we have that $D^+=1$ and $D'^+=0$. Therefore it suffices to show that $2 + I$ is greater than or equal to $I'$, i.e., that $2+I-I'$ is greater than or equal to zero. First notice that an edge does not change its condition of being an inflection or not if we reverse its orientation (since $[u_{i+2},u_{i+1},u_i] = (-1) \cdot [u_i,u_{i+1},u_{i+2}]$ and $[u_{i+3},u_{i+2},u_{i+1}] = (-1) \cdot [u_{i+1},u_{i+2},u_{i+3}]$). Hence the operation of reversing the orientation of the polygon from vertex $u_{i+1}$ to $u_j$ does not increase nor decrease the number of inflections.

Therefore, in order to determine the number $2+I-I'$, it is enough to study what happens to the edges near the intersection:
\begin{itemize}
    \item two of the original edges disappear (namely, $\overrightarrow{u_i u_{i+1}}$ and $\overrightarrow{u_j u_{j+1}}$). These edges might be inflections, in which case they would contribute to the number $I$;
    \item two new edges appear (namely, $\overrightarrow{u_i u_j}$ and $\overrightarrow{u_{i+1} u_{j+1}}$). These edges might be inflections, in which case they would contribute to the number $I'$;
    \item the adjacent edges $\overrightarrow{u_{i-1} u_i}$ , $\overrightarrow{u_{i+1} u_{i+2}}$, $\overrightarrow{u_{j-1} u_j}$ and $\overrightarrow{u_{j+1} u_{j+2}}$ might also change their condition of being inflections or not, since we are altering one of the vertices that are adjacent to each one of these edges. Therefore theses edges might contribute either to $I$ or $I'$ (or both).
\end{itemize}

The condition of any of these edges to be an inflection or not depends on the position of the adjacent vertices $u_{i-1}$, $u_{i+2}$, $u_{j-1}$ and $u_{j+2}$. We call these vertices \textit{external}. The vertices $u_i$, $u_{i+1}$, $u_j$ or $u_{j+1}$ are \textit{internal}. Likewise, the edges whose endpoints are internal vertices ($\overrightarrow{u_i u_{i+1}}$, $\overrightarrow{u_j u_{j+1}}$, $\overrightarrow{u_i u_j}$ and $\overrightarrow{u_{i+1} u_{j+1}}$) are called \textit{internal edges}, while the remaining adjacent edges ($\overrightarrow{u_{i-1} u_i}$ , $\overrightarrow{u_{i+2} u_{i+1}}$, $\overrightarrow{u_j u_{j-1}}$ and $\overrightarrow{u_{j+1} u_{j+2}}$) are called \textit{external edges}.

Each external vertex might be in one of four different regions. For instance, vertex $u_{i-1}$ might be in one of the two regions determined by the line spanned by $\overrightarrow{u_i u_{i+1}}$ and in one of the two regions determined by the line spanned by $\overrightarrow{u_i u_j}$, which gives us four regions in total (see figure \ref{fig:gray_regions}). Now, if $u_{i-1}$ is not in any of the gray regions as in the figure \ref{fig:gray_regions} (a), then $u_{i+1}$ and $u_j$ will be on the same region determined by the line spanned by $\overrightarrow{u_{i-1} u_i}$. This implies that edge $\overrightarrow{u_{i-1} u_i}$ does not alter its condition of being an inflection or not after the cut-and-connect process, and therefore its contribution to the number $2+I-I'$ is zero. If, however, $u_{i-1}$ is in one of the gray regions, then we must also consider subcases in which $\overrightarrow{u_{i-1} u_i}$ goes from being an ordinary edge to being an inflection, and vice-versa (in which case its contribution to the number $2+I-I'$ would be $-1$ resp. $+1$). Figure \ref{fig:gray_regions}(c) shows simultaneously the gray regions for all internal vertices.

\begin{figure}
    \centering
    \includegraphics[scale=0.3]{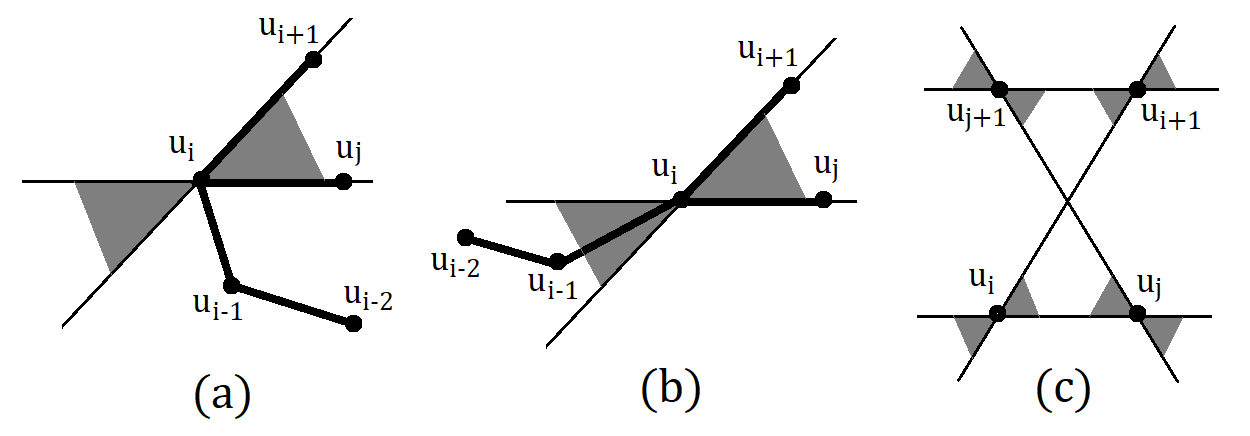}
    \caption{In (a), edge $\overrightarrow{u_{i-1} u_i}$ is not an inflection in $P$ nor in $P'$. In (b), edge $\overrightarrow{u_{i-1} u_i}$ is not an inflection in $P$, but it is an inflection in $P'$.}
    \label{fig:gray_regions}
\end{figure}

Figure \ref{fig:four_cases} shows 4 possibilities (among many others). In case (a), $\overrightarrow{u_i u_{i+1}}$ and $\overrightarrow{u_{i+1} u_{j+1}}$ are inflections, while $\overrightarrow{u_j u_{j+1}}$ and $\overrightarrow{u_i u_j}$ are not. This gives us $2+I-I'=2+0=2$.

In case (b), $\overrightarrow{u_j u_{j+1}}$ and $\overrightarrow{u_{i+1} u_{j+1}}$ are inflections, while $\overrightarrow{u_i u_{i+1}}$ and $\overrightarrow{u_i u_j}$ are not, which means that these four edges contribute zero to the number $2+I-I'$. Now, in this case we must also look at edges $\overrightarrow{u_{i-1} u_i}$ and $\overrightarrow{u_{j-1} u_j}$: both of them are originally inflections but become ordinary edges. Therefore $2+I-I'=2+2=4$.

A similar analysis shows that for case (c) the number $2+I-I'$ equals $0$. Notice that configurations (b) and (c) are the same except for the vertices $u_{i-2}$ and $u_{j-2}$, which in each case determine if edges $\overrightarrow{u_{i-1} u_i}$ and $\overrightarrow{u_{j-1} u_j}$ cease to be or become inflections. This, on its turn, determine a change in the number $2+I-I'$. Hence, in a certain sense, case (c) is a ``worse" situation than case (b) because its corresponding number $2+I-I'$ is less than that of (b), although it does not contradict the conclusion of Lemma \ref{never_decreasing_lemma}. Since we want to simplify the number of configurations to be analyzed, we will assume that if any of the external vertices $u$ is in a gray region, then the adjacent vertex to $u$ which is not internal is positioned in such a way that the corresponding external edge goes from being an ordinary edge to becoming an inflection, i.e., it will contribute $-1$ to the number $2+I-I'$ (that is, since we want to prove that the number $2+I-I'$ is equal or greater than zero, we are already considering the ``worst-case scenario").

For case (d) the number $2+I-I'$ equals $-2$. This does not contradict the validity of Lemma \ref{never_decreasing_lemma}, since in this case vertex $u_{j+2}$ is in the forbidden region and therefore the configuration does not satisfy the hypotheses.

\begin{figure}
    \centering
    \includegraphics[scale=0.45]{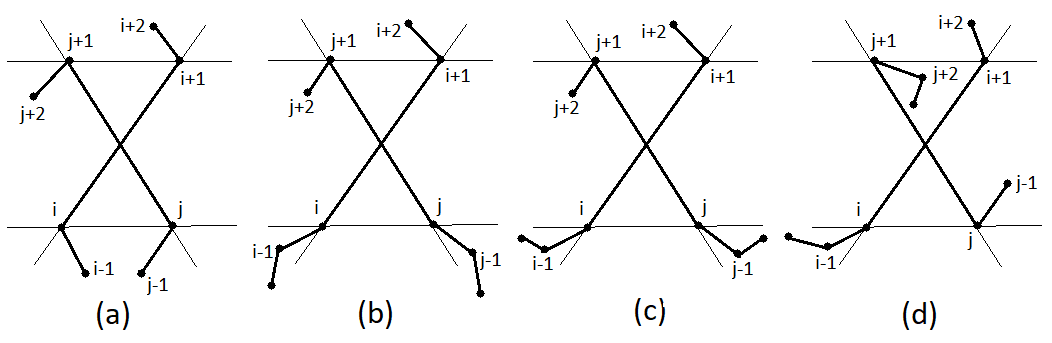}
    \caption{Four different cases}
    \label{fig:four_cases}
\end{figure}

Another important fact is that vertices $u_{j+1}$ and $u_i$ (or $u_{i+1}$ and $u_j$) might be consecutive. In this case, we can just add another vertex $u_k$ between $u_{j+1}$ and $u_i$ as in figure \ref{fig:adding_one_vertex}: just delete edge $\overrightarrow{u_{j+1} u_i}$ and add the new ones $\overrightarrow{u_{j+1} u_k}$ and $\overrightarrow{u_k u_i}$. This can be done without altering the numbers $I$ and $I'$ provided that one places vertex $u_k$ as in figure \ref{fig:adding_one_vertex}: edges $\overrightarrow{u_i u_{i+1}}$, $\overrightarrow{u_j u_{j+1}}$, $\overrightarrow{u_i u_j}$ and $\overrightarrow{u_{i+1} u_{j+1}}$ do not change their condition of being or not being an inflection, while edges $\overrightarrow{u_{j+1} u_i}$ (the deleted one), $\overrightarrow{u_{j+1} u_k}$ and $\overrightarrow{u_k u_i}$ (the new ones) are not inflections. Since the number $2+I+I'$ of the new polygon is equal to the corresponding number of the original polygon, we can therefore assume that our polygons do not have self-intersections of the type of figure \ref{fig:adding_one_vertex}-left.

\begin{figure}
    \centering
    \includegraphics[scale=0.35]{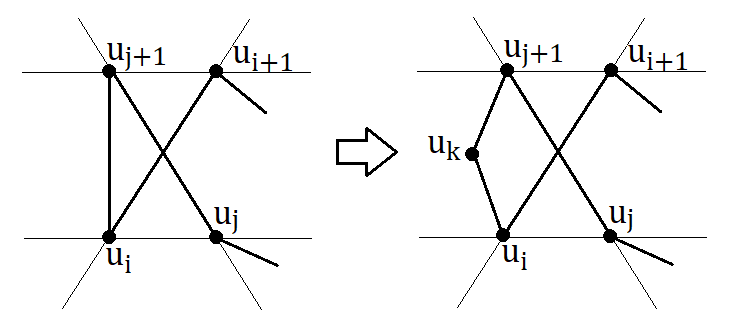}
    \caption{If vertices $u_{j+1}$ and $u_i$ are consecutive, one can add an intermediate vertex without altering the numbers $I$ and $I'$.}
    \label{fig:adding_one_vertex}
\end{figure}

We could prove Lemma \ref{never_decreasing_lemma} by looking at all $3^4=81$ possible configurations (each of the $4$ external vertices might be in $3$ different regions) and checking that the number $2+I-I'$ is always greater or equal to zero. One can notice further that many of these configurations are symmetric (by reflections) to each other. For example, in figure \ref{fig:symmetries} the configurations (a),(b),(c) and (d) can be obtained from the others by reflecting ``horizontally" and ``vertically", and inflections are reflected into each other. Hence the number $2+I-I'=2+0=2$ is the same for these 4 cases. In figure \ref{fig:symmetries} cases (e) and (f) can also be obtained from one another by a reflection, and cases (g) and (h) can also be reflected into each other.

\begin{figure}
    \centering
    \includegraphics[scale=0.4]{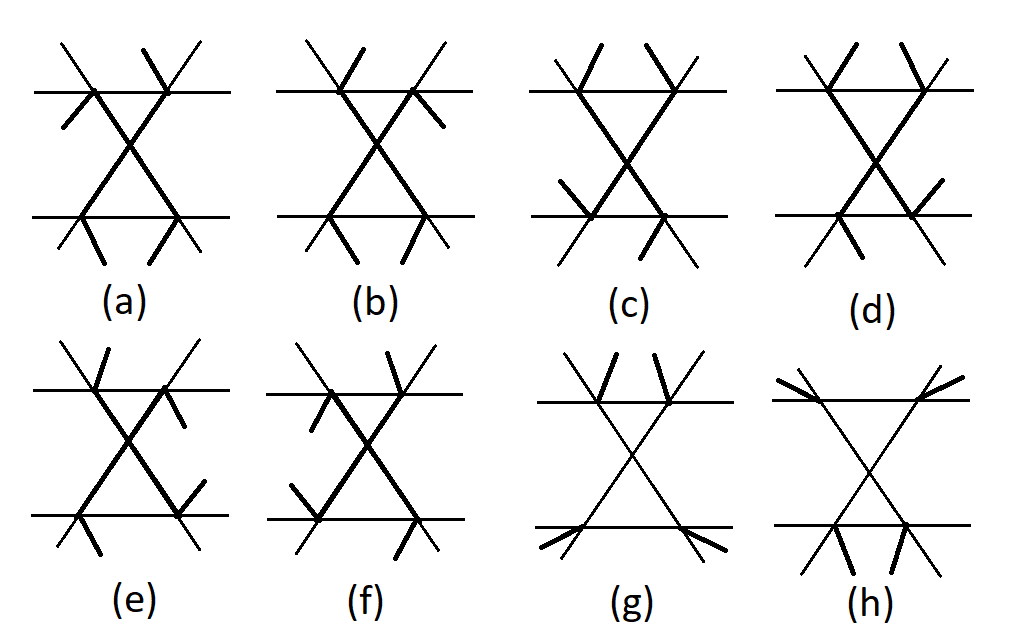}
    \caption{Cases (a),(b),(c) and (d) can be reflected one into the other, and for all of them the number $2+I-I'$ is the same, equal to $2$. Cases (e) and (f) can be reflected into each other as well, and their number $2+I-I'$ is the same, equal to $0$. Cases (g) and (h) can also be reflected into each other, and their number $2+I-I'$ is the same, equal to $0$ (recall that we assume the ``worst-case scenario", i.e., the external edges in the gray regions goes from ordinary edges to becoming inflections).}
    \label{fig:symmetries}
\end{figure}

Therefore the number of configurations to be considered can be decreased. The final number of cases is 27 and is depicted in figure \ref{fig:27_cases}. Since in all the cases the number $\gamma=2+I-I'$ is greater or equal than $0$, lemma \ref{never_decreasing_lemma} is proved.

\begin{figure}
    \centering
    \includegraphics[scale=0.3]{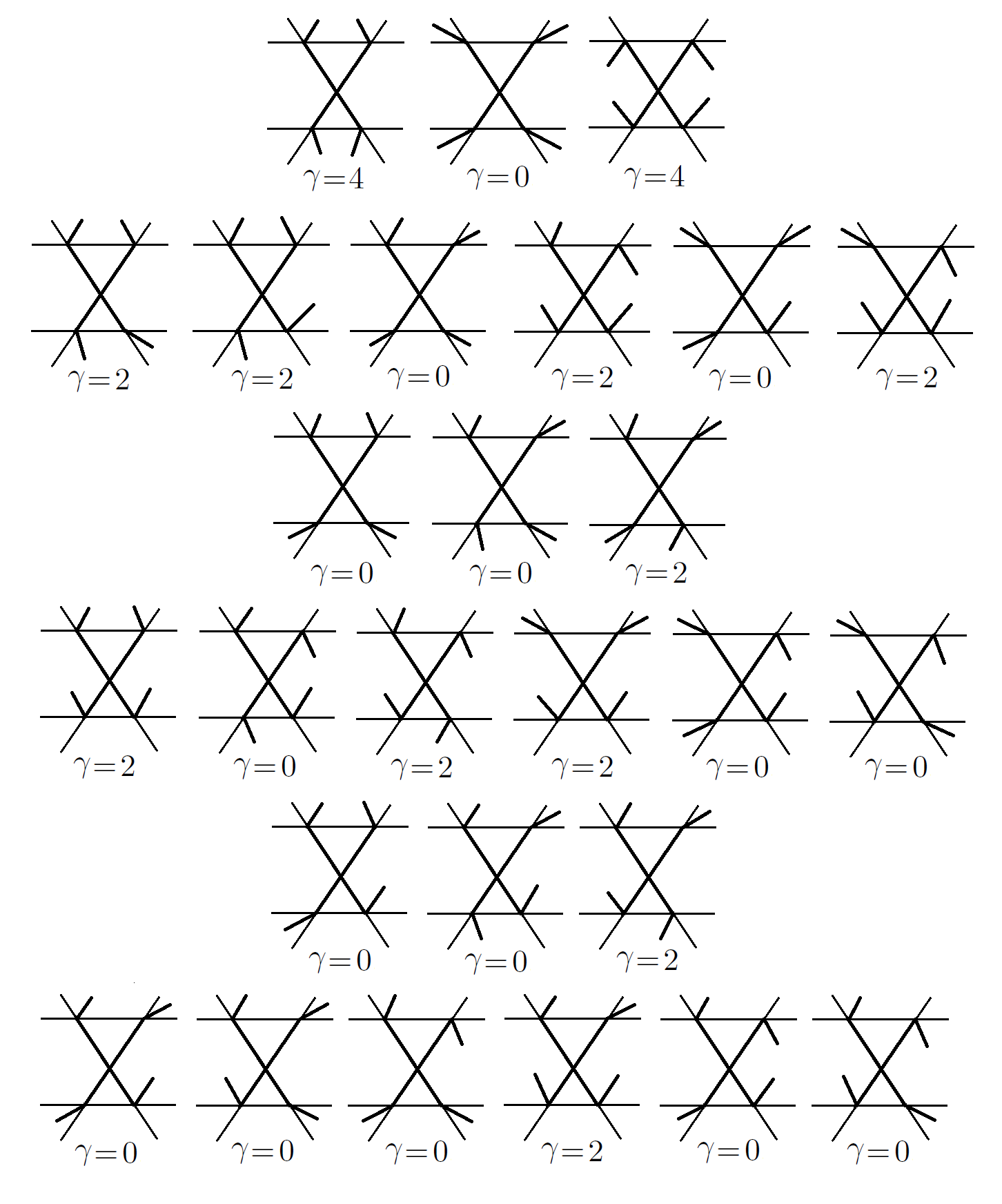}
    \caption{The $27$ possible types of configurations regarding the location of the external vertices. In all of the cases, the number $\gamma=2+I-I'$ is greater or equal to $0$.}
    \label{fig:27_cases}
\end{figure}

A more algebraic approach to checking that the number $2+I-I'$ is always greater or equal to zero may be found in the following (optional) section.

\section{An elementary algebraic approach to study all the possible configurations}

Instead of depicting all the possible configurations, we can do a more synthetic approach: we noticed before that the location of the external vertices suffices to discover the number $\gamma =2+I-I'$ (recall that for each vertex in a gray region we assume an extra contribution of $-1$ to $\gamma$). Each of these vertices does not work alone: the location of pairs of them is what determines which edges (from $\overrightarrow{u_i u_{i+1}}$, $\overrightarrow{u_j u_{j+1}}$, $\overrightarrow{u_i u_j}$ and $\overrightarrow{u_{i+1} u_{j+1}}$) are inflections or not. This suggests that it might be possible to assign to each external vertex a signed number that, all of them considered together, gives us a formula that computes the number $\gamma$. If we succeed at finding such a formula, it will be easier for a computer to check all the possible configurations. If we are lucky, however, it might be possible to prove easily (by elementary algebra) that such a formula always gives zero or positive numbers.

The idea to deduce a formula that gives the number $\gamma$ is to associate to each external vertex a pair $(x_k,y_k) \in \{ \pm 1\}^2$. At each internal vertex two different lines intersect. The corresponding external vertex might be in one of the two regions determined by one line, and in one of the regions determined by the other line. From now on we relabel (for simplicity of notation) each index of the internal vertices as in figure \ref{fig:diagram_plus_minus} (c).  If the external vertex corresponding to the internal vertex $u_k$ is in a region with plus respectively minus sign in figure \ref{fig:diagram_plus_minus} (a), then put $x_k$ equal to $+1$ respectively $-1$. If the external vertex corresponding to the internal vertex $u_k$ is in a region with a plus respectively minus sign in figure \ref{fig:diagram_plus_minus} (b), then put $y_k$ equal to $+1$ respectively $-1$. See figure \ref{fig:example_diagram} for an example. Once this is done, the following facts follow:

\begin{itemize}
    \item edge $\overrightarrow{u_4 u_1}$ is an inflection if and only if $x_1$ and $x_4$ have the same sign, and edge $\overrightarrow{u_3 u_2}$ is an inflection if and only if $x_2$ and $x_3$ have the same sign;
    \item edge $\overrightarrow{u_2 u_1}$ is an inflection if and only if $y_1$ and $y_2$ have opposite signs, and edge $\overrightarrow{u_3 u_4}$ is an inflection if and only if $y_3$ and $y_4$ have opposite signs;
    \item the edge connecting the internal and external vertices at $u_i$ is changes its condition of being an inflection if and only if it is in one of the gray regions, which on its turn happens if and only if $x_i$ and $y_i$ have the same sign;
\end{itemize}

\begin{figure}
    \centering
    \includegraphics[scale=0.4]{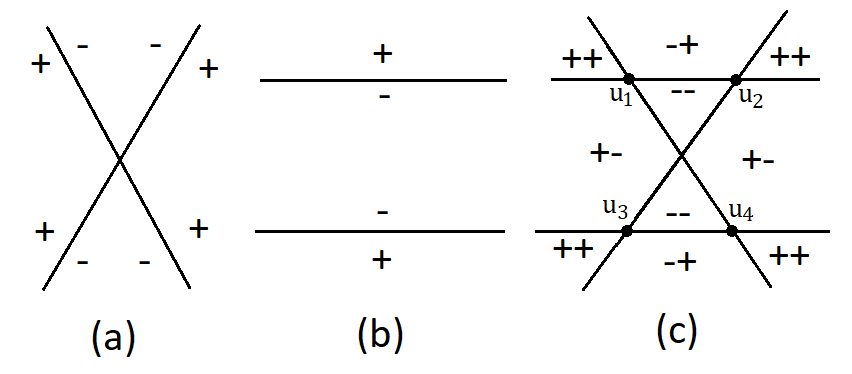}
    \caption{Assigning a pair of signed numbers to each region}
    \label{fig:diagram_plus_minus}
\end{figure}

\begin{figure}
    \centering
    \includegraphics[scale=0.4]{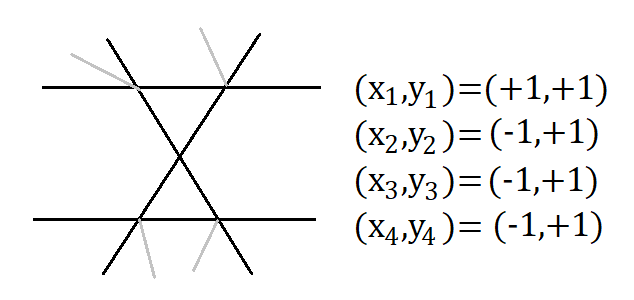}
    \caption{An example of configuration with the respective values of $x_k$'s and $y_k$'s.}
    \label{fig:example_diagram}
\end{figure}

Although the assigning was a bit arbitrary, it still conveys the symmetry we are looking for as figure \ref{fig:symmetries} shows.
Now comes the trick: we want to find a formula in terms of the $x_k$'s and the $y_k$'s that gives the number $\gamma=2+I-I'$. The previous facts imply the following numerical relations, respectively:

\begin{itemize}
    \item We want an expression that gives $1$ if edge $\overrightarrow{u_4 u_1}$ is an inflection, and $0$ if it is not. It is not hard to see that $\frac{x_1 x_4 + |x_1 x_4|}{2}$ does the work. Similarly, for the edge $\overrightarrow{u_3 u_2}$, the expression $\frac{x_2 x_3 + |x_2 x_3|}{2}$ gives $1$ when this edge is an inflection, and $0$ when it is not. Therefore the contribution of inflections among the internal edges to the number $I$ is equal to
    $$ \frac{x_1 x_4 + x_2 x_3 + |x_1 x_4| + |x_2 x_3|}{2} = \frac{x_1 x_4 + x_2 x_3 + 2}{2} = \frac{x_1 x_4 + x_2 x_3}{2} +1.$$
    \item We want an expression that gives $1$ if edge $\overrightarrow{u_2 u_1}$ is an inflection, and $0$ if it is not. Such an expression is $\frac{-y_1 y_2 + |y_1 y_2|}{2}$. Similarly, for the edge $\overrightarrow{u_3 u_4}$, the expression $\frac{-y_3 y_4 + |y_3 y_4|}{2}$ gives $1$ when this edge is an inflection, $0$ when it is not. Therefore the contribution of inflections among the internal edges to the number $I'$ is equal to
    $$ \frac{-y_1 y_2 - y_3 y_4 + |y_1 y_2| + |y_3 y_4|}{2} = \frac{-y_1 y_2 - y_3 y_4 + 2}{2} = -\frac{y_1 y_2 + y_3 y_4 }{2} +1.$$
    \item Assuming the ``worst-case scenario" concerning the change of the conditions of the external edges, we want an expression that gives $1$ if the external edge at $u_k$ is an inflection, and $0$ when it is not (for all $k\in\{1,2,3,4\})$. Such an expression is $\frac{x_k y_k + |x_k y_k|}{2}$. Therefore, the contribution of inflections among the external edges to the number $I'$ is equal to
    $$ \frac{x_1 y_1 + x_2 y_2 + x_3 y_3 + x_4 y_4 + |x_1 y_1| + |x_2 y_2| + |x_3 y_3| + |x_4 y_4|}{2} $$
    $$ = \frac{x_1 y_1 + x_2 y_2 + x_3 y_3 + x_4 y_4 + 4}{2} $$
    $$ = \frac{x_1 y_1 + x_2 y_2 + x_3 y_3 + x_4 y_4}{2} + 2.$$
\end{itemize}

Our conclusion is that the number $\gamma = 2 + I - I'$ is equal to
$$ 2 + \frac{x_1 x_4 + x_2 x_3}{2} +1 - (-\frac{y_1 y_2 + y_3 y_4 }{2} +1 + \frac{x_1 y_1 + x_2 y_2 + x_3 y_3 + x_4 y_4}{2} + 2)$$
$$ = \frac{x_1 x_4 + x_2 x_3 + y_1 y_2 + y_3 y_4 -x_1 y_1 - x_2 y_2 - x_3 y_3 - x_4 y_4}{2}.$$

Notice that this formula is invariant by permuting the indices by $(1 2)(3 4)$ (which is related by the ``y-axis reflection" symmetry) and by $(1 3) (2 4)$ (which is related by the ``x-axis reflection" symmetry). Therefore the formula gives the same number for configurations that can be obtained one from another by reflections (as we saw in the examples of figure \ref{fig:symmetries}).

It remains to prove that the number $\gamma$ is always positive or equal to zero, provided that the external vertices are not in the forbidden regions. By the previous discussion, it suffices to prove the following lemma:

\begin{lemma}
\label{polynomial_lemma}
If $(x_k,y_k) \neq (-1,-1)$ for all $k \in \{1,2,3,4\}$, then $2 \gamma$, given by
$$ x_1 x_4 + x_2 x_3 + y_1 y_2 + y_3 y_4 -x_1 y_1 - x_2 y_2 - x_3 y_3 - x_4 y_4,$$
is always greater or equal to zero.
\end{lemma}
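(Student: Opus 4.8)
The plan is to trade the $\pm1$-valued variables for $\{0,1\}$-valued indicator variables. I would set $x_k = 1 - 2a_k$ and $y_k = 1 - 2b_k$ with $a_k,b_k \in \{0,1\}$, so that $a_k = 1$ exactly when $x_k = -1$ and $b_k = 1$ exactly when $y_k = -1$. Under this affine change of variables the single hypothesis $(x_k,y_k) \neq (-1,-1)$ becomes $(a_k,b_k) \neq (1,1)$, i.e. $a_k b_k = 0$ for each $k \in \{1,2,3,4\}$, and this is the only place the hypothesis will be used.

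The key step is then a direct expansion of $2\gamma$ in the new variables. For the ``same-index'' products the constraint kills the cross term: $x_k y_k = (1-2a_k)(1-2b_k) = 1 - 2a_k - 2b_k + 4a_k b_k = 1 - 2a_k - 2b_k$. For the remaining products one just has $x_i x_j = 1 - 2a_i - 2a_j + 4a_i a_j$ and $y_i y_j = 1 - 2b_i - 2b_j + 4b_i b_j$. Substituting these into $x_1 x_4 + x_2 x_3 + y_1 y_2 + y_3 y_4 - x_1 y_1 - x_2 y_2 - x_3 y_3 - x_4 y_4$, the eight constants cancel in pairs; the crucial point is that the linear terms also cancel completely, because the index pairs $\{1,4\},\{2,3\}$ (for the $x$'s) and $\{1,2\},\{3,4\}$ (for the $y$'s) each cover $\{1,2,3,4\}$ exactly once, so each $a_k$ receives $-2$ from one $x$-product and $+2$ from the term $-x_k y_k$, and similarly for each $b_k$. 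What remains is only the quadratic part, giving
$$ 2\gamma = 4\bigl(a_1 a_4 + a_2 a_3 + b_1 b_2 + b_3 b_4\bigr). $$
Since each $a_k$ and each $b_k$ lies in $\{0,1\}$, every summand on the right is nonnegative, so $2\gamma \geq 0$, which is the claim; as a bonus one reads off that equality holds precisely when $a_1 a_4 = a_2 a_3 = b_1 b_2 = b_3 b_4 = 0$.

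There is no genuine obstacle here: the statement could in principle be verified by simply running through the $3^4 = 81$ admissible assignments of $(x_k,y_k)$ (or the $27$ reduced cases already drawn in figure \ref{fig:27_cases}), and the substitution above is just a way of organizing that finite check so that the cancellations become transparent. The only mildly delicate point is bookkeeping the linear terms carefully enough to see that they vanish; once that is done the inequality is immediate from positivity of the surviving quadratic monomials.
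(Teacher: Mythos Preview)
Your proof is correct, and it takes a genuinely different route from the paper's. The paper argues by a five-way case split on the number of $+1$'s in the sequence $(x_1y_1,x_2y_2,x_3y_3,x_4y_4)$, combining in each case the hypothesis $(x_k,y_k)\neq(-1,-1)$ with ad hoc counting of how many of the first four terms must equal $+1$. Your substitution $x_k=1-2a_k$, $y_k=1-2b_k$ turns the hypothesis into the algebraic identity $a_kb_k=0$, and after the (correctly verified) cancellation of constants and linear terms you obtain the closed form $2\gamma=4(a_1a_4+a_2a_3+b_1b_2+b_3b_4)$, from which nonnegativity is immediate. This is more economical than the paper's case analysis and yields extra information for free: one sees at once that $2\gamma$ is always a multiple of $4$ (hence $\gamma$ is an even nonnegative integer) and that equality $\gamma=0$ holds exactly when $a_1a_4=a_2a_3=b_1b_2=b_3b_4=0$. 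The paper's approach, by contrast, stays closer to the geometric picture of the gray regions and makes the role of the ``worst-case'' assumption on external edges more visible, but your algebraic identity is the cleaner proof of the lemma as stated.
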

\begin{proof}
    Consider the sequence $(x_1 y_1, x_2 y_2, x_3 y_3, x_4 y_4)$. There are five possibilities regarding the number of $+1$'s in the sequence:
    \begin{itemize}
        \item 4 of them. The hypothesis implies then that all $x_k$'s and $y_k$'s are equal to $1$, and therefore $2\gamma$ is equal to $0$.
        \item 3 of them. The hypothesis implies then that exactly one number among the $x_k$'s and $y_k$'s is equal to $-1$, which then implies that $2\gamma$ is equal to $0$.
        \item 2 of them. This immediately implies that the last four terms of the expression of $2\gamma$ add up to $0$. Moreover, by hypothesis, this implies that exactly two numbers among the $x_k$'s and $y_k$'s are equal to $-1$. This in turn implies that at least two of the first four terms of the expression of $2\gamma$ are equal to $1$, which suffices for the entire expression of $2\gamma$ to be greater or equal to $0$.
        \item 1 of them. This immediately implies that the last four terms of the expression of $2\gamma$ add up to $2$. Moreover, by hypothesis, this implies that exactly five numbers among the $x_k$'s and $y_k$'s are equal to $+1$. This in turn implies that at least one of the first four terms of the expression of $2\gamma$ is equal to $1$, which suffices for the entire expression of $2\gamma$ to be greater of equal to $0$.
        \item $0$ of them. This immediately implies that the last four terms of the expression of $2\gamma$ add up to $4$, which suffices for the entire expression of $2\gamma$ to be greater or equal to $0$.
    \end{itemize}
\end{proof}

Lemma \ref{never_decreasing_lemma} now follows immediately from lemma \ref{polynomial_lemma}.

\section{Proofs of theorems}

Now we can prove the promised theorems.

\begin{proof}
    (of Theorem \ref{generalized_segre_discrete}) It suffices to prove the theorem when $D^+ = 1$. If the polygon $Q$ satisfies at its self-intersection the hypothesis of Lemma \ref{never_decreasing_lemma} and the corresponding polygon $Q'$ is obtained as indicated, then $2D^+ + I = 2 + I \geq 2 D'^+ + I' = 0 + I' = I'$, which is greater or equal to $4$, since $Q'$ satisfies the hypotheses of Theorem \ref{segre_theorem_discrete_spherical}.

    If, however, one (or even both) of the regions determined by $\{u_i,u_j,w\}$ and $\{u_{i+1},u_{j+1},w\}$ contain in its interior other vertices of $Q$ (which would then imply that the new polygon would have a self-intersection), then we construct and intermediate polygon before ``cutting-and-pasting". We put
    $$\epsilon = \frac{1}{2} \min \{|w - u_i|_{\mathbb{S}^2}; i \in \{1,...,n\}\}$$
    (where we denote by $|\cdot|_{\mathbb{S}^2}$ the spherical distance) and denote by $S_{\epsilon}(w)$ the circle with radius $\epsilon$ centered at $w$. We then add new vertices to the polygon $Q$ in the following way (see figure \ref{fig:adding_vertices}):

    \begin{itemize}
        \item If the region determined by $\{u_i,u_j,w\}$ has some other vertex in its interior, then add to $Q$ the new vertices $u_{i+\frac{1}{3}} := \overrightarrow{u_i w} \cap S_\epsilon(w)$ and $u_{j+\frac{1}{3}} := \overrightarrow{u_j w} \cap S_\epsilon(w)$, with the ordering induced by the usual ordering of rational numbers $\mod n$;
        \item if the region determined by $\{u_{i+1},u_{j+1},w\}$ has some other vertex in its interior, then add to $Q$ the new vertices $u_{i+\frac{2}{3}} := \overrightarrow{w u_{i+1}} \cap S_\epsilon(w)$ and $u_{j+\frac{2}{3}} := \overrightarrow{w u_{j+1}} \cap S_\epsilon(w)$, with the ordering induced by the usual ordering of rational number $\mod n$.
    \end{itemize}

\begin{figure}
    \centering
    \includegraphics[scale=0.3]{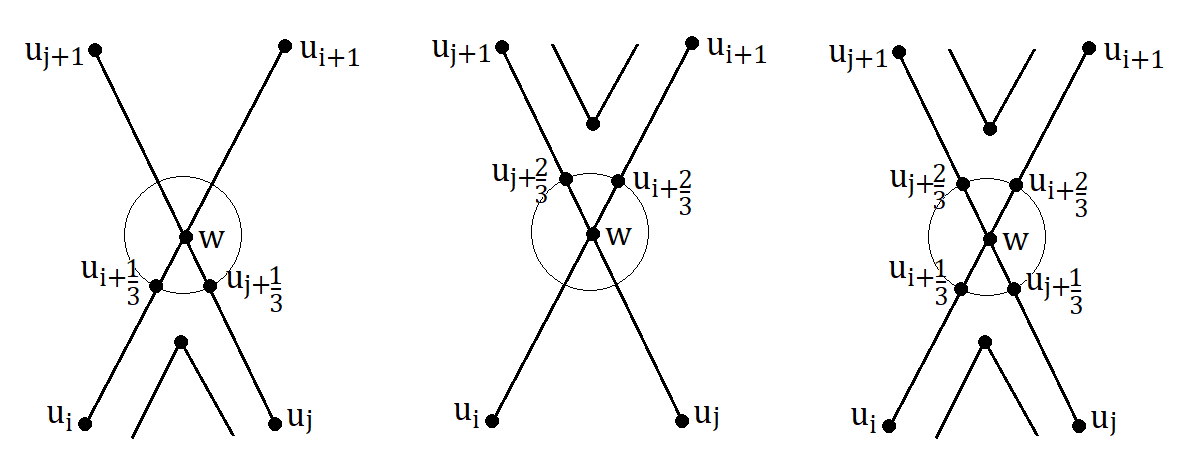}
    \caption{Adding vertices}
    \label{fig:adding_vertices}
\end{figure}

Denote the newly obtained polygon by $Q'$. Perturb the vertices of $Q'$ slightly so that it does not have three consecutive collinear vertices nor new self-intersections, and such that the number of inflections of $Q'$ is the same as the original polygon $Q$ (see figure \ref{fig:new_vertices_inflections} for examples). This can always be done. Continue denoting such polygon by $Q'$. Now, the new polygon $Q'$ satisfies the hypotheses of Lemma \ref{never_decreasing_lemma}, with $I'=I$ and $D'^+ = D^+$. Therefore we can construct from $Q'$ a new polygon $Q''$ without self-intersections as in the discussion before Lemma \ref{never_decreasing_lemma} (in the case of figure \ref{fig:adding_vertices} on the right, for instance, the internal vertices would be $u_{i+\frac{1}{3}}$, $u_{i+\frac{2}{3}}$, $u_{j+\frac{1}{3}}$ and $u_{j+\frac{2}{3}}$). By the latter result we have:
$$ 2D^+ + I = 2D'^+ + I' \geq 2D''^+ + I'' = 0 + I'' \geq 4,$$
where the last inequality holds by Theorem \ref{segre_theorem_discrete_spherical}.

\begin{figure}
    \centering
    \includegraphics[scale=0.3]{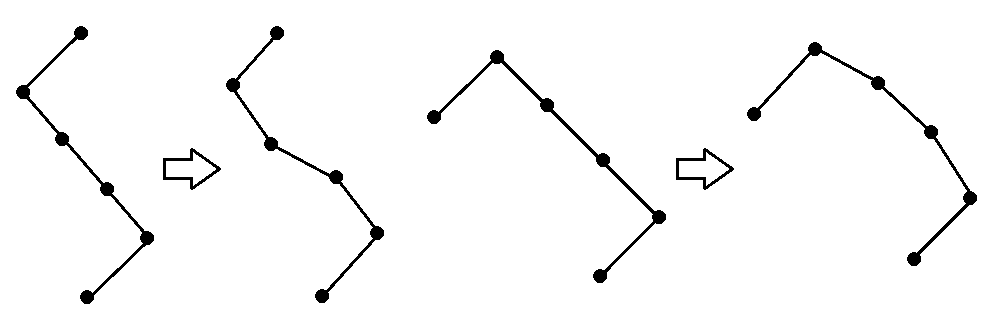}
    \caption{Perturbing a polygon with three consecutive vertices so that the number of its inflections will still be the same.}
    \label{fig:new_vertices_inflections}
\end{figure}

\end{proof}

Now we prove Theorem \ref{generalized_mobius_discrete}:

\begin{proof} (of Theorem \ref{generalized_mobius_discrete})
    If the original polygon $Q$ is symmetric, then $D^+$ is always even. If $D^+ \geq 4$, the result follows immediately. If $D^+=0$, then we apply theorem \ref{discrete_mobius_theorem}. Now, if $D^+=2$, then we can apply the same procedure as we did in this section: construct from $Q$ a new polygon $Q'$ through ``cutting-and-pasting" at each of the two self-intersections (adding new vertices if necessary), so that $Q'$ does not have self-intersections. Notice that each step of this procedure is invariant by reflection on the origin, hence $Q'$ is also symmetric. Therefore
    $$ 2D^+ + I \geq I' \geq 6,$$
    where the first inequality holds applying Lemma \ref{never_decreasing_lemma} twice and the second inequality holds by Theorem \ref{discrete_mobius_theorem}.
\end{proof}

We still need to prove theorem \ref{generalized_special_segre_discrete}. Recall that $D^+$ is the number of self-intersections of $Q$, $D^-$ is the number of antipodal intersections of $Q$ and $D=D^+ + D^-$.

\begin{proof} (of theorem \ref{generalized_special_segre_discrete})
    If for a given spherical polygon $Q$ not contained in any hemisphere the number $D^-$ equals $1$, then
    $$ 2D + I = 2(D^+ + D^-) + I = 2D^- + 2D^+ + I = 2 + 2D^+ + I \geq 2 + 4 = 6,$$
    where the inequality holds by theorem \ref{generalized_segre_discrete}. If, however, $D^- = 0$, then we still need to prove that in this case the number $2D^+ + I$ is greater or equal to $6$. Now, $Q$ either has no self-intersections or has at least one. In the former case we apply theorem \ref{special_segre_discrete}, while in the latter case we proceed as we did in the previous section: at the self-intersection we cut-and-paste the edges of $Q$ such that the resulting polygon $Q$ has no self-intersections and no antipodal intersections. In case we need to add new vertices before cutting-and-pasting, we must take now as $\epsilon$ the number
    $$ \epsilon = \frac{1}{2} \min \{|w \pm u_i|_{\mathbb{S}^2}; i \in \{1,...,n\}\}$$
    (where we denote again by $|\cdot|_{\mathbb{S}^2}$ the spherical distance), so that the proof of theorem \ref{generalized_segre_discrete} also works here. The conclusion is that $2D^+ + I$ is greater or equal to $I'$, where $I'$ is the number of inflections of the resulting polygon $Q'$, which does not have self-intersections nor antipodal intersections. By theorem \ref{special_segre_discrete}, this number is greater or equal to $6$.
\end{proof}

\section{Further remarks}

The ``cutting-and-pasting" approach used around the double points (i.e., intersection points) is also used in the smooth setting by Ghomi (see \cite{Ghomi}) to prove theorems \ref{generalized_special_segre_smooth} and \ref{generalized_segre_smooth}. In our case, however, the separate study of the 27 cases (up to symmetry) around the intersection point to prove lemma \ref{never_decreasing_lemma} is necessary since we are dealing with (spherical) polygons. The elementary algebraic argument of section 5.4 and the proof of lemma \ref{polynomial_lemma} are new.

In the smooth setting, theorems \ref{generalized_special_segre_smooth} and \ref{generalized_segre_smooth} are due to Ghomi (see \cite{Ghomi}). From these theorems, he also considers corresponding results for space curves: self/antipodal intersections in the tangent indicatrix correspond to pairs of parallel tangents with the same/inverse orientation in the original space curve. Taking into account the number of such pairs, he obtains inequalities for space curves which are analogous to the inequalities of theorems \ref{generalized_special_segre_smooth} and \ref{generalized_segre_smooth}.

As far as we know, the proofs of theorems \ref{generalized_special_segre_discrete}, \ref{generalized_segre_discrete} and \ref{generalized_mobius_discrete} in the discrete setting are new. Notice also that these theorems, although stated for spherical polygons, can be thought as applied to tangent indicatrices of space polygons. Since generic space polygons do not have ``pairs of parallel tangents", one can define pairs of \textit{parallel vertices with the same/inverse orientation} of a space polygon $P$ as pairs of vertices $v_i$ and $v_j$ such that the tangent indicatrix $Q$ of $P$ has self/ antipodal intersections between spherical edges $\overrightarrow{u_{i-1} u_i}$ and $\overrightarrow{u_{j-1} u_j}$. Taking into account the number of such pairs, we can obtain the following inequalities for space polygons which are analogous to the inequalities of theorems \ref{generalized_special_segre_discrete} and \ref{generalized_segre_discrete}. Denote by $T^+$ and $T^-$ the numbers of pairs of parallel vertices with the same/inverse orientation of a space polygon, and put $T:=T^+ + T^-$. Denote also by $F$ the number of flattenings of a space polygon.

\begin{theorem}
\label{analogous_generalized_special_segre_discrete}
Let $P=[v_1,...,v_n] \in \mathbb{R}^3$ ($n\geq 6$) be a space polygon. Then
$$ 2T + F \geq 6.$$
\end{theorem}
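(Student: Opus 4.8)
The plan is to read Theorem \ref{analogous_generalized_special_segre_discrete} off Theorem \ref{generalized_special_segre_discrete} applied to the tangent indicatrix $Q=[u_1,\dots,u_n]$ of $P$. What must be checked is a dictionary between the discrete invariants of $P$ and those of $Q$: that flattenings of $P$ match spherical inflections of $Q$ (so $F=I$), that pairs of parallel vertices of $P$ of each type match self/antipodal intersections of $Q$ (so $T^{+}=D^{+}$, $T^{-}=D^{-}$, hence $T=D$), and that $Q$ is in balanced position.

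First I would set up the two bijections. The assignment $\{v_i,v_{i+1},v_{i+2}\}\mapsto\{u_i,u_{i+1}\}$ is a bijection between the consecutive triples of $P$ and the edges of $Q$, and by Remark \ref{remark_flattening_inflection} it carries flattenings precisely to inflections: $\{v_i,v_{i+1},v_{i+2}\}$ is a flattening exactly when $[u_{i-1},u_i,u_{i+1}]$ and $[u_i,u_{i+1},u_{i+2}]$ have opposite signs, which is the definition of an inflection at $\{u_i,u_{i+1}\}$. Hence $F=I$. Similarly, $v_i\mapsto\overrightarrow{u_{i-1}u_i}$ is a bijection from the vertices of $P$ to the edges of $Q$, and by the very definition adopted above a pair of parallel vertices with the same (resp.\ inverse) orientation is a pair of edges of $Q$ that cross (resp.\ one of which meets the other's antipode), i.e.\ a self-intersection (resp.\ an antipodal intersection) of $Q$. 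Hence $T^{+}=D^{+}$ and $T^{-}=D^{-}$, so $T=D$.

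Next I would verify that $Q$ is balanced. Since $P$ is closed, $\sum_i e_i=0$, so $\sum_i |e_i|\,u_i=0$ is a vanishing linear combination of the $u_i$ with strictly positive coefficients. For any unit vector $n$ this gives $\sum_i |e_i|\langle u_i,n\rangle=0$, so the $u_i$ cannot all lie in the open hemisphere $\{\langle\cdot,n\rangle>0\}$, and they lie in the closed hemisphere $\{\langle\cdot,n\rangle\ge 0\}$ only if $\langle u_i,n\rangle=0$ for every $i$, that is, only if all edge directions lie on a single great circle; but then all edges would be parallel to one plane and $P$ would be planar, contrary to our standing assumption that $P$ is a genuine (general-position) space polygon. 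Thus $Q$ is a spherical polygon with $n\ge 6$ vertices, not contained in any closed hemisphere, and Theorem \ref{generalized_special_segre_discrete} yields $2D+I\ge 6$, i.e.\ $2T+F\ge 6$.

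I do not expect a real obstacle: the argument is essentially the translation of a known result through the tangent-indicatrix correspondence. The only points needing care are the hemisphere computation above (the closedness of $P$ is exactly what forces its indicatrix to be balanced) and the bookkeeping convention --- already implicit throughout the polygonal part of the paper --- that $P$ is in general position, so that $Q$ has no three vertices on a common great circle and the flattening/inflection dictionary is non-degenerate; the planar case, where flattenings are not well defined, is excluded by this convention.
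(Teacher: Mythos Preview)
Your proposal is correct and matches the paper's approach exactly: the paper simply states that Theorem \ref{analogous_generalized_special_segre_discrete} follows immediately from Theorem \ref{generalized_special_segre_discrete}, and you have supplied precisely the dictionary ($F=I$, $T^{\pm}=D^{\pm}$, balancedness of the tangent indicatrix via $\sum |e_i|u_i=0$) that justifies that immediacy. Your verification that $Q$ is not contained in any closed hemisphere is a detail the paper leaves implicit, but it is the standard argument and your treatment of it is fine.
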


\begin{theorem}
\label{analogous_generalized_segre_discrete}
Let $P=[v_1,...,v_n] \in \mathbb{R}^3$ ($n\geq 6$) be a space polygon. Then
$$ 2T^+ + F \geq 4.$$
\end{theorem}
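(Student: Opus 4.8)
The plan is to reduce everything to Theorem~\ref{generalized_segre_discrete} by passing to the tangent indicatrix $Q=[u_1,\dots,u_n]$ of $P$, where $u_i=e_i/|e_i|$. The first step is to record the dictionary between the combinatorial invariants of $P$ and those of $Q$. Remark~\ref{remark_flattening_inflection} is in fact an equivalence: writing $\epsilon_k=[u_k,u_{k+1},u_{k+2}]$, the triple $\{v_i,v_{i+1},v_{i+2}\}$ is a flattening of $P$ exactly when $\epsilon_{i-1}$ and $\epsilon_i$ have opposite signs (this is a one-line determinant computation, using $e_k=|e_k|u_k$ and the cyclic invariance of $3\times3$ determinants), i.e.\ exactly when $\{u_i,u_{i+1}\}$ is a spherical inflection of $Q$; hence $F=I(Q)$. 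By the very definition adopted in this section, a pair of parallel vertices of $P$ with the same orientation \emph{is}, by convention, a self-intersection of $Q$ between the corresponding spherical edges, so $T^+=D^+(Q)$ (and, for Theorem~\ref{analogous_generalized_special_segre_discrete}, $T^-=D^-(Q)$, hence $T=D(Q)$). Thus the inequality to prove, $2T^++F\ge4$, is literally $2D^+(Q)+I(Q)\ge4$.

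The second and only substantial step is to check the remaining hypothesis of Theorem~\ref{generalized_segre_discrete}, namely that $Q$ is not contained in any closed hemisphere. This is the classical balance of the edge directions of a closed polygon: since $\sum_{i=1}^n(v_{i+1}-v_i)=0$ telescopes to zero, we get $\sum_{i=1}^n|e_i|\,u_i=0$, a vanishing combination of the $u_i$ with strictly positive coefficients. If all the $u_i$ lay in a closed hemisphere $\{x\in\mathbb{S}^2:\langle x,\nu\rangle\ge0\}$, then $0=\sum_i|e_i|\langle u_i,\nu\rangle$ with every summand nonnegative would force $\langle u_i,\nu\rangle=0$ for all $i$; that is, all the $u_i$ would lie on the great circle $\nu^{\perp}\cap\mathbb{S}^2$, which is precisely the case in which $P$ is planar. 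So, for $P$ in general position (in particular non-planar, and with no three $u_i$ on a common great circle — the standing assumption of the paper, under which the $\epsilon_k$ are nonzero and $Q$ is a genuine spherical polygon), $Q$ is a balanced spherical polygon with $n\ge6\ge4$ vertices, and Theorem~\ref{generalized_segre_discrete} applies directly to give $2D^+(Q)+I(Q)\ge4$, i.e.\ $2T^++F\ge4$.

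The degenerate situations require no real work: when $P$ is planar all $\epsilon_k$ vanish and the invariants are themselves degenerate, so the statement is understood for $P$ in general position as throughout the paper; and Theorem~\ref{analogous_generalized_special_segre_discrete} is proved identically, the same dictionary giving $2T+F=2D(Q)+I(Q)$, the same telescoping argument showing $Q$ is balanced, and Theorem~\ref{generalized_special_segre_discrete} (whose hypothesis is again only that $Q$ avoids every closed hemisphere, with the bound $n\ge6$ matching) yielding $2D(Q)+I(Q)\ge6$. I expect the balance verification of the second step to be the only point where any thought is needed; once $Q$ is identified as a balanced spherical polygon, both space-polygon inequalities are immediate corollaries of the spherical theorems already established.
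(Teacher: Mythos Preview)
Your proposal is correct and follows exactly the approach the paper takes: the paper simply states that Theorems~\ref{analogous_generalized_special_segre_discrete} and~\ref{analogous_generalized_segre_discrete} follow immediately from Theorems~\ref{generalized_special_segre_discrete} and~\ref{generalized_segre_discrete} via the tangent indicatrix dictionary $F=I(Q)$, $T^{\pm}=D^{\pm}(Q)$. Your explicit verification that $Q$ is balanced (using the telescoping identity $\sum |e_i|u_i=0$) is the one detail the paper leaves implicit, and your handling of it is correct.
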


Theorems \ref{analogous_generalized_special_segre_discrete} and \ref{analogous_generalized_segre_discrete} follow immediately from theorems \ref{generalized_special_segre_discrete} and \ref{generalized_segre_discrete}, respectively.

\section*{Acknowledgments}

This study was financed in part by the Coordenação de Aperfeiçoamento de Pessoal de Nível Superior - Brasil (CAPES) - Finance Code 001. It was also financed in part by Fundação Carlos Chagas Filho de Amparo à Pesquisa do Estado do Rio de Janeiro - Rio de Janeiro, Brasil (FAPERJ).

\printbibliography

\end{document}